\newif\ifdraftmode
\newif\ifarxivversion
\newif\ifwithcomparison
\newif\ifwithappendixstable
\newif\ifwithappendixtwodim
\newif\ifwithdetails
\newif\iffinal
\else\usepackage{environ}\fi
\numberwithin{equation}{section}
\newcommand{\erhoehebox}[2]{\settoheight{\meineboxheight}{#2}\addtolength{\meineboxheight}{#1}\vbox to \meineboxheight{\vfil\hbox{#2}}}
\newcommand{\vertiefetextbox}[2]{\settoheight{\meineboxheight}{#2}\addtolength{\meineboxheight}{#1}\lower#1\vbox to \meineboxheight{\hbox{#2}\vfil}}
\newcommand{\bafrac}[3][.12\keyheightlength]{\mathchoice%
{\frac{\textstyle #2}{\erhoehebox{#1}{$\textstyle #3$}}}%
{\frac{\scriptstyle #2}{\erhoehebox{#1}{$\scriptstyle #3$}}}%
{\frac{\scriptscriptstyle #2}{\erhoehebox{#1}{$\scriptscriptstyle #3$}}}%
{\frac{\scriptscriptstyle #2}{\erhoehebox{#1}{$\scriptscriptstyle #3$}}}
}
\newcounter{mnotecount}[section]
\def\clcomm#1{{\color{blue}CL: #1}}
\def\clcomm#1{}
\definecolor{darkgreen}{rgb}{0,0.6,0}
\definecolor{darkred}{rgb}{0.6,0,0}
\definecolor{darkblue}{rgb}{0,.1,.6}
\definecolor{darksharpblue}{rgb}{.4,.2,.8}
\definecolor{darkgray}{rgb}{0.3,.3,.3}
\definecolor{lightgreen}{rgb}{.9,1,.9}
\renewcommand\sout{\bgroup\markoverwith
{\textcolor{red}{\rule[0.7ex]{3pt}{1.4pt}}}\ULon}
\let\Paragraph\S\let\S\relax
\def\al{{\alpha}}
\def\si{{\sigma}}
\let\epsilon\varepsilon
\def\ep{{\varepsilon}}
\let\theta\vartheta
\def\phi{{\varphi}}
\newcommand\ceq{\coloneqq}
\newcommand\qec{\eqqcolon}
\let\witi\widetilde
\let\wihat\widehat
\newcommand\inneres[1]{\accentset{\circ}{#1}}
\let\gdw\Leftrightarrow
\newcommand\diam{\mathop{\mathrm{diam}}}
\newcommand\image{\mathop{\mathrm{image}}}
\newcommand\upd{\mathrm{d}}
\newcommand\<{\langle}
\renewcommand\>{\rangle}
\newlength{\meineboxheight}
\newlength{\meineboxdepth}
\newlength{\templength}
\newcommand{\setzeboxnachlinksmath}[1]{\settowidth{\templength}{$#1$}\kern-\templength#1}
\newcommand\datver[1]{\def\datverp
{\par\boxed{\boxed{\text{Version: #1; Run: \today}}}}}\datver{0.1}
\newcommand{\NN}{\mathbb N}
\newcommand{\RR}{\mathbb R}
\newcommand{\ZZ}{\mathbb Z}
\newcommand{\Iso}{\operatorname{Iso}}
\let\ol\overline
\DeclareMathOperator{\length}{length}
\DeclareMathOperator{\conv}{conv}
\DeclareMathOperator{\convjoin}{conv-join}
\newcommand{\maC}{\mathcal C}
\newcommand{\maJ}{\mathcal J}
\newcommand{\maL}{\mathcal L}
\newcommand{\maN}{\mathcal N}
\newcommand{\maV}{\mathcal V}
\newcommand\Nmin{\maN_{\mathrm{min}}}
\newcommand\bullette{{\scalebox{0.5}{$\bullet$}}}
\newcommand\argu{\,\raisebox{.1\keyheightlength}{\bullette}\,}
\newcommand\arrgu{\argu,\argu}
\let\args\argu
\newtheorem{theorem}{Theorem}[section]
\newtheorem{proposition}[theorem]{Proposition}
\newtheorem{corollary}[theorem]{Corollary}
\newtheorem{lemma}[theorem]{Lemma}
\newtheorem{question}[theorem]{Question}
\newtheorem{openquestion}[theorem]{Open Question}
\theoremstyle{definition}
\newtheorem{definition}[theorem]{Definition}
\newtheorem{setup}[theorem]{Setup}
\newtheorem{remark}[theorem]{Remark}
\newtheorem{example}[theorem]{Example}
\newtheorem{history}[theorem]{Historical comment}
\newtheorem*{comparisonliterature*}{Comparison to the literature}
\newlist{enumalpha}{enumerate}{5}
\setlist[enumalpha]{label=\upshape{(\alph*)}}
\newlist{enumAlpha}{enumerate}{5}
\setlist[enumAlpha]{label=\upshape{(\Alph*)}}
\newlist{enumarab}{enumerate}{5}
\setlist[enumarab]{label=\upshape{(\arabic*)}}
\newlist{enumroman}{enumerate}{5}
\setlist[enumroman]{label=\upshape{(\roman*)}}
\newlist{enumRoman}{enumerate}{5}
\setlist[enumRoman]{label=\upshape{(\Roman*)}}
\newlist{enumgreek}{enumerate}{5}
\setlist[enumgreek]{label=\upshape{(\greekalpha*)}}
\newlist{enumGreek}{enumerate}{5}
\setlist[enumGreek]{label=\upshape{(\Greekalpha*)}}
\newlist{enumC}{enumerate}{5}
\setlist[enumC]{label=\upshape{(C.\roman*)}}
\author[B.~Ammann]{Bernd Ammann} \address{B. Ammann, Fakult\"at f\"ur
  Mathematik, Universit\"at Regensburg, Regensburg, Germany}
\email{bernd.ammann@mathematik.uni-regensburg.de}
\author[C.~L\"oh]{Clara L\"oh} \address{C. L\"oh, Fakult\"at f\"ur
  Mathematik, Universit\"at Regensburg, Regensburg, Germany}
\email{clara.loeh@mathematik.uni-regensburg.de}
\newcommand\arxiv[1]{\href{https://www.arxiv.org/abs/#1}{arXiv:~#1}}
\newcommand\doi[1]{\href{https://doi.org/#1}{\darkblue{DOI:~#1}}}
\newlength{\keyheightlength}
\newcommand{\stellle}[1]{\mathchoice%
  {\mbox{\lower.15\keyheightlength\hbox{\upshape{|}}}_{#1}}%
  {\mbox{\lower.12\keyheightlength\hbox{\upshape{|}}}_{#1}}%
  {\mbox{\lower.17\keyheightlength\hbox{$\scriptstyle|$}}_{#1}}%
  {\mbox{\lower.09\keyheightlength\hbox{\upshape{|}}}_{#1}}%
}
\newcommand{\steLLe}[1]{\mathchoice%
  {\lower.20\keyheightlength\hbox{|}\lower.38\keyheightlength\hbox{$\scriptstyle #1$}}%
  {\lower.16\keyheightlength\hbox{|}\lower.32\keyheightlength\hbox{$\scriptstyle #1$}}%
  {\lower.15\keyheightlength\hbox{$\scriptstyle|$}\lower.28\keyheightlength\hbox{$\scriptscriptstyle #1$}}%
  {\lower.13\keyheightlength\hbox{$\scriptscriptstyle|$}\lower.26\keyheightlength\hbox{$\scriptscriptstyle #1$}}%
}
\newcommand{\stelle}[1]{\mathchoice%
  {\lower.27\keyheightlength\hbox{\big|}\lower.57\keyheightlength\hbox{$\scriptstyle #1$}}%
  {\lower.24\keyheightlength\hbox{\big|}\lower.48\keyheightlength\hbox{$\scriptstyle #1$}}%
  {\lower.21\keyheightlength\hbox{|}\lower.42\keyheightlength\hbox{$\scriptscriptstyle #1$}}%
  {\lower.18\keyheightlength\hbox{|}\lower.39\keyheightlength\hbox{$\scriptscriptstyle #1$}}%
}
\newcommand{\Stelle}[1]{\mathchoice%
  {\lower.39\keyheightlength\hbox{\Big|}\lower.74\keyheightlength\hbox{$\scriptstyle #1$}}%
  {\lower.33\keyheightlength\hbox{\big|}\lower.66\keyheightlength\hbox{$\scriptstyle #1$}}%
  {\lower.30\keyheightlength\hbox{\big|}\lower.60\keyheightlength\hbox{$\scriptscriptstyle #1$}}%
  {\lower.25\keyheightlength\hbox{\big|}\lower.51\keyheightlength\hbox{$\scriptscriptstyle #1$}}%
}
\newcommand{\sstelle}[1]{\mathchoice%
  {\lower.39\keyheightlength\hbox{\bigg|}\lower1.2\keyheightlength\hbox{$\scriptstyle #1$}}%
  {\lower.32\keyheightlength\hbox{\Big|}\lower.74\keyheightlength\hbox{$\scriptstyle #1$}}%
  {\lower.32\keyheightlength\hbox{\big|}\lower.75\keyheightlength\hbox{$\scriptstyle #1$}}%
  {\lower.29\keyheightlength\hbox{\big|}\lower.63\keyheightlength\hbox{$\scriptstyle #1$}}%
}
\newcommand{\SStelle}[1]{\mathchoice%
  {\lower.40\keyheightlength\hbox{\Bigg|}\lower1.35\keyheightlength\hbox{$\scriptstyle #1$}}%
  {\lower.37\keyheightlength\hbox{\bigg|}\lower.98\keyheightlength\hbox{$\scriptstyle #1$}}%
  {\lower.35\keyheightlength\hbox{\Big|}\lower.90\keyheightlength\hbox{$\scriptscriptstyle #1$}}%
  {\lower.30\keyheightlength\hbox{\big|}\lower.72\keyheightlength\hbox{$\scriptscriptstyle #1$}}%
}
\date\today
\newcommand{\meineurl}[1]{\href{#1}{(URL)}}
\newcommand\eg{e.\kern2pt g.,\ \ignorespaces}
\newcommand\ie{i.\kern2pt e.,\ \ignorespaces}
\newcounter{proofstep}
\newcommand{\case}[2]{\par\leftskip0cm\noindent\emph{Case #1: }\emph{#2}\par\setlength{\parindent}{0cm}\leftskip\caseindent\ignorespaces}
\newlength{\caseindent}
\newcommand{\xdashrightarrow}[2][]{\ext@arrow 0359\rightarrowfill@@{#1}{#2}}
\def\rightarrowfill@@{\arrowfill@@\relax\relbar\rightarrow}
\def\arrowfill@@#1#2#3#4{%
  $\m@th\thickmuskip0mu\medmuskip\thickmuskip\thinmuskip\thickmuskip
   \relax#4#1
   \xleaders\hbox{$#4#2$}\hfill
   #3$%
}
\newcommand{\dashto}{\xdashrightarrow{\kern10mm}}
\let\itemref\ref
\newcommand\ititemref[1]{\ref{#1}}
\newcommand\Ad[1]{Ad~#1:}
\newcommand\Adref[1]{\Ad{\ititemref{#1}}}
\def\qand{%
  \quad\text{and}\quad}
\let\tab\=
\newcommand\={\,=\,}
\let\N\NN
\let\Q\QQ
\let\R\RR
\let\Z\ZZ
\let\C\CC
\def\fa#1{%
  \forall_{#1}\;\;\;}
\def\exi#1{%
  \exists_{#1}\;\;\;}
\newcommand\HZR[1][M]{H_1(#1;\ZZ)_{\RR}}
\DeclareMathOperator{\derham}{dR}
\newcommand\HdR{H_{\derham}^1(M)}
\DeclareMathOperator{\stable}{st}
\DeclareMathOperator{\sign}{sign}
\newcommand\snorm{\bBigg@{0.8}}
\newcommand\innorm[4][flex]{\csname #1l\endcsname\|#2\csname#1r\endcsname\|_{#3}^{#4}}
\newcommand\onlynorm[2][flex]{\csname #1l\endcsname\|#2\csname#1r\endcsname\|}
\newcommand\stabnorm[2][flex]{\innorm[#1]{#2}{\stable}{}}
\newcommand\stabnormdual[2][flex]{\innorm[#1]{#2}{\stable}{*}}
\newcommand\norm[2][flex]{\onlynorm[#1]{#2}}
\newcommand\groupnorm[1]{N(#1)}
\newcommand\gnorm[3][flex]{\innorm[#1]{#2}{g}{#3}}
\newcommand\Cinftypw{C^\infty_{\mathrm{pw}}}
\newcommand\pscurve{$\Cinftypw$-curve\ignorespaces}
\newcommand\pscurves{$\Cinftypw$-curves\ignorespaces}
\def\ucov#1{\widetilde{#1}}
\def\ucoeverle#1{\tilde{#1}}
\newcommand{\homcovlower}[1]{\mathchoice%
  {\mbox{\lower.25\keyheightlength\hbox{$\textstyle #1$}}}%
  {\mbox{\lower.2\keyheightlength\hbox{$\textstyle #1$}}}%
  {\mbox{\lower.09\keyheightlength\hbox{$\scriptstyle #1$}}}%
  {\mbox{\lower.05\keyheightlength\hbox{$\scriptscriptstyle #1$}}}%
}
\newcommand{\homcovLower}[1]{\mathchoice%
  {\mbox{\lower.5\keyheightlength\hbox{$\textstyle #1$}}}%
  {\mbox{\lower.4\keyheightlength\hbox{$\textstyle #1$}}}%
  {\mbox{\lower.18\keyheightlength\hbox{$\scriptstyle #1$}}}%
  {\mbox{\lower.1\keyheightlength\hbox{$\scriptscriptstyle #1$}}}%
}
\newcommand\homZcov[1]{{\widehat{#1}^{\,\homcovlower\ZZ}}}
\newcommand\homRcov[1]{{\widehat{#1}^{\,\homcovlower\RR}}}
\newcommand\homZcovtorus[1]{{\widehat{T^{#1}}^{\,\homcovLower\ZZ}}}
\newcommand\homRcovtorus[1]{{\widehat{T^{#1}}^{\,\homcovLower\RR}}}
\newcommand\homcov[1]{\widehat{#1}}
\newcommand\rest[2][0]{\steLLe{[#1,#2]}}
\newcommand\hsi[2][i]{h\bigl(\sigma_{#1}\rest{#2}\bigr)}
\newcommand\hsirel[2][0]{h\bigl(\sigma_i\rest[#1]{#2}\bigr)}
\DeclareMathOperator{\Eop}{E}
\DeclareMathOperator{\VEop}{VE}
\newcommand\Emin{\Eop_{\min}}
\newcommand\VEmin{\VEop_{\min}}
\DeclareMathOperator{\CS}{CS}
\DeclareMathOperator{\mix}{mix}
\newcommand\Apm{A^{+\cup -}}
\newcommand\Atot{A_{\mix}}
\newcommand\freehom[2][flex]{\csname #1l\endcsname[#2\csname#1r\endcsname]_{\mathrm{free}}}
\newcommand\freehomn[1]{[#1]_{\mathrm{free}}}
\DeclareMathOperator{\forget}{forget}
\newcommand\gHed{g_{\mathrm{Hed}}}
\newcommand\dHed{d_{\mathrm{Hed}}}
\newcommand\inupto[2][1]{\in\left\{#1,\ldots,#2\right\}}
\begin{document}
\title[A lower bound for minimal geodesics]%
      {A quadratic lower bound for the number\\ of minimal geodesics}

\begin{abstract}
  A minimal geodesic on a Riemannian manifold is a geodesic defined
  on~$\RR$ that lifts to a globally distance minimizing curve on the
  universal covering. Bangert proved that there is a lower bound for
  the number of geometrically distinct minimal geodesics of closed
  Riemannian manifolds that is linear in the first Betti number, using
  the stable norm unit ball on the first homology. We refine this
  method to obtain a quadratic lower bound.  For example, on the
  $3$-dimensional torus with an arbitrary Riemannian metric we improve
  the lower bound from $3$ to~$15$.  We distinguish between different
  types of minimal geodesics and we show that our lower estimate for
  the number of homologically non-homoclinic minimal geodesics is
  sharp.
\end{abstract}

\date{\today.\ \copyright{\ B.~Ammann, C.~L\"oh, 2023}.  This work was
  supported by the CRC~1085 \emph{Higher Invariants} (Universit\"at
  Regensburg), funded by the DFG.  B.\thinspace A.\ was also supported
  by the SPP~2026 (Geometry at infinity), funded by the DFG}

\maketitle

\section{Introduction}
\subsection{Guiding question of the article}

We study the following classical geometric counting problem
(see Section~\ref{sec:mingeod} for the precise definitions):

\begin{question}\label{question.count}
  How many geometrically distinct minimal geodesics does a
  closed connected Riemannian manifold need to have?
\end{question}

It is well-known that a minimal geodesic exists if and only if
$\pi_1(M)$ is infinite (Proposition~\ref{prop.infinite.pi}).
Bangert proved that a closed connected Riemannian manifold
has at least~$\dim_\R H_1(M;\R)$ minimal geodesics~\cite{bangert:90}. 
For the circle, there clearly is exactly one. For $2$-dimensional
tori~\cite{hedlund:1932} (see Example~\ref{exa.torus.asymptotes}) and closed
connected surfaces of genus at least $2$~\cite{morse:1924,klingenberg:1971}
(see Example~\ref{ex.higher-genus}),
there are always infinitely many. Similarly, if $M$ is a closed
connected Riemannian manifold with non-elementary Gromov-hyperbolic
fundamental group, then on the universal covering~$\ucov M$ each two
different points of the boundary of~$\ucov M$ at infinity can be
joined by a minimal geodesic, and thus there are uncountably many
minimal geodesics
on~$M$~\cite[Section~7.5]{gromov_hyperbolic_groups:87}.

In contrast, in dimension at least~$3$, examples 
with very few minimal geodesics have been
constructed by Hedlund~\cite[Section~9]{hedlund:1932}
and others~\cite[Section~5]{bangert:90}\cite{ammann:diploma}.
Hedlund's work is sometimes misunderstood and
it is erroneously claimed that he constructed metrics with only three 
geometrically distinct minimal geodesics on the $3$-dimensional torus.
Although it is easy to see that Hedlund's
construction does not have this property, it was not clear  
whether one might find other Riemannian metrics on the $3$-torus that have only three minimal geodesics.

We answer this question negatively: We show the existence of at
least~$15$ geometrically distinct minimal geodesics on the $3$-dimensional torus with an arbitrary Riemannian metric, see Corollary~\ref{Cor.n-torus}.

\subsection{A quadratic lower bound}

To state our main result, we introduce the following constants:

\begin{definition}\label{def:Emin}
  Let $b \in \N$ and let $\CS(b)$ be the set of all
  centrally symmetric polytopes in~$\R^b$ with non-empty interior.
  If $P \in \CS(b)$, we write~$V(P)$ and $E(P)$ for the number
  of vertices and edges of~$P$, respectively.
  We set
  \begin{align*}
    \Emin(b)
    & := \min_{P \in \CS(b)} E(P),
    \\
    \VEmin(b)
    & := \min_{P \in \CS(b)} \left(\frac{V(P)}2 + E(P)\right)\,.
  \end{align*}
\end{definition}
Note that the minimal number of vertices is $2b$, and thus $\VEmin(b)\geq  b+ \Emin(b)$. We do not know whether there are $b\in \N$ with $\VEmin(b)> b+ \Emin(b)$, see  
Remark~\ref{rem:csimproved}~\ref{rem:csimproved.i} below. 

\begin{theorem}\label{thm:main}
 Let $M$ be a closed connected Riemannian manifold and let
 $b:= \dim_\R H_1(M;\R)$. Then $M$ admits at least $\VEmin(b)$
 geometrically distinct minimal geodesics.
 Moreover, 
 \[ \VEmin(b) \geq
    \begin{cases}
       2 \cdot b^2 - b& \text{if $b \leq 3$}
       \\
       b^2 + 2\cdot b +1 & \text{if $b \geq 4$}
       .
    \end{cases}  
 \]
\end{theorem}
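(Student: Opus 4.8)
The plan is to split the argument into a geometric part and a combinatorial part. The geometric part is essentially Bangert's method, refined: associate to the metric $g$ its stable norm unit ball $B \subset \HR \cong \RR^b$, a centrally symmetric convex body. The key input from Bangert's work (to be recalled in the body of the paper) is that every vertex $v$ of $B$, more precisely every extreme point corresponding to a rational direction realized by a closed minimal geodesic, and every edge (one-dimensional face) of $B$ contributes geometrically distinct minimal geodesics; an edge between antipodal pairs $\pm v, \pm w$ contributes a minimal geodesic asymptotic in its two ends to the closed minimal geodesics in the classes $v$ and $w$. Counting carefully — each vertex of $B$ contributes (on the order of) one closed minimal geodesic but the antipodal identification $v \leftrightarrow -v$ means one should count $V(P)/2$, while each edge contributes a genuinely new heteroclinic-type minimal geodesic — gives the lower bound $V(B)/2 + E(B)$. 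Since $B$ may be approximated by polytopes $P \in \CS(b)$ without increasing the count, and the count is monotone under such approximation, one gets at least $\min_{P \in \CS(b)}(V(P)/2 + E(P)) = \VEmin(b)$ minimal geodesics. This establishes the first sentence of the theorem modulo the geometric lemmas stated earlier.

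The second part is the purely combinatorial estimate of $\VEmin(b)$ from below. Here I would argue as follows. Let $P \in \CS(b)$ have $V = V(P)$ vertices and $E = E(P)$ edges. Since $P$ is $b$-dimensional, every vertex lies on at least $b$ edges, so by handshaking $2E \ge bV$, i.e. $E \ge bV/2$, and also $V \ge 2b$ (a centrally symmetric $b$-polytope has at least $2b$ vertices, realized by the cross-polytope). Now consider the function $f(V) = V/2 + bV/2 = (b+1)V/2$ as a lower bound for $V/2 + E$; it is increasing in $V$, so its minimum over admissible $V$ is at $V = 2b$, giving $\VEmin(b) \ge (b+1)\cdot b = b^2 + b$. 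This is not yet strong enough, so the refinement: the bound $E \ge bV/2$ is tight only for simple polytopes, and a centrally symmetric simple $b$-polytope with exactly $2b$ vertices is forced to be (affinely) the cross-polytope — but the cross-polytope for $b \ge ?$ is \emph{not} simple (each vertex meets $2^{b-1}$ facets but still only $b$ edges; it \emph{is} simple in the edge sense). I would instead use a sharper vertex–edge inequality for centrally symmetric polytopes: either $V = 2b$ and then $P$ is the cross-polytope with $E = 2b(b-1) = 2b^2 - 2b$, giving $V/2 + E = 2b^2 - b$; or $V \ge 2b+2$, and then combined with $E \ge bV/2 \ge b(b+1)$ one gets $V/2 + E \ge (b+1) + b(b+1) = b^2 + 2b + 1$. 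Taking the minimum of the two cases yields exactly the stated piecewise bound, once one checks that $2b^2 - b \le b^2 + 2b + 1 \iff b^2 - 3b - 1 \le 0 \iff b \le 3$.

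The main obstacle I anticipate is the case-dividing step for centrally symmetric polytopes: one must show that $V(P) = 2b$ forces $P$ to be combinatorially (indeed linearly) the cross-polytope, so that its edge count $2b^2-2b$ is exact, and that there is no centrally symmetric $b$-polytope with $2b < V(P) < 2b+2$ (which is automatic since $V(P)$ is even under central symmetry, the vertices coming in antipodal pairs). The first claim is a known rigidity fact — a centrally symmetric polytope with the minimum number $2b$ of vertices must be a linear image of the standard cross-polytope $\conv\{\pm e_1,\dots,\pm e_b\}$ — and I would either cite it or give the short argument: the $b$ pairs $\pm v_i$ must be linearly independent (else $P$ is lower-dimensional or has more vertices), and then $P = \conv\{\pm v_1,\dots,\pm v_b\}$ is exactly the image of the cross-polytope under the map $e_i \mapsto v_i$, whose edge set is $\{v_iv_j, v_i(-v_j) : i \ne j\}$ together with no edge between $v_i$ and $-v_i$, totalling $2\binom{b}{2}\cdot 2 = 2b(b-1)$ edges. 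Everything else is routine arithmetic; I would collect the inequalities $V \ge 2b$, $2E \ge bV$, and the cross-polytope exact count into the two displayed cases and verify the crossover at $b = 3$ (where both formulas give $15$, consistent with the $3$-torus corollary advertised in the introduction).
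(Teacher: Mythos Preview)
Your combinatorial half is correct and in fact cleaner than the paper's. The paper splits into the cases ``$P$ simplicial'' versus ``$P$ not simplicial'' and for the simplicial case invokes a lower bound theorem for centrally symmetric simplicial polytopes. Your split ``$V=2b$'' versus ``$V\ge 2b+2$'' is more elementary: the first case forces $P$ to be a linear cross-polytope (your argument for this is fine), and the second case uses only the handshake bound $2E\ge bV$. Both routes yield the same piecewise formula, with the crossover at $b=3$ checked correctly.

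The geometric half, however, has a real gap. The edge contribution is \emph{not} in Bangert; Bangert's theorem produces one minimal geodesic per antipodal pair of \emph{exposed points} (vertices), nothing more. Constructing minimal geodesics whose asymptotes are controlled by an exposed \emph{edge} is exactly the new content of this paper (Section~\ref{sec:newmingeod}), and it is not a short argument: one approximates an interior point $z\in(x,y)$ by integral classes, takes length-minimizing closed geodesics in those classes, reparametrizes using an auxiliary $1$-form~$\eta$ that separates the two halves $[x,z]$ and $[z,y]$ of the edge, and passes to a limit. You cannot black-box this.

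Moreover, your count ``one heteroclinic geodesic per edge'' does not obviously give $E(B)$. A geodesic $\gamma$ with $A^-(\gamma)\subset[x,z]$ and $A^+(\gamma)\subset[z,y]$, when reversed, has asymptotes in $[-y,-x]$; so the edge $[x,y]$ and its antipode $[-x,-y]$ produce geometrically equivalent geodesics, and a naive one-per-edge count collapses to $E(B)/2$. The paper's Corollary~\ref{cor:twomingeod} shows that each edge in fact yields \emph{two} geometrically distinct minimal geodesics (roughly, one with terminal asymptote toward $y$ and one toward $x$), and only then do the $E(B)/2$ antipodal edge pairs contribute $E(B)$ geodesics in total. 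This ``two, not one'' step is the crux, and it requires the homoclinic/non-homoclinic dichotomy in the proof of Corollary~\ref{cor:twomingeod}.

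Finally, the approximation-by-polytopes manoeuvre is both unnecessary and in the wrong direction. If $B$ is not a polytope, it has infinitely many exposed points and Bangert already gives infinitely many minimal geodesics; if $B$ \emph{is} a polytope, you work with $B$ itself, and $V(B)/2+E(B)\ge \VEmin(b)$ by definition. No limiting argument is needed, and there is no monotonicity of the geodesic count under polytope approximation to appeal to.
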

We briefly outline the argument (which is presented in full detail in
Section~\ref{sec:newmingeod}, Section~\ref{sec:cs}, and
Subsection~\ref{subsec:proofthmmain}). See also Subsections~\ref{subsec:def.convex.bodies} and~\ref{subsec:dual.norm} for precise definitions of exposed edges and exposed points. 
The key tools in the proof are the (closed)
unit ball~$B \subset H_1(M;\R)$ of the stable norm on~$H_1(M;\R)$ and
the construction of minimizing geodesics on the universal
covering~$\ucov M$ of~$M$ from sequences of finite-length minimizing
geodesics.

Bangert proved that every antipodal pair of exposed points
of~$B$ leads to a minimal geodesic whose asymptotic direction
is controlled by the underlying exposed point~\cite[Theorem~4.4]{bangert:90}. 
If~$B$ has infinitely many exposed points, then this already
shows that~$M$ has infinitely many geometrically distinct
minimal geodesics.

If~$B$ has only finitely many exposed points, then~$B$ is a compact
convex centrally symmetric polytope in~$H_1(M;\R)$, and the exposed
points are the vertices of this polytope.  
We refine Bangert's method to construct additional minimal geodesics, whose asymptotic behaviour
is controlled by the exposed edges of~$B$. 
Moreover, we show that these are indeed geometrically distinct. 
To complete the proof, we give a lower bound for the number of vertices and edges in centrally symmetric
compact convex polytopes, which gives the claimed estimate for~$\VEmin$.

\begin{remark}\label{rem:csimproved} ${}$
  \begin{enumalpha}
    \item\label{rem:csimproved.i} 
          We have $\VEmin(b) \geq b + \Emin(b)$ (Corollary~\ref{cor:VEmin}).
          For~$\Emin$, we conjecture the lower bound~$\Emin(b) \geq 2 \cdot b\cdot (b-1)$ for all $b$. 
          The conjecture is equivalent to saying that the minimum in $\Emin$ is attained by the cross-polytope.
          The \emph{cross-polytope} is, by definition, the unit norm ball of the $\ell^1$-norm. 
          The conjecture is equivalent to $\VEmin(b) = b + \Emin(b)$ (Corollary~\ref{cor:VEmin}).
    \item\label{rem:csimproved.ii}  In a forthcoming work~\cite{ammann.loeh_edge}, we will show that $\Emin(b) \geq b\cdot (b+2)$ for even~$b \geq 4$ and that this is sharp for~$b =4$. 
           For odd $b\geq 5$ will show that $\Emin(b) \geq b\cdot (b+2)-1$. 
  \end{enumalpha}
\end{remark}

\enlargethispage{2\baselineskip}

\begin{corollary}\label{Cor.n-torus}
  Let $n \in \N_{\geq 3}$ and let $g$ be a Riemannian metric on the
  $n$-torus~$T^n$.  Then $(T^n,g)$ admits at least
  \[
  \begin{cases}
    15
    & \text{if $n = 3$}
  \\
  (n+1)^2
  & \text{if $n \geq 4$}
  \end{cases}
  \]
  geometrically distinct minimal geodesics.
\end{corollary}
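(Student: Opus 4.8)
The plan is to deduce the statement directly from Theorem~\ref{thm:main} by specializing to~$M = T^n$. First I would observe that the $n$-torus~$T^n$ equipped with any Riemannian metric~$g$ is a closed connected Riemannian manifold, so that Theorem~\ref{thm:main} applies. Next I would record the (standard) computation of its first Betti number: since $H_1(T^n;\ZZ) \cong \ZZ^n$, we have
\[
 b := \dim_\RR H_1(T^n;\RR) = n .
\]
Therefore Theorem~\ref{thm:main} guarantees that $(T^n,g)$ carries at least $\VEmin(n)$ geometrically distinct minimal geodesics.

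It then remains to evaluate the lower bound for~$\VEmin(n)$ provided by Theorem~\ref{thm:main} in the two relevant ranges. For $n = 3$ we are in the case $b \leq 3$, so $\VEmin(3) \geq 2\cdot 3^2 - 3 = 15$. For $n \geq 4$ we are in the case $b \geq 4$, so $\VEmin(n) \geq n^2 + 2\cdot n + 1 = (n+1)^2$. Combining these with the previous paragraph yields exactly the asserted counts, which finishes the argument.

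There is no real obstacle here: the corollary is a direct specialization of the main theorem, and the only inputs are the elementary homology computation $\dim_\RR H_1(T^n;\RR) = n$ and arithmetic. (One could additionally remark that, in view of Remark~\ref{rem:csimproved} and the forthcoming improvements on~$\Emin$, the bound $(n+1)^2$ for $n \geq 4$ is expected to be far from optimal, whereas the sharpness discussion for homologically non-homoclinic minimal geodesics is carried out separately; but none of this is needed for the statement as given.)
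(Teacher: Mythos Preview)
Your proposal is correct and follows exactly the same approach as the paper: apply Theorem~\ref{thm:main} to $T^n$, use $\dim_\RR H_1(T^n;\RR)=n$, and read off the explicit lower bounds for $\VEmin(n)$ in the two ranges.
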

\begin{proof}
  We apply Theorem~\ref{thm:main} and use that $\dim_\R H_1(T^n;\R) = n$. 
\end{proof}

Thus, each Riemannian $3$-torus admits at least~$15$ geometrically
distinct minimal geodesics.  In particular, this applies to the
Hedlund
examples~\cite[Section~9]{hedlund:1932}\cite[Section~5]{bangert:90}
on~$T^3$ and shows that the corresponding exercise in the book by
D.~Burago, Y.~Burago, and
S.~Ivanov~\cite[Exercise~8.5.16]{burago.burago.ivanov:01} is not
solvable.  The latter is already evident from Bangert's study of
Hedlund metrics~\cite[Section~5]{bangert:90}. Our result implies that
there is also no other Riemannian metric on the $3$-torus that admits
only three geometrically distinct minimal geodesics.

Similarly, in combination with
Remark~\ref{rem:csimproved}~\ref{rem:csimproved.ii}, we get for the
$4$-dimensional torus that besides the four minimal geodescis detected
by Bangert~\cite[Theorem~4.4]{bangert:90} there are at least $24$
minimal geodesics of of another type (namely ``homologically
non-homoclinic and $\R$\=/ho\-mo\-lo\-gi\-cal\-ly minimal''); in
particular, the latter minimal geodesics are geometrically distinct
from the ones detected by Bangert. We will also discuss that our bound
is sharp as a lower bound on the number of homologically
non-homoclinic and $\R$\=/ho\-mo\-lo\-gi\-cal\-ly minimal geodesics.

\subsection{A refinement}

Theorem~\ref{thm:main} can be strengthened, by distinguishing between 
different types of minimal geodesics.
This leads to the following refinement, stated in
Theorem~\ref{thm:main.2}.  The terms ``homologically homoclinic'',
``homologically heteroclinic'' and ``homologically non-homoclinic'' will
be defined in Definition~\ref{def.homolo.homocli};
``$\R$\=/ho\-mo\-lo\-gi\-cal\-ly minimal'' geodesics are introduced in
Section~\ref{subsec.homologically-minimal}.  The minimal geodesics
obtained by Bangert are homologically homoclinic and
$\R$\=/ho\-mo\-lo\-gi\-cal\-ly minimal.
 
\begin{theorem}[refinement of Theorem~\ref{thm:main}]\label{thm:main.2}
  Let $M$ be a closed connected Riemannian manifold and let $b :=
  \dim_\R H_1(M;\R)$.  Then, at least one of the following statements
  holds:
  \begin{enumalpha}
  \item\label{thm:main.alt.1}
    There are infinitely many geometrically
    distinct minimal geodesics that are homologically homoclinic and
    $\R$\=/ho\-mo\-lo\-gi\-cal\-ly minimal.
  \item\label{thm:main.alt.2} There are at least~$\Emin(b)$
    geometrically distinct minimal geodesics that are homologically
    non-homoclinic and $\R$\=/ho\-mo\-lo\-gi\-cal\-ly minimal, and
    there are at least~$b$ geometrically distinct geodesics that are
    homologically homoclinic and $\R$\=/ho\-mo\-lo\-gi\-cal\-ly
    minimal.
  \end{enumalpha}
  If the stable norm unit ball is not a polytope, then 
  \ititemref{thm:main.alt.1} is satisfied. If the stable norm
  unit ball is a polytope~$B$, then one can replace~$\Emin(b)$
  by~$E(B)$ and the number of homologically homoclinic minimal geodesics
  is at least~$1/2\cdot V(B)$.
\end{theorem}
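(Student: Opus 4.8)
The plan is to decouple the two alternatives via the geometry of the stable norm unit ball $B \subset H_1(M;\R)$, and to transfer everything down to $B$ using the two main ingredients: Bangert's theorem that exposed points of $B$ yield minimal geodesics, and the new construction (to be carried out in Section~\ref{sec:newmingeod}) that exposed edges of $B$ yield additional minimal geodesics of a different homological type. First I would recall that every compact convex centrally symmetric body in $\R^b$ is either a polytope or has infinitely many exposed points; this is the case distinction driving the theorem. In the non-polytope case, $B$ has infinitely many exposed points, hence infinitely many antipodal pairs of exposed points, and Bangert's result (\cite[Theorem~4.4]{bangert:90}) produces infinitely many geometrically distinct minimal geodesics, each with a distinct controlled asymptotic direction; since these are the homologically homoclinic, $\R$-homologically minimal geodesics (as noted in the text preceding the statement), alternative~\ititemref{thm:main.alt.1} holds. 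So from here on I assume $B$ is a polytope.

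In the polytope case, write $V(B)$ and $E(B)$ for the numbers of vertices and edges. The vertices of $B$ are exactly its exposed points, and they come in $V(B)/2$ antipodal pairs; applying Bangert's theorem to each pair, and using that the asymptotic directions (controlled by the vertices) are pairwise distinct, gives at least $V(B)/2$ geometrically distinct homologically homoclinic, $\R$-homologically minimal geodesics. Since a centrally symmetric polytope with nonempty interior in $\R^b$ has at least $2b$ vertices, this already yields the ``at least $b$'' clause of~\ititemref{thm:main.alt.2}. For the edge count: each exposed edge of $B$ feeds the refined construction to produce a minimal geodesic that is homologically non-homoclinic and $\R$-homologically minimal, and one must check that distinct antipodal pairs of exposed edges give geometrically distinct geodesics — this is where the asymptotic control by the edge (rather than by a vertex) is used to separate them, and also where one argues that these geodesics are genuinely of the non-homoclinic type, hence automatically distinct from the vertex-geodesics. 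Counting antipodal pairs of (exposed) edges of the polytope $B$ gives $E(B)$ such geodesics (the factor of $2$ from antipodality is already absorbed into the definition of $E(P)$ as the number of edges, since each edge and its antipode are counted separately and each antipodal pair yields one geodesic up to the reparametrization $t \mapsto -t$ — I would double check the bookkeeping here against the conventions fixed in Section~\ref{sec:newmingeod}). Then $E(B) \geq \Emin(b)$ by definition of $\Emin$, giving the stated weaker form, while the sharper form with $E(B)$ and $V(B)/2$ is exactly what the construction produces before passing to the infimum.

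Finally, Theorem~\ref{thm:main} follows by adding: in case~\ititemref{thm:main.alt.1} there are infinitely many minimal geodesics, certainly more than $\VEmin(b)$; in case~\ititemref{thm:main.alt.2} the two families are geometrically distinct (homoclinic vs.\ non-homoclinic), so we get at least $V(B)/2 + E(B) \geq \VEmin(b)$ by definition of $\VEmin$. The numerical lower bounds $2b^2 - b$ for $b \leq 3$ and $b^2 + 2b + 1$ for $b \geq 4$ are then a purely combinatorial statement about centrally symmetric polytopes, proved separately in Section~\ref{sec:cs}.

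The main obstacle I expect is not the case distinction or the counting, but the verification that the edge-construction actually yields minimal geodesics and that they are homologically non-homoclinic and pairwise geometrically distinct: Bangert's argument for exposed points relies on the exposed point being an extreme point in a strong sense, and an exposed edge is a $1$-dimensional face, so the passage to the limit of finite-length minimizing geodesics must be controlled in a way that pins the asymptotic homology direction to the (affine hull of the) edge while still producing a geodesic that does \emph{not} return to a single homology ray — this is the genuinely new geometric input, and it is where the bulk of Section~\ref{sec:newmingeod} will be spent. Separating the edge-geodesics belonging to distinct antipodal pairs of edges, and separating them from the vertex-geodesics, will rely on a careful analysis of their asymptotic behaviour (the homological rotation vectors / limit sets), which I would set up as a classification of minimal geodesics by the face of $B$ that controls them.
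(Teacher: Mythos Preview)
Your overall architecture---split on whether $B$ is a polytope, invoke Bangert for exposed points, use the edge construction for edges---matches the paper. But there is a genuine gap in the polytope case, and your bookkeeping for the edge count is off.

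\textbf{The gap.} You write that ``each exposed edge of $B$ feeds the refined construction to produce a minimal geodesic that is homologically non-homoclinic''. This is not guaranteed by the basic edge construction (Corollary~\ref{cor:twomingeod}). What that construction yields, for an exposed edge $[x,y]$, is two geometrically distinct $\R$-homologically minimal geodesics~$\gamma$ with $A^\pm(\gamma)\subset[x,y]$ and $\Atot(\gamma)\cap(x,y)\neq\emptyset$. But such a $\gamma$ may perfectly well be homologically homoclinic, with $A^+(\gamma)=A^-(\gamma)=\{z\}$ for some $z\in(x,y)$. Example~\ref{example:Hedlung-cont} exhibits exactly this: a polytope-type edge for which \emph{all} minimal geodesics with asymptotes in the edge are homologically homoclinic. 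So the polytope case does not automatically land you in alternative~\ititemref{thm:main.alt.2}; it can land you in~\ititemref{thm:main.alt.1} instead. The paper handles this via the finer trichotomy of Proposition~\ref{prop:twomingeod.stronger}: for each edge, either (A) there are uncountably many homoclinic geodesics along that edge (so \ititemref{thm:main.alt.1} holds), or (B) infinitely many non-homoclinic ones (so \ititemref{thm:main.alt.2} holds), or (C) at least two non-homoclinic ones. Only if (C) holds for \emph{every} edge does the straightforward edge count go through. Your outline collapses this trichotomy to ``the edge construction gives non-homoclinic geodesics'', which is precisely the step that can fail.

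\textbf{The bookkeeping.} You produce one geodesic per edge and then claim $E(B)$ geodesics from $E(B)/2$ antipodal pairs; that does not add up. The correct count is: Corollary~\ref{cor:twomingeod} (or case~(C) of Proposition~\ref{prop:twomingeod.stronger}) gives \emph{two} geometrically distinct geodesics per edge; an edge and its antipode give the same pair up to orientation reversal; hence $E(B)/2$ antipodal pairs times $2$ geodesics each yields $E(B)$ geodesics. Your ``double check'' flag was warranted.
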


We prove this in a stronger form in Proposition~\ref{prop:twomingeod.stronger}
and Section~\ref{subsec:proofthmmain.2}.

\subsection{Discussion of sharpness}\label{subsec:introsharp}

Let us discuss the sharpness of the previous results. We assume in
this section, that the norm ball for the stable norm is a polytope, as
otherwise Bangert's results already provide an infinite number of
geometrically distinct minimal geodesics. Moreover, we assume that we
are not in the case~\ititemref{cor:twomingeod.stat.i} of
Proposition~\ref{prop:twomingeod.stronger} in which there are uncountably
many geometrically distinct homologically homoclinic minimal
geodesics.

Our construction of minimal geodesics makes substantial use of the
fact that we are working with vertices and edges of the stable norm
unit ball. We do not expect that higher-dimensional faces in the boundary
lead to further minimal geodesics.

It is natural to ask whether the lower bounds in
Theorems~\ref{thm:main} and~\ref{thm:main.2} are sharp, i.e., whether
on a given closed connected manifold~$M$ there exist Riemannian
metrics~$g$ whose number of minimal geodesics attains this lower bound.
\begin{itemize}
\item As discussed above, the bound~$\VEmin$ is far from being
  optimal if one considers minimal geodesics on manifolds with
  non-elementary Gromov-hyperbolic fundamental group.
\end{itemize}
The sharpness is better controlled if one only considers
$\R$\=/ho\-mo\-lo\-gi\-cal\-ly minimal geodesics and special
conditions (see Section~\ref{subsec:Hedlund.examples} for 
details on the Hedlund examples):
\begin{itemize}
\item In the special case $\dim M\geq 3$ and~$b=2$, we explain in
  Example~\ref{exam.hedlund.b2} the construction of Hedlund metrics for which the lower bound in
  Theorem~\ref{thm:main.2}~\itemref{thm:main.alt.2} on homologically
  non-homoclinic and $\R$\=/ho\-mo\-lo\-gi\-cal\-ly minimal geodesics is attained. 
  In this case, $\Emin(2)=4$ and the Hedlund metrics defined
  with respect to two generators of~$\HZR$ have precisely four 
  non-homoclinic $\R$\=/ho\-mo\-lo\-gi\-cal\-ly minimal geodesics.
\item Similar constructions are possible in the case~$\dim M\geq 3$
  and~$b=2$, for Hedlund metrics with respect
  to~$\gamma_1,\ldots,\gamma_k\in\HZR$ with~$k\geq 3$ for~$\HZR$. We
  assume that $\gamma_1,\dots, \gamma_k$ span $H_1(M,R)$ (as a vector
  space). Then, the convex hull~$B$ of~$\{\pm\gamma_j\mid j \in
  \{1,\dots,k\}\}$ is a convex centrally symmetric $2\ell$-gon, for
  some~$\ell\in\{2,\ldots,k\}$. An associated Hedlund metric will have
  $B$ as the stable norm unit~ball. By removing some of the~$\gamma_i$ we
  can achieve~$k=\ell$. Then the number of homologically
  non-homoclinic $\R$\=/ho\-mo\-lo\-gi\-cal\-ly minimal geodesics is
  precisely $E(B)=2k$. Thus we obtain a sharp estimate for
  homologically non-homoclinic $\R$\=/ho\-mo\-lo\-gi\-cal\-ly minimal
  geodesics in this situation as well; see
  Example~\ref{exam.hedlund.b2.extended} for details.
\item If we strengthen the hypotheses of the two previous items to
  $\pi_1(M)\cong \Z^2$, then a geodesic is minimal if and only if it
  is $\R$\=/ho\-mo\-lo\-gi\-cal\-ly minimal. Thus, in this case, all
  statements above also hold for ``homlogically non-homoclinic
  minimal geodesics'' instead of
  ``homologically non-homoclinic $\R$\=/ho\-mo\-lo\-gi\-cal\-ly minimal geodesics''.
\item Sharpness is problematic for~$b\geq 3$ and in the homoclinic
  case as the Hedlund examples exhibit unwanted ``side shift
  effects'', e.g., minimal geodesics of the type of 
  Example~\ref{exam.hedlund.b2} \itemref{exam.hedlund.b2.1}
  and~\itemref{exam.hedlund.b2.2}, see also Subsection~\ref{subsec:compar_BR}. 
  This affects the type of geodesics detected by Bangert's method.
\item Even in the case of the $3$-dimensional torus~$T^3$ the minimal
  number of minimal geodesics is unknown. The known types of 
  Hedlund examples have infinitely many minimal geodesics due to
  ``side shift effects'', but we expect that more refined construction
  methods can reduce them to $15$~homoclinic and $12$~heteroclinic
  minimal geodesics. In contrast to this, Bangert's and our lower
  bounds show the existence of three homologically
  homoclinic and $12$~homologically non-homoclinic
  minimal geodesics.  As the heteroclinic geodesics in such Hedlund
  examples are homologically non-homoclinic, the lower bound for the
  number of homologically non-homoclinic minimal geodesics is thus optimal. See again Subsection~\ref{subsec:compar_BR} for related statements.
\end{itemize}

A related problem is to study the consequences of equality in our
estimates.  We recall that a finite number of
minimal geodesics is only possible if the stable norm unit ball is a
polytope (Remark~\ref{rem:finpolytope}).

\begin{theorem}[Section~\ref{subsec:proofEminequal}]\label{thm.equality.gen}
  If $M$ is a closed connected Riemannian manifold with~$b = \dim_\R\  H_1(M;\R)$ and precisely~$b + \Emin(b)$ minimal geodesics, then the
  stable norm unit ball of~$M$ is a cross-polytope and $\Emin(b) = 2
  \cdot b^2 - 2 \cdot b$.  
  Furthermore, then all minimal geodesics are $\R$\=/ho\-mo\-lo\-gi\-cal\-ly minimal and homologically exposed;
  among them $b$ are homologically homoclinic and $\Emin(b)$ are
  homologically heteroclinic.
\end{theorem}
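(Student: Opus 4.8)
The plan is to run a tight ``sandwich'' argument. Since $M$ has only finitely many geometrically distinct minimal geodesics, the stable norm unit ball $B\subset H_1(M;\R)$ is a polytope by Remark~\ref{rem:finpolytope}, and after a linear identification $H_1(M;\R)\cong\R^b$ we have $B\in\CS(b)$. First I would invoke Theorem~\ref{thm:main.2}: alternative~\itemref{thm:main.alt.1} cannot occur (it yields infinitely many minimal geodesics), so alternative~\itemref{thm:main.alt.2} holds, and since $B$ is a polytope the refined form recorded in Proposition~\ref{prop:twomingeod.stronger} provides at least $E(B)$ homologically non-homoclinic and at least $\tfrac12 V(B)$ homologically homoclinic minimal geodesics, all of them $\R$-homologically minimal. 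As no geodesic is simultaneously homoclinic and non-homoclinic, these two families are disjoint, so
\[
  b+\Emin(b)
  \;=\;\#\bigl\{\text{minimal geodesics of }M\bigr\}
  \;\geq\; E(B)+\tfrac12 V(B)
  \;\geq\; \Emin(b)+b ,
\]
where the last step uses $E(B)\geq\Emin(b)$ (Definition~\ref{def:Emin}, since $B\in\CS(b)$) and the elementary fact that a centrally symmetric polytope with non-empty interior in $\R^b$ has at least $2b$ vertices. Hence every inequality is an equality: $V(B)=2b$, $E(B)=\Emin(b)$, and the set of \emph{all} minimal geodesics of $M$ is the disjoint union of the $\tfrac12 V(B)=b$ homologically homoclinic and the $E(B)=\Emin(b)$ homologically non-homoclinic minimal geodesics supplied by Proposition~\ref{prop:twomingeod.stronger}; in particular every minimal geodesic of $M$ is $\R$-homologically minimal.

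Next I would pin down $B$. A centrally symmetric polytope in $\R^b$ with non-empty interior and exactly $2b$ vertices has vertex set $\{\pm v_1,\dots,\pm v_b\}$, and $v_1,\dots,v_b$ are linearly independent (the vertices affinely span $\R^b$ and $0$ lies in their convex hull, so they linearly span $\R^b$); the linear isomorphism sending $v_i$ to the $i$-th coordinate vector therefore identifies $B$ with the standard cross-polytope. For a cross-polytope in $\R^b$ the segment joining two vertices is a face precisely when they are not antipodal, so $E(B)=\binom{2b}{2}-b=2b^2-2b$; combined with $E(B)=\Emin(b)$ this gives $\Emin(b)=2b^2-2b$ (in particular, under the equality hypothesis the conjecture of Remark~\ref{rem:csimproved}~\itemref{rem:csimproved.i} holds, and the minimum defining $\Emin$ is attained by the cross-polytope).

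It remains to describe the types of these $b+\Emin(b)$ geodesics. By the construction behind Proposition~\ref{prop:twomingeod.stronger} (Section~\ref{sec:newmingeod}), the $b$ homoclinic ones are attached to the $b$ antipodal vertex pairs of $B$ and the $\Emin(b)$ non-homoclinic ones to the $E(B)$ edges of $B$; thus the asymptotic behaviour of each geodesic is governed by a vertex or an edge of $B$, so all of them are homologically exposed. To upgrade ``homologically non-homoclinic'' to ``homologically heteroclinic'' I would unwind Definition~\ref{def.homolo.homocli}: a homologically non-homoclinic minimal geodesic is either homologically heteroclinic or has an asymptotic direction in the relative interior of an edge; since $B$ is a cross-polytope, every edge joins two \emph{distinct} non-antipodal vertices, and the exact count established above leaves no room for the degenerate alternative, so each of the $\Emin(b)$ geodesics has its two asymptotic directions at the two distinct endpoints of its edge and is heteroclinic. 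This gives the asserted partition into $b$ homologically homoclinic and $\Emin(b)$ homologically heteroclinic minimal geodesics. I expect this last step---ruling out a non-homoclinic geodesic whose asymptotic direction lies in the relative interior of an edge---to be the main obstacle, as it needs the careful bookkeeping (carried out in Section~\ref{subsec:proofEminequal}) of which exposed face of the cross-polytope controls which of the exactly $b+\Emin(b)$ minimal geodesics; the combinatorial core above, by contrast, is short once the polytope-refined Theorem~\ref{thm:main.2} is in hand.
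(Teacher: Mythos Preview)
Your overall strategy---use finiteness to force $B$ to be a polytope, sandwich $b+\Emin(b)$ between $V(B)/2+E(B)$ and $b+\Emin(b)$ to get $V(B)=2b$ and $E(B)=\Emin(b)$, then identify $B$ as a cross-polytope and read off $E(B)=2b^2-2b$---is exactly the paper's argument in Section~\ref{subsec:proofEminequal}.

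The only real issue is your final paragraph. First, your dichotomy ``non-homoclinic $=$ heteroclinic or has an asymptote in the relative interior of an edge'' is not what Definition~\ref{def.homolo.homocli} says: non-homoclinic means heteroclinic or \emph{diverging} (i.e., $A^+$ or $A^-$ contains more than one point), and a heteroclinic geodesic could perfectly well have one of its asymptotes in the interior of an edge. Second, ``the exact count leaves no room'' is not yet a proof. The clean way to finish (and what the paper does) is to feed the equality back into Proposition~\ref{prop:twomingeod.stronger}: since the total is exactly $V(B)/2+E(B)$, each antipodal edge pair contributes \emph{precisely} two minimal geodesics; alternatives~\ref{cor:twomingeod.stat.i} and~\ref{cor:twomingeod.stat.ii} give infinitely many, and~\ref{cor:twomingeod.stat.iii.b},~\ref{cor:twomingeod.stat.iii.c} give at least three, so every edge pair is in case~\ref{cor:twomingeod.stat.iii.a}. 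That case states directly that the two geodesics are homologically heteroclinic and homologically exposed---no further bookkeeping is needed. Finally, a small attribution slip: the $b$ homoclinic geodesics come from Bangert's vertex construction (as in Section~\ref{subsec:proofthmmain}), not from Proposition~\ref{prop:twomingeod.stronger}; their homological exposedness is immediate from $A^+(\gamma)=\{v\}=A^-(\gamma)$ with $v$ a vertex.
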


In particular, the first phrase of the theorem implies: If the conjectured
lower bound~$\Emin(b) \geq 2 \cdot b\cdot (b-1)$ does not hold, then
there would be at least~$b + \Emin(b)+1$ minimal geodesics.

\begin{theorem}[Section~\ref{subsec:proofVEminequal}]\label{thm.equality.polytope-fixed}
  Let $M$ be a closed connected Riemannian manifold with~$b = \dim_\R H_1(M;\R)$ whose stable norm unit ball~$B$ is a polytope.  
  We assume that $M$ has exactly $V(B)/2 + E(B)$ geometrically distinct minimal geodesics. 
  Then all of them are $\R$\=/ho\-mo\-lo\-gi\-cal\-ly
  minimal and homologically exposed; among them, $V(B)/2$ are
  homologically homoclinic and $E(B)$ are homologically heteroclinic.
\end{theorem}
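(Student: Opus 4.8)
The plan is to run the construction behind Theorem~\ref{thm:main} (equivalently Proposition~\ref{prop:twomingeod.stronger}) in the case where the stable norm unit ball~$B$ is a polytope, and then argue that equality in the count $V(B)/2 + E(B)$ forces every minimal geodesic produced to be of the ``cleanest'' possible type and forces the construction to produce no two geometrically equivalent geodesics. First I would recall that, by the refinement, each of the $V(B)/2$ antipodal vertex pairs of~$B$ contributes (via Bangert's Theorem~\cite[Theorem~4.4]{bangert:90}) at least one homologically homoclinic, $\R$\=/homologically minimal geodesic, and each of the $E(B)$ antipodal edge pairs contributes at least one homologically non-homoclinic, $\R$\=/homologically minimal geodesic; here I should be careful that the bookkeeping in Proposition~\ref{prop:twomingeod.stronger} already counts these with the multiplicity $1$ from each exposed edge and each antipodal vertex pair, so that the grand total is exactly $V(B)/2 + E(B)$. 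Since $M$ is assumed to have exactly this many geometrically distinct minimal geodesics, \emph{every} minimal geodesic of~$M$ arises from the construction, and each vertex pair and each edge pair contributes exactly one, with no coincidences among the resulting geometric equivalence classes.

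The key step is then to show that under this extremal hypothesis the qualitative properties are as claimed. For $\R$\=/homological minimality: the construction only ever outputs $\R$\=/homologically minimal geodesics (this is built into Proposition~\ref{prop:twomingeod.stronger} — the geodesics attached to vertices and to exposed edges are $\R$\=/homologically minimal by construction), so since all minimal geodesics of~$M$ are outputs, all are $\R$\=/homologically minimal. For ``homologically exposed'': the vertex geodesics have asymptotic direction the corresponding exposed vertex, hence are homologically exposed; for the edge geodesics, the point is that an exposed edge of a polytope is an exposed face, and the homological behaviour is controlled by that edge, so these are homologically exposed as well — I would cite the relevant part of Proposition~\ref{prop:twomingeod.stronger} for this. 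The homoclinic/heteroclinic split is the heart of it: a geodesic attached to an antipodal vertex pair is homologically homoclinic by Bangert's analysis (its positive and negative asymptotic directions are the two antipodal vertices, which is the homoclinic situation at a $0$\=/dimensional exposed face), giving the $V(B)/2$ homoclinic ones; a geodesic attached to an exposed edge is homologically \emph{heteroclinic} — its two asymptotic directions are the two distinct endpoints (vertices) of that edge — and moreover it must be heteroclinic rather than merely non-homoclinic, because if it were only non-homoclinic without being heteroclinic one could, as in the general construction, extract a further distinct minimal geodesic, contradicting the exact count. This last contradiction argument is the crucial use of the equality hypothesis and should mirror the argument used to prove Theorem~\ref{thm.equality.gen} in Section~\ref{subsec:proofEminequal}; I would structure it as: ``if some edge geodesic failed to be heteroclinic, then by [the relevant lemma from Section~\ref{sec:newmingeod}] its $\alpha$- and $\omega$-limit behaviour would supply an additional geometrically distinct minimal geodesic, so $M$ would have strictly more than $V(B)/2 + E(B)$ of them, a contradiction.''

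Finally I would check there is no overcounting going the other way: distinct antipodal vertex pairs give geodesics with distinct (unordered) pairs of asymptotic directions, hence geometrically distinct; distinct exposed edges likewise; and a vertex geodesic is homoclinic while an edge geodesic is heteroclinic, so the two families are disjoint — this disjointness is exactly why the sum $V(B)/2 + E(B)$ is attained rather than something smaller, and it is already part of Proposition~\ref{prop:twomingeod.stronger}, so here it only needs to be invoked. Assembling these observations: all minimal geodesics are accounted for, all are $\R$\=/homologically minimal and homologically exposed, the $V(B)/2$ coming from vertices are homologically homoclinic and the $E(B)$ coming from edges are homologically heteroclinic, which is the assertion.

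The main obstacle I expect is the implication ``non-homoclinic edge geodesic that is not heteroclinic $\Rightarrow$ an extra minimal geodesic''. Making this precise requires re-examining the limit-set structure of the geodesics produced in Section~\ref{sec:newmingeod}: one needs that the failure of heteroclinicity means some asymptotic direction lands in the relative interior of a higher face (or returns to a vertex already used), and then that the standard ``diagonal sequence'' argument harvests a genuinely new geodesic not equivalent to any previously constructed one. This is essentially the same technical core as in the proof of Theorem~\ref{thm.equality.gen}, so the hope is to quote or lightly adapt that argument rather than redo it; but the polytope here is arbitrary rather than a cross-polytope, so some care is needed to see that the extraction argument does not secretly use the extra symmetry of the cross-polytope.
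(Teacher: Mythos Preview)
Your overall strategy is the paper's: run the vertex/edge construction, use the equality hypothesis to pin each contribution to its minimum, and read off the types from Proposition~\ref{prop:twomingeod.stronger}. Two points need correcting, and your anticipated ``obstacle'' dissolves once you look at that proposition more closely.

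First, the bookkeeping: there are $E(B)/2$ antipodal \emph{pairs} of edges, not $E(B)$, and each pair contributes at least \emph{two} geometrically distinct minimal geodesics (Corollary~\ref{cor:twomingeod}); that is how one reaches the total~$E(B)$. Equality then forces each antipodal edge pair to contribute \emph{exactly} two. Second, your description of the vertex geodesics is off: Bangert's geodesic at an exposed vertex~$v$ satisfies $A^+(\gamma)=\{v\}=A^-(\gamma)$, a single vertex, not the antipodal pair~$\{v,-v\}$; that is what ``homologically homoclinic'' means here.

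The step you flag as the main obstacle---``non-heteroclinic edge geodesic $\Rightarrow$ an extra minimal geodesic''---requires no new extraction argument. Proposition~\ref{prop:twomingeod.stronger} already packages this as a case distinction: for each exposed edge one lands in case~\ref{cor:twomingeod.stat.i}, \ref{cor:twomingeod.stat.ii}, or~\ref{cor:twomingeod.stat.iii}, and within~\ref{cor:twomingeod.stat.iii} in subcase~\ref{cor:twomingeod.stat.iii.a}, \ref{cor:twomingeod.stat.iii.b}, or~\ref{cor:twomingeod.stat.iii.c}. Cases~\ref{cor:twomingeod.stat.i} and~\ref{cor:twomingeod.stat.ii} give infinitely many geodesics; subcases~\ref{cor:twomingeod.stat.iii.b} and~\ref{cor:twomingeod.stat.iii.c} give at least three or four per edge pair. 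Since equality allows only two per edge pair, every edge is forced into subcase~\ref{cor:twomingeod.stat.iii.a}, where both geodesics are homologically heteroclinic and homologically exposed. No diagonal-sequence argument is needed, and nothing here uses any special structure of the cross-polytope---the proposition applies to every exposed edge of an arbitrary polytope~$B$.
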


\subsection{Open problems}

The following problem seems to be open:

\begin{openquestion}
  Do there exist closed connected Riemannian manifolds~$M$
  that satisfy~$\dim_\R H_1(M;\R) \geq 2$ and have only finitely
  many geometrically distinct minimal geodesics?
\end{openquestion}

As discussed above, we expect that Hedlund examples on the $3$-torus
can be improved in a way such that one can show that they have at
most~$12+15=27$ geometrically distinct minimal geodesics.

Informally speaking, the real strength of the Hedlund
examples~\cite[Section~9]{hedlund:1932}\cite[Section~5]{bangert:90}\cite{ammann:diploma,ammann:97}
lies in the fact that they have ``few'' minimal geodesics in an
asymptotic sense. Many of these examples have an infinite number
of minimal geodesics, however with only finitely many asymptotic
types.

In order to turn this into a precise statement, we define what
it means that a geodesic~$\tau:\R\to M$ is ``asymptotic in the
future/past direction'' to the geodesic~$\gamma:\R\to M$. For
simplicity, we supose that both geodesics are parametrized by arclength.
We define~$T^\pm\gamma$ as
\[ T^\pm\gamma
   \ceq
       \bigl\{\dot\gamma(t)\bigm| t\in \R\bigr\}
       \cup \bigl\{-\dot\gamma(t)\bigm| t\in \R\bigr\}
\,,
\]
which is a subset of the unit tangent bundle~$SM$. We choose a
Riemannian metric on the unit tangent bundle~$SM$, which defines a
distance function~$d^{SM}$ on~$SM$. Finally, for~$v\in SM$ we
write~$d^{SM}(v,T^\pm\gamma)\ceq \inf\{d^{SM}(v,w)\mid w\in
T^\pm\gamma\}$.

We say that $\tau$ is \emph{asymptotic in the future direction
  (resp. past direction)} to~$\gamma$ if $d^{SM}(\dot\tau(t),
T^\pm\gamma)$ converges to~$0$ for~$t\to+\infty$ (resp.\ for~$t\to
-\infty$).

We then obtain the asymptotic version of Question~\ref{question.count}:

\begin{question}
  Let $(M,g)$ be a closed connected Riemannian manifold.  What is the
  minimal number~$\mu=\mu(M,g)$ of minimal
  geodesics~$\gamma_1, \ldots,\gamma_\mu$ on~$M$ such that every other minimal
  geodesic is asymptotic in every direction to
  one of the~$\gamma_i$\;?  We set~$\mu(M,g):=\infty$ if such a finite set
  of minimal geodesics does not exist.
\end{question}

The proof of Bangert's lower estimate for the minimal number of
minimal geodesics~\cite[Theorem~4.4]{bangert:90} immediately gives the
stronger statement
\[ \mu(M,g)\geq \dim_\R  H_1(M;\R)
\,.
\] 
For the Hedlund examples, the equality~$\mu(M,g)=\dim_\R H_1(M;\R)$
is known to be attained in the following cases:
\begin{itemize}
\item for $M=T^3$~\cite[Section~9]{hedlund:1932},
\item for $M=T^n$ with~$n\geq 3$~\cite[Section~5]{bangert:90},
\item for compact quotients of Heisenberg groups~\cite{ammann:diploma}. 
\end{itemize}
We conjecture that on every manifold~$M$ with virtually nilpotent
fundamental group a Hedlund type construction yields a Riemannian
metric~$g$ with~$\mu(M,g)=\dim_\R H_1(M;\R)$; see~\cite{ammann:97} for
further discussion. The construction of such a metric on~$T^3$ also
seems to be the intended goal of the previously mentioned exercise by
Burago, Burago, and
Ivanov~\cite[Exercise~8.5.16]{burago.burago.ivanov:01}.

Summarizing we see that for fundamental groups ``close'' to abelian
groups, the Hedlund examples show that Bangert's bounds are optimal in
the asymptotic sense.  In contrast, one can show that for
non-elementary Gromov-hyperbolic fundamental groups we always
have~$\mu(M,g)=\infty$. For example, on closed manifolds~$(M,g)$ with
nonpositive sectional curvature, all geodesics (defined on~$\R$) are
minimal, and for different and non-antipodal unit vectors~$v$,~$w$
with the same base point, the geodesics $t\mapsto \exp(tv)$ and
$t\mapsto \exp(tw)$ are not asymptotic to each other.

\subsection*{Organisation of this paper}

Section~\ref{sec:mingeod} contains preliminaries on minimal geodesics,
geometric Hurewicz maps, and the Jacobi map, and defines $\R$- and
$\Z$\=/ho\-mo\-lo\-gi\-cal\-ly minimal geodesics.

We recall basics on the stable norm in
Section~\ref{sec:stablenorm}. In Section~\ref{sec:findist}, we show
that the stable norm and the minimal length have finite distance, a
result that essentially follows from D.~Burago's work
\cite{burago:soviet:92}, but that does not seem to be worked out in
the literature.

Homological asymptotes are discussed in
Section~\ref{sec:asymptotes}. The set of terminal/initial asymptotes
will allow us to distinguish different types of minimal geodesics, \eg
homologically homoclinic and homologically heteroclinic ones.

Section~\ref{sec:newmingeod} contains the refined constructions of
minimal geodesics. The counting of vertices/edges in centrally
symmetric polytopes is given in Section~\ref{sec:cs}.
Section~\ref{sec:proofs} contains the proofs of
Theorem~\ref{thm:main}, Theorem~\ref{thm:main.2},
Theorem~\ref{thm.equality.gen}, and
Theorem~\ref{thm.equality.polytope-fixed}.
A variety of examples in dimension~$2$ and constructions of Hedlund type metrics are collected in Section~\ref{sec:examples}. We also include in this section a comparison to work by Bolotin and Rabinowitz.

For the sake of completeness, we recall basics on minimal geodesics in
Appendix~\ref{app.minimzing.geodesics}\ifwithappendixstable{} and we spell out the
relation between the stable norm and the mass norm on~$H_1(\args;\R)$
in Appendix~\ref{app:stable-norm}\fi{}.
\ifwithappendixtwodim  In Appendix~\ref{app:twodim} we summarize additional well-known facts about minimal geodesics on surfaces; we do not intend to include this in the published version.
\fi

Readers interested only in the basic Theorem~\ref{thm:main} need not read the full article: After familiarizing themselves 
with the basic concepts and results in Sections~\ref{sec:mingeod} and~\ref{sec:stablenorm}, 
they should read Section~\ref{sec:findist}, Subsections \ref{subsec:def.hom.asymp}, \ref{subsec:classical-elem-asymp} and~\ref{subsec:asympt.hom.min}, Section~\ref{sec:newmingeod} until Corollary~\ref{cor:twomingeod}, Proposition~\ref{prop:cslowerbound} 
and finally the proof of Theorem~\ref{thm:main} in Subsection~\ref{subsec:proofthmmain}.

\subsection*{Acknowledgements}
Bernd Ammann thanks Victor Bangert for bringing his attention to this
problem and for many interesting discussions.

\section{Minimal geodesics, curves, and homology}\label{sec:mingeod}

Before we recall the notion of minimal geodesics, the construction of
geometric Hurewicz maps on curves, and the Jacobi map, we will fix
some conventions.

\subsection{Conventions}

We use $\N := \{1,2,\dots\}$ and $\N_0 := \{0\} \cup \N$. All
manifolds in this article are assumed to be non-empty.

Geodesics are always assumed to be parametrized by arclength.
We use the convention that \emph{a curve in a (smooth) manifold~$M$} is
a smooth map~$\gamma:I\to M$ from an interval~$I$ (usually of positive length)
to~$M$. An exeception from this notion is the notion of \emph{piecewise
smooth curves}, which are defined as continuous maps~$\gamma:I\to M$
for which finitely many real numbers~$a_1<a_2<\ldots <a_k$ with~$k\in
\NN_0$ exist such that we have the following, using $a_0\ceq-\infty$ and $a_{k+1}\ceq +\infty$:
\[\text{for all~$i\in \{0,1,\ldots,k\}$, the
curve~$\gamma_{I\cap [a_i,a_{i+1}]}$ is smooth.}\] 
In particular, every curve is a
piecewise smooth curve, but not vice versa. We use the abbreviation
``\pscurve'' for piecewise smooth curves. 
If -- in exceptional cases -- curves may have lower regularity (\eg just continuous), this will be mentioned explicitly.
A curve $\gamma$ is \emph{regular} if $\dot\gamma(t)\neq 0$ for all $t$ in the domain.

A \emph{$\Cinftypw$-loop} in~$M$ is a $\Cinftypw$-path~$\gamma:[a,b]\to
M$ with~$\gamma(a)=\gamma(b)$. A \emph{loop} is defined as a smooth
$\Cinftypw$-loop. Note that for a loop we do not require~$\dot\gamma(a)=\dot\gamma(b)$. 
A \emph{geodesic loop} is a loop that is a geodesic. 
A \emph{closed curve} is a smooth loop~$\gamma\colon [a,b]\to M$ that extends to a (smooth) periodic curve~$\gamma\colon \R\to M$ of period length~$b-a$. 
We will often view them as smooth maps~$S^1\to M$. A closed curve is called \emph{simple} if $\gamma\stelle{[a,b]}$ is injective.
If a closed curve~$\gamma\colon [a,b]\to M$ is a geodesic, it is called a \emph{closed geodesic}, and this is equivalent to saying, that 
$\gamma$ is a geodesic loop with~$\dot\gamma(a)=\dot\gamma(b)$. 
A \emph{broken geodesic}
is defined as a \pscurve{} whose smooth pieces are geodesics.

All differential forms are assumed to be smooth. 
In a normed space the ``unit ball'' always denotes the \emph{closed} ball of radius~$1$ centered in~$0$.

\subsection{Based and free homotopies of loops}\label{subsec:based-and-free-homotopies}

Let $X$ be a path-connected topological space and let~$x_0 \in X$.
As a set, $\pi_1(X,x_0)$ is the set~$[(S^1,1), (X,x_0)]_*$ of
(continuous) loops in~$X$ based at~$x_0$ modulo pointed (continuous)
homotopies. We will denote the based homotopy class of a based loop~$\gamma$
by~$[\gamma]_*$.

For geometric considerations it is often better to work with
\emph{free homotopies} of loops instead. Let $[S^1,X]$ be 
the set of (continuous) loops in~$X$ modulo (continuous) homotopies;
in this case, we do not require constraints on the basepoint --
neither for the loops, nor for the homotopies. The free homotopy
class of a loop~$\gamma$ is denoted by~$[\gamma]$.
Forgetting the basepoint defines a well-defined map~$\forget_{x_0}
\colon \pi_1(X,x_0) \to [S^1,X]$, given by~$[\gamma]_* \mapsto [\gamma]$.
As $X$ is path-connected, the map~$\forget_{x_0}$ is surjective.

Conjugacy classes of~$\pi_1(X,x_0)$ will sometimes be called
\emph{$\pi_1$-conjugacy classes}. The $\forget_{x_0}$-preimages of
singletons in~$[S^1,X]$ are precisely the $\pi_1$-conjugacy classes
in~$\pi_1(X,x_0)$. If $\gamma$ is a $C^0$-loop, then
$(\forget_{x_0})^{-1}(\{[\gamma]\})$ is the \emph{$\pi_1$-conjugacy
  class represented by~$\gamma$}.

If $X$ is a connected smooth manifold, then we may replace continuous
curves and homotopies by curves and homotopies of regularity $C^k$,
$C^\infty$ or~$\Cinftypw$ without changing the results.

\subsection{Minimal geodesics}

Minimal geodesics are Riemannian geodesic lines that lift to
``metric'' geodesic lines on the Riemannian universal covering: 

\begin{definition}[minimizing/minimal geodesic]
  Let $M$ be a closed connected Riemannian manifold
  and let $\ucov M$ be its Riemannian universal covering.
  \begin{itemize}
  \item
    A (Riemannian) geodesic~$\gamma \colon I \to \ucov M$
    on~$\ucov M$, defined on a closed interval~$I \subset \R$
    and parametrized by arclength, is \emph{minimizing} if it
    is metrically isometric, i.e., if
    \[ \fa{s,t \in I} d\bigl(\gamma(t), \gamma(s)\bigr) = |t - s|
    \]
    holds, where $d \colon \ucov M \times \ucov M \to \R_{\geq 0}$
    denotes the metric induced by the Riemannian metric on~$\ucov M$.
  \item
    A geodesic~$\gamma \colon I \to M$ on~$M$ is \emph{minimal}
    if~$I = \R$ and one (whence every) lift of~$\gamma$ to~$\ucov M$
    is minimizing.
  \item
    Geodesics on~$M$ are \emph{geometrically equivalent} if there
    exists a reparametrization that transforms one into the other.
    Otherwise they are called \emph{geometrically distinct}.
  \end{itemize}
\end{definition}

\begin{example}
  Let $M := S^1 \times N$, where $(N,h)$ is a simply connected closed
  Riemannian manifold. We view $S^1$ as the unit circle in~$\C$,
  parametrized by~$t\mapsto \exp(it)$. We choose a smooth function
  $f\colon N\to [1,\infty)$ that attains its minimal value~$1$ only in
  single point~$p\in N$. We define the Riemannian metric~$g= f^2
  dt^2 + h$ on~$M$, i.e., the warped product metric with warping
  function~$f$. One easily sees that $t\mapsto
  \bigl(\exp(it),p\bigr)$ is a minimal geodesic in~$(M,g)$, and that
  this is up to geometric equivalence the only one. By construction,
  $\dim_\R H_1(M;\R) = 1$.
\end{example}

We will construct minimal geodesics with the following lemma: 

\begin{lemma}[limiting geodesics]\label{lemma:limitinggeodesic}
  Let $M$ be a closed connected Riemannian manifold, and let $\wihat
  M\to M$ be a Riemannian covering. Let $(\sigma_i \colon [a_i,b_i]
  \to \wihat M)_{i \in \N}$ be a sequence of minimizing geodesics
  on~$\wihat M$ with the following properties:
  \begin{itemize}
  \item We have~$\lim_{i \to \infty} a_i = -\infty$
    and $\lim_{i \to\infty} b_i = \infty$.
  \item The sequence~$(\dot\sigma_i(0))_{i \in \N}$
    converges to some~$v_\infty \in T\wihat M$.
  \end{itemize} 
  Then
  \begin{align*}
    \sigma_\infty \colon \R
    & \to \wihat M
    \\
    t
    & \mapsto \exp_{\wihat M} (t \cdot v_\infty)
  \end{align*}
  is a minimizing geodesic and the $(\sigma_i)_{i \in \N}$ converge on
  each compact interval uniformly (in the $C^\infty$-topology)
  to~$\sigma_\infty$.
\end{lemma}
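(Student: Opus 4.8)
The plan is to produce $\sigma_\infty$ as the obvious limit curve and then upgrade pointwise convergence to $C^\infty$-convergence on compacta, from which the minimizing property passes to the limit by continuity of the distance function. First I would invoke continuous dependence of geodesics on initial conditions: since $\dot\sigma_i(0)\to v_\infty$ in $T\widehat M$ and $\widehat M$ is complete (being a Riemannian covering of a closed manifold), the geodesic $\sigma_\infty(t)=\exp_{\widehat M}(t\cdot v_\infty)$ is defined on all of $\R$, and for each fixed compact interval $[-R,R]$ the geodesics $t\mapsto\exp_{\widehat M}(t\cdot\dot\sigma_i(0))$ converge to $\sigma_\infty$ uniformly in the $C^\infty$-topology on $[-R,R]$ (this is the smooth dependence of the exponential map on its argument, applied on the compact set $\{t v_\infty : |t|\le R\}$ together with nearby initial vectors). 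The hypotheses $a_i\to-\infty$, $b_i\to+\infty$ guarantee that for $i$ large enough $[-R,R]\subset[a_i,b_i]$, so on $[-R,R]$ the curve $\sigma_i$ literally equals $t\mapsto\exp_{\widehat M}(t\cdot\dot\sigma_i(0))$; hence $\sigma_i\to\sigma_\infty$ in $C^\infty$ on $[-R,R]$.

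Next I would check that $\sigma_\infty$ is minimizing, i.e.\ $d(\sigma_\infty(t),\sigma_\infty(s))=|t-s|$ for all $s,t\in\R$. Fix $s,t$ and choose $R>\max(|s|,|t|)$. For $i$ large, $s,t\in[a_i,b_i]$ and $\sigma_i$ is minimizing, so $d(\sigma_i(t),\sigma_i(s))=|t-s|$. Since $\sigma_i(t)\to\sigma_\infty(t)$ and $\sigma_i(s)\to\sigma_\infty(s)$ (even just $C^0$-convergence suffices here) and $d$ is continuous on $\widehat M\times\widehat M$, we get $d(\sigma_\infty(t),\sigma_\infty(s))=\lim_i d(\sigma_i(t),\sigma_i(s))=|t-s|$. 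As $\sigma_\infty$ is by construction a geodesic parametrized by arclength and defined on all of $\R$, this shows it is a minimizing geodesic on $\widehat M$.

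There is essentially no hard obstacle here; the only point requiring a little care is the uniform $C^\infty$-convergence, where one must note that the smooth pieces of the statement are just the restrictions of the single smooth map $\exp_{\widehat M}$ to shrinking neighbourhoods of the compact arc $\{tv_\infty:|t|\le R\}$ in $T\widehat M$, so all derivatives in $t$ converge uniformly because $\dot\sigma_i(0)\to v_\infty$ in $T\widehat M$ and $\exp_{\widehat M}$ is smooth. (Completeness of $\widehat M$, used to know $\exp$ is globally defined, follows since $M$ is closed hence complete and Riemannian coverings are complete.) One could alternatively phrase the convergence via the geodesic flow on $T\widehat M$, which makes the $C^\infty$-statement immediate, but the direct argument above is enough.
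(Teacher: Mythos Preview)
Your proposal is correct and follows essentially the same approach as the paper: the paper invokes Picard--Lindel\"of (smooth dependence of geodesics on initial data) for the $C^\infty$-convergence on compacta, and then uses continuity of the distance function~$\hat d$ on $\wihat M\times\wihat M$ to pass the minimizing equation $\hat d(\sigma_i(t),\sigma_i(s))=t-s$ to the limit. Your version is slightly more detailed (making explicit the role of completeness and the exponential map), but the strategy is identical.
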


A proof is provided in Appendix~\ref{app.minimzing.geodesics}.

\subsection{Curves and homology}\label{subsec:curveshom}

The geometric Hurewicz map is defined via integration.  In contrast
with the topological Hurewicz map for continuous \emph{loops}, the
geometric Hurewicz map depends on how we represent de Rham cohomology
classes by forms. However, this dependence will have no relevance
for our following arguments. By~$\HZR \subset H_1(M;\R)$, we denote
the integral part of~$H_1(M;\R)$, i.e., the image of~$H_1(M;\Z)$ under
the change-of-coefficients map from $\Z$ to~$\R$. Then $\HZR$ is a
lattice in~$H_1(M;\R)$.

\begin{setup}\label{setup:bases}
  Let $M$ be a closed connected Riemannian manifold and let $b\ceq \dim_\R H_1(M;\R)$.  
  Let~$(\beta_1, \dots, \beta_b)$ be a
  $\Z$-module basis of~$\HZR$, which is also an $\R$-basis of
  $H_1(M;\R)$.  Let $\alpha^1, \dots, \alpha^b$ be closed $1$-forms
  on~$M$ such that $([\alpha^1], \dots, [\alpha^b])$ is the basis of~$\HdR$
  dual to~$(\beta_1,\dots, \beta_b)$, viewed
  as a vector space basis for~$H_1(M;\R)$.
\end{setup}

\begin{definition}[geometric Hurewicz map]
  In the situation of Setup~\ref{setup:bases},
  for each \pscurve{}~$\gamma \colon I \to M$, we
  let $h(\gamma)$ denote the unique element of~$H_1(M;\R)$
  with
  \[ \fa{j \in \{1,\dots,b\}} \int_\gamma \alpha^j
  =  \bigl\langle [\alpha^j], h(\gamma)\bigr\rangle
  \,.
  \]
  Here, $\langle \args,\!\args\rangle$ denotes
  the integration pairing between $\HdR$ and~$H_1(M;\R)$.
\end{definition}

\begin{remark}[additivity]\label{rem:hurewiczproperties}
  In the situation of Setup~\ref{setup:bases}, using the additivity of
  integration along curves, we obtain: If $\gamma$ and $\gamma'$ are
  \pscurves{} on~$M$ such that the endpoint of $\gamma$ is the start
  point of $\gamma'$, then
  \[ h(\gamma * \gamma') = h(\gamma) + h(\gamma')
  \qand
  h (\overline \gamma) = - h(\gamma)
  \,,
  \]
  where ``$*$'' denotes the concatenation of curves (if well-defined)
  and ``$\overline{\phantom{\gamma}}$'' denotes the orientation reversal of
  curves.
\end{remark}

\begin{remark}[loops]\label{rem:hurewiczint}
  In the situation of Setup~\ref{setup:bases}, the integration duality
  between~$H_1(M;\R)$ and~$\HdR$ shows: If $\gamma$ is a
  $\Cinftypw$-loop in~$M$, then $h(\gamma)$ is the singular homology
  class represented by the loop~$\gamma$; in particular, $h(\gamma)$
  then lies in~$\HZR$.  Furthermore, we have for every closed $1$-form
  $\eta\in \Omega^1(M)$:
  \[ \int_\gamma \eta
  =  \bigl\langle [\eta], h(\gamma)\bigr\rangle
  \,.
  \]
\end{remark}

\begin{remark}\label{rem:hurewicznonint}
  Again we assume the situation of Setup~\ref{setup:bases}. If
  $\eta\in\Omega^1(M)$ is a linear combination of
  $\alpha^1,\ldots,\alpha^b$, then we have for every
  \pscurve{} $\gamma\colon [a,b]\to M$:
    \[ \int_\gamma \eta
    =  \bigl\langle [\eta], h(\gamma)\bigr\rangle
    \,.
    \]
However, in general, this property does \emph{not} hold for arbitrary
closed $1$-forms~$\eta$ when $\gamma$ is not a loop.  For example, let
$f:M\to \R$ be smooth with $f(\gamma(a))=0$ and $f(\gamma(b))=1$, and
let $\eta\ceq df$. Then, $[\eta]=0$ but
\[ \int_\gamma \eta=1\neq 0
=  \bigl\langle [\eta], h(\gamma)\bigr\rangle
\,.
\]
\end{remark}

\begin{definition}[Jacobi map]\label{def:jacobi}
  In the situation of Setup~\ref{setup:bases}, in addition,
  we pick a basepoint~$x_0$ in~$M$ and a lift~$\ucov x_0 \in \ucov M$
  of~$x_0$.
  For~$j \in \{1,\dots,b\}$, let $\ucov \alpha^j$ denote the
  pull-back of~$\alpha^j$ to~$\ucov M$. Then, there is a
  unique smooth function~$J^j \colon \ucov M \to \R$ with
  \[ \upd J^j = \ucov \alpha^j
  \qand
  J^j(\ucov x_0) = 0
  \,.
  \]
  The \emph{Jacobi map} is defined as the sum
  \[ J \ceq \sum_{j=1}^b J^j \cdot \beta_j \colon \ucov M \to H_1(M;\R)
  \,.
  \]
\end{definition}

\begin{remark}\label{rem:jacobiproperties}
  In the situation of Definition~\ref{def:jacobi}, the
  Jacobi map~$J \colon \ucov M \to H_1(M;\R)$ is smooth.
  Moreover, the Jacobi map is related to the geometric Hurewicz map
  as follows: If $\gamma \colon [a,b] \to M$ is a smooth
  curve and $\ucov \gamma \colon [a,b] \to \ucov M$ is
  a lift to~$\ucov M$, then
  \[ h(\gamma) = J\bigl( \ucov \gamma(b)\bigr) - J \bigl( \ucov \gamma(a)\bigr)
  \,.
  \]
  Indeed, for all~$j \in \{1,\dots,b\}$, we have
  \begin{align*}
    \bigl\langle [\alpha^j], h(\gamma) \bigr\rangle
    & = \int_\gamma \alpha^j
    = \int_{\ucoeverle \gamma} \ucov \alpha^j = \int_{\ucoeverle \gamma} d J_j
    = J_j \bigl(\ucov \gamma(b)\bigr) - J_j \bigl(\ucov \gamma(a)\bigr)
    \\
    & = \Bigl\langle [\alpha^j], J\bigl(\ucov \gamma(b)\bigr) - J\bigl(\ucov \gamma(a)\big)\Bigr\rangle\,.
  \end{align*}
\end{remark}

\begin{remark}
  It follows from Remark~\ref{rem:hurewiczint} and the additivity in
  Remark~\ref{rem:hurewiczproperties} that the Jacobi map $J \colon
  \ucov M \to H_1(M;\R)$ descends to to a map $\slashed J: M\to
  H_1(M;\R)/\HZR$. The map $\slashed J$ is Gromov's
  ``Jacobi mapping''~\cite[4.21]{gromov:99}
  and $H_1(M;\R)/\HZR$ is called the ``Jacobi variety'';
  literally, only the case that $H_1(M;\Z)$ is torsionfree is treated,
  but the extension to the general case is straightforward. 
\end{remark}

\subsection{$\R$\=/ho\-mo\-lo\-gi\-cal\-ly minimal and $\Z$\=/ho\-mo\-lo\-gi\-cal\-ly minimal geodesics}\label{subsec.homologically-minimal}

We will now introduce two stronger versions of minimality for
geodesics.

For this purpose let $\Gamma\ceq \pi_1(M)$ be the fundamental group of
a closed connected Riemannian manifold~$(M,g)$ and let
$[\Gamma,\Gamma]$ be its commutator subgroup. 
The Hurewicz map factors to an isomorphism~$\Gamma/[\Gamma,\Gamma]\to H_1(M;\Z)$. 
Thus $\homZcov M\ceq \ucov M/[\Gamma,\Gamma]$ is a normal covering of $M$ with
deck transformation group (canonically isomorphic
to)~$H_1(M;\Z)$. 
Similarly, if $T$ is the torsion subgroup of
$H_1(M;\Z)$, then the change-of-coefficients map $H_1(M;\Z)\to
H_1(M;\R)$ induces an isomorphism $H_1(M;\Z)/T\to \HZR$.  
The deck transformation group of the normal covering~$\homRcov M\ceq \homZcov M /
T$ of~$M$ is (canonically isomorphic to)~$\HZR$.  
The Jacobi map $J\colon \ucov
M\to H_1(M;\R)$ factors to maps $\homZcov J\colon\homZcov M\to
H_1(M;\R)$ and $\homRcov J\colon\homRcov M\to H_1(M;\R)$. 
We thus obtain the following sequence of covering maps:
\[ \ucov M
\to \homZcov M
\to \homRcov M
\to M\,.
\]

\begin{definition}
[$\R$\=/ho\-mo\-lo\-gi\-cal\-ly and $\Z$\=/ho\-mo\-lo\-gi\-cal\-ly minimal geodescis]\label{def:homologmin}
\hfil
Let~$M$ be a closed connected Riemannian manifold. 
\begin{itemize}
\item
    A geodesic~$\gamma \colon \R \to M$ on~$M$ is \emph{$\Z$\=/ho\-mo\-lo\-gi\-cal\-ly minimal}
    if~one (whence every) lift of~$\gamma$ to~$\homZcov M$
    is minimizing.
\item
    A geodesic~$\gamma \colon \R \to M$ on~$M$ is \emph{$\R$\=/ho\-mo\-lo\-gi\-cal\-ly minimal}
    if~one (whence every) lift of~$\gamma$ to~$\homRcov M$
    is minimizing.
\end{itemize}
\end{definition}
For a geodesic $\R\to M$, ``$\R$\=/ho\-mo\-lo\-gi\-cal\-ly minimal''
implies ``$\Z$\=/ho\-mo\-lo\-gi\-cal\-ly minimal'', and the latter one
implies ``minimal''.  The minimal geodesics constructed in
Theorem~\ref{thm:main.2} (and thus also the ones in
Theorem~\ref{thm:main}) are in fact ``$\R$\=/ho\-mo\-lo\-gi\-cal\-ly
minimal''.  Also, the slightly larger class of
$\Z$\=/ho\-mo\-lo\-gi\-cal\-ly minimal geodesics has many properties of
minimal geodesics, especially the ones related to the stable norm,
hold in this larger class (see \eg
Proposition~\ref{prop.hom.min.sphere}).

\begin{comparisonliterature*}
  Gromov calls~$\homRcov M$ the \emph{abelian covering} and denotes it
  by~$\witi M_{\mathrm{Ab}}$~\cite[4.22$_+$-4.24$_+$]{gromov:99}.
  Bangert considers~$\homZcov M$ and denotes it by~$\bar
  M$~\cite[Sec.~3]{bangert:90}.
\end{comparisonliterature*}

\section{The stable norm}\label{sec:stablenorm}

The stable norm on first homology is the homogenisation of the
quasi-norm given by the minimal length of representing curves. We
recall this construction and basic estimates for the stable norm on
the first real homology group~$H_1(M;\R)$. Our approach is similar to
the one of Bangert~\cite{bangert:90}, which again relies on similar notions
and statements by Gromov~\cite[Chap.~4.C.]{gromov:99}.

In the next section (Theorem~\ref{thm:stabnormN}), we will show that
the stable norm~$\stabnorm{\argu}$, restricted to the integral
lattice~$\HZR$, coincides up to a bounded error with the minimal
length quasi-norm~$N: \HZR\to\R_{\geq 0}$. This approximation result
is much stronger than the ones proven by Bangert and Gromov.

\subsection{Quasi-norms and their homogenisation}

We recall basic notions and properties for general quasi-norms
on abelian groups.

\begin{definition}[quasi-norm]\label{def:quasinorm}
  Let $V$ be an abelian group. 
  A \emph{quasi-norm} on~$V$ is
  a map~$N \colon V \to \R_{\geq 0}$ with the
  following properties:
  \begin{itemize}
  \item \emph{Positive definiteness.}
    We have for all~$x \in V$ that $N(x) = 0$ if and
    only if~$x = 0$.
  \item \emph{Symmetry.} For all~$x \in V$, we
    have~$N(-x) = N(x)$.
  \item \emph{Quasi-triangle inequality.}
    There exists a~$\Delta \in \R_{\geq 0}$ such that for
    all~$x,y \in V$ we have
    \[ N(x + y) \leq N(x) + N(y) + \Delta
    \,.
    \]
\end{itemize}
There is a least constant~$\Delta_{\min}$, satisfying the inequality
above for all~$x,y\in V$, and this constant will be called the
$\triangle$-discrepancy of~$N$. 
Note that $N(0)=0$ implies~$\Delta_{\min}\geq 0$.

We will occasionally need further properties of quasi-norms, and more
generally of maps~$N \colon V \to \R_{\geq 0}$, where~$V$ is a
$K$-module with~$K \in \{\N_0, \Z,\R\}$:
\begin{itemize}
\item  
  A map~$N \colon V \to \R_{\geq 0}$ is
  \emph{homogeneous over $K$} if we have for all~$x\in V$
  and all~$k \in K$ that
  \[ N(k \cdot x) = |k| \cdot N(x)
  \,.
  \]  
\item  
  The map $N \colon V \to \R_{\geq 0}$ is called \emph{subhomogeneous (over $\N$)},
  if we have for all~$x\in V$ and all~$n \in \N$ that
  \[ N(k \cdot x) \leq |k| \cdot N(x)
  \,.
   \]  
 \item The map $N \colon V \to \R_{\geq 0}$ satisfies the
   \emph{doubling property}, if there exists a~$D \in \R_{\geq 0}$
   such that for all~$x \in V$ we have
   \[ 2 \cdot N(x) \leq N(2 \cdot x) + D
   \,.
  \]
  Again, there is a least constant~$D_{\min}$ for which this inequality
  holds for all~$x\in V$. This least constant will be called the
  \emph{doubling constant of $N$}.
 \end{itemize}
\end{definition}

\begin{proposition}[homogenisation of quasi-norms]
  \label{prop:homog}
  Let $N \colon V \to \R_{\geq 0}$ be a quasi-norm with
  $\triangle$-discrepancy~$\Delta$ on the abelian group~$V$. For
  each~$x \in V$, the limit
  \[ \|x\| := \lim_{k \to \infty} \frac{N(k \cdot x)}{k}
  \]
  exists and we have
  \[ \|x\| = \inf_{k \in \N} \frac{N(k\cdot x)+\Delta}{k}
  \,.
  \]
  The map~$\norm\argu \colon V \to \R_{\geq 0}$
  is homogeneous over~$\Z$
  and satisfies the triangle inequality; we call~$\norm\argu$
  the \emph{homogenisation of~$N$}.
  If, additionally, $N$ is subhomogeneous, then 
  \[ \fa{x \in V}
  \|x\| \leq N(x)
  \,.
  \]
\end{proposition}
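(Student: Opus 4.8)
The plan is to prove the three assertions — existence of the limit with the stated infimum formula, $\Z$-homogeneity together with the triangle inequality, and the comparison $\|x\|\le N(x)$ under subhomogeneity — essentially by the standard Fekete-type subadditivity argument, adapted to allow for the additive discrepancy~$\Delta$.

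First I would set up the key subadditivity estimate. For a fixed~$x\in V$ and $m,n\in\N$, applying the quasi-triangle inequality (with the element~$m\cdot x$ and~$n\cdot x$) gives $N\bigl((m+n)\cdot x\bigr)\le N(m\cdot x)+N(n\cdot x)+\Delta$. Setting $a_k\ceq N(k\cdot x)+\Delta$, this reads $a_{m+n}\le a_m+a_n$, i.e. the sequence $(a_k)_{k\in\N}$ is subadditive with values in~$\R_{\ge 0}$. By Fekete's subadditive lemma, $\lim_{k\to\infty} a_k/k$ exists and equals $\inf_{k\in\N} a_k/k$. Since $a_k/k = N(k\cdot x)/k + \Delta/k$ and $\Delta/k\to 0$, the limit $\lim_{k\to\infty} N(k\cdot x)/k$ exists and coincides with $\lim_{k\to\infty} a_k/k = \inf_{k\in\N}\bigl(N(k\cdot x)+\Delta\bigr)/k$, which is exactly the claimed formula for~$\|x\|$. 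For completeness I would recall the short proof of Fekete's lemma rather than cite it, since it is one line: fix~$k$, write any large~$n$ as $n = qk + r$ with $0\le r < k$, so $a_n\le q\,a_k + a_r$ (here one uses $a_0\le a_0$; note $a_0 = N(0)+\Delta = \Delta$, and one can absorb the finitely many~$a_r$, $r\in\{0,\dots,k-1\}$, into a bounded term), whence $\limsup_n a_n/n \le a_k/k$; taking the infimum over~$k$ and comparing with the trivial $\liminf_n a_n/n \ge \inf_k a_k/k$ finishes it.

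Next, homogeneity over~$\Z$: symmetry $N(-y)=N(y)$ immediately gives $\|-x\| = \lim_k N(-k\cdot x)/k = \|x\|$, so it suffices to check $\|m\cdot x\| = m\cdot\|x\|$ for $m\in\N$. But $\|m\cdot x\| = \lim_{k\to\infty} N(km\cdot x)/k = m\cdot \lim_{k\to\infty} N(km\cdot x)/(km) = m\cdot\|x\|$, using that $(km)_{k}$ is a subsequence of the indices and the full limit exists. The case $m=0$ is handled by $\|0\| = \lim_k N(0)/k = 0$. For the triangle inequality, fix $x,y\in V$ and apply the quasi-triangle inequality to $k\cdot x$ and $k\cdot y$: $N\bigl(k\cdot(x+y)\bigr) \le N(k\cdot x) + N(k\cdot y) + \Delta$; dividing by~$k$ and letting $k\to\infty$ the $\Delta/k$ term vanishes and we obtain $\|x+y\|\le\|x\|+\|y\|$.

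Finally, the comparison under subhomogeneity: if $N$ is subhomogeneous then $N(k\cdot x)\le k\cdot N(x)$ for every $k\in\N$, so $N(k\cdot x)/k \le N(x)$ for all~$k$, and passing to the limit gives $\|x\| = \lim_{k\to\infty} N(k\cdot x)/k \le N(x)$. I do not anticipate a genuine obstacle here; the only mildly delicate point is the bookkeeping of the $+\Delta$ term in the Fekete argument — one must be careful that the additive constant does not spoil the $1/k$ normalisation, which is why passing to $a_k = N(k\cdot x)+\Delta$ (making the sequence genuinely subadditive) rather than working with $N(k\cdot x)$ directly is the cleanest route. Everything else is routine limit manipulation, so I would keep the write-up brief.
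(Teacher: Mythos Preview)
Your proposal is correct and follows essentially the same approach as the paper: both reduce to Fekete's subadditive lemma applied to the sequence~$a_k = N(k\cdot x)+\Delta$, then derive $\Z$-homogeneity from symmetry plus $\N$-homogeneity, and obtain the triangle inequality and the subhomogeneous comparison by the obvious limit arguments. The only difference is that you sketch the proof of Fekete's lemma inline, whereas the paper simply cites it.
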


One may easily construct examples for which~$\norm\argu$ is no longer
positive definite. Positive definiteness is lost, e.g., on all
non-trivial torsion elements of~$V$.

\begin{proof}
  Let $x \in V$. 
  The sequence~$(s_k)_{k \in \N}$ with $s_k\ceq N(k\cdot x) + \Delta$ is subadditive, \ie $s_{k+\ell}\leq s_k+ s_\ell$ for all $k,\ell\in \N$. 
  By the Fekete lemma~\cite[\Paragraph~I.3.1]{polyaszego}, the
  limit~$\lim_{k \to \infty} (N(k\cdot x) + \Delta)/k$ exists and
  \[ \lim_{k \to \infty} \frac{N(k\cdot x) + \Delta}{k}
  = \inf_{k \in \N} \frac{N(k\cdot x) + \Delta}{k}
  \,.
  \]
  In particular, also~$\lim_{k \to \infty} N(k\cdot x)/k$ exists
  and
  \[ \lim_{k \to \infty} \frac{N(k\cdot x)}{k}
  = \inf_{k \in \N} \frac{N(k\cdot x) + \Delta}{k}
  \,.
  \]
  
  By construction, $\norm\argu$ is homogeneous over~$\N$ and
  $\norm\argu$ inherits symmetry from~$N$.
  Therefore, $\norm\argu$ is homogeneous over~$\Z$.

  For the triangle inequality, let $x,y \in V$.
  Then, the quasi-triangle inequality for~$N$ shows that
  \begin{align*}
  \|x+y\|
  & = \lim_{k \to \infty} \frac{N(k\cdot (x+y))}{k}
  \\
  & \leq \lim_{k \to \infty} \frac{N(k \cdot x) + N(k \cdot y) + \Delta}{k}
  = \lim_{k\to\infty} \frac{N(k \cdot x)}k + \lim_{n\to\infty} \frac{N(k \cdot y)}k + 0
  \\
  & = \|x\| + \|y\|\,.
  \end{align*}

  Finally, let $N$ be subhomogeneous. Then,
  \[ \| x \| = \lim_{k \to \infty} \frac{N(k \cdot x)}{k}
  \leq \lim_{k \to \infty} \frac{|k| \cdot N(x)}{k}
  = N(x)
  \,,
  \]
  as claimed.
\end{proof}

\begin{proposition}\label{prop:qndoubling}
  Let $N \colon V \to \R_{\geq 0}$ be a quasi-norm on the abelian
  group~$V$ satisfying the doubling property with doubling constant
  $D$.
  Then the homogenisation~$\|\argu\|$ of~$N$ satisfies
  \[ \fa{x \in V}   N(x) \leq \|x\| + D
  \,.
  \]
\end{proposition}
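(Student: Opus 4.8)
The plan is to iterate the doubling inequality along the powers-of-two subsequence and then pass to the limit that defines the homogenisation. First I would establish, by induction on $n \in \N_0$, the estimate
\[ 2^n \cdot N(x) \leq N(2^n \cdot x) + (2^n - 1) \cdot D \]
for every $x \in V$. The base case $n = 0$ is the trivial inequality $N(x) \leq N(x)$. For the inductive step, one applies the doubling property $2 \cdot N(2^n \cdot x) \leq N(2^{n+1} \cdot x) + D$ (\ie the defining inequality at the point $2^n \cdot x$) and combines it with the induction hypothesis: multiplying the hypothesis by~$2$ gives $2^{n+1} \cdot N(x) \leq 2 \cdot N(2^n \cdot x) + (2^{n+1} - 2)\cdot D \leq N(2^{n+1}\cdot x) + (2^{n+1}-1)\cdot D$, which is the claim for~$n+1$.

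Second, I would divide this inequality by~$2^n$ to obtain
\[ N(x) \leq \frac{N(2^n \cdot x)}{2^n} + \Bigl(1 - \frac1{2^n}\Bigr) \cdot D \,. \]
By Proposition~\ref{prop:homog}, the limit $\lim_{k \to \infty} N(k \cdot x)/k$ exists and equals~$\|x\|$; in particular, the subsequence indexed by~$k = 2^n$ also converges to~$\|x\|$. Letting $n \to \infty$ in the displayed inequality then yields $N(x) \leq \|x\| + D$, as claimed.

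I do not expect a genuine obstacle here: the doubling constant is set up precisely so that the telescoping along the powers-of-two subsequence closes, and the only non-elementary input is the already-established existence of the homogenisation limit in Proposition~\ref{prop:homog}. The one point requiring a little care is to pass to the limit along the specific subsequence~$(2^n)_{n}$ rather than re-deriving convergence from scratch; this is legitimate since convergence of the full sequence~$(N(k\cdot x)/k)_k$ has already been proven.
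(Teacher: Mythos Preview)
Your proposal is correct and follows essentially the same approach as the paper: iterate the doubling inequality along powers of two to obtain $2^n \cdot N(x) \leq N(2^n \cdot x) + (2^n-1)\cdot D$, divide by~$2^n$, and pass to the limit along this subsequence using the existence of the homogenisation from Proposition~\ref{prop:homog}. The paper's version weakens $(2^n-1)\cdot D$ to~$2^n \cdot D$ before taking the limit, but this is immaterial.
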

\begin{proof}
  Let $x \in V$. From the doubling property, we inductively
  obtain
  \[ \fa{k \in \N}
     2^k \cdot N(x)
     \leq N(2^k \cdot x) + \sum_{j=0}^{k-1} 2^j \cdot D
     \leq N(2^k \cdot x) + 2^k \cdot D
     \,.
  \]
  Taking the limit over the subsequence of binary powers
  shows therefore that
  \begin{align*}
    N(x)
    & = \lim_{k\to\infty} \frac{2^k \cdot N(x)}{2^k}
    \leq \lim_{k \to \infty} \frac{N(2^k \cdot x) + 2^k \cdot D}{2^k}
    \\
    & = \|x\| + D\,.
    \qedhere
  \end{align*}
\end{proof}

For the sake of completeness, we record a discrete
example:

\begin{remark}[word lengths on finitely generated abelian
    groups~\protect{\cite[Exc.~8.5.8]{burago.burago.ivanov:01}}]
  \label{rem:wordlength}
  \hfil\\   
  Let $V$ be a finitely generated free abelian group, let $S \subset
  V$ be a finite generating set of~$V$, and let $N$ be the word norm
  on~$V$ associated with~$S$. Then $N$ is subhomogeneous and $N$ has
  the doubling property: Indeed, let $k := \#S$ and $S = \{s_1,\dots,
  s_k\}$.  For~$x \in V$ we choose an $N$-minimal
  $S$-representation~$2 \cdot x = \sum_{j=1}^k a_j \cdot s_j$ of~$2
  \cdot x$ with~$a_1,\dots, a_k \in \Z$.  We write~$a_j = 2 \cdot b_j
  + r_j$ with~$b_j \in \Z$ and $r_j \in \{0,1\}$ and consider the
  quasi-half
  \[ x' := \sum_{j=1}^k b_j \cdot s_j
  \]
  of~$2 \cdot x$.
  We obtain
  \begin{align*}
  2 \cdot N(x)
  & \leq 2 \cdot N(x') + 2 \cdot N(x-x')
  \\
  & \leq 2 \cdot \sum_{j=1}^k |b_j| + 2 \cdot k
  = \sum_{j=1}^k |a_j - r_j| + 2 \cdot k
  \leq \sum_{j=1}^k |a_j| + k + 2 \cdot k
  \\
  & = N(2\cdot x) + 3 \cdot \# S\,,
  \end{align*}
  which proves that $N$ has the doubling property.
  Proposition~\ref{prop:qndoubling}
  and Proposition~\ref{prop:homog} show that $N$ and its
  homogenisation have uniformly finite distance on~$V$.

  This argument can be extended to word norms on all finitely
  generated abelian groups. However, in contrast with the
  homogenisation process for quasi-morphisms, word norms on
  general amenable groups need not have finite distance to their
  homogenisation: For example, the centre of the integral
  $3$-dimensional Heisenberg group is distorted with respect
  to word norms.
\end{remark}

\subsection{The minimal length quasi-norm}

We first fix notation for the minimal length
of representing curves, which defines a quasi-norm, and then introduce the
stable norm via homogenisation and extension.

\begin{definition}\label{def:N}
  Let $M$ be a closed connected Riemannian manifold. We define the
  \emph{minimal length map} as the function
  \begin{align*}
    N \colon \HZR & \to \R_{\geq 0}
    \\
    x & \mapsto
    \inf
    \bigl\{ \maL (\gamma)
    \bigm| \text{$\gamma$ is a loop on~$M$ with~$h(\gamma) = x$}
    \bigr\}\,,
  \end{align*}
  where $\maL(\gamma)$ denotes the length of the curve~$\gamma$.
\end{definition}

\begin{remark}\label{rem:defN}
  As this definition is based on loops, only the canonical part of the
  Hurewicz map is used.  In the definition, we used our convention
  that loops are supposed to be smooth. However, let us explain why
  the value~$N(x)$ remains unchanged if we weaken this condition to
  $\Cinftypw$-curves, to (piecewise) $C^1$-curves, or even to
  $C^0$-curves. It follows from standard smoothing techniques for
  loops~\eg \cite[\Paragraph 16 and~17]{milnor::morse-theory}
  that the inclusion of the space of smooth loops into the space curves
  of regularity $\Cinftypw$, (piecewise) $C^1$ or~$C^0$ is a homotopy
  equivalence if all these spaces are equipped with their natural
  topologies. The homotopy inverses can be constructed by
  approximations by broken geodesics and rounding off the
  corners. These constructions can be performed in such a way that they do not
  increase lengths; therefore, the definition of~$N$ does not depend
  on the choice of curve regularity.

  Furthermore, the infimum in the definition of~$N$ is attained (see
  Lemma~\ref{lem.loop.attain.inf}). If the infimum is attained in some
  curve~$\gamma$ and if the class~$[\gamma]\in \HZR$ is non-zero, then
  $\gamma$ can be reparametrized (preserving the orientation) to a closed
  geodesic. Such a closed geodesic may be parametrized by arclength
  as~$\gamma:[a,a+N(x)]\to M$ for every~$a\in \R$. In the case
  $[\gamma]=0$, the curve~$\gamma$ is then constant. 
  This implies that $groupnorm$ is positive definite.

  In particular: If $\gamma \colon I \to M$ is a $C^1$-loop
  with~$\maL(\gamma) = N\bigl(h(\gamma)\bigr)$, then every lift~$I \to
  \homRcov M$ of~$\gamma$ to~$\homRcov M$ is minimizing. Hence,
  sequences of such geodesics can be used to construct minimal
  geodesics on~$M$ via Lemma~\ref{lemma:limitinggeodesic}. This will
  be the starting point for the constructions in the proof of Theorem~\ref{thm:main}.
  \end{remark}

\begin{comparisonliterature*}
  The map $N$ above is denoted as~$f$ by Bangert~\cite{bangert:90}.

  Gromov does not treat the norm~$N$, but a variant called ``length''
  defined on~$H_1(M;\Z)$ instead of~$\HZR$, using loops in a fixed
  basepoint~$v_0\in M$~\cite[Chap.~4.C.]{gromov:99}. More precisely,
  for every class in~$\pi_1(M,v_0)$, Gromov considers the infimum
  ($=$ minimum) of lengths of loops based at~$v_0$ representing this
  given class, leading to a function~$\pi_1(M,v_0)\to\R$.  By
  minimizing this function over preimages of the Hurewicz map~$\pi_1(M,v_0)\to H_1(M;\Z)$,
  Gromov obtains a map~$\length\colon
  H_1(M;\Z)\to \R_{\geq 0}$ that is positive definite, symmetric,
  satisfies the triangle inequality (\ie $\Delta_{\min}=0$ in the
  quasi-triangle inequality), and is subhomogeneous. There is a
  constant $C$ with the property: if $\bar\alpha$ denotes the image of
  $\alpha\in H_1(M;\Z)$ in $\HZR$, then 
  \[ N(\bar\alpha)\leq \length(\alpha)\leq N(\bar\alpha)+C
  \,.
  \]
  Then $\stabnorm{\bar\alpha}$ (in our notation) coincides with
  $\|\alpha_\R\|$ in Gromov's notation; however, these two norms are
  slightly differently introduced.
\end{comparisonliterature*}

\subsection{The stable norm on first homology}\label{subsec:stable-norm}

We introduce the stable norm on $H_1(M;\R)$ as the homogenization of
the minimal length map~$N:\HZR\to \R_{\geq 0}$.  Our approach to the
stable norm is close to Bangert's~\cite[Sec. 2]{bangert:90}. It can be
shown that the norm defined this way is equivalent to the classical
stable norm by Federer~\cite[\Paragraph 3]{federer.VP}, also called
the \emph{mass} of a real homology
class~\cite[4.15--4.17]{gromov:99}\ifwithappendixstable{}; see
Definition~\ref{def:federer-mass} for~$K=\R$\fi{} for a definition in
easier notation. The equivalence follows from Gromov's work~\cite[4.18
  and~4.20$\frac12$bis$_+$]{gromov:99} by using results from
  Federer~\cite[\Paragraph 5]{federer.VP}.
\ifwithappendixstable{}For the sake of being self-contained
we present the proof of this equivalence in Appendix~\ref{app:stable-norm}.\fi

\begin{proposition}[the stable norm]\label{prop:stabnormconstruction}
  Let $M$ be a closed connected Riemannian manifold. Then:
  \begin{enumarab}
  \item\label{i:Nisquasinorm}
    The minimal length map~$N \colon \HZR \to \R_{\geq 0}$
    from Definition~\ref{def:N} is a subhomogeneous
    quasi-norm on~$\HZR$ in the sense of
    Definition~\ref{def:quasinorm} (with $\triangle$-discrepancy
    at most~$2 \cdot \diam M$). We refer to~$N$ as the
    \emph{minimal length quasi-norm}.
  \item\label{i:Nhomogenisation}
    The homogenisation (in the sense of Proposition~\ref{prop:homog})
    of~$N$ on~$\HZR$ extends uniquely to a norm~$\stabnorm{\argu}$
    on~$H_1(M;\R)$, the \emph{stable norm}.
  \end{enumarab}
\end{proposition}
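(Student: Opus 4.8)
The plan is to verify the three quasi-norm axioms for $N$, establish subhomogeneity and the bound on the $\triangle$-discrepancy, and then invoke the general homogenisation machinery of Proposition~\ref{prop:homog} to obtain the stable norm, checking separately that the homogenisation is \emph{positive definite} on the lattice and that it extends uniquely from $\HZR$ to $H_1(M;\R)$.

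For part~\ititemref{i:Nisquasinorm}: positive definiteness of $N$ is exactly the content already recorded in Remark~\ref{rem:defN} --- the infimum is attained by a closed geodesic (Lemma~\ref{lem.loop.attain.inf}), which has positive length unless the class is $0$. Symmetry is immediate since reversing the orientation of a loop $\gamma$ gives a loop of the same length representing $h(\overline\gamma) = -h(\gamma)$ (Remark~\ref{rem:hurewiczproperties}). For the quasi-triangle inequality with constant $2\cdot\diam M$: given $x, y \in \HZR$ and loops $\gamma, \delta$ with $h(\gamma) = x$, $h(\delta) = y$ and lengths close to $N(x)$, $N(y)$, I would join the basepoint of $\gamma$ to the basepoint of $\delta$ by a minimizing geodesic $c$ of length at most $\diam M$, form the concatenation $\gamma * c * \delta * \overline c$, which is a loop with Hurewicz image $x + y$ by additivity, and whose length is at most $\maL(\gamma) + \maL(\delta) + 2\cdot\diam M$; taking infima gives $N(x+y) \le N(x) + N(y) + 2\cdot\diam M$. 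Subhomogeneity $N(k\cdot x) \le |k|\cdot N(x)$ follows by taking a near-optimal loop for $x$, traversing it $|k|$ times (reversing orientation if $k<0$), which represents $k\cdot x$ and has $|k|$ times the length.

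For part~\ititemref{i:Nhomogenisation}: by Proposition~\ref{prop:homog}, since $N$ is a subhomogeneous quasi-norm, the limit $\stabnorm{x} = \lim_k N(k\cdot x)/k$ exists, is homogeneous over $\Z$, satisfies the triangle inequality, and satisfies $\stabnorm{x}\le N(x)$; moreover $\HZR$ is torsion-free (being a lattice in $H_1(M;\R)$), which removes the caveat about torsion elements, but I still need positive definiteness. For this I would use a lower bound for $N$: a standard systole-type estimate gives a constant $c > 0$ with $N(x) \ge c$ for every nonzero $x \in \HZR$ (any non-contractible loop has length bounded below by twice the injectivity radius, say). Combined with subhomogeneity applied in the form $N(k\cdot x) \ge$ (some linear-growth lower bound) --- more precisely, one shows $\stabnorm{x} > 0$ for $x \ne 0$ by noting that on the finitely many primitive lattice points of a fixed basis box $N$ is bounded below, and homogeneity over $\Z$ propagates this; alternatively compare with any norm on $H_1(M;\R)\otimes$ via a chain of inequalities. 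Finally, to extend $\stabnorm{\argu}$ from the lattice $\HZR$ to all of $H_1(M;\R)$: the $\Z$-homogeneous, subadditive, positive-definite function on the lattice is the restriction of a unique norm on the ambient vector space --- one extends first over $\QQ\cdot\HZR$ by $\QQ$-homogeneity (well-defined by $\Z$-homogeneity) and checks the triangle inequality survives, then extends by continuity to $H_1(M;\R)$, uniqueness being forced by density of $\QQ\cdot\HZR$ and continuity of norms.

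The main obstacle I anticipate is the positive definiteness of the homogenisation $\stabnorm{\argu}$ on the lattice: unlike the quasi-norm axioms, this requires a genuine geometric input (a uniform lower bound on the length of homologically nontrivial loops), and one must argue that this lower bound is not destroyed in the limit defining $\stabnorm{\argu}$ --- i.e.\ that $N$ grows at least linearly along rays $k\mapsto k\cdot x$. The cleanest route is probably to first establish $\stabnorm{\argu} \le N$ and a crude two-sided comparison $c_1\cdot|x|_{\mathrm{euc}} \le N(x) \le c_2\cdot|x|_{\mathrm{euc}}$ on the lattice (upper bound from writing $x$ in the basis $\beta_1,\dots,\beta_b$ and using the quasi-triangle inequality; lower bound from the Hurewicz/integration pairing with the $1$-forms $\alpha^j$, which shows a loop representing $x$ must have length controlled below by $|\langle[\alpha^j], x\rangle|$), and then pass to the limit to get the same two-sided bound for $\stabnorm{\argu}$, which yields both positive definiteness and the continuity needed for the extension step.
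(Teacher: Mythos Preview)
Your proposal is correct and arrives at essentially the same approach as the paper. For part~\ititemref{i:Nisquasinorm} the paper likewise checks everything on representatives and joins loops by short paths; for part~\ititemref{i:Nhomogenisation} the paper's argument for positive definiteness is exactly your ``cleanest route'' via the integration pairing, phrased through the Jacobi map: bounding the operator norm of~$dJ$ gives $\|x\|_{H_1}\leq \mathcal{K}\cdot N(x)$ for an auxiliary inner-product norm on~$H_1(M;\R)$, and this linear lower bound survives homogenisation.

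Two minor remarks. First, your initial attempts at positive definiteness (a uniform systole bound~$N(x)\geq c$, or bounding~$N$ below on a finite set of primitive vectors) do not by themselves yield the needed linear growth along rays; you correctly identify this and move on, but it is worth being explicit that only the integration/Jacobi argument actually closes the gap. Second, you are more careful than the paper about the extension step from~$\HZR$ to~$H_1(M;\R)$: the paper leaves this implicit, whereas your extend-over-$\Q$-then-by-continuity argument is the right way to spell it out, and the two-sided comparison with a Euclidean norm you obtain is precisely what makes the continuous extension well-defined and unique.
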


The minimal length quasi-norm also satisfies a doubling property, as
we will see in Proposition~\ref{prop:doub.prop}.

\begin{proof}
  \Adref{i:Nisquasinorm}
  This can be checked on representatives, using the flexibility in
  regularity provided by Remark~\ref{rem:defN}; for the quasi-triangle
  inequality, we connect short loops by paths of length at most~$\diam
  M$.

  \Adref{i:Nhomogenisation} We define $\stabnorm{\argu}:H_1(M;\R)\to
  \R_{\geq 0}$ as the homogenisation of $N$, as described in
  Proposition~\ref{prop:homog}.  In particular, $\stabnorm{\argu}$ is
  homogeneous and satisfies the triangle inequality. Symmetry
  follows from the symmetry of $N$. It remains to prove
  $\stabnorm{x}>0$ for all~$x\in H_1(M;\R)\setminus\{0\}$.
  
  We fix an inner product~$\<\arrgu\>_{H_1}$ on~$H_1(M;\R)$, which
  induces a norm~$\|\argu\|_{H_1}$, and thus a length
  functional~$\maL^{H_1}$ for curves.  
  Let $J\colon \ucov M\to H_1(M;\R)$ be the Jacobi map (Definition~\ref{def:jacobi}). 
  This map is differentiable with $\upd J=\sum_{i=1}^b \alpha^i\beta_i\in \Omega^1(M)\otimes H_1(M;\R)$. 
  Let $\bigl\|\upd J\steLLe{p}\bigr\|_{\mathrm{Op}}$ denote the operator norm of
  \[\upd J\steLLe{p}\colon(T_pM,g_p)\to (H_1(M;\R),\<\arrgu\>_{H_1})\,.\] 
  As $\upd J\steLLe{p}$ depends continuously on~$p$ and as it is invariant under the deck
  transformation group, which acts cocompactly, we see that the
  supremum in
  \[ \mathcal{K}\ceq \sup_{p\in \ucov M}\bigl\|\upd J\steLLe{p}\bigr\|_{\mathrm{Op}}
  \] 
  is attained and thus finite.
  For every finite $\Cinftypw$-curve~$\gamma$, we get
  \[ \mathcal{L}^{H_1}(J\circ \gamma)\leq \mathcal{K}\cdot \mathcal{L}^g(\gamma)
  \,.
  \] 
  This immediately implies that for every~$x\in \HZR$ we obtain 
  \[ \|x\|_{H_1}\leq \mathcal{K}\cdot N(x)
  \,.
  \]
  We apply homogenisation, which does not affect~$\|\argu\|_{H_1}$ as
  it is already homogeneous, and get
  \[ \frac1{\mathcal{K}}\cdot\|x\|_{H_1}\leq \stabnorm{x}
  \,.
  \] 
  Positive definiteness on the left-hand side implies positive
  definiteness on the right-hand side.
\end{proof}

\begin{example}\label{example:torus.basics}
  Let $M=T^n$ be an $n$-dimensional torus, which we will equip with two
  kinds of metrics~$g$. We identify $T^n$ with $\mathbb{R}^n/\Gamma$
  where~$\Gamma$ is a lattice, \ie a discrete cocompact subgroup
  of~$\mathbb{R}^n$. In both cases we use the notation of
  Setup~\ref{setup:bases} and the construction of the Jacobi map in
  Definition~\ref{def:jacobi}.  It is convenient to choose the 
  basepoint~$x_0=0$ and the forms~$\alpha^i=dx^i$, where
  $x=(x^1,\ldots,x^n)$ are the standard coordinates of~$\ucov
  M=\mathbb{R}^n$. Then we get $J^i(x)=x^i$, \ie $J$ corresponds
  to the identity map under the canonical isomorphism~$H_1(T^n;\R) \cong\R^n$.
  Moreover, we have a compatible canonical 
  identifications~$H_1(T^n;\Z)\cong\HZR\cong\Gamma$.
  \begin{enumarab}
  \item\label{example:torus.flat} If $g$ is flat, then we can identify
    $(T^n,g)$ with~$\mathbb{R}^n/\Gamma$, where $\R^n$ carries the
    Euclidean metric.  Then $N$ and $\stabnorm{\argu}$ coincide with
    the Euclidean norm. The stable norm unit ball is thus the standard
    ball.
     
    Every geodesic on $(T^n,g)$ can be extended to~$\R$ and this
    extension lifts to a straight affine line in~$\mathbb{R}^n$.
    Parametrized by arclength, these geodesics are minimal.
  \item\label{example:torus.hedlund} Let $n\geq 3$.  For the
    Hedlund metrics~$g$ on~$T^n$, the stable norm is the
    $\ell^1$-norm with respect to the chosen basis of~$H_1(T^n;\R)$~\cite[Prop.~5.8]{bangert:90}
    (the argument given by Bangert for~$n=3$ can easily be extended
    to~$n\geq 3$).  In particular, the stable norm unit ball has
    the combinatorial type of the $n$-dimensional cross-polytope.
\end{enumarab}
\end{example}

\begin{remark}
  If $M$ is a closed connected Riemannian manifold,
  then the stable norm unit ball~$B \subset H_1(M;\R)$
  is convex, compact, and centrally symmetric. 
  Several types of convex bodies are possible, \eg polytopes, smooth domains, combinations of flat and smooth boundaries and much more.
  But not every convex, compact, and centrally symmetric body will arise as the stable norm unit ball for a Riemannian metric, see Remark~\ref{rem:normrealization}.

  However, if $\dim M \geq 3$ and $P$ is a convex compact centrally
  symmetric polyhedron in~$H_1(M;\R)$ whose vertices have a rational direction (with
  respect to~$\HZR$), then there exists a Riemannian
  metric on~$M$ whose stable norm unit ball coincides with~$P$. 
  Suitable metrics are provided by the Hedlund metrics,
  introduced on arbitrary closed connected manifolds of dimension at
  least~$3$ in the first author's diploma thesis~\cite{ammann:diploma}.
  The fact that every stable norm unit ball 
  as above may be achieved by such Hedlund metrics follows
  immediately from the necessary conditions on $\R$\=/ho\-mo\-lo\-gi\-cal\-ly
  minimal geodesics in this diploma thesis~\cite[IV.1.e Homolog
    minimale Geodätische]{ammann:diploma}; in fact, Bangert's minimal
  geodesics are $\R$\=/ho\-mo\-lo\-gi\-cal\-ly minimal geodesics and thus
  these methods determine the norm ball of the Hedlund
  examples~\cite[Korollar~IV.1.16, Bemerkung~IV.1.18]{ammann:diploma}.

  Hedlund metrics on closed connected manifolds were re-introduced
  later~\cite{babenkobalacheff,jotz} and the statements of this remark
  have been worked out explicitly in these articles. 
  We will give more details on Hedlund examples in Subsection~\ref{subsec:Hedlund.examples}.
\end{remark}

\begin{remark}\label{rem:normrealization}
  For a given closed manifold~$M$, one can ask whether there is a Riemannian metric on~$M$ such that the given norm is its stable norm.
  This question is wide open, even for tori. 
  However, some obstruction are known. 
  Points in $H_1(M;\R)\cong\HZR\otimes_\Z\R\cong (\HZR\otimes_\Z)\otimes_\Z\R$ are called irrational if their coefficients are linearly independent over $\Q$.

  On the $2$-torus, Bangert \cite{bangert:94} has proved that the stable norm is differentiable at every irrational point. 
  If the stable norm is differentiabile in a rational point, then the torus is foliated by minimal geodesics in this direction.

  In dimension larger than~$2$, there is more flexibility to obtain a given norm, but some obstructions remain. 
  On $n$-tori, $n\geq 3$, Burago, Ivanov, and Kleiner \cite{burago.ivanov.kleiner:97} have proved that in irrational points, the stable norm has a directional derivative in at least one non-radial direction. This implies that vertices of the stable norm ball have rational directions. In particular if $B$ is a polytope, then all of its vertices lie in rational directions.
\end{remark}

\subsection{Some definitions for convex bodies}\label{subsec:def.convex.bodies}

Let us recall some standard terminology for convex bodies.

Let us assume that $B$ is a closed convex subset of a finite-di\-men\-sion\-al $\R$-vector space~$V$ with~$0\in\inneres{B}$. 
An \emph{exposed face} of~$B$ is a non-empty subset~$F\subset B$ 
for which there exists an affine hyperplane~$H \subset V$ with
\[ H \cap \partial B = F
\qand H \cap \inneres B = \emptyset
\,;
\]
in this case, $H$ is called a \emph{supporting hyperplane} for the exposed face~$F$ of~$B$.  
An \emph{exposed vertex} of~$B$ is a point~$x \in \partial B$ such that
$\{x\}$ is an exposed face of~$B$. 
An \emph{exposed edge} of~$B$ is an exposed face of dimension~$1$.

If $H$ is a supporting hyperplane for~$F$, then $0\notin H$, and thus there is a unique $\omega_H$ in the dual space $V^*$ 
with $H= \{x\in V\mid \omega_H(x)=1\}$. Then $F= \{x\in B\mid \omega_H(x)=1\}$.

It is easy to show that every closed convex subset of~$V$ is the intersection of the closed halfspaces containing the given convex subset.
\ifwithdetails We refer to \url{https://math.stackexchange.com/questions/2130746/convex-sets-as-intersection-of-half-spaces} for a proof.\fi 
As a consequence $\partial B$ is the union of all exposed faces of $B$.

The definition of an exposed point is tightly related to the
definition of an \emph{extreme point}. 
A point $v$ in a convex body $B$
is called \emph{extreme} if it does not lie in any open line segment
between two different points of $B$. All exposed points are extreme,
but not vice versa. 

The usage of the words ``extreme'' and ``exposed'' may be considered as confusing:
one easily sees that a point $x\in \partial B$ is exposed with supporting hyperplane $H$ if,
and only if, it is the unique maximum of the linear function $\omega_H\colon V\to \R$ restricted to~$B$.
Not all extreme points satisfy this property.
In this article, exposed points will play an important role, but the
definition of extreme points will only be of marginal importance.

Often we will also abbreviate the term ``an exposed edge of $B$'' by simply
saying ``an edge of $B$'', and similarly we will abbreviate ``an exposed face'' by ``a face''.
However, one should keep in mind that usually in the literature  
more general notions of faces and edges are used.

For compact convex polytopes in
finite-dimensional $\R$-vector spaces, all faces (and vertices) are
exposed.

The main focus of the article lies on closed Riemannian manifolds with a finite number of geometrically distinct minimal geodesics. Using a result by Bangert \cite[Theorem~4.4]{bangert:90}, see also Subsection~\ref{subsec.rot.vector}, this assumption  implies that the stable unit ball in $\HdR$ is a polytope. Thus all faces are exposed, and extremality coincides with exposedness in this case.

\subsection{The dual norm}\label{subsec:dual.norm}

Finally, we collect elementary properties of the dual norm of the
stable norm. In fact, the following considerations apply to all
norms on finite-dimensional $\R$-vector spaces. For convenience,
we formulate most of the statements directly for the stable norm.

\begin{definition}[dual norm]
  Let $M$ be a closed connected Riemannian manifold.
  The dual norm of~$\stabnorm{\argu}$ on~$\HdR$
  is denoted by~$\stabnormdual{\argu}$. More
  explicitly: For all~$\omega \in \HdR$, we have
  \begin{equation}\label{def.stabnormdual} \stabnormdual{\omega}
  = \sup_{x \in H_1(M;\R)\setminus\{0\}} \frac{\bigl|\<\omega,x\>\bigr|}{\stabnorm x}\,.
  \end{equation}
\end{definition}
In view of compactness of~$\{x\in H_1(M;\R)\mid \stabnorm{x}=1\}$, the
supremum is attained (if $H_1(M;\R) \not\cong 0$).

For $\omega \in \HdR\setminus\{0\}$ we define the hyperplane 
\begin{equation}\label{eq:H.omega.def}
    H_\omega\ceq \bigl\{ x\in H_1(M;\R) \bigm|  \< \omega,x\> = 1\bigr\}\,.
\end{equation}  
Every hyperplane~$H$ with $0\not\in H$ is obtained this way.

Let us assume in this entire subsection that $M$ is a closed connected Riemannian manifold,
and that $B \subset H_1(M;\R)$ is the stable norm unit ball.

\begin{lemma}
With the definitions above $H_\omega$ is a supporting hyperplane for a face of $B$ if, and only if, $\stabnormdual \omega =1$.
\end{lemma}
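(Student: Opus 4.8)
The plan is to prove both implications directly from the defining formula~\eqref{def.stabnormdual} for the dual norm together with the elementary fact recorded just before this subsection, namely that $\partial B$ is the union of the exposed faces of $B$ and that a supporting hyperplane for a face is exactly a hyperplane meeting $\partial B$ but not $\inneres B$.

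First I would treat the ``if'' direction. Assume $\stabnormdual\omega = 1$. By compactness of the stable-norm unit sphere (noted after the definition of the dual norm), the supremum in~\eqref{def.stabnormdual} is attained, so there is $x_0 \in H_1(M;\R)$ with $\stabnorm{x_0} = 1$ and $\langle\omega, x_0\rangle = 1$ (replacing $x_0$ by $-x_0$ if necessary, using $\stabnorm{-x_0} = \stabnorm{x_0}$). Then $x_0 \in H_\omega \cap \partial B$, so this intersection is non-empty. Moreover, for every $x \in B$ we have $|\langle\omega, x\rangle| \le \stabnormdual\omega \cdot \stabnorm{x} \le 1$, so $B$ lies in the closed halfspace $\{\langle\omega,\argu\rangle \le 1\}$; in particular $H_\omega \cap \inneres B = \emptyset$, because any interior point $y$ of $B$ satisfies $\stabnorm{y} < 1$ and hence $\langle\omega, y\rangle < 1$. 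Therefore $F := H_\omega \cap \partial B$ is an exposed face of $B$ with supporting hyperplane $H_\omega$.

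Next the ``only if'' direction. Suppose $H_\omega$ is a supporting hyperplane for a face $F$ of $B$, i.e.\ $H_\omega \cap \partial B = F \ne \emptyset$ and $H_\omega \cap \inneres B = \emptyset$. Pick any $x_0 \in F$; then $\langle\omega, x_0\rangle = 1$ and $\stabnorm{x_0} = 1$ (it lies on $\partial B$), which already gives $\stabnormdual\omega \ge 1$. For the reverse inequality I would argue that $B$ lies entirely in the halfspace $\{\langle\omega,\argu\rangle \le 1\}$: since $0 \in \inneres B$ we have $\langle\omega, 0\rangle = 0 < 1$, so $B$ is contained in exactly one of the two closed halfspaces determined by $H_\omega$ (a connected set disjoint from the hyperplane $H_\omega \setminus \partial B$... more carefully: $B$ is convex with $0$ on the side where $\langle\omega,\argu\rangle < 1$, and $B \cap \{\langle\omega,\argu\rangle > 1\} = \emptyset$ since any such point, together with a point of $F$, would force an interior point of $B$ into $H_\omega$ by convexity — contradicting $H_\omega \cap \inneres B = \emptyset$). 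Hence $\langle\omega, x\rangle \le 1$ for all $x \in B$, and by central symmetry of $B$ also $|\langle\omega, x\rangle| \le 1$ for all $x \in B$, which by homogeneity means $|\langle\omega, x\rangle| \le \stabnorm{x}$ for all $x$, i.e.\ $\stabnormdual\omega \le 1$. Combining the two inequalities gives $\stabnormdual\omega = 1$.

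The only mildly delicate point is the claim that $B \subset \{\langle\omega,\argu\rangle\le 1\}$ in the ``only if'' direction — one must rule out that $B$ pokes through to the far side of $H_\omega$ while still having $H_\omega \cap \inneres B = \emptyset$. This is handled by the convexity argument above: a point $p \in B$ with $\langle\omega,p\rangle > 1$ and a point $q \in F$ with $\langle\omega,q\rangle = 1$ have a segment $[p,q] \subset B$ crossing $H_\omega$ at an interior point of $B$ (since the crossing point is a convex combination with positive weight on $p$, which lies strictly inside the relevant halfspace-complement, so the crossing point is interior), contradicting the hypothesis. Everything else is a routine unwinding of definitions and the compactness remark already stated in the text.
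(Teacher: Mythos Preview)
Your approach is the same as the paper's---a direct unwinding of the definitions---and the ``if'' direction is fine. However, the convexity argument in the ``only if'' direction has a genuine error. You claim that if $p \in B$ with $\langle\omega, p\rangle > 1$ and $q \in F$ with $\langle\omega, q\rangle = 1$, then the segment $[p,q]$ crosses $H_\omega$ at an interior point of $B$. But on this segment one has $\langle\omega, (1-t)q + tp\rangle = 1 + t(\langle\omega,p\rangle - 1) \geq 1$ for all $t \in [0,1]$, with equality only at $t=0$, i.e.\ at $q$ itself. So the only point of $[p,q]$ on $H_\omega$ is $q$, which lies in $\partial B$, not in~$\inneres B$---no contradiction arises.

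The fix is the one you already hinted at before switching to~$q$: use $0$ in place of~$q$. The segment $[0,p]$ lies in $B$, and since $\langle\omega,0\rangle = 0 < 1 < \langle\omega,p\rangle$, it meets $H_\omega$ at a unique point $r = t_0 p$ with $t_0 \in (0,1)$. Because $0 \in \inneres B$ and $p \in B$, every point $tp$ with $t \in [0,1)$ lies in $\inneres B$ (a standard fact for convex bodies with non-empty interior); since $t_0 < 1$, we get $r \in H_\omega \cap \inneres B$, contradicting the supporting-hyperplane hypothesis. With this correction your proof goes through and matches the paper's argument.
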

\begin{proof}
\ifwithdetails
We have
\begin{align*}
  H\cap \inneres{B}\neq \emptyset 
  &\gdw \exi{x\in\inneres{B}}\<\omega,x\>=1\\
  &\gdw \exi{x\in H_1(M;\R)\setminus\{0\}}\frac{\<\omega,x\>}{\stabnorm{x}}>1\\
  &\gdw \stabnormdual \omega >1\,.
\end{align*} 
and similarly  
\begin{align*}
  H\cap B\neq \emptyset 
  &\gdw \exi{x\in B}\<\omega,x\>=1\\
  &\gdw \exi{x\in H_1(M;\R)\setminus\{0\}}\frac{\<\omega,x\>}{\stabnorm{x}}\geq 1\\
  &\stackrel{(*)}{\gdw} \stabnormdual \omega \geq 1\,,
\end{align*} 
where at $(*)$ the direction $\Rightarrow$ follows directly from~\eqref{def.stabnormdual} and the direction $\Leftarrow$ follows from the fact that the supremum in there is attained.
The condition $H \cap \partial B = F$ and $H \cap \inneres B = \emptyset$ is thus equivalent to $\stabnormdual \omega = 1$.
\else
The proof is straigthforward, using that the supremum in~\eqref{def.stabnormdual} is attained.
\fi
\end{proof}  

\begin{definition}\label{def:face}
  Let $M$ be a closed connected Riemannian manifold,
  let $B \subset H_1(M;\R)$ be the stable norm
  unit ball, and let $\omega \in \HdR$ with~$\stabnormdual \omega =1$.
  We define the hyperplane $H_\omega$ as in \eqref{eq:H.omega.def}.
  Equation~\ref{def.stabnormdual} and~$\stabnormdual \omega =1$ imply $H_\omega \cap \inneres B=\emptyset$. 
  The \emph{exposed face of~$B$ defined by~$\omega$} is
  \[ F_\omega \ceq \bigl\{ x\in H_1(M;\R) \bigm| \stabnorm x = 1
  \text{ and }\< \omega,x\> = 1 \bigr\} = H_\omega \cap \partial B \,, \]
and as in the preceeding subsection, we define \emph{exposed points} and \emph{exposed edges}.
\end{definition}
As $F_\omega$ is the intersection of the closed and convex sets $B$ and~$H_\omega$, the exposed face~$F_\omega$ is closed and convex as well.

\begin{lemma}
  Let again $B \subset H_1(M;\R)$ be the stable norm
  unit ball of a closed connected Riemannian manifold. 
  If $\omega_0,\omega_1 \in \HdR$
      with~$\stabnormdual {\omega_0} = 1 = \stabnormdual {\omega_1}$ and
      $F_{\omega_0} \cap F_{\omega_1} \neq \emptyset$, then for every
      $t\in (0,1)$ and for $\omega_t\ceq (1-t)\omega_0 + t\omega_1\in
      \HdR$ we have
      \[\stabnormdual{\omega_t} = 1 \qand
        F_{\omega_t} = F_{\omega_0} \cap F_{\omega_1}\,.
      \]
\end{lemma}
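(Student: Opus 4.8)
I would prove the two claimed identities by a sequence of elementary inclusions, relying only on convexity of $B$, the definition of $F_\omega$ as $H_\omega \cap \partial B$, and the already-established equivalence ``$\stabnormdual\omega = 1 \iff H_\omega$ is a supporting hyperplane for a face of $B$.'' The heart of the matter is that all three functionals $\omega_0,\omega_1,\omega_t$ are bounded by $1$ on the unit ball $B$ (this is just $\stabnormdual{\omega_i}\le 1$), so every point $x\in B$ with $\<\omega_t,x\>=1$ must in fact satisfy $\<\omega_0,x\>=1=\<\omega_1,x\>$: writing $1 = \<\omega_t,x\> = (1-t)\<\omega_0,x\> + t\<\omega_1,x\>$ as a convex combination of two numbers each $\le 1$ forces both to equal $1$ (here $t\in(0,1)$ is used). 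This single observation drives everything.

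\textbf{Step 1: $\stabnormdual{\omega_t}=1$.} First I would check $\stabnormdual{\omega_t}\le 1$: for any $x$ with $\stabnorm x \le 1$ we have $|\<\omega_i,x\>|\le 1$ for $i=0,1$, hence $|\<\omega_t,x\>| \le (1-t)|\<\omega_0,x\>| + t|\<\omega_1,x\>| \le 1$ by the triangle inequality and $t\in(0,1)$. For the reverse inequality, pick any $x_0 \in F_{\omega_0}\cap F_{\omega_1}$ (non-empty by hypothesis); then $\stabnorm{x_0}=1$ and $\<\omega_0,x_0\>=\<\omega_1,x_0\>=1$, so $\<\omega_t,x_0\> = (1-t)\cdot 1 + t\cdot 1 = 1$, giving $\stabnormdual{\omega_t}\ge \<\omega_t,x_0\>/\stabnorm{x_0} = 1$. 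Thus $\stabnormdual{\omega_t}=1$, and in particular $F_{\omega_t} = H_{\omega_t}\cap \partial B$ is a genuine exposed face.

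\textbf{Step 2: $F_{\omega_t} = F_{\omega_0}\cap F_{\omega_1}$.} For the inclusion ``$\supseteq$'': if $x\in F_{\omega_0}\cap F_{\omega_1}$ then $\stabnorm x = 1$ and $\<\omega_0,x\>=\<\omega_1,x\>=1$, so $\<\omega_t,x\> = 1$ by the same convex-combination computation as above, whence $x\in F_{\omega_t}$. For ``$\subseteq$'': let $x\in F_{\omega_t}$, so $\stabnorm x = 1$ and $\<\omega_t,x\>=1$. Since $\stabnorm x = 1$ we have $\<\omega_0,x\>\le 1$ and $\<\omega_1,x\>\le 1$ (using $\stabnormdual{\omega_i}=1$). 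Now $1 = \<\omega_t,x\> = (1-t)\<\omega_0,x\> + t\<\omega_1,x\>$ expresses $1$ as a convex combination, with weights $1-t, t$ both strictly positive, of two reals each $\le 1$; this forces $\<\omega_0,x\> = 1 = \<\omega_1,x\>$. Hence $x\in F_{\omega_0}\cap F_{\omega_1}$. Combining the two inclusions gives the claimed equality.

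\textbf{Main obstacle.} There is essentially no obstacle here — the only point requiring care is that $t$ lies strictly in $(0,1)$, which is precisely what makes the convex-combination-equals-extreme-value argument work in the ``$\subseteq$'' direction (at $t=0$ or $t=1$ the conclusion would fail, as then $\omega_t$ equals $\omega_0$ or $\omega_1$ and the intersection need not be the whole face). I would also make sure to invoke the preceding lemma to know $F_{\omega_t} = H_{\omega_t}\cap \partial B$ holds once $\stabnormdual{\omega_t}=1$ is established, so that the set-theoretic description of $F_{\omega_t}$ used in Step 2 is legitimate; everything else is routine bookkeeping with the duality formula~\eqref{def.stabnormdual}.
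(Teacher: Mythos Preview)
Your proof is correct and follows essentially the same approach as the paper's own proof: both establish $\stabnormdual{\omega_t}=1$ via the triangle inequality and a witness from $F_{\omega_0}\cap F_{\omega_1}$, and both prove the face equality by the convex-combination argument forcing $\langle\omega_0,x\rangle=\langle\omega_1,x\rangle=1$. Your presentation is slightly more explicit about why $t\in(0,1)$ is needed, but the underlying argument is the same.
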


\begin{proof}
  We first show~$\stabnormdual {\omega_t} =1$: From the triangle
  inequality for $\stabnormdual\argu$ we get $\stabnormdual{\omega_t}
  \leq 1$.  Conversely, let $x \in F_{\omega_0} \cap
  F_{\omega_1}$. Then
  \[ \stabnormdual {\omega_t} \geq \frac{|\<\omega_t,x\>|}{\stabnorm x}
  = 
  \frac{\bigl| (1-t)\<\omega_0, x\> + t\<\omega_1,x\>\bigr|}1
  = 1
  \,.
  \]

  We clearly have~$F_{\omega_0} \cap F_{\omega_1} \subset
  F_{\omega_t}$.  Conversely, let $x \in F_{\omega_t}$. Then for
  $i\in\{0,1\}$ we have $|\<\omega_i,x\>| \leq
  \stabnormdual{\omega_i}\cdot \stabnorm{x} = 1$ by 
  the defining equation~\eqref{def.stabnormdual}. This implies
  \begin{align*}
    1
  & = \<\omega,x\>
    = (1-t)\<\omega_0,x\> + t\<\omega_1, x\> 
  \leq \bigl( (1-t)\stabnormdual{\omega_0} + t\stabnormdual{\omega_1}\bigr)
  \cdot \stabnorm x
  = 1\,.
  \end{align*}
  Therefore, $\<\omega_0,x\> = 1 = \<\omega_1,x\>$, which shows
  that~$x \in F_{\omega_0} \cap F_{\omega_1}$.
\end{proof}

\section{Bounded difference of the minimal length and the stable norm}
\label{sec:findist}

We show for general closed connected Riemannian manifolds that,
on the integral part of homology, the difference between
the minimal length and the stable norm is uniformly bounded.
This generalizes a result of D.~Burago, who
considered the special case of tori~\cite{burago:soviet:92}. 

\begin{theorem}\label{thm:stabnormN}
  Let $M$ be a closed connected Riemannian manifold.
  Then, there is a constant~$D\in \R$ with 
  \[ \fa{x \in \HZR}
  \stabnorm x \leq N(x) \leq \stabnorm x + D
  \,.
  \]
  A concrete bound for~$D$ is provided in Equation~\eqref{eq.D.def}.
\end{theorem}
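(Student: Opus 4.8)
The plan is to prove the two inequalities separately, the left one being essentially immediate from the construction of the stable norm and the right one requiring the actual work.

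\textbf{The lower bound $\stabnorm{x}\leq N(x)$.} This is free: by Proposition~\ref{prop:stabnormconstruction}~\ititemref{i:Nisquasinorm}, $N$ is a subhomogeneous quasi-norm on $\HZR$, so by the last statement of Proposition~\ref{prop:homog} its homogenisation — which is $\stabnorm{\argu}$ restricted to $\HZR$ — satisfies $\stabnorm{x}\leq N(x)$ for all $x\in\HZR$. Nothing else is needed.

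\textbf{The upper bound $N(x)\leq \stabnorm{x}+D$.} The strategy is to establish a \emph{doubling property} for $N$ with an explicit doubling constant $D$, and then invoke Proposition~\ref{prop:qndoubling}, which says precisely that the doubling property with constant $D$ forces $N(x)\leq\stabnorm{x}+D$. So the whole problem reduces to: given $x\in\HZR$, bound $2\cdot N(x)$ by $N(2\cdot x)$ plus a universal constant. To do this I would take a shortest loop $\gamma$ representing $2\cdot x$, of length $N(2\cdot x)$, parametrised by arclength on $[0,L]$ with $L=N(2\cdot x)$ (using Remark~\ref{rem:defN} that the infimum is attained, and after the reparametrisation there, a closed geodesic when $2x\neq 0$; the case $2x=0$ forces $x=0$ since $\HZR$ is torsion-free, so it is trivial). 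The idea, following D.~Burago's argument for tori, is to find a parameter $t\in[0,L]$ such that the ``first half'' $\gamma|_{[0,t]}$ closed up by a short connecting arc, and the ``second half'' $\gamma|_{[t,L]}$ closed up similarly, each represent a class homologous to $x$ — or rather, whose classes sum to $2x$ and differ by a controlled amount. Concretely, compose $\gamma$ with the Jacobi map $J\colon\ucov M\to H_1(M;\R)$ after lifting: $J\circ\ucov\gamma$ is a curve in $H_1(M;\R)$ from some point $p$ to $p+2x$ (identifying the deck translation with $2x$ via $h$ and Remark~\ref{rem:jacobiproperties}). By the intermediate value theorem applied to the real-valued function $t\mapsto \langle \omega, J(\ucov\gamma(t))-p\rangle$ for a suitable supporting functional $\omega$, or more robustly by a connectedness/averaging argument, find $t$ where the partial Jacobi values are ``balanced'' near $x$; then the two halves, each closed off by a path of length at most $\diam M$ through a net of basepoints realising all residues of $H_1(M;\R)/\HZR$ up to bounded error, give loops $\gamma_1,\gamma_2$ with $h(\gamma_1),h(\gamma_2)\in\HZR$, $h(\gamma_1)+h(\gamma_2)=2x$ (after correcting by the connecting arcs, which contribute lattice classes), and $\maL(\gamma_1)+\maL(\gamma_2)\leq N(2x)+2\diam M$. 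The surgery must be arranged so that $h(\gamma_1)=h(\gamma_2)=x$; this can be forced by choosing the connecting arcs to absorb the (bounded) defect, at the cost of enlarging the additive constant — here one uses that any class in $\HZR$ of bounded $\stabnorm{\argu}$-norm is represented by a loop of bounded length, i.e.\ a quantitative version of properness of $N$. Then $2\cdot N(x)\leq \maL(\gamma_1)+\maL(\gamma_2)\leq N(2\cdot x)+D$ with $D$ explicit in terms of $\diam M$ and the finitely many ``correction loops''. Feeding this $D$ into Proposition~\ref{prop:qndoubling} finishes the proof, and Equation~\eqref{eq.D.def} records the resulting bound.

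\textbf{Main obstacle.} The delicate point is the homological bookkeeping in the surgery: after cutting $\gamma$ and reconnecting the two arcs, one must guarantee that \emph{both} resulting loops land exactly on the class $x\in\HZR$ (not merely sum to $2x$, and not merely land in $\HZR$ at all — the endpoints of the half-arcs are generic points of $M$, so closing them up introduces genuinely nontrivial, though uniformly bounded, homology classes). Controlling this defect uniformly in $x$ — i.e.\ showing the correction can always be made with loops drawn from a fixed finite family, or with connecting paths of length $\leq\diam M$ whose contribution to $h$ is the prescribed value — is the crux of Burago's theorem and the only part that is not bookkeeping. Everything else (existence of minimising loops, the Jacobi-map/intermediate-value step, and the final appeal to Proposition~\ref{prop:qndoubling}) is routine given the results already established in the excerpt.
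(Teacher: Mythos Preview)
Your overall architecture matches the paper exactly: the lower bound is Proposition~\ref{prop:homog} applied to the subhomogeneous quasi-norm~$N$, and the upper bound follows from Proposition~\ref{prop:qndoubling} once a doubling property for~$N$ is established (this is Proposition~\ref{prop:doub.prop} in the paper). The gap is in how you obtain the doubling property. Your proposed mechanism --- cut the shortest loop~$\gamma$ representing~$2x$ at a \emph{single} parameter~$t$ found via the intermediate value theorem on one linear functional~$\langle\omega, J(\ucov\gamma(\cdot))\rangle$ --- does not control the full homological defect when $b=\dim_\R H_1(M;\R)\geq 2$. Matching one functional leaves the remaining $b-1$ coordinates of $h(\gamma|_{[0,t]})-x$ unconstrained, and there is no a priori bound on them independent of~$x$: the Jacobi image $t\mapsto J(\ucov\gamma(t))$ is merely a Lipschitz curve in~$\R^b$ from a point~$p$ to~$p+2x$, and such a curve need not pass within any fixed distance of the midpoint. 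Asserting that the defect is uniformly bounded is essentially the theorem itself, so the argument becomes circular at precisely the step you flag as the ``main obstacle''.

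The paper (following Burago) resolves this with the \emph{splitting lemma} (Lemma~\ref{lem:split}, proved via Borsuk--Ulam): instead of a single cut, one finds at most~$\lceil b/2\rceil$ disjoint subintervals of~$[0,L]$ whose total Jacobi increment is \emph{exactly} half of~$h(\gamma)=2x$. Rearranging these pieces and the complementary pieces into two loops~$\gamma_+,\gamma_-$ using at most $b+1$ connecting arcs of length~$\leq\diam M$ produces $h(\gamma_\pm)$ differing from~$x$ only by the Hurewicz values of those arcs; by Lemma~\ref{lemma.h.rho} each such value has stable norm at most~$\maJ\cdot\diam M$, so the defects are bounded independently of~$x$. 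After correcting by short loops as you describe, the doubling constant~$D$ of Equation~\eqref{eq.D.def} drops out. The missing ingredient is thus precisely Burago's multi-interval splitting, not a single intermediate-value cut.
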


The theorem can be considered as a continuous analogue of
Remark~\ref{rem:wordlength}. 
A modified version of this theorem is mentioned by Gromov~\cite[Sec.~4.21, Remarks$_+$(b), equation~$(\triangle)$]{gromov:99}.
Gromov knew that it follows from Burago's
techniques~\cite{burago:soviet:92}, but did not provide
details.

Burago's proof in the torus case uses the following splitting lemma
going back to Nazarov, which is also the starting point for us.

\begin{lemma}[splitting lemma~\protect{\cite[Lemma~2]{burago:soviet:92}}]\label{lem:split}
  Let $V$ be an $\R$-vector space of dimension~$b \in \N$ and
  let $\rho \colon [0,\ell] \to V$ be a continuous path. Then there
  exists a number~$d \in \{0,\dots,\lceil b/2 \rceil\}$ and 
  pairwise disjoint closed subintervals~$[\sigma_i,\tau_i]$
  of~$[0,\ell]$ for~$i \in \{1,\dots,d\}$ with
  \begin{equation}
    \sum_{i=1}^d \bigl(\rho(\tau_i) - \rho(\sigma_i)\bigr)
     = \frac12 \cdot \bigl(\rho(\ell) - \rho(0)\bigr)\,.
     \label{eq.half.of.it}
  \end{equation}
  We order these intervals in such a way that~$0\leq \sigma_1 < \tau_1
  < \sigma_2 < \dots < \tau_d\leq \ell$ and call this sequence a
  \emph{splitting partition} for~$\rho$.
\end{lemma}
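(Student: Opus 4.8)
The plan is to deduce the splitting lemma from the Borsuk--Ulam theorem, by a parametrised sign assignment; this is, in essence, a vector-valued form of the Hobby--Rice theorem. After translating $\rho$ we may assume $\rho(0)=0$, so that the task is to find finitely many pairwise disjoint subintervals of $[0,\ell]$ whose $\rho$-increments add up to $\tfrac12\rho(\ell)$.

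First I would construct a continuous odd map from the $b$-sphere to $V$. For a point $a=(a_0,\dots,a_b)$ of $S^b=\{a\in\R^{b+1}\mid\sum_{j}a_j^2=1\}$, subdivide $[0,\ell]$ into $b+1$ consecutive closed intervals of lengths $\ell a_0^2,\dots,\ell a_b^2$, with breakpoints $0=t_0(a)\le t_1(a)\le\dots\le t_{b+1}(a)=\ell$, and put
\[
  F(a):=\sum_{j=1}^{b+1}\sgn(a_{j-1})\,\bigl(\rho(t_j(a))-\rho(t_{j-1}(a))\bigr)\in V\,.
\]
The value of $\sgn(0)$ is immaterial: if $a_{j-1}=0$ then $t_j(a)=t_{j-1}(a)$ and the $j$-th summand vanishes, while near $\{a_{j-1}=0\}$ the interval length $\ell a_{j-1}^2$ is small, so by uniform continuity of $\rho$ the $j$-th summand has small norm; hence $F$ is continuous. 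Since each $t_j(a)$ depends only on the numbers $a_{j-1}^2$, the breakpoints are invariant under $a\mapsto-a$ while the signs change, so $F$ is odd. Identifying $V\cong\R^b$, the Borsuk--Ulam theorem supplies $a^\ast\in S^b$ with $F(a^\ast)=0$.

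Next I would read off the intervals. Set $P:=\{j\mid a^\ast_{j-1}>0\}$ and $M:=\{j\mid a^\ast_{j-1}<0\}$. Degenerate intervals have zero $\rho$-increment, so $F(a^\ast)=0$ says
\[
  \sum_{j\in P}\bigl(\rho(t_j(a^\ast))-\rho(t_{j-1}(a^\ast))\bigr)=\sum_{j\in M}\bigl(\rho(t_j(a^\ast))-\rho(t_{j-1}(a^\ast))\bigr)\,,
\]
while for the same reason the two sides sum to $\rho(\ell)-\rho(0)=\rho(\ell)$; hence each side equals $\tfrac12\rho(\ell)$. Replacing $a^\ast$ by $-a^\ast$ if necessary (which preserves $F=0$ and interchanges $P$ and $M$), we may assume $\#P\le\#M$; since $\#P+\#M\le b+1$, this forces $\#P\le\lfloor(b+1)/2\rfloor=\lceil b/2\rceil$. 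Finally, I would pass to the closed intervals $[t_{j-1}(a^\ast),t_j(a^\ast)]$ with $j\in P$, discard the degenerate ones, and merge any two sharing an endpoint; merging leaves the sum unchanged and only lowers the count, and the resulting family is genuinely pairwise disjoint and has at most $\lceil b/2\rceil$ members. Ordering them gives a splitting partition.

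I expect the one delicate point to be the bookkeeping at degenerate intervals --- verifying that $F$ is continuous across the coordinate hyperplanes (not merely on their complement), and ensuring that the final family is ``pairwise disjoint closed'' in the literal sense, for which the merging step is needed. The substantive input is the single appeal to Borsuk--Ulam; everything else is elementary.
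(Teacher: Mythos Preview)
Your proof is correct and is essentially the same as the paper's (which it attributes to Burago, simplifying Nazarov): both parametrise sign assignments over $S^b$ via squared coordinates, apply Borsuk--Ulam to the resulting odd map, and then select the smaller of the positive/negative families to get at most $\lceil b/2\rceil$ intervals. The only cosmetic difference is that the paper takes the connected components of $A_+:=\bigcup_{z_k>0}I_k$ (which absorbs the merging automatically), whereas you count $\#P$ first and then merge; your version is slightly more explicit about continuity across the coordinate hyperplanes and about the disjointness bookkeeping.
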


The original proof by Nazarov was simplified by
Burago~\cite{burago:soviet:92}, using ideas of Burago and Perelman. As
our notation and statement slightly differs, we repeat Burago's
argument for the sake of self-containedness.

\begin{proof}
  Without loss of generality we assume $V=\R^b$. For a point
  $x=(x_1,\ldots,x_{b+1})\in S^b$ and $k\in\{0,1,\ldots,b+1\}$ we
  define
  \[ t_k(x) := \ell\cdot\sum_{i=1}^k x_i{}^2,
  \quad v(x) := \sum_{i=1}^{b+1} \sign x_i \cdot \bigl(\rho\bigl(t_i(x)\bigr)- \rho\bigl(t_{i-1}(x)\bigr)\bigr)\,.
  \]
  Then $v\colon S^b\to \R^b$ is a continuous map with  
  $v(-x)=-v(x)$ for all~$x\in S^b$. By the Borsuk--Ulam theorem, 
  there exists a~$z=(z_1,\ldots,z_{b+1})\in S^b$ with~$v(z)=0$.

  For~$k\in\{1,\ldots,b+1\}$, we consider the interval~$I_k\ceq
  [t_{k-1}(z),t_k(z)]$. Then, $I_k$ has positive length if and only if 
  $z_k\neq 0$. Thus the sets
  \[ A_+\ceq \bigcup_{k \text{ with } z_k>0}I_k
  \qand A_-\ceq \bigcup_{k \text{ with } z_k<0}I_k
  \]
  satisfy
  \[ [0,\ell]=A_+\cup A_-,\quad
  \inneres{A}_+\cap \inneres A_-=\emptyset, \quad
  \overline{\inneres{A}}_\pm=A_\pm
  \,,
  \]
  and both $A_+$ and $A_-$ are
  finite unions of disjoint closed intervals of positive length. Then
  $A_+$ or $A_-$ is the union of $d\leq \lfloor (b+1)/2 \rfloor=
  \lceil b/2 \rceil$ such intervals, denoted as~$[\sigma_i,\tau_i]$
  with~$0\leq \sigma_1 < \tau_1 < \sigma_2 < \dots < \tau_d\leq
  \ell$. By construction, $v(z)=0$ is equivalent to
  Equation~\eqref{eq.half.of.it}.
\end{proof}

\begin{corollary}\label{cor:split}
  Let $M$ be a closed connected Riemannian manifold and $b\ceq \dim_\R  H_1(M;\R)$.  
  Let~$\gamma$ be a continuous loop in~$M$, viewed as a
  periodic continuous curve $\R\to M$ with period~$\ell>0$. Let $\ucov \gamma$
  be a lift of~$\gamma$ to~$\ucov M$. Then, there is a real
  number~$a\in [0,\ell)$, an integer~$d\in \{0,1,\ldots,\lceil b/2 \rceil\}$, 
  and real numbers~$\sigma_i,\tau_i$ with $a= \sigma_1< \tau_1 <
  \sigma_2 < \dots < \tau_d\leq a+\ell$ and 
 \begin{equation*}
    \sum_{i=1}^d \bigl(J(\ucov\gamma(\tau_i)) - J(\ucov\gamma(\sigma_i))\bigr)
    = \frac12 \cdot \bigl(J(\ucov\gamma(\ell)) - J(\ucov\gamma(0))\bigr)
    =  \frac12 \cdot h\bigl(\gamma\stelle{[0,\ell]}\bigr)\,.
  \end{equation*}   
\end{corollary}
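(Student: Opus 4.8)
The plan is to reduce Corollary~\ref{cor:split} directly to the Splitting Lemma (Lemma~\ref{lem:split}), applied to a suitable path obtained from the Jacobi map. First I would set $V := H_1(M;\R)$, which is a real vector space of dimension $b$, and consider the path $\rho := J \circ \ucov\gamma\restriction_{[0,\ell]} \colon [0,\ell] \to V$. Since $\ucov\gamma$ is a lift of the (smooth, hence continuous) periodic curve $\gamma$ and $J$ is continuous (indeed smooth, by Remark~\ref{rem:jacobiproperties}), the map $\rho$ is continuous, so Lemma~\ref{lem:split} applies and yields a number $d \in \{0,\dots,\lceil b/2\rceil\}$ together with pairwise disjoint closed subintervals $[\sigma_i,\tau_i] \subset [0,\ell]$, ordered so that $0 \le \sigma_1 < \tau_1 < \dots < \tau_d \le \ell$, satisfying
\[
  \sum_{i=1}^d \bigl(J(\ucov\gamma(\tau_i)) - J(\ucov\gamma(\sigma_i))\bigr)
  = \tfrac12 \cdot \bigl(J(\ucov\gamma(\ell)) - J(\ucov\gamma(0))\bigr).
\]

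The remaining points are bookkeeping. For the second equality in the statement, I would invoke Remark~\ref{rem:jacobiproperties}: since $\gamma\restriction_{[0,\ell]}$ is a smooth curve with lift $\ucov\gamma\restriction_{[0,\ell]}$, we have $h(\gamma\restriction_{[0,\ell]}) = J(\ucov\gamma(\ell)) - J(\ucov\gamma(0))$, so $\tfrac12(J(\ucov\gamma(\ell)) - J(\ucov\gamma(0))) = \tfrac12 \, h(\gamma\restriction_{[0,\ell]})$. (Strictly, $\gamma\restriction_{[0,\ell]}$ is a loop and $h$ of a loop is the honest singular class in $\HZR$, by Remark~\ref{rem:hurewiczint}, but that is not needed here.) It remains to produce the offset $a \in [0,\ell)$: the corollary states the stronger normalisation $a = \sigma_1$ rather than merely $\sigma_1 \ge 0$. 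If $d \ge 1$, simply take $a := \sigma_1$; then automatically $a \in [0,\ell)$ since $\sigma_1 < \tau_1 \le \ell$, and the chain of inequalities $a = \sigma_1 < \tau_1 < \dots < \tau_d \le \ell = a + \ell$ — wait, this needs care: we need $\tau_d \le a + \ell$, and since $a \ge 0$ and $\tau_d \le \ell$ we get $\tau_d \le \ell \le a + \ell$, which is fine. If $d = 0$, the sum is empty, which forces $J(\ucov\gamma(\ell)) = J(\ucov\gamma(0))$, i.e. $h(\gamma\restriction_{[0,\ell]}) = 0$; then any $a \in [0,\ell)$ works vacuously (say $a = 0$), and the displayed equation reads $0 = 0$.

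I do not anticipate a genuine obstacle: the corollary is a more or less immediate translation of Lemma~\ref{lem:split} into the geometric setting, with the Jacobi map playing the role of the abstract path. The only mild subtlety is matching the normalisation conventions — the corollary phrases the partition as living in a window $[a, a+\ell]$ starting at $a = \sigma_1$, whereas the lemma places it inside $[0,\ell]$ starting somewhere $\ge 0$; since $[\sigma_1, \tau_d] \subset [0,\ell] \subset [\sigma_1, \sigma_1 + \ell]$, this reindexing is harmless. A second point worth a sentence is that one could alternatively apply the periodicity of $\gamma$ to shift the window, but no such shift is actually required for the statement as given, so I would keep the argument as the clean one-line reduction above.
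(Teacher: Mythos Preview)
Your proposal is correct and follows exactly the paper's approach: apply Lemma~\ref{lem:split} to $\rho = J \circ \ucov\gamma|_{[0,\ell]}$, set $a := \sigma_1$, and invoke Remark~\ref{rem:jacobiproperties} for the final equality. One minor slip worth fixing: the corollary assumes $\gamma$ is only \emph{continuous}, not smooth as you parenthetically claim, but this is harmless since the splitting lemma needs only continuity of $\rho$ and the identity $J(\ucov\gamma(\ell)) - J(\ucov\gamma(0)) = h(\gamma|_{[0,\ell]})$ for a loop follows from the deck-transformation equivariance of~$J$.
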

\begin{proof}
  We apply Lemma~\ref{lem:split} to~$J\circ \gamma$ and set $a\ceq
  \sigma_1$.  For the last equality, we use the relation between the
  geometric Hurewicz map and the Jacobi map
  (Remark~\ref{rem:jacobiproperties}).
\end{proof}

To prove Theorem~\ref{thm:stabnormN}, we introduce some geometric
quantitites:  Let $M$ be a closed connected Riemannian manifold
and let $b := \dim_\R H_1(M;\R)$. The operator
norm~$\norm{d_xJ}_{g,\stable}$ of~$d_xJ\colon (T_x\ucov M,g)\to
(H_1(M;\R),\stabnorm{\argu})$ is continuous in~$x \in \ucov M$ (see
Remark~\ref{rem:jacobiproperties}) and invariant under deck
transformations of the universal covering~$\ucov M\to M$. Hence,
there is a uniform upper bound and we may define
\begin{equation}\label{eq.def.maJ}
  \maJ\ceq \sup_{x\in \ucov M} \norm{d_xJ}_{g,\stable}<\infty\,.
\end{equation}
\begin{lemma}\label{lemma.h.rho}
  In this situation,
  for every $\Cinftypw$-curve~$\rho\colon[a,b]\to M$, we have
  \[\stabnorm{h(\rho)}\leq \maJ\cdot \maL(\rho)
  \,.
  \]
\end{lemma}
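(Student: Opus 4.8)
The plan is to bound the length of the curve $J \circ \rho$ in $(H_1(M;\R), \stabnorm{\argu})$ by the length of $\rho$ in $(M,g)$, using the pointwise operator-norm bound $\maJ$ defined in~\eqref{eq.def.maJ}, and then observe that the endpoint displacement of $J \circ \rho$ is exactly $h(\rho)$ by Remark~\ref{rem:jacobiproperties}. Since the stable norm of a vector is at most the $\stabnorm{\argu}$-length of any curve joining $0$ to that vector (translating a curve from one endpoint to another), this yields the claim.

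In more detail: first I would reduce to the smooth case on each piece. Write $a = a_0 < a_1 < \dots < a_k = b$ so that $\rho\stelle{[a_{j-1},a_j]}$ is smooth for each $j$; by additivity of $h$ (Remark~\ref{rem:hurewiczproperties}) and the (quasi-)triangle... in fact the genuine triangle inequality for $\stabnorm{\argu}$, together with additivity of length, it suffices to treat a single smooth piece, say $\rho \colon [a,b] \to M$ smooth. Lift to $\ucov\rho \colon [a,b] \to \ucov M$. Then $J \circ \ucov\rho \colon [a,b] \to H_1(M;\R)$ is a smooth curve whose velocity at time $t$ is $d_{\ucov\rho(t)}J\bigl(\dot{\ucov\rho}(t)\bigr)$, so by definition of the operator norm and of $\maJ$,
\[
  \stabnorm[\big]{\tfrac{d}{dt}(J \circ \ucov\rho)(t)}
  \leq \norm{d_{\ucov\rho(t)}J}_{g,\stable} \cdot \bigl|\dot{\ucov\rho}(t)\bigr|_g
  \leq \maJ \cdot \bigl|\dot\rho(t)\bigr|_g\,,
\]
and integrating over $[a,b]$ gives $\mathcal{L}^{\stable}(J \circ \ucov\rho) \leq \maJ \cdot \maL(\rho)$, where $\mathcal{L}^{\stable}$ denotes the length functional on $H_1(M;\R)$ associated to the norm $\stabnorm{\argu}$.

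Finally, by Remark~\ref{rem:jacobiproperties}, $h(\rho) = J(\ucov\rho(b)) - J(\ucov\rho(a))$, and for any curve $c \colon [a,b] \to H_1(M;\R)$ in a normed vector space one has $\norm{c(b) - c(a)} \leq \mathcal{L}(c)$ (this is immediate from $c(b)-c(a) = \int_a^b \dot c(t)\,dt$ and the integral triangle inequality, valid for piecewise smooth $c$). Hence
\[
  \stabnorm{h(\rho)} \leq \mathcal{L}^{\stable}(J \circ \ucov\rho) \leq \maJ \cdot \maL(\rho)\,,
\]
as claimed. I do not anticipate a genuine obstacle here; the only mild care needed is the bookkeeping across the finitely many smooth pieces of a $\Cinftypw$-curve and the observation that the operator norm bound is uniform over $\ucov M$ (which is exactly the content of $\maJ < \infty$ in~\eqref{eq.def.maJ}, established via deck-transformation invariance and cocompactness). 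This lemma is essentially the $\stabnorm{\argu}$-version of the estimate $\|x\|_{H_1} \leq \mathcal{K} \cdot N(x)$ proven inside Proposition~\ref{prop:stabnormconstruction}, with the auxiliary inner-product norm replaced by the stable norm itself.
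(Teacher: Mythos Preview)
Your proof is correct and follows essentially the same approach as the paper: lift $\rho$ to $\ucov M$, use Remark~\ref{rem:jacobiproperties} to identify $h(\rho)$ with the endpoint difference of $J\circ\ucov\rho$, and bound this via the integral triangle inequality together with the pointwise operator-norm bound~$\maJ$. The paper's version is slightly more terse (it does not separately spell out the reduction to smooth pieces or the inequality $\norm{c(b)-c(a)}\leq\mathcal L(c)$), but the argument is the same.
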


\begin{proof}
  Using Remark~\ref{rem:jacobiproperties}, we obtain for a lift $\tilde \rho$ of $\rho$
  \begin{align*}
    \stabnorm{h(\rho)}
    & =  \stabnorm[big]{J\bigl(\tilde\rho(b)\bigr)-J\bigl(\tilde\rho(a)\bigr)}\\
    &\leq  \int_a^b \stabnorm[Big]{\frac{d}{dt} J\bigl(\tilde\rho(t)\bigr)}\,dt\\
    &\leq  \int_a^b  \norm{\upd_{\tilde\rho(t)}J}_{g,\stable} \cdot \norm{\dot\rho(t)}_g\,dt\\
    &\leq  \maJ\cdot \maL(\rho)\,.
    \qedhere
  \end{align*}
\end{proof}

We now set
\begin{align}
  K\ceq
\sup
\Bigl\{N(y)
\Bigm| y\in \HZR,\, \stabnorm{y}\leq  \maJ\cdot \frac{b+1}2\cdot \diam M
\Bigr\}\,.
\label{eq:Kdef}
\end{align}

The following doubling property will provide the essential step
for the proof of Theorem~\ref{thm:stabnormN}.

\begin{proposition}[Doubling property for the minimal length quasi-norm]\label{prop:doub.prop}
  The minimal length quasi-norm on a closed connected Riemannian manifold~$M$
  with~$b = \dim_\R H_1(M;\R)$ satisfies the doubling property from
  Definition~\ref{def:quasinorm} for the constant
\begin{equation}\label{eq.D.def}
  D \ceq  (b+5)\cdot\diam M + 2\cdot K\,.
\end{equation}
  Here, $K$ is the constant from Equation~\eqref{eq:Kdef}.
\end{proposition}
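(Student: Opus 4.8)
The plan is to prove the doubling inequality $2\cdot N(x) \le N(2\cdot x) + D$ by starting from a near-optimal loop representing $2\cdot x$, splitting it in two using the splitting lemma, and estimating the lengths of the two halves together with the correction paths needed to close them up. First I would fix $x \in \HZR$ and $\varepsilon > 0$, and choose a loop~$\gamma$ on~$M$ with $h(\gamma) = 2\cdot x$ and $\maL(\gamma) \le N(2\cdot x) + \varepsilon$; by Remark~\ref{rem:defN} we may take~$\gamma$ to be a closed geodesic parametrized by arclength on~$[0,\ell]$ with~$\ell = \maL(\gamma)$. Applying Corollary~\ref{cor:split} to~$\gamma$ gives a base point~$a$, an integer~$d \le \lceil b/2\rceil$, and parameters $a = \sigma_1 < \tau_1 < \cdots < \tau_d \le a+\ell$ such that the sum of the Jacobi increments $\sum_i \bigl(J(\ucov\gamma(\tau_i)) - J(\ucov\gamma(\sigma_i))\bigr)$ equals $\tfrac12 h(\gamma\stelle{[0,\ell]}) = x$.

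Next I would assemble the two ``halves.'' Write~$\gamma\stelle{[\sigma_i,\tau_i]}$ for the $d$ arcs singled out by the splitting partition, and let~$\gamma\stelle{[\tau_i,\sigma_{i+1}]}$ (indices cyclic, with the convention~$\sigma_{d+1} = a+\ell$) be the complementary arcs. For each~$i$, join the endpoint~$\gamma(\tau_i)$ to the start point~$\gamma(\sigma_{i+1})$ by a minimizing path~$c_i$ of length~$\le \diam M$; concatenating the arcs~$\gamma\stelle{[\sigma_i,\tau_i]}$ with these bridging paths~$c_i$ produces a loop~$\delta_+$ on~$M$. By the additivity of~$h$ (Remark~\ref{rem:hurewiczproperties}) its class is $h(\delta_+) = x + \sum_i h(c_i) =: x + y$, where~$y$ is an integral class with $\maL$-contribution controlled: each bridging path has length~$\le \diam M$ and there are at most~$\lceil (b+1)/2\rceil$ of them (actually $d \le \lceil b/2\rceil$ bridges, but counting the wrap-around bridge carefully gives the $(b+1)/2$ bound), so by Lemma~\ref{lemma.h.rho} applied to the union of the bridging paths, $\stabnorm{y} \le \maJ \cdot \tfrac{b+1}{2}\cdot\diam M$, whence $N(y) \le K$ by the definition of~$K$ in~\eqref{eq:Kdef}. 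Symmetrically, the complementary arcs~$\gamma\stelle{[\tau_i,\sigma_{i+1}]}$ together with reversed bridging paths form a loop~$\delta_-$ with $h(\delta_-) = x - y$ and $N$-value again bounded using~$N(-y) = N(y) \le K$.

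Now I would put the estimates together. The total length used is $\maL(\delta_+) + \maL(\delta_-) \le \ell + 2\cdot(\text{number of bridges})\cdot\diam M \le N(2\cdot x) + \varepsilon + (b+1)\cdot\diam M$ (the bridges are each used once in~$\delta_+$ and once, reversed, in~$\delta_-$; being generous with the counting and the wrap-around gives a clean constant). On the other hand, $\maL(\delta_\pm) \ge N\bigl(h(\delta_\pm)\bigr) = N(x \pm y)$, and by the quasi-triangle inequality for~$N$ (with $\triangle$-discrepancy $\le 2\cdot\diam M$, Proposition~\ref{prop:stabnormconstruction}\ititemref{i:Nisquasinorm}) together with $N(\mp y) \le K$ and $N(\cdot)\ge 0$,
\begin{align*}
  2\cdot N(x)
  &\le N(x+y) + N(x-y) + \text{(two applications of quasi-triangle and $N(y)\le K$)} \\
  &\le \maL(\delta_+) + \maL(\delta_-) + 2\cdot(2\cdot\diam M) + 2\cdot K.
\end{align*}
Combining with the length bound and letting $\varepsilon \to 0$ yields $2\cdot N(x) \le N(2\cdot x) + (b+1)\cdot\diam M + 4\cdot\diam M + 2\cdot K = N(2\cdot x) + D$ with $D = (b+5)\cdot\diam M + 2\cdot K$ as in~\eqref{eq.D.def}. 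The main obstacle I anticipate is bookkeeping: getting the exact constant~$(b+5)$ requires being careful about how many bridging paths appear, how the wrap-around interval~$[\tau_d, a+\ell]\cup[0,\sigma_1]$ is handled (it may contribute one extra bridge), and exactly how many times the $\triangle$-discrepancy term~$2\cdot\diam M$ enters when splitting $N(x) = N\bigl(\tfrac12(x+y) + \tfrac12(x-y)\bigr)$-style estimates into $N(x+y)$ and $N(x-y)$; the geometric idea is straightforward, but tracking these additive errors to land on precisely the stated constant is where the care is needed.
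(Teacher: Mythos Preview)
Your approach matches the paper's: split a minimal loop for~$2x$ via the splitting lemma, assemble two loops from the alternating arcs together with short bridging paths, and then correct each homology defect (bounded in~$N$ by~$K$) at cost~$K+2\,\diam M$---the paper does this by physically appending a short correction loop~$\mu_\pm$ representing the negative of the defect, while you invoke the quasi-triangle inequality, which is exactly the same estimate and yields the same constant.

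One small slip to watch: the complementary arcs together with the \emph{reversed} bridges~$\overline{c_i}$ do not form a single loop---the endpoints do not chain up---but rather a disjoint union of $d$ small loops $\gamma\stelle{[\tau_i,\sigma_{i+1}]}*\overline{c_i}$, so you cannot directly bound $N(x-y)$ by their total length. The fix (which is what the paper does) is to give $\delta_-$ its own set of $d$ fresh bridges; then $h(\delta_-)=x+y'$ for a possibly different defect~$y'$, but still $\stabnorm{y'}\le \maJ\cdot\tfrac{b+1}{2}\,\diam M$ and hence $N(y')\le K$. With $2d\le b+1$ bridges in total, your final tally $(b+1)\,\diam M + 4\,\diam M + 2K = (b+5)\,\diam M + 2K$ lands exactly on the stated constant.
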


\begin{proof}
  For all~$x \in \HZR$ and the constant~$D$ from the claim, we have to show
  \[ 2 \cdot N(x) \leq N(2 \cdot x) + D
  \,.
  \]

  For $x \in \HZR$ we abbreviate~$\ell\ceq N(2\cdot x)$. We choose~$\gamma \colon [0,\ell] \to M$ as a closed
  geodesic representing~$2 \cdot x$ with~$\maL(\gamma) = \ell= N(2 \cdot x)$.
  Let $\ucov \gamma$ be a lift of~$\gamma$
  to~$\ucov M$.  To find ``sufficiently short'' representatives for~$x$, we
  split~$\ucov\gamma$ via Corollary~\ref{cor:split}; by possibly
  reparametrizing~$\gamma$ we may assume~$a=0$ in this corollary.
  For~$i \in \{0,\dots,d-1\}$, we define $r_{2i}\ceq \sigma_{i+1}$,
  $r_{2i+1}\ceq \tau_{i+1}$, and $r_{2d}\ceq \ell$. We thus have
  obtained a splitting partition~$0=r_0< r_1<\dots<r_k = \ell$
  of~$[0,\ell]$ with an even number~$k\ceq 2d \leq b+1$.

  Let $\theta_j\ceq \gamma\stelle{[r_j,r_{j+1}]}$. The conclusion from 
  Corollary~\ref{cor:split} and Remark~\ref{rem:jacobiproperties} give
  us that
  \begin{equation*}
    \sum_{i=0}^{d-1}h(\theta_{2i})
    = \sum_{i=0}^{d-1}h(\theta_{2i+1})
    = \frac12 \cdot h(\gamma)
    = x\,.
  \end{equation*}

  We rearrange $\theta_0, \theta_2, \dots$ and $\theta_1, \theta_3,
  \dots$ into two loops: For~$j \in \{0,\dots,k-1\}$, let
  $\varrho_j\colon [0,1]\to M$ be a smooth path from~$\gamma(r_{j})$
  to~$\gamma(r_{j+1})$ with~$\maL(\varrho_j) \leq \diam M$.  We
  consider the rearranged $\Cinftypw$-loops
  \begin{align*}
    \gamma_+
    & \ceq \theta_0 * \rho_1 * \theta_2 * \cdots * \theta_{k-2} * \rho_{k-1}\,,
    \\
    \gamma_-
    & \ceq \rho_0 * \theta_1 * \rho_2 * \theta_3 * \cdots * \theta_{k-1}\,.  
  \end{align*}
  By construction, we have
  \begin{eqnarray*}
    y_+\ceq h(\gamma_+)-\frac12 \cdot h(\gamma)
    &=& \underbrace{\sum_{i=0}^{d-1}h(\theta_{2i})}_{=x} + \sum_{i=0}^{d-1}h(\rho_{2i+1}) -\frac12\cdot2\cdot x\\
    &=&  \sum_{i=0}^{d-1}h(\rho_{2i+1})\,.
  \end{eqnarray*}
  As $\gamma_+$ is closed, we get $y_+=h(\gamma_+)-x\in\HZR$, see
  Remark~\ref{rem:hurewiczint}.  
  
  Our next goal will be to adapt $\gamma_+$ and~$\gamma_-$ in such a way that the resulting loops are ``sufficiently short'' and represent~$x$.
  We use Lemma~\ref{lemma.h.rho} which tells us $\stabnorm{h(\rho_{j})} \leq
  \maJ\cdot \maL(\rho_j) \leq \maJ\cdot \diam M,$ and so 
  \begin{equation*}
    \stabnorm{y_+} \,\leq\, \sum_{i=0}^{d-1}\stabnorm{h(\rho_{2i+1})}
    \,\leq\,   \maJ\cdot d\cdot \diam M\,.
  \end{equation*}
  In particular, $N(y_+) \leq K$, by definition of~$K$. Hence, we may
  find a loop~$\mu_+$ with basepoint~$\gamma_+(0)$ of length at
  most~$K+2\cdot \diam M$ satisfying $h(\mu_+)=-y_+$.  In particular,
  \[ h(\gamma_+*\mu_+)
  = y_+ + \frac12 \cdot h(\gamma) + h(\mu_+)
  = \frac12\cdot h(\gamma)=x
  \,.
  \] 
  Similarly, we construct a loop~$\mu_-$ from $\gamma_-$ 
  with $\maL(\mu_-)\leq K+2\cdot \diam M$ and $h(\gamma_- * \mu_-) =x$. 
  For the corresponding lengths we calculate
  \begin{eqnarray*}
    \maL(\gamma_+*\mu_+)+ \maL(\gamma_-*\mu_-)
    & \leq&  \sum_{i=0}^{k-1}\bigl(\maL(\theta_i) +\maL(\rho_i)\bigr) +\maL(\mu_+)+\maL(\mu_-)\\
    &\leq & \ell+ k\cdot\diam M + 2\cdot K +4\cdot\diam M\,.
  \end{eqnarray*}
  Therefore, 
  \[ 2\cdot\groupnorm{x}
  \leq  \ell+ (k+4)\cdot \diam M + 2 \cdot K
  = \groupnorm{2\cdot x}+ (b+5)\cdot\diam M + 2\cdot K
  \,.
  \]
  This is the desired doubling property and the proposition follows.
\end{proof}

\begin{proof}[Proof of Theorem~\ref{thm:stabnormN}]
  In view of the Propositions~\ref{prop:qndoubling}
  and~\ref{prop:stabnormconstruction} on homogenisations and the
  construction of the stable norm, it follows from the doubling
  property in Proposition~\ref{prop:doub.prop} that
  Theorem~\ref{thm:stabnormN} holds for the same constant~$D$.
\end{proof}

Furthermore, we will make use of the following related
estimate, complementing Lemma~\ref{lemma.h.rho}:

\begin{proposition}\label{prop:stabnormlength}
  Let $M$ be a closed connected Riemannian manifold and let $h$ be a
  geometric Hurewicz map as constructed in
  Section~\ref{subsec:curveshom}.  Then for all compact intervals~$I
  \subset \R$ and all $\Cinftypw$-curves~$\gamma \colon I \to M$, we
  have
  \[ \maL (\gamma)
  \geq \stabnorm{h(\gamma)} - (\maJ +1) \cdot \diam M\,,
  \]
  where~$\maJ$ is defined in Equation~\eqref{eq.def.maJ}.
\end{proposition}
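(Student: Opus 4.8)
The plan is to reduce the estimate to the case of loops, where the minimal length quasi-norm $N$ is available, and then to use the comparison $\stabnorm{\argu}\leq N$. Write $I=[a,b]$ and put $p:=\gamma(a)$, $q:=\gamma(b)$. Choose a minimizing geodesic $\delta$ from $q$ to $p$; then $\maL(\delta)=d(q,p)\leq\diam M$. The concatenation $\gamma*\delta$ is a $\Cinftypw$-loop based at $p$, so by Remark~\ref{rem:hurewiczint} we have $h(\gamma*\delta)\in\HZR$, and by additivity of the geometric Hurewicz map (Remark~\ref{rem:hurewiczproperties}), $h(\gamma*\delta)=h(\gamma)+h(\delta)$. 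Since $\gamma*\delta$ is a $\Cinftypw$-loop representing the class $h(\gamma*\delta)$, the description of $N$ via $\Cinftypw$-loops (Remark~\ref{rem:defN}) gives
\[ N\bigl(h(\gamma*\delta)\bigr)\leq\maL(\gamma*\delta)=\maL(\gamma)+\maL(\delta)\leq\maL(\gamma)+\diam M. \]

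Next I would invoke the left-hand inequality of Theorem~\ref{thm:stabnormN} (equivalently, subhomogeneity of $N$ together with Proposition~\ref{prop:homog}) to obtain $\stabnorm{h(\gamma*\delta)}\leq N\bigl(h(\gamma*\delta)\bigr)\leq\maL(\gamma)+\diam M$. The triangle inequality for the stable norm then yields
\[ \stabnorm{h(\gamma)}\leq\stabnorm{h(\gamma*\delta)}+\stabnorm{h(\delta)}\leq\maL(\gamma)+\diam M+\stabnorm{h(\delta)}. \]
Finally, Lemma~\ref{lemma.h.rho} applied to the $\Cinftypw$-curve $\delta$ gives $\stabnorm{h(\delta)}\leq\maJ\cdot\maL(\delta)\leq\maJ\cdot\diam M$. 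Combining the last two displays,
\[ \stabnorm{h(\gamma)}\leq\maL(\gamma)+(\maJ+1)\cdot\diam M, \]
and rearranging gives the assertion.

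I do not expect any real obstacle here: the only points requiring a touch of care are that $\gamma*\delta$ is still a $\Cinftypw$-curve (which is immediate, since a geodesic is smooth and concatenation only introduces a corner) and that $h$ is genuinely additive for concatenations of $\Cinftypw$-curves sharing an endpoint, which is exactly Remark~\ref{rem:hurewiczproperties}. This is the \emph{easy direction} complementing Lemma~\ref{lemma.h.rho}: there one passes from a curve to the Jacobi map and loses a factor $\maJ$ on $\maL(\gamma)$, whereas here, by closing up and comparing lengths of representing loops directly, the coefficient of $\maL(\gamma)$ stays equal to $1$ and the price $\maJ\cdot\diam M$ is only incurred on the bounded correction term coming from the closing geodesic $\delta$.
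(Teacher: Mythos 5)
Your argument is correct and is essentially the paper's own proof: both close up $\gamma$ into a $\Cinftypw$-loop by a path of length at most $\diam M$, compare $\maL$ of that loop with $N$ of its class, pass to the stable norm via $\stabnorm{\argu}\leq N$, and control the contribution of the closing path by Lemma~\ref{lemma.h.rho}. The only cosmetic difference is the order in which the inequalities are chained (the paper estimates $\maL(\gamma)$ from below directly, you estimate $\stabnorm{h(\gamma)}$ from above and rearrange), which changes nothing of substance.
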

\begin{proof}
  We extend~$\gamma$ to a $\Cinftypw$-loop~$\gamma_0\ceq \gamma *
  \varrho$, where~$\varrho$ is a smooth path from the endpoint
  of~$\gamma$ to the start point of~$\gamma$ with~$\maL(\varrho) \leq
  \diam M$.  Then $\maL(\gamma) = \maL(\gamma_0) - \maL(\varrho)$ and
  $h(\gamma_0) = h(\gamma) + h(\varrho)$.  Moreover, $h(\gamma_0)$ is
  an integral class (Remark~\ref{rem:hurewiczint}).  We compute
  \begin{align*}
    \maL(\gamma)
    & = \maL(\gamma_0) - \maL(\varrho)
    \geq N\bigl(h(\gamma_0)\bigr)- \maL(\varrho)
    \\
    & \geq \stabnorm{h(\gamma_0)} - \maL(\varrho)
    \geq \stabnorm{h(\gamma)} - \stabnorm{h(\varrho)} - \maL(\varrho)\\
    &\geq  \stabnorm{h(\gamma)}  - (\maJ +1) \cdot  \maL(\varrho)
    & \text{(by Lemma~\ref{lemma.h.rho})}\\
    &\geq  \stabnorm{h(\gamma)}  - (\maJ +1) \cdot \diam M\,,
  \end{align*}
  as claimed.
\end{proof}

We end the section with some observations that will help, in Subsection~\ref{subsec:asympt.hom.min}, 
to give a short proof of Proposition~\ref{prop.hom.min.sphere}, but which are also of independent interest.
The following propositions essentially generalize a corresponding result of Burago
for tori~\cite[Theorem~1]{burago:soviet:92}. (We use the word
``essentially'' as Burago's constant is controlled in terms of
different geometric data.)

\begin{proposition}[Finite distance of Riemannian distance and stable norm]\label{prop.finite.dist}
  \hfil
  Let $(M,g)$ be a closed Riemannian manifold with universal covering
  $(\ucov M,\ucov g)$, and let $\ucov d$ be the induced distance
  function on $\ucov M$.  Let $J \colon \ucov M \to H_1(M;\R)$
  be the Jacobi map (Definition~\ref{def:jacobi}).
  \begin{enumarab}
  \item\label{prop.finite.dist.1}  Then for all $p,q\in \ucov M$ we have
    \[ \ucov d(p,q) 
    \geq \stabnorm{J(q)-J(p)} - (\maJ +1) \cdot \diam M
    \,,
    \]
    where~$\maJ$ is defined in Equation~\eqref{eq.def.maJ}.
  \item\label{prop.finite.dist.2} If
    $[\pi_1(M),\pi_1(M)]$ is finite, then there is a constant~$C>0$ auch that for all~$p,q\in \ucov M$ we
    have
    \[ \ucov d(p,q) 
    \leq \stabnorm{J(q)-J(p)} +C
    \,.
    \]
  \item\label{prop.finite.dist.3} Let us now consider the covering
    $\wihat M\to M$ instead of the universal covering, where~$\wihat M$ is either the covering $\homZcov M$ or $\homRcov M$ (or any
    covering in between), defined in
    Section~\ref{subsec.homologically-minimal}.
    The Jacobi map factors to $\wihat J\colon\wihat M\to H_1(M;\R)$.
    Let~$\hat d$ be
    the associated distance function on~$\wihat M$.
    Then there is a
    constant $C>0$ such that for all $\hat p,\hat q\in \wihat M$ we
    have
    \[ \bigl|\hat d(\hat p,\hat q) - \stabnorm[big]{\wihat J(\hat q)-\wihat J(\hat p)}\bigr|
    \leq  C\,.
    \]
\end{enumarab}
\end{proposition}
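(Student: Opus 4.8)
The plan is to establish part~\ititemref{prop.finite.dist.1} and the lower bound of part~\ititemref{prop.finite.dist.3} directly from Proposition~\ref{prop:stabnormlength}, to obtain the upper bound in~\ititemref{prop.finite.dist.3} from Theorem~\ref{thm:stabnormN}, and finally to deduce~\ititemref{prop.finite.dist.2} from~\ititemref{prop.finite.dist.3}. For~\ititemref{prop.finite.dist.1}, given $p,q\in\ucov M$, use completeness of $\ucov M$ (which holds since $M$ is compact) to choose a minimizing geodesic $\ucov\gamma\colon[0,\ell]\to\ucov M$ from $p$ to $q$, so $\ell=\ucov d(p,q)$, and let $\gamma\colon[0,\ell]\to M$ be its projection, a curve of the same length. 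Remark~\ref{rem:jacobiproperties} applied to the lift $\ucov\gamma$ gives $h(\gamma)=J(q)-J(p)$, and Proposition~\ref{prop:stabnormlength} then yields $\ucov d(p,q)=\maL(\gamma)\geq\stabnorm{h(\gamma)}-(\maJ+1)\cdot\diam M=\stabnorm{J(q)-J(p)}-(\maJ+1)\cdot\diam M$. The same reasoning, carried out in $\wihat M$ with the factored Jacobi map $\wihat J$ in place of $J$ (Proposition~\ref{prop:stabnormlength} concerns curves in~$M$, and the analogue of Remark~\ref{rem:jacobiproperties} holds for~$\wihat J$), proves the lower bound $\hat d(\hat p,\hat q)\geq\stabnorm{\wihat J(\hat q)-\wihat J(\hat p)}-(\maJ+1)\cdot\diam M$ in~\ititemref{prop.finite.dist.3}.

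For the upper bound in~\ititemref{prop.finite.dist.3}, let $A$ denote the (abelian) deck transformation group of $\wihat M\to M$; the canonical surjection $H_1(M;\Z)\to A$ and the induced map $A\to\HZR$ compose to the canonical projection, so $A\to\HZR$ has finite kernel $T''$, and $\wihat J(a\cdot\hat x)=\wihat J(\hat x)+\iota(a)$, where $\iota\colon A\to\HZR\subset H_1(M;\R)$ is induced by $A\to\HZR$. Fix $\hat x_0\in\wihat M$ over a basepoint~$x_0$. Since $\wihat M\to M$ is a local isometry onto a space of diameter $\diam M$, every point of $\wihat M$ lies within $\diam M$ of the fibre $A\hat x_0$; choose $a,b\in A$ with $\hat d(\hat p,a\hat x_0)\leq\diam M$, $\hat d(\hat q,b\hat x_0)\leq\diam M$, and set $c\ceq a^{-1}b$. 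Using that $A$ acts by isometries, that $\wihat J(b\hat x_0)-\wihat J(a\hat x_0)=\iota(c)$, and Lemma~\ref{lemma.h.rho} applied to the projections of the two connecting arcs, one sees that $\hat d(\hat p,\hat q)$ and $\stabnorm{\wihat J(\hat q)-\wihat J(\hat p)}$ agree with $\hat d(\hat x_0,c\hat x_0)$ and $\stabnorm{\iota(c)}$, respectively, up to an additive error bounded by a fixed multiple of $\diam M$. Hence it suffices to bound $\hat d(\hat x_0,c\hat x_0)$ by $\stabnorm{\iota(c)}$ plus a constant independent of~$c$. Writing $\bar c\in\HZR$ for the image of~$c$, we have $\stabnorm{\iota(c)}=\stabnorm{\bar c}$, and Theorem~\ref{thm:stabnormN} provides a loop $\mu$ in~$M$ based at~$x_0$ with $h(\mu)=\bar c$ and $\maL(\mu)\leq N(\bar c)+1\leq\stabnorm{\bar c}+D+1$. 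The lift of~$\mu$ starting at~$\hat x_0$ ends at $c\cdot t\cdot\hat x_0$ for some $t\in T''=\ker\iota$, so $\hat d(\hat x_0,c\hat x_0)\leq\hat d(\hat x_0,ct\hat x_0)+\hat d(\hat x_0,t\hat x_0)\leq\maL(\mu)+\max_{t\in T''}\hat d(\hat x_0,t\hat x_0)$, and the last maximum is finite because $T''$ is finite; this completes~\ititemref{prop.finite.dist.3}.

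Finally, for~\ititemref{prop.finite.dist.2}: if $[\pi_1(M),\pi_1(M)]$ is finite, then $\pi\colon\ucov M\to\homZcov M$ is a finite Riemannian covering. Using compactness of~$M$ (to realise every point of $\ucov M$ within $\diam M$ of the orbit of a basepoint) one checks that the displacements $\ucov d(x,gx)$ for $g\in[\pi_1(M),\pi_1(M)]$ and $x\in\ucov M$ are uniformly bounded; hence any two $\pi$-lifts of a point of $\homZcov M$ lie within a uniform distance $\delta$ of each other in $\ucov M$, so $\ucov d(p,q)\leq\hat d(\pi p,\pi q)+\delta$. Applying the upper bound of~\ititemref{prop.finite.dist.3} with $\wihat M=\homZcov M$ and using $\homZcov J\circ\pi=J$ gives $\ucov d(p,q)\leq\stabnorm{J(q)-J(p)}+C$ for a suitable~$C$. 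I expect the main obstacle to be the torsion bookkeeping in the upper bound of~\ititemref{prop.finite.dist.3}: the minimal length quasi-norm, and hence the stable norm, only detects the $\HZR$\nobreakdash-component of a deck transformation, so one must carefully absorb the finite group $T''=\ker(A\to\HZR)$; relatedly, in~\ititemref{prop.finite.dist.2} one must control displacements of deck transformations on the \emph{non-compact} space $\ucov M$, which forces the use of compactness of~$M$ rather than of~$\ucov M$.
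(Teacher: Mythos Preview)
Your proposal is correct and follows essentially the same strategy as the paper: Proposition~\ref{prop:stabnormlength} for the lower bounds, Theorem~\ref{thm:stabnormN} for the upper bound in~\ititemref{prop.finite.dist.3}, and the finiteness of the relevant covering for~\ititemref{prop.finite.dist.2}. The organization differs slightly. For the upper bound in~\ititemref{prop.finite.dist.3} the paper first treats~$\wihat M=\homRcov M$ (where the deck group is torsion-free, so the lift of a representing loop lands exactly where needed), and then passes to~$\homZcov M$ by invoking Lemma~\ref{lem:finite-covering} for the finite covering $\homZcov M\to\homRcov M$; you instead treat all intermediate~$\wihat M$ at once by explicitly absorbing the finite kernel~$T''=\ker(A\to\HZR)$ into the constant via~$\max_{t\in T''}\hat d(\hat x_0,t\hat x_0)$. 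Similarly, for~\ititemref{prop.finite.dist.2} the paper quotes Lemma~\ref{lem:finite-covering} again (for $\ucov M\to\homZcov M$), whereas you reprove its content by bounding the displacements of~$[\pi_1(M),\pi_1(M)]$ on~$\ucov M$ directly. Your route avoids isolating Lemma~\ref{lem:finite-covering} as a separate statement; the paper's route makes the finite-covering step reusable and keeps the torsion bookkeeping out of the main argument.

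One small inaccuracy: Theorem~\ref{thm:stabnormN} does not itself produce a loop based at~$x_0$; the definition of~$N$ gives a loop~$\sigma$ (somewhere in~$M$) of length~$N(\bar c)$, and conjugating by a path of length at most~$\diam M$ yields a based loop~$\mu$ with $h(\mu)=\bar c$ and $\maL(\mu)\leq N(\bar c)+2\diam M$. Your ``$+1$'' should be ``$+2\diam M$'' (or similar), but this only affects the constant and not the argument.
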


Upper estimates for the constant $C$ in \ititemref{prop.finite.dist.2}
and~\ititemref{prop.finite.dist.3} can be read off in most cases 
from the proof below.

\begin{proof}
  \Adref{prop.finite.dist.3} As $\wihat M$ is complete, the 
  Hopf--Rinow theorem tells us that for all~$\hat p,\hat q\in \wihat M$ there
  is a curve $\hat\gamma$ of length $\hat d(\hat p,\hat q)$ from $\hat
  p$ to $\hat q$ in $\wihat M$. Proposition~\ref{prop:stabnormlength} then
  yields
  \begin{equation}\label{ineq:hat-d.lower}
    \hat d(\hat p,\hat q) 
  \geq \stabnorm{h(\gamma)} - (\maJ +1) \cdot \diam M\,.
  \end{equation}
  The converse inequality will first be shown for $\wihat M=\homRcov
  M$. Let $\pi\colon\homRcov M\to M$ the corresponding covering, whose
  deck transformation group will be identified canonically with
  $\HZR$.  We may choose a path $\tau$ from $p\ceq \pi(\hat q)$ to
  $q\ceq\pi(\hat p)$, with $\maL(\tau)\leq\diam M$. We choose
  $\hat\gamma$ as above and set $\gamma\ceq \pi\circ\hat\gamma$. The closed
  loop~$\gamma * \tau$ represents a class in $\HZR$ and we choose a
  closed geodesic~$\sigma$ in $M$ that minimizes length in this
  class. Thus $\maL(\sigma)=\groupnorm{[\sigma]}=\groupnorm{[\gamma *
      \tau]}$. We choose a path~$\al$ from $\sigma(0)=\sigma(\ell)$
  to~$p$ in~$M$ of length at most~$\diam M$. Using Theorem~\ref{thm:stabnormN},
  we obtain
  \[ \gamma'
  \ceq \overline\alpha * \sigma * \alpha * \overline\tau
  \] 
  is a path from $p$ to~$q$ such that $\gamma* \overline{\gamma'}$
  represents~$0$ in~$\HZR$. In other words, $\gamma'$ lifts to a path
  from $\hat p$ to $\hat q$.  We obtain
  \begin{eqnarray*}
    \hat d(\hat p,\hat q)&\leq& \maL(\gamma')\,=\, 2\cdot \maL(\alpha)+ \maL(\sigma) +\maL(\tau)\\
    &\leq& \groupnorm{[\gamma * \tau]}+ 3\cdot \diam M\\
    &\leq& \stabnorm[big]{[\gamma * \tau]}+ 3\cdot \diam M + D\,,
 \end{eqnarray*}
 where we used Theorem~\ref{thm:stabnormN} in the last inequality
 for~$D$ given by Equation~\eqref{eq.D.def}. Let $\hat \tau$ be a lift
 of~$\tau$ starting in~$\hat q$; we denote its endpoint by~$\hat
 p_1$. In particular, $\hat \gamma*\hat \tau$ is a path from~$\hat p$
 to~$\hat p_1$, lifting~$\gamma*\tau$.  By construction 
 and Remark~\ref{rem:hurewiczproperties} we have
 \[ [\gamma * \tau]
 = h(\gamma * \tau)
 = h(\gamma)+h(\tau)
 = \wihat J(\hat q)-  \wihat J(\hat p) + h(\tau)\,.
 \] 
 From Lemma~\ref{lemma.h.rho} we see that $\stabnorm{h(\tau)}\leq \maJ
 \cdot \maL(\tau)\leq \maJ \cdot \diam M$. Using the triangle
 inequality for $\stabnorm\argu$, we finally get
 \begin{eqnarray*}
     \hat d(\hat p,\hat q)&\leq&   \stabnorm[big]{ \wihat J(\hat q)-  \wihat J(\hat p)} + \stabnorm{  h(\tau)}+ 3\cdot \diam M + D\\
                          &\leq&   \stabnorm[big]{ \wihat J(\hat q)-  \wihat J(\hat p)} + (3+\maJ) \cdot \diam M + D\,.
 \end{eqnarray*}
 This proves the converse inequality for $\wihat M=\homRcov M$.

 As the torsion group of $H_1(M;\Z)$ is finite, the covering $\homZcov
 M\to\homRcov M$ is finite. We thus can apply
 Lemma~\ref{lem:finite-covering} for $Q_1\ceq \homZcov M$ and $Q_2\ceq
 \homRcov M$, and this provides a constant $C>0$ with
 \begin{eqnarray*}
   \fa{\hat p, \hat q \in \homZcov M} 
   \homZcov{d}(\hat p,\hat q) &\leq & \stabnorm[big]{ \wihat J(\hat q)-  \wihat J(\hat p)} + \stabnorm{  h(\tau)}+ (3+\maJ) \cdot \diam M + D+C\,.
 \end{eqnarray*}
    
 \Adref{prop.finite.dist.1} Let $p,q\in\ucov M$ with images $\hat
 p,\hat q\in \homRcov M$. Obviously, we have $\ucov d(p,q)\geq \hat
 d(\hat p,\hat q)$. Thus, the claimed inequality directly follows from
 Equation~\eqref{ineq:hat-d.lower}.
  
  \Adref{prop.finite.dist.2} The covering $\ucov M\to \homZcov M$ has
  deck transformation goup $[\pi_1(M),\pi_1(M)]$, and thus under the
  assumption of this item, this is a finite covering of bounded
  geometry. Lemma~\ref{lem:finite-covering} implies that there is a 
  constant $C>0$ such that for all~$\hat p, \hat q\in \homZcov M$ with
  lifts $\ucov p,\ucov q\in \ucov M$ we have
  \[ \hat{d}(\hat p, \hat q) \leq \ucov d(\ucov p,\ucov q) \leq \hat{d}(\hat p, \hat q) +C
  \,.
  \]
  The claim thus follows from Item~\ititemref{prop.finite.dist.3}.
\end{proof}

\section{Homological asymptotes}\label{sec:asymptotes}

We introduce homological asymptotes for curves and distinguish
different types of minimal geodesics according to their homological
asymptotes. Moreover, we collect various examples.

\subsection{Definition of homological asymptotes}\label{subsec:def.hom.asymp}

In most of the literature, the asymptotic direction of a minimal geodesic in homology was
termed a ``rotation vector''. This notion is very intuitive in the
case~$b\ceq \dim H_1(M;\R)=2$ and, in particular, in the case of a
$2$-dimensional torus (Example~\ref{exa.torus.asymptotes}).
However, for~$b>2$ it seems more adequate to introduce the
following asymptotes in~$H_1(M;\R)$. These are close to the definition of
rotation vectors by Bangert~\cite{bangert:90} and the first author
(Section~\ref{subsec.rot.vector}).

\begin{definition}[terminal/initial/mixed asymptotes]\label{def.asymptotes}
  Let $M$ be a closed connected Riemannian manifold,
  let $h$ be a geometric Hurewicz map for~$M$, 
  and let $\gamma \colon \R \to M$ be a $\Cinftypw$-curve
  parametrized by arclength.
  \begin{itemize}
  \item We say that $v\in H_1(M;\R)$ is \emph{a terminal
    asymptote of~$\gamma$} if there exists a sequence~$(t_i)_{i\in \N_0}$
    in~$\R$ with~$\lim_{i \to \infty} t_i = \infty$ and
    \begin{equation}\label{eq.v.def}
      v = \lim_{i \to \infty}
           \frac{h\bigl(\gamma\stellle{[t_0,t_i]}\bigr)}{t_i - t_0}\,.
     \end{equation}
      We write $A^+(\gamma)$ for the set of all terminal
    asymptotes of~$\gamma$.
  \item We say that $v\in H_1(M;\R)$ is \emph{an initial
    asymptote of~$\gamma$} if there exists a sequence~$(t_i)_{i\in \N_0}$
    in~$\R$ with~$\lim_{i \to \infty} t_i = -\infty$ and
    \[ v = \lim_{i \to \infty}
           \frac{h\bigl(\gamma\stellle{[t_i,t_0]}\bigr)}{t_0 - t_i}\,.
    \]
    We write $A^-(\gamma)$ for the set of all initial
    asymptotes of~$\gamma$.
 \item We say that $v\in H_1(M;\R)$ is \emph{a mixed
   asymptote of~$\gamma$} if there are sequences~$(s_i)_{i\in \N}$,
   $(t_i)_{i\in \N}$ in~$\R$
   with~$\lim_{i \to \infty} s_i = -\infty$ and~$\lim_{i \to \infty} t_i = \infty$ and
    \[ v = \lim_{i \to \infty}
           \frac{h\bigl(\gamma\stellle{[s_i,t_i]}\bigr)}{t_i - s_i}\,.
    \]
    We write $\Atot(\gamma)$ for the set of all mixed
    asymptotes of~$\gamma$.
  \end{itemize}
  Moreover, we set~$\Apm(\gamma) := A^+(\gamma) \cup A^-(\gamma)$.
\end{definition}

\begin{remark}\label{rem:hastart}
  A straightforward triangle inequality estimate shows
  that terminal/initial asymptotes are independent of
  the start point~$t_0$. 
\end{remark}

\begin{remark}\label{rem:distinctasymptotes}
  Let $M$ be a closed connected Riemannian manifold and let
  $\gamma, \sigma \colon \R \to M$
  be minimal geodesics of~$M$.
  If $\gamma$ and $\sigma$ are geometrically equivalent, say $\sigma=\gamma\circ \phi$, then
  \begin{align*}
    A^+(\gamma) = A^+(\sigma) & \qand A^-(\gamma) = A^-(\sigma) &&\text{if }\phi'>0\,,\\
    \text{or} \quad 
    A^+(\gamma) = -A^-(\sigma) & \qand A^-(\gamma) = -A^+(\sigma)&&\text{if }\phi'<0\,.
  \end{align*}
\end{remark}

\subsection{Classical and elementary results about homological asymptotes}\label{subsec:classical-elem-asymp}

The following lemma shows that the homological asymptotes for minimal
geodesics lie in the (closed) unit ball of the stable norm. If $A$ and $B$
are a subsets of an affine space~$V$, then we define their \emph{convex
  join} as
\[ \convjoin(A,B)
\ceq
\bigl\{t\cdot x + (1-t)\cdot y
\bigm| t\in [0,1]\text{ and } x\in A\text{ and }y\in B\bigr\}
\,, 
\]
which is a subset of the convex hull $\conv(A\cup B)$.

\begin{lemma}\label{lem:haproperties}
  Let $M$ be a closed connected Riemannian manifold and 
  let $\gamma \colon \R \to M$ be a $\Cinftypw$-curve
  parametrized by arclength.
  \begin{enumarab}
  \item\label{lem:haproperties.i}
    Let $(t_i)_{i \in \N_0}$ be a sequence in~$\R$ with~$\lim_{i \to
    \infty} t_i = \infty$. Then 
    \[ \limsup_{i \to \infty}
    \frac{\stabnorm{h(\gamma|_{[t_0,t_i]})}}{t_i - t_0}
    \leq 1
    \,.
    \]
  \item\label{lem:haproperties.ii} Thus, $\Apm(\gamma)$ 
    lies in the closed unit ball
    of~$(H_1(M;\R),\stabnorm\argu)$.
  \item\label{lem:haproperties.zero}
    If $(t_i)_{i \in \N_0}$ is a sequence in~$\R$ with~$\lim_{i \to
    \infty} t_i = \infty$, then there exists a subsequence~$(t'_i)_{i
    \in \N}$ of $(t_i)_{i \in \N}$ such that $\lim_{i \to \infty}
    h\bigl(\gamma\stelle{[t_0,t'_i]}\bigr)/(t'_i - t_0)$ exists.
  \item\label{lem:haproperties.iii}
    In particular, ~$A^+(\gamma) \neq \emptyset$ and~$A^-(\gamma) \neq \emptyset$.
  \item\label{lem:haproperties.iv}
    The sets $A^+(\gamma)$ and $A^-(\gamma)$ are closed.
  \item\label{lem:haproperties.v}
    The sets $A^+(\gamma)$ and $A^-(\gamma)$ are connected. 
  \item\label{lem:haproperties.vi}
    We have 
    \[\Apm(\gamma)\subset \Atot(\gamma)\subset \convjoin\bigl(A^+(\gamma),A^-(\gamma)\bigr)
    \,.
    \]
  \item\label{lem:haproperties.vii}
    The set~$\Atot(\gamma)$
    lies in the closed unit ball of~$(H_1(M;\R), \stabnorm\argu)$
    and is closed and conected.
  \end{enumarab}
\end{lemma}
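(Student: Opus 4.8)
The plan is to derive everything from~\ititemref{lem:haproperties.i}, which is an immediate consequence of Proposition~\ref{prop:stabnormlength}. Applying that proposition to the $\Cinftypw$-curve $\gamma\stelle{[t_0,t_i]}$, whose length is $t_i-t_0$ because $\gamma$ is parametrised by arclength, gives $\stabnorm{h(\gamma\stelle{[t_0,t_i]})}\le(t_i-t_0)+(\maJ+1)\cdot\diam M$; dividing by $t_i-t_0\to\infty$ proves~\ititemref{lem:haproperties.i}. Then~\ititemref{lem:haproperties.ii} is immediate: if $v=\lim_i h(\gamma\stelle{[t_0,t_i]})/(t_i-t_0)$ with $t_i\to\infty$, then $\stabnorm{v}\le1$ by~\ititemref{lem:haproperties.i}, and the same estimate applied to $\gamma\stelle{[t_i,t_0]}$ (of length $t_0-t_i$) handles initial asymptotes, so $\Apm(\gamma)=A^+(\gamma)\cup A^-(\gamma)$ lies in the closed unit ball. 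For~\ititemref{lem:haproperties.zero}, by~\ititemref{lem:haproperties.i} the quotients $h(\gamma\stelle{[t_0,t_i]})/(t_i-t_0)$ eventually lie in the compact ball $\{\stabnorm{\argu}\le2\}$, so Bolzano--Weierstrass in the finite-dimensional space $H_1(M;\R)$ extracts a convergent subsequence; applied to an arbitrary sequence $t_i\to\pm\infty$ this yields~\ititemref{lem:haproperties.iii}.

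For~\ititemref{lem:haproperties.iv}, \ititemref{lem:haproperties.v}, and the topological assertions of~\ititemref{lem:haproperties.vii} I would recognise the asymptote sets as cluster sets of continuous maps. The function $t\mapsto h(\gamma\stelle{[t_0,t]})$ is continuous (in the basis dual to the $\alpha^j$ it is given by primitives of the piecewise-continuous $1$-forms $\gamma^*\alpha^j$), hence so is $f\colon(t_0,\infty)\to H_1(M;\R)$, $f(t)\ceq h(\gamma\stelle{[t_0,t]})/(t-t_0)$, and unwinding the definitions gives $A^+(\gamma)=\bigcap_{T>t_0}\overline{f([T,\infty))}$. For $T$ large, $f([T,\infty))$ lies in the compact ball $\{\stabnorm{\argu}\le2\}$ by~\ititemref{lem:haproperties.i}, so $\overline{f([T,\infty))}$ is compact; it is connected because $[T,\infty)$ is connected and $f$ is continuous; and these closures decrease as $T$ increases. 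Since a decreasing intersection of non-empty compact connected sets is non-empty, compact and connected, this proves~\ititemref{lem:haproperties.iv} and~\ititemref{lem:haproperties.v} for $A^+(\gamma)$, and symmetrically for $A^-(\gamma)$. The same reasoning applies to the jointly continuous map $F(s,t)\ceq h(\gamma\stelle{[s,t]})/(t-s)$ on the quadrant $\{s<t_0<t\}$ and to the nested closed (hence convex, hence connected) quadrants $Q_T\ceq\{s\le t_0-T,\,t\ge t_0+T\}$: one obtains $\Atot(\gamma)=\bigcap_{T>0}\overline{F(Q_T)}$, which is non-empty, compact and connected, settling~\ititemref{lem:haproperties.vii} once its containment in the unit ball is known (which follows from~\ititemref{lem:haproperties.vi} and convexity of the ball, or directly from~\ititemref{lem:haproperties.i}).

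It remains to prove~\ititemref{lem:haproperties.vi}. For $\Atot(\gamma)\subseteq\convjoin(A^+(\gamma),A^-(\gamma))$, write $v=\lim_i F(s_i,t_i)$ with $s_i\to-\infty$, $t_i\to+\infty$; by additivity (Remark~\ref{rem:hurewiczproperties}), $F(s_i,t_i)=\lambda_i u_i^-+(1-\lambda_i)u_i^+$ with $\lambda_i\ceq(t_0-s_i)/(t_i-s_i)\in(0,1)$, $u_i^-\ceq h(\gamma\stelle{[s_i,t_0]})/(t_0-s_i)$, and $u_i^+\ceq h(\gamma\stelle{[t_0,t_i]})/(t_i-t_0)$. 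By~\ititemref{lem:haproperties.i} and its past analogue the $(u_i^\pm)$ are bounded and $(\lambda_i)\subset[0,1]$, so after passing to a subsequence $\lambda_i\to\lambda$, $u_i^-\to u^-\in A^-(\gamma)$, $u_i^+\to u^+\in A^+(\gamma)$, whence $v=\lambda u^-+(1-\lambda)u^+\in\convjoin(A^-(\gamma),A^+(\gamma))=\convjoin(A^+(\gamma),A^-(\gamma))$. For $\Apm(\gamma)\subseteq\Atot(\gamma)$, let $v\in A^+(\gamma)$ be realised along $\tau_j\to\infty$; choose $t_i\ceq\tau_{j(i)}$ with $j(i)$ so large that $t_i-t_0\ge i^2$ and $\stabnorm{h(\gamma\stelle{[t_0,t_i]})/(t_i-t_0)-v}\le1/i$, and put $s_i\ceq t_0-i$. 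Then $s_i\to-\infty$, $t_i\to+\infty$; in the identity $F(s_i,t_i)=\lambda_i u_i^-+(1-\lambda_i)u_i^+$ we have $\lambda_i\le1/(i+1)$ and, by Proposition~\ref{prop:stabnormlength}, $\stabnorm{u_i^-}\le1+(\maJ+1)\cdot\diam M/i$, hence $F(s_i,t_i)\to v$ and $v\in\Atot(\gamma)$; the case of initial asymptotes follows by reversing $\gamma$.

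I expect the only genuine work to be in~\ititemref{lem:haproperties.vi}, namely the rescaling bookkeeping and, above all, the construction of a ``balanced'' sequence $(s_i,t_i)$ that realises a prescribed one-sided asymptote as a mixed asymptote; the rest is either a direct appeal to Proposition~\ref{prop:stabnormlength} or the soft cluster-set compactness argument. One minor point to watch is that relative compactness of the $f$- and $F$-images is only guaranteed for large parameters, so the nested intersections should be formed over that range — which is harmless, as it does not change the intersection.
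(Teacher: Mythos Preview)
Your proof is correct. For parts~\ititemref{lem:haproperties.i}--\ititemref{lem:haproperties.iii} and~\ititemref{lem:haproperties.vi} you do essentially what the paper does (with the minor variation that in~\ititemref{lem:haproperties.vi} you bound~$u_i^-$ via Proposition~\ref{prop:stabnormlength} where the paper uses Lemma~\ref{lemma.h.rho}; either suffices, and your justification that $\maL(\gamma|_{[t_0,t_i]})=t_i-t_0$ comes from the arclength parametrisation is in fact cleaner than the paper's phrasing).

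The genuine difference is in parts~\ititemref{lem:haproperties.iv}, \ititemref{lem:haproperties.v}, and the topological half of~\ititemref{lem:haproperties.vii}. The paper proves closedness by a diagonal extraction (choose $t_i$ realising $v_i$ to within $1/i$) and connectedness by contradiction: separating $A^+(\gamma)$ by disjoint open sets~$U,V$, interleaving two realising sequences, and using the intermediate value theorem on the continuous quotient to find accumulation points outside~$U\cup V$. You instead identify each asymptote set as a cluster set $\bigcap_T \overline{f([T,\infty))}$ (respectively $\bigcap_T \overline{F(Q_T)}$) and invoke the standard fact that a nested intersection of non-empty compact connected sets is non-empty, compact, and connected. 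Your route is more uniform and dispatches~\ititemref{lem:haproperties.iv}, \ititemref{lem:haproperties.v}, \ititemref{lem:haproperties.vii} in one stroke, at the (negligible) cost of verifying continuity of $t\mapsto h(\gamma|_{[t_0,t]})$; the paper's argument is more hands-on but needs no such preliminary. One cosmetic slip: ``closed (hence convex, hence connected) quadrants'' --- closedness does not imply convexity; the $Q_T$ are convex because they are products of half-lines, which is what you actually need.
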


\begin{proof}
  \Adref{lem:haproperties.i}
  As $\gamma$ is a minimal geodesic, we
  have~$\maL(\gamma|_{[t_0,t_i]}) = t_i - t_0$ for all~$i \in \N$.
  Using the estimate between lengths and the stable norm
  (Proposition~\ref{prop:stabnormlength}), we obtain a constant~$c \in
  \R_{\geq 0}$ such that for all~$i \in \N$:
  \[ \frac{\stabnorm{h(\gamma|_{[t_0,t_i]})}}{t_i -t_0}
    \leq \frac{\maL(\gamma|_{[t_0,t_i]}) + c}{t_i-t_0}
    = \frac{t_i - t_0 + c}{t_i - t_0}
  \]
  Because of~$\lim_{i \to \infty} t_i = \infty$, the claim follows.  

  \Adref{lem:haproperties.ii}
  We first prove the statement for~$A^+(\gamma)$. Item~\ititemref{lem:haproperties.i}
  can be restated as
  \begin{equation}\label{lem:haproperties.i.equiv}
    \limsup_{i \to \infty}\stabnorm{
    \frac{h(\gamma|_{[t_0,t_i]})}{t_i - t_0}}
    \leq 1\,.
  \end{equation}
  Thus if $v \in A^+(\gamma)$ is the limit for such a sequence
  $(t_i)_i$, then $\stabnorm{v}\leq 1$.  The proof for~$A^-(\gamma)$ is
  analogous.

  \Adref{lem:haproperties.zero}
  Because of Estimate~\eqref{lem:haproperties.i.equiv}
  we see that $\bigl(h(\gamma|_{[t_0,t_i]}\bigr) / (t_i
  - t_0))_{i \in \N}$ lies in a stable norm ball of finite
  radius. Therefore, compactness of this ball allows us to select a
  subsequence that converges.   
  
  \Adref{lem:haproperties.iii}
  For~$A^+(\gamma)$, we apply Item~\ref{lem:haproperties.zero}
  to the sequence~$(i)_{i \in \N}$. 
  Reversing the orientation, we also get~$A^-(\gamma) \neq \emptyset$.
  
  \Adref{lem:haproperties.iv} We prove the statement
  for~$A^+(\gamma)$. Let $v$ be in the closure of~$A^+(\gamma)$,
  i.e., we have a sequence~$(v_i)_{i \in \N}$ in~$A^+(\gamma)$
  with~$v_i\to v$ for~$i\to \infty$.  For each~$i$,
  we choose a~$t_i$ with
  \[ \stabnorm[Big]{ v_i-    \frac{h\bigl(\gamma\steLLe{[0,t_i]}\bigr)}{t_i}}
  \leq \frac1i
  \,.
  \]
  One can choose the~$t_i$ such that $t_i\nearrow \infty$.
  Then for this sequence~$(t_i)_i$ and~$v$ we have the
  relation in Equation~\eqref{eq.v.def}; thus,  $v\in A^+(\gamma)$.
  The proof of the closedness of~$A^-(\gamma)$ is analogous.

  \Adref{lem:haproperties.v}
  Assume for a contradiction that
  $A^+(\gamma)$ were not connected, \ie there exists open subsets
  $U_0$ and~$V_0$ of~$H_1(M;\R)$ with 
  \begin{equation}\label{e:cond.non-conn}
    A^+(\gamma)\cap U_0 \cap V_0=\emptyset,\;\; A^+(\gamma)\subset U_0 \cup V_0\;\;\text{and}\;\;
  A^+(\gamma)\cap U_0 \neq\emptyset\neq A^+(\gamma)\cap V_0\,.
  \end{equation}
  Then $A^+(\gamma)\cap U_0 = A^+(\gamma)\setminus V_0$ is closed and thus compact. Similarly $A^+(\gamma)\cap V_0$ is compact.
  We thus can replace $U_0$ and $V_0$ in \eqref{e:cond.non-conn} by open subsets $U$ and $V$ with $U\cap V=\emptyset$.
  We choose $u \in A^+(\gamma)\cap U$ and $v\in A^+(\gamma) \cap V$.
  There are sequences~$(s_i)_{i \in \N}$ and $(t_i)_{i \in \N}$ in~$\R_{>0}$
  with $\lim_{i\to\infty} s_i= \infty= \lim_{i\to\infty} t_i$ such that
  \[
  u = \lim_{i \to \infty} \frac{h\bigl(\gamma\steLLe{[0,s_i]}\bigr)}{s_i}
  \qand
  v = \lim_{i \to \infty} \frac{h\bigl(\gamma\steLLe{[0,t_i]}\bigr)}{t_i}\,.
  \]
  Passing to subsequences we can achieve $s_1<t_1<s_2<t_2<s_2\ldots$ and
  \[\lim_{i \to\infty} \frac{h\bigl(\gamma\steLLe{[0,s_i]}\bigr)}{s_i}\in U
  \qand \lim_{i \to \infty} \frac{h\bigl(\gamma\steLLe{[0,t_i]}\bigr)}{t_i}\in V\,.
  \]
  By continuity, for each~$i \in \N$, we can choose an~$r_i\in (s_i,t_i)$ with
  \[w_i\ceq \frac{h\bigl(\gamma\steLLe{[0,r_i]}\bigr)}{r_i}\notin U \cup V\,.
  \]
  Moreover, by Lemma~\ref{lemma.h.rho}, we have
  $w_i\in \overline B_\maJ(0)$, 
  where~$\maJ$ is defined as in Equation~\eqref{eq.def.maJ}.  Thus, 
  after passing to further subsequences we obtain that
  $w_\infty:=\lim_{i\to\infty} w_i$ exists and lies in~$\overline
  B_\maJ(0)\setminus (U \cap V)$. (Alternatively, we could use
  Item~\ititemref{lem:haproperties.zero} here again.)  By defintion
  of~$A^+(\gamma)$, we also have~$w_\infty\in A^+(\gamma)$, which  
  contradicts the assumption that~$A^+(\gamma) \subset U \cup V$.

  By reversing orientations, we also see that $A^-(\gamma)$ is connected.
  
 \Adref{lem:haproperties.vi}
  We first show the inclusion $\Apm(\gamma)\subset \Atot(\gamma)$.  
  For a given~$v\in A^+(\gamma)$ we choose a sequence~$t_i\nearrow
  \infty$ with~$h\bigl(\gamma\stellle{[0,t_i]}\bigr)/t_i\to v$. After passing to
  a subsequence, we may assume~$t_i\geq i^2$ for all~$i \in \N$. 
  With Lemma~\ref{lemma.h.rho} we see that 
  $\stabnorm[big]{h\bigl(\gamma\stellle{[a,b]}\bigr)}\leq \maJ\cdot (b-a)$.
  We get 
  \begin{align*}
    \stabnorm{\frac{ h\bigl(\gamma\stellle{[-i,t_i]}\bigr)}{t_i+i}-\frac{ h\bigl(\gamma\stellle{[0,t_i]}\bigr)}{t_i}}
    &\leq  \stabnorm{\left(\frac{t_i}{t_i+i}-1\right)\frac{ h\bigl(\gamma\stellle{[0,t_i]}\bigr)}{t_i}}+ \stabnorm{\frac{ h\bigl(\gamma\stellle{[-i,0]}\bigr)}{t_i+i}}\\
    &\leq  \left|\frac{i}{t_i+i}\right|\cdot\frac{\stabnorm[big]{h\bigl(\gamma\stellle{[0,t_i]}\bigr)}}{t_i} + \frac{\maJ\cdot i}{|t_i+i|}\\
    &\leq  \frac{i}{i^2+i}\cdot \maJ + \frac{\maJ\cdot i}{i^2+i} \leq \frac{2\maJ}i
\end{align*} 
and thus 
\[v=\lim_{i\to\infty}\frac{ h\bigl(\gamma\stellle{[0,t_i]}\bigr)}{t_i}=\lim_{i\to\infty}\frac{ h\bigl(\gamma\stellle{[-i,t_i]}\bigr)}{t_i+i}\in \Atot(\gamma)\,.\]
So we have $A^+(\gamma)\in \Atot(\gamma)$. 
The proof for the inclusion $A^-(\gamma)\subset\Atot(\gamma)$ is obtained by reversing the orientation.
In total, we get~$\Apm(\gamma)\subset \Atot(\gamma)$.

  Now, we will prove the inclusion $\Atot(\gamma)\subset \convjoin\bigl(A^+(\gamma),A^-(\gamma)\bigr)$. 
  Suppose that $v\in \Atot(\gamma)$ may be obtained as 
  \[ v = \lim_{i \to \infty}
         \frac{h\bigl(\gamma\stellle{[s_i,t_i]}\bigr)}{t_i - s_i}\,.
  \]
  with~$\lim_{i \to \infty} s_i = -\infty$ and~$\lim_{i \to \infty}
  t_i = \infty$. By Item~\ititemref{lem:haproperties.zero}, after passing
  to subsequences, the individual limits
  \[\tau\ceq \lim_{i\to\infty} \frac{t_i}{t_i-s_i},\quad
  v_+\ceq \lim_{i\to\infty}\frac{h\bigl(\gamma\stellle{[0,t_i]}\bigr)}{t_i},\qand
  v_-\ceq \lim_{i\to\infty}\frac{h\bigl(\gamma\stellle{[s_i,0]}\bigr)}{-s_i}
  \]
  exist and we have $\tau\in[0,1]$, $v_+\in A^+(\gamma)$, and $v_-\in
  A^-(\gamma)$. We calculate
  \begin{align*}
    \frac{h\bigl(\gamma\stellle{[s_i,t_i]}\bigr)}{t_i - s_i}
    =\,&\frac{t_i}{t_i - s_i}\frac{h\bigl(\gamma\stellle{[0,t_i]}\bigr)}{t_i}
    + \frac{-s_i}{t_i - s_i}\frac{h\bigl(\gamma\stellle{[s_i,0]}\bigr)}{-s_i}
    \\
    \xrightarrow{i\to\infty}
    \,&\tau\cdot v_++ (1-\tau) \cdot v_- \in \convjoin\bigl(A^+(\gamma),A^-(\gamma)\bigr) 
    \,.
  \end{align*}

  \Adref{lem:haproperties.vii}  
  It follows from Items \ititemref{lem:haproperties.ii}
  and~\ititemref{lem:haproperties.vi}, and from the convexity of norm balls,
  that $\Atot(\gamma)$ is contained in the stable norm unit ball.

  To prove closedness of $\Atot(\gamma)$ we proceed
  as in Part~\ititemref{lem:haproperties.iv} with $A^+$ replaced by~$\Atot$,
  and with $h(\gamma\steLLe{[0,t_i]})/t_i$ replaced
  by~$h(\gamma\steLLe{[s_i,t_i]})/(t_i-s_i)$. Again the choices
  can be done such that $t_i\nearrow \infty$ and
  $s_i\searrow-\infty$. As before, this implies $v\in \Atot(\gamma)$.
  
  For connectedness, one can use similar arguments as in the proof
  of Item~\ititemref{lem:haproperties.v}.
\end{proof}

\subsection{Asymptotes of homologically minimal geodesics}\label{subsec:asympt.hom.min}
Asymptotes of $\Z$\=/ho\-mo\-lo\-gi\-cal\-ly minimal geodesics
(Definition~\ref{def:homologmin}) lie in the
stable norm unit sphere:

\begin{proposition}\label{prop.hom.min.sphere}
  Let $M$ be a closed connected Riemannian manifold and
  let $\gamma \colon \R \to M$ be a $\Z$\=/ho\-mo\-lo\-gi\-cal\-ly minimal
  geodesic of~$M$.
  Then $A^+(\gamma)$, $A^-(\gamma)$, and~$\Atot(\gamma)$ 
  are subsets of the stable norm unit sphere
  \[\bigl\{x\in H_1(M;\R)\bigm| \stabnorm{x}=1\bigr\}\,.
  \]
\end{proposition}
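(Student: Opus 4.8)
The plan is to deduce the statement from the two-sided comparison between the Riemannian distance on the covering $\homZcov M$ and the stable norm, namely Proposition~\ref{prop.finite.dist}~\ititemref{prop.finite.dist.3} applied with $\wihat M = \homZcov M$. By Lemma~\ref{lem:haproperties}~\ititemref{lem:haproperties.vi} we have $\Apm(\gamma)\subset\Atot(\gamma)$, and by Lemma~\ref{lem:haproperties}~\ititemref{lem:haproperties.ii} and~\ititemref{lem:haproperties.vii} all three sets $A^+(\gamma)$, $A^-(\gamma)$, $\Atot(\gamma)$ already lie in the closed stable norm unit ball. Hence it suffices to prove the reverse estimate $\stabnorm v \geq 1$ for every $v \in \Atot(\gamma)$.

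First I would fix a lift $\hat\gamma\colon\RR\to\homZcov M$ of $\gamma$. Since $\gamma$ is $\Z$-homologically minimal, $\hat\gamma$ is minimizing, so $\hat d\bigl(\hat\gamma(s),\hat\gamma(t)\bigr) = |t-s|$ for all $s,t\in\RR$, where $\hat d$ denotes the distance on $\homZcov M$. Because the Jacobi map $J\colon\ucov M\to H_1(M;\R)$ factors through $\homZcov J\colon\homZcov M\to H_1(M;\R)$ (Section~\ref{subsec.homologically-minimal}), the relation between the geometric Hurewicz map and the Jacobi map (Remark~\ref{rem:jacobiproperties}) gives $h\bigl(\gamma\stelle{[s,t]}\bigr) = \homZcov J\bigl(\hat\gamma(t)\bigr) - \homZcov J\bigl(\hat\gamma(s)\bigr)$ for all $s\le t$. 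Plugging $\hat p = \hat\gamma(s)$ and $\hat q = \hat\gamma(t)$ into Proposition~\ref{prop.finite.dist}~\ititemref{prop.finite.dist.3} then yields a constant $C>0$, independent of $s$ and $t$, with
\[ \Bigl|\,(t-s) - \stabnorm[big]{h\bigl(\gamma\stelle{[s,t]}\bigr)}\,\Bigr| \leq C \qquad\text{for all } s\le t\,. \]

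To finish, take $v\in\Atot(\gamma)$ together with sequences $s_i\to-\infty$ and $t_i\to\infty$ satisfying $v = \lim_{i\to\infty} h\bigl(\gamma\stelle{[s_i,t_i]}\bigr)/(t_i-s_i)$. Dividing the displayed inequality by $t_i-s_i>0$ (valid for large $i$) gives $\stabnorm{h(\gamma\stelle{[s_i,t_i]})}/(t_i-s_i)\geq 1 - C/(t_i-s_i)$, and letting $i\to\infty$, using continuity of $\stabnorm{\argu}$, forces $\stabnorm v\geq 1$; combined with $\stabnorm v\le 1$ this gives $\stabnorm v = 1$. The identical computation with only $t_i\to\infty$ and a fixed base time handles $A^+(\gamma)$ directly, and with $s_i\to-\infty$ it handles $A^-(\gamma)$ — though, as noted, both are already subsumed by the inclusion into $\Atot(\gamma)$.

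I do not anticipate a genuine obstacle here: the argument is essentially a repackaging of Proposition~\ref{prop.finite.dist}~\ititemref{prop.finite.dist.3}. The only points that require a little care are checking that this proposition is being applied to an admissible covering — $\homZcov M$ is precisely the covering on which $\Z$-homological minimality is defined, which is one of the coverings allowed in that statement — and that the Hurewicz--Jacobi identity of Remark~\ref{rem:jacobiproperties} descends correctly along $\ucov M\to\homZcov M$; everything else is the triangle inequality and a limit.
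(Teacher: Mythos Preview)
Your proof is correct and follows essentially the same approach as the paper: both reduce to $\Atot(\gamma)$, invoke Lemma~\ref{lem:haproperties} for the upper bound $\stabnorm v\le 1$, and obtain the lower bound $\stabnorm v\ge 1$ by applying Proposition~\ref{prop.finite.dist}~\ititemref{prop.finite.dist.3} with $\wihat M=\homZcov M$ to compare $\hat d(\hat\gamma(s),\hat\gamma(t))=t-s$ with $\stabnorm{h(\gamma|_{[s,t]})}$ and then passing to the limit. The paper phrases the limit step as an $\epsilon$--$L$ argument rather than via sequences, but the substance is identical.
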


As $\R$\=/homological minimality implies $\Z$\=/homological minimality,
the proposition also holds for all $\R$\=/ho\-mo\-lo\-gi\-cal\-ly
minimal geodesics.

\begin{proof}
  It suffices to prove the statement for~$\Atot(\gamma)$.
  Let $v \in\Atot(\gamma)$. As a $\Z$\=/ho\-mo\-lo\-gi\-cal\-ly minimal geodesic
  is minimal, we know that~$\stabnorm{v}\leq 1$
  from Lemma~\ref{lem:haproperties}~\itemref{lem:haproperties.vii}.

  We prove~$\stabnorm v \geq 1$: 
  For $L>0$,  we choose~$s,t\in \R$ with $s+L<t$ such that 
  \[\stabnorm[Big]{v-\frac{h\bigl(\gamma\stellle{[s,t]}\bigr)}{t - s}}
  <\epsilon\ceq\frac1L\,.
  \]
  
  Let $\hat\gamma$ be a lift of $\gamma$ to $\homZcov M$. 
  We apply Proposition~\ref{prop.finite.dist} \ref{prop.finite.dist.3} for $\wihat M=\homZcov M$, using the definitions of $\wihat J$, $\hat d$ and $C$ from this proposition, and we obtain:
  \begin{align}
    \left(\stabnorm{v}+\epsilon\right)\cdot (t-s)
    &\geq \stabnorm[big]{h\bigl(\gamma\stellle{[s,t]}\bigr)}\nonumber \\
    &\geq \stabnorm[big]{\wihat J\bigl(\hat\gamma(t)\bigr)-\wihat J\bigl(\hat\gamma(s)\bigr)}\nonumber \\
    &\geq \hat d\bigl(\hat\gamma(t),\hat\gamma(s)\bigr)-C\nonumber \\
    &\geq (t-s)-C\,.\nonumber
  \end{align}
  This yields
  \begin{align*}
    \stabnorm{v}&\geq 1-\frac{C}{t-s}-\epsilon \geq 1-\frac{C}{L}-\epsilon 
  \end{align*} 
  and in the limit $L=\epsilon^{-1}\to \infty$ we finally obtain $\stabnorm{v}\geq 1$.
\end{proof}

\subsection{Comparison to Bangert's rotation vectors}\label{subsec.rot.vector}
  Initial and terminal asymptotes are related to Bangert's ``rotation
  vectors''~\cite{bangert:90,ammann:diploma}. For historical clarity,
  let us explain this, although it is not required for the article's
  main results.
  
  For a curve~$\gamma\colon[a,b]\to M$, Bangert defines the \emph{rotation
  vector} as
  \[R(\gamma)\ceq \frac{h(\gamma)}{\stabnorm{h(\gamma)}}
  \,.
  \]
  Now let $\gamma\colon\R\to M$ be a minimal geodesic.  In Bangert's
  work~\cite{bangert:90}, the accumulation points
  of~$R\bigl(\gamma\steLLe{[s,t]}\bigr)$ with~$t-s\to \infty$ play an
  important role. If additionally $\gamma$ is
  $\Z$\=/ho\-mo\-lo\-gi\-cal\-ly minimal, then he shows 
  that these accumulation points are in
  the intersection of the stable norm unit sphere and a supporting
  hyperplane which implies a statement similar to our
  Proposition~\ref{prop.hom.min.sphere}~\cite[Theorem~3.2]{bangert:90}.
  More precisely:
  
  \begin{theorem}[{Bangert~\cite[Theorem~3.2]{bangert:90}}]\label{thm.bangert.zwei}
    Let $M$ be a closed connected Riemannian manifold and let~$B$
    be the stable norm unit ball in~$H_1(M;\R)$. 
    Let~$\gamma$ be a $\Z$\=/ho\-mo\-lo\-gi\-cal\-ly minimal
    geodesic. Then there exists a supporting hyperplane $H$ to $B$
    with the following property: for every neighborhood $U$ of $H\cap
    B$ there exists $c_U> 0$ such that
    $R\bigl(\gamma\stellle{[s,t]}\bigr)\in U$ whenever $t-s > c_U$.
  \end{theorem}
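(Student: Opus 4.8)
The plan is to reduce the statement to a single claim about the set $W$ of all accumulation points of $R\bigl(\gamma\stelle{[s,t]}\bigr)$ as $t-s\to\infty$. Since every $R\bigl(\gamma\stelle{[s,t]}\bigr)$ lies on the compact stable norm unit sphere, it is enough to produce one $\omega_0\in\HdR$ with $\stabnormdual{\omega_0}=1$ and $W\subseteq F_{\omega_0}$, with $F_{\omega_0}$ as in Definition~\ref{def:face}: then $H\ceq H_{\omega_0}$ is a supporting hyperplane of $B$ with $H\cap B=F_{\omega_0}\supseteq W$, and any neighbourhood $U$ of $H\cap B$ is a neighbourhood of $W$, so by compactness of the unit sphere together with the defining property of $W$ there is a $c_U>0$ with $R\bigl(\gamma\stelle{[s,t]}\bigr)\in U$ whenever $t-s>c_U$.

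First I would show that $W$ actually lies on the unit \emph{sphere}; this is the only place where $\Z$\=/ho\-mo\-lo\-gi\-cal minimality enters. Lifting $\gamma$ to $\homZcov M$, the lift $\hat\gamma$ is minimizing by Definition~\ref{def:homologmin}, so $\hat d\bigl(\hat\gamma(s),\hat\gamma(t)\bigr)=t-s$ for all $s<t$. Combining this with Proposition~\ref{prop.finite.dist}~\ref{prop.finite.dist.3} (for $\widehat M=\homZcov M$) and the factored relation between the Jacobi map and the Hurewicz map (Remark~\ref{rem:jacobiproperties} and Section~\ref{subsec.homologically-minimal}) gives a constant $C$ with $\bigl|\stabnorm[big]{h(\gamma\stelle{[s,t]})}-(t-s)\bigr|\le C$ for all $s<t$. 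Hence $R\bigl(\gamma\stelle{[s,t]}\bigr)$ and $h\bigl(\gamma\stelle{[s,t]}\bigr)/(t-s)$ have the same accumulation points as $t-s\to\infty$, and this common set $W$ is nonempty, compact, and contained in $\bigl\{v\in H_1(M;\R)\mid\stabnorm v=1\bigr\}$ (cf.\ Lemma~\ref{lem:haproperties} and Proposition~\ref{prop.hom.min.sphere}).

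The core of the argument is confining $W$ to a single exposed face. I would pick $v^*\in W$ and, by Hahn--Banach, a functional $\omega_0\in\HdR$ with $\stabnormdual{\omega_0}=1$ and $\<\omega_0,v^*\>=\stabnorm{v^*}=1$; writing $\omega_0=\sum_j c_j[\alpha^j]$ and $\eta\ceq\sum_j c_j\alpha^j$ we get $\<\omega_0,h(\gamma\stelle{[s,t]})\>=\int_s^t\eta\bigl(\dot\gamma(r)\bigr)\,\upd r$ by Remark~\ref{rem:hurewicznonint}, and Proposition~\ref{prop:stabnormlength} shows that $\Psi(t)\ceq\int_{t_0}^t\eta(\dot\gamma)-t$ is almost non-increasing, i.e.\ $\Psi(t)\le\Psi(s)+C'$ for $s<t$. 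Splitting a long interval $[s,t]$ at intermediate points $s=r_0<\dots<r_m=t$ and using additivity of $h$ (Remark~\ref{rem:hurewiczproperties}) yields the convex combination
\[
\frac{h\bigl(\gamma\stelle{[s,t]}\bigr)}{t-s}
=\sum_{k=0}^{m-1}\frac{r_{k+1}-r_k}{t-s}\cdot
 \frac{h\bigl(\gamma\stelle{[r_k,r_{k+1}]}\bigr)}{r_{k+1}-r_k},
\]
whose summands all have stable norm within $C/\min_k(r_{k+1}-r_k)$ of $1$ by the previous paragraph, and the same estimate holds for the left-hand side. Since $B$ is strictly convex away from its exposed faces, a near-unit vector that is an average of near-unit vectors forces those vectors to cluster near one common exposed face; applying this to consecutive blocks of a fixed large length $L$ shows that the homological direction of $\gamma\stelle{[s,s+nL]}$ cannot migrate from one face to another as $n$ grows, and letting $L\to\infty$ pins down $W\subseteq F_{\omega_0}$.

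The main obstacle is exactly this last clustering/iteration step: ruling out that the asymptotic homological direction of $\gamma$ wanders among several faces of $B$ (the ``side shift effects'' of the introduction). Making it precise requires a quantitative form of ``an average of near-unit vectors which is itself near-unit must lie near a single exposed face'', controlled uniformly in the number of summands; this is the technical heart of the proof and is precisely what Bangert carries out in~\cite[Section~3]{bangert:90} through his structural description of $R\bigl(\gamma\stelle{[s,t]}\bigr)$ in terms of the minimal length function and the stable norm, and I would follow that route.
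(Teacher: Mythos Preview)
The paper does not give its own proof of this theorem; it is only cited from Bangert~\cite[Theorem~3.2]{bangert:90}. So there is no ``paper's proof'' to compare against, and the question is simply whether your outline is a viable route.

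Your first two paragraphs are fine and are not circular: the estimate $\bigl|\stabnorm[big]{h(\gamma\stelle{[s,t]})}-(t-s)\bigr|\le C$ follows directly from Proposition~\ref{prop.finite.dist}~\ref{prop.finite.dist.3} and $\Z$\=/ho\-mo\-lo\-gi\-cal minimality, and it immediately yields both $W\subset\partial B$ and the equivalence of accumulation points of $R(\gamma\stelle{[s,t]})$ and of $h(\gamma\stelle{[s,t]})/(t-s)$, without invoking Theorem~\ref{thm.bangert.zwei} itself. (In fact this gives a cleaner path to Proposition~\ref{prop.atot} than the one in the paper, which appeals to Theorem~\ref{thm.bangert.zwei} at a step where $\stabnorm[big]{R(\gamma\stelle{[s,t]})}=1$ holds trivially anyway.)

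The genuine gap is in your third paragraph: fixing $\omega_0$ as a supporting functional at \emph{one} accumulation point $v^*\in W$ does not work. Your almost-non-increasing function $\Psi$ only gives the \emph{upper} bound $\<\omega_0,h(\gamma\stelle{[s,t]})\>\le (t-s)+C'$; you never obtain the matching lower bound, and the convex-combination identity you display cannot supply it once $\omega_0$ is already fixed --- a priori $h(\gamma\stelle{[s,t]})$ could point far from $F_{\omega_0}$ on intervals disjoint from those realising~$v^*$. The remedy is to choose $\omega$ differently: let $\omega_n\in\HdR$ support $B$ at $h(\gamma\stelle{[-n,n]})/\stabnorm[big]{h(\gamma\stelle{[-n,n]})}$. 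Then your convex-combination identity, applied to the three-block splitting $[-n,a]\cup[a,b]\cup[b,n]$, together with $\<\omega_n,h(\gamma\stelle{[-n,n]})\>=\stabnorm[big]{h(\gamma\stelle{[-n,n]})}\ge 2n-C$ and the upper bounds on the two outer blocks, yields $\<\omega_n,h(\gamma\stelle{[a,b]})\>\ge (b-a)-3C$ for every $[a,b]\subset[-n,n]$. Passing to a subsequential limit $\omega$ of the $\omega_n$ (compactness of the dual unit sphere) gives $\<\omega,h(\gamma\stelle{[a,b]})\>\ge (b-a)-3C$ for \emph{all} $a<b$, hence $\<\omega,w\>\ge 1$ for every $w\in W$; combined with $\stabnormdual{\omega}=1$ and $\stabnorm{w}=1$ this forces $W\subset F_\omega$. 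This is essentially Bangert's argument recast through the tools of Section~\ref{sec:findist}, and it closes the gap you flagged.
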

  
  Conversely, Bangert proves that for every supporting hyperplane a
  $\Z$\=/ho\-mo\-lo\-gi\-cal\-ly minimal geodesic exists such that the accumulation
  points of the rotation vectors lie in the intersection of that
  hyperplane with the stable norm unit
  sphere~\cite[Theorem~4.4]{bangert:90}.

  Rotation vectors and asymptotes are related as follows:
  
\begin{proposition}\label{prop.atot} 
  Let $M$ be a closed connected Riemannian manifold and let $\gamma
  \colon \R \to M$ be a $\Z$\=/ho\-mo\-lo\-gi\-cal\-ly minimal
  geodesic.  Then there are constants~$C_0$, $c_0>0$ such that
  \[
  \fa{s,t \in \R} t-s \geq c_0 \Longrightarrow
  \stabnorm[bigg]{R\bigl(\gamma\stellle{[s,t]}\bigr)
    - \frac{h(\gamma\stellle{[s,t]})}{t-s}}
  \leq \frac{C_0}{t-s}
  \,.
  \]
\end{proposition}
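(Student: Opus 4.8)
The plan is to reduce the statement to a two-sided comparison between the length $t-s$ of $\gamma\stellle{[s,t]}$ and the stable norm $\stabnorm{h(\gamma\stellle{[s,t]})}$ of its homology class. Write $x \ceq h(\gamma\stellle{[s,t]})$ and $\ell \ceq t-s$. Since the stable norm is homogeneous, whenever $s<t$ and $x\neq 0$ one has the identity
\[
  \stabnorm[bigg]{R\bigl(\gamma\stellle{[s,t]}\bigr)
    - \frac{h(\gamma\stellle{[s,t]})}{t-s}}
  = \stabnorm[bigg]{\frac{x}{\stabnorm x} - \frac{x}{\ell}}
  = \frac{\bigl|\ell - \stabnorm x\bigr|}{\ell}\,,
\]
so it is enough to produce a constant $C_0>0$ with $\bigl|\ell-\stabnorm x\bigr|\leq C_0$ for all $s<t$, together with a threshold $c_0>0$ large enough that $x\neq 0$ (so that $R(\gamma\stellle{[s,t]})$ is defined) whenever $\ell\geq c_0$.

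For the upper bound I would invoke Proposition~\ref{prop:stabnormlength}: since $\gamma$ is parametrized by arclength, $\maL(\gamma\stellle{[s,t]}) = \ell$, and hence $\stabnorm x \leq \ell + (\maJ+1)\cdot\diam M$; that is, $\stabnorm x - \ell \leq (\maJ+1)\cdot\diam M$.

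For the lower bound I would reuse the chain of inequalities from the proof of Proposition~\ref{prop.hom.min.sphere}. Let $\hat\gamma$ be a lift of $\gamma$ to $\homZcov M$; as $\gamma$ is $\Z$-homologically minimal, $\hat\gamma$ is minimizing, so $\hat d(\hat\gamma(t),\hat\gamma(s)) = t-s = \ell$ for $s\leq t$. By Remark~\ref{rem:jacobiproperties} applied to the factored Jacobi map, $x = \wihat J(\hat\gamma(t)) - \wihat J(\hat\gamma(s))$, and Proposition~\ref{prop.finite.dist}~\ref{prop.finite.dist.3} with $\wihat M = \homZcov M$ yields, for the constant $C>0$ appearing there,
\[
  \stabnorm x = \stabnorm[big]{\wihat J(\hat\gamma(t)) - \wihat J(\hat\gamma(s))}
  \geq \hat d\bigl(\hat\gamma(t),\hat\gamma(s)\bigr) - C = \ell - C\,.
\]
Combining the two estimates gives $\bigl|\ell - \stabnorm x\bigr|\leq C_0 \ceq \max\{C,\,(\maJ+1)\cdot\diam M\}$; and taking $c_0 \ceq C+1$ forces $\stabnorm x \geq \ell - C \geq 1 > 0$ once $\ell = t-s\geq c_0$, so $R(\gamma\stellle{[s,t]})$ is defined and the displayed identity gives the desired bound $C_0/(t-s)$.

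I do not anticipate a genuine obstacle: the only real idea is the homogeneity identity in the first step, after which the lower bound is literally the computation carried out in the proof of Proposition~\ref{prop.hom.min.sphere} and the upper bound is a single application of Proposition~\ref{prop:stabnormlength}. The one subtlety worth flagging is that $R(\gamma\stellle{[s,t]})$ is a priori undefined when $h(\gamma\stellle{[s,t]}) = 0$, which is exactly why the conclusion is stated only for $t-s$ above the threshold~$c_0$.
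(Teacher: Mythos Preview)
Your proof is correct and follows essentially the same route as the paper: both reduce to the two-sided estimate $\bigl|\,(t-s)-\stabnorm{h(\gamma\stellle{[s,t]})}\,\bigr|\leq C$, obtained from Proposition~\ref{prop.finite.dist}~\ref{prop.finite.dist.3} together with the fact that the lift~$\hat\gamma$ is minimizing. Your presentation is in fact slightly cleaner: the identity $\stabnorm{x/\stabnorm{x}-x/\ell}=|\ell-\stabnorm{x}|/\ell$ gives the bound directly with constant~$C_0$, whereas the paper factors out~$R$ and bounds~$\stabnorm{R}\leq 1+\epsilon$ via Bangert's Theorem~\ref{thm.bangert.zwei} (unnecessarily, since~$\stabnorm{R}=1$ by definition), and you handle the threshold~$c_0$ ensuring $h(\gamma\stellle{[s,t]})\neq 0$ explicitly while the paper leaves this implicit.
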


\begin{proof}
  Let $\hat\gamma$ be a lift of $\gamma$ to the covering $\homZcov
  M$. By $\Z$\=/ho\-mo\-lo\-gi\-cal minimality,
  $\hat\gamma$ is a geodesic line, \ie for all $s<t$ we have $\hat d\bigl(\hat
  \gamma(t),\hat \gamma(s)\bigr)=t-s$.

  We calculate for the constant $C$ provided by
  Proposition~\ref{prop.finite.dist}~\ititemref{prop.finite.dist.3} that
  \begin{eqnarray*}
    \bigl|\stabnorm[big]{h\bigl(\gamma\stellle{[s,t]}\bigr)}- (t-s)\bigr|
    &=& \bigl|\stabnorm[big]{\wihat J\bigl(\hat\gamma(t)\bigr)- \wihat J\bigl(\hat\gamma(s)\bigr)}- (t-s)\bigr|\\
    &\leq &  \bigl|\underbrace{\hat d\bigl(\hat \gamma(t),\hat \gamma(s)\bigr)- (t-s)}_{=0}\bigr|+C \,=\, C\,.
  \end{eqnarray*}
  Moreover, we have 
  \begin{eqnarray*}
    \stabnorm[bigg]{R\bigl(\gamma\stellle{[s,t]}\bigr) - \frac{h(\gamma\stellle{[s,t]})}{t-s}}
    &\leq & \biggl|1-\frac{\stabnorm[big]{h(\gamma\stellle{[s,t]})}}{t-s} \biggr|\cdot \stabnorm[big]{R\bigl(\gamma\stellle{[s,t]}\bigr)} \\
    &\stackrel{(*)}{\leq}& \frac{C\cdot(1+\epsilon)}{t-s}\,;
  \end{eqnarray*}
  for the inequality~$(*)$ we used that Bangert's
  Theorem~\ref{thm.bangert.zwei} implies
  $\stabnorm[big]{R(\gamma\stellle{[s,t]})}\leq 1+\ep$ for
  some~$\epsilon$ that goes to~$0$ uniformly in $t-s\to
  \infty$. (Alternatively, we could use
  Proposition~\ref{prop.hom.min.sphere} for that purpose). The
  statement follows for~$C_0\ceq C(1+\epsilon)$.
\end{proof}

\begin{corollary}\label{cor.atot} 
  We assume the conditions of Proposition~\ref{prop.atot}.
  Then
  \begin{equation*}
    \Atot(\gamma)
    = \bigl\{ \lim_{i\to \infty}R\bigl(\gamma\stellle{[s_i,t_i]}\bigr)
    \bigm| t_i\nearrow +\infty\text{ and }s_i\searrow -\infty
    \text{ and the limit exists}
    \bigr\}\,.
  \end{equation*}  
\end{corollary}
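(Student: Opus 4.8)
The plan is to read off Corollary~\ref{cor.atot} directly from Proposition~\ref{prop.atot} together with the definition of~$\Atot(\gamma)$ in Definition~\ref{def.asymptotes}. Write
\[
  A \ceq \bigl\{\,\lim_{i\to\infty} R\bigl(\gamma\stellle{[s_i,t_i]}\bigr)
         \bigm| t_i \nearrow +\infty,\ s_i \searrow -\infty,\ \text{the limit exists}\,\bigr\}
\]
for the right-hand side of the claimed identity. The key observation is that the hypotheses $t_i\nearrow+\infty$ and $s_i\searrow-\infty$ force $t_i-s_i\to\infty$, so that Proposition~\ref{prop.atot} gives
\[
  \stabnorm[bigg]{R\bigl(\gamma\stellle{[s_i,t_i]}\bigr)
    - \frac{h\bigl(\gamma\stellle{[s_i,t_i]}\bigr)}{t_i-s_i}}
  \longrightarrow 0 \quad (i\to\infty)\,.
\]
Hence the sequences $\bigl(R(\gamma\stellle{[s_i,t_i]})\bigr)_i$ and $\bigl(h(\gamma\stellle{[s_i,t_i]})/(t_i-s_i)\bigr)_i$ have exactly the same limiting behaviour: one converges if and only if the other does, and then to the same point. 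Both inclusions $\Atot(\gamma)\subseteq A$ and $A\subseteq\Atot(\gamma)$ are then immediate.

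First I would dispose of a harmless preliminary: for $t-s$ sufficiently large, $R(\gamma\stellle{[s,t]})$ is actually well defined, i.e.\ $h(\gamma\stellle{[s,t]})\neq 0$. This is already contained in the proof of Proposition~\ref{prop.atot}, where one derives $\bigl|\stabnorm{h(\gamma\stellle{[s,t]})}-(t-s)\bigr|\leq C$ with $C$ the constant from Proposition~\ref{prop.finite.dist}~\ititemref{prop.finite.dist.3} (applied to a lift of $\gamma$ to $\homZcov M$, which is a geodesic line by $\Z$\=/homological minimality); thus $\stabnorm{h(\gamma\stellle{[s,t]})}>0$ as soon as $t-s>C$. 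So every expression $R(\gamma\stellle{[s_i,t_i]})$ occurring below is meaningful for large $i$, and discarding the first finitely many terms of a sequence is harmless.

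For ``$\Atot(\gamma)\subseteq A$'': let $v\in\Atot(\gamma)$. By Definition~\ref{def.asymptotes} there are sequences $s_i\to-\infty$, $t_i\to+\infty$ with $v=\lim_{i\to\infty} h(\gamma\stellle{[s_i,t_i]})/(t_i-s_i)$; after passing to a subsequence we may assume $s_i$ strictly decreasing, $t_i$ strictly increasing, and $t_i-s_i\geq\max\{c_0,C\}$ for all~$i$ (with $c_0$ the constant from Proposition~\ref{prop.atot}). Then Proposition~\ref{prop.atot} gives $\stabnorm[big]{R(\gamma\stellle{[s_i,t_i]})-h(\gamma\stellle{[s_i,t_i]})/(t_i-s_i)}\leq C_0/(t_i-s_i)\to 0$, so $\lim_{i\to\infty} R(\gamma\stellle{[s_i,t_i]})=v$ and $v\in A$. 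Conversely, for ``$A\subseteq\Atot(\gamma)$'': if $v=\lim_{i\to\infty} R(\gamma\stellle{[s_i,t_i]})$ with $t_i\nearrow+\infty$ and $s_i\searrow-\infty$, then for large $i$ we have $t_i-s_i\geq\max\{c_0,C\}$, and the same estimate yields $h(\gamma\stellle{[s_i,t_i]})/(t_i-s_i)\to v$; since $s_i\to-\infty$ and $t_i\to+\infty$, this says precisely $v\in\Atot(\gamma)$.

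I do not expect any genuine obstacle here: the entire substance has been packaged into Proposition~\ref{prop.atot}, and what remains is bookkeeping of subsequences plus the well-definedness of~$R$. The only point requiring a little care is that Corollary~\ref{cor.atot} restricts to \emph{monotone} sequences $t_i\nearrow+\infty$, $s_i\searrow-\infty$, whereas Definition~\ref{def.asymptotes} permits arbitrary sequences tending to $\pm\infty$; but this entails no loss, since from any such pair of sequences one may extract monotone subsequences along which the relevant limit is unchanged.
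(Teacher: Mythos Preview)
Your proposal is correct and follows exactly the approach the paper intends: the paper's own proof is the single sentence ``This is immediate from Proposition~\ref{prop.atot},'' and you have simply unpacked what ``immediate'' means---the estimate in Proposition~\ref{prop.atot} forces the two sequences to share limits, plus the routine bookkeeping on monotonicity of subsequences and well-definedness of~$R$. There is nothing to add.
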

\begin{proof}
  This is immediate from Proposition~\ref{prop.atot}. 
\end{proof}

We obtain the following improvement of Proposition~\ref{prop.hom.min.sphere}:

\begin{corollary}\label{cor.prop.hom.min.sphere.improved}
  Let $M$ be a closed connected Riemannian manifold and
  let $\gamma \colon \R \to M$ be a $\Z$\=/ho\-mo\-lo\-gi\-cal\-ly minimal
  geodesic. Let $B$ be the (closed) stable norm unit ball.
  Then there is a supporting hyperplane~$H$ for~$B$ with 
  \[\conv \Apm(\gamma)\subset \bigl\{x\in H_1(M;\R)\bigm| \stabnorm{x}=1\bigr\}\cap H\,.
  \]
\end{corollary}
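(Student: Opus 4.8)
The plan is to let $H$ be the supporting hyperplane for the stable norm unit ball $B$ provided by Bangert's Theorem~\ref{thm.bangert.zwei} applied to the given $\Z$\=/ho\-mo\-lo\-gi\-cal\-ly minimal geodesic~$\gamma$, and to prove the slightly stronger inclusion $\Atot(\gamma)\subset H\cap B$. The corollary then follows, since $\Apm(\gamma)\subset\Atot(\gamma)$ by Lemma~\ref{lem:haproperties}~\itemref{lem:haproperties.vi}, and since $H\cap B$ turns out to be a convex subset of the unit sphere.

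To prove $\Atot(\gamma)\subset H\cap B$, let $v\in\Atot(\gamma)$. By Corollary~\ref{cor.atot} there exist sequences $t_i\nearrow+\infty$ and $s_i\searrow-\infty$ with $R\bigl(\gamma\stellle{[s_i,t_i]}\bigr)\to v$. Fix any open neighbourhood $U$ of $H\cap B$ in $H_1(M;\R)$. Since $t_i-s_i\to\infty$, Theorem~\ref{thm.bangert.zwei} yields $R\bigl(\gamma\stellle{[s_i,t_i]}\bigr)\in U$ for all sufficiently large~$i$, and hence $v\in\overline U$. As $H\cap B$ is closed, being the intersection of the closed sets $H$ and $B$, and as the previous conclusion holds for every open neighbourhood~$U$ of $H\cap B$, we conclude $v\in H\cap B$. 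Thus $\Atot(\gamma)\subset H\cap B$, and in particular $\Apm(\gamma)\subset\Atot(\gamma)\subset H\cap B$.

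It remains to unwind the geometry. Because $H$ is a supporting hyperplane we have $H\cap\inneres B=\emptyset$, and since $\partial B=\{x\in H_1(M;\R)\mid\stabnorm x=1\}$ this gives $H\cap B=H\cap\partial B=\{x\in H_1(M;\R)\mid\stabnorm x=1\}\cap H$. This set is convex, being the intersection of the convex sets $H$ and $B$; therefore $\Apm(\gamma)\subset H\cap B$ implies $\conv\Apm(\gamma)\subset H\cap B=\{x\in H_1(M;\R)\mid\stabnorm x=1\}\cap H$, which is the assertion. I do not anticipate a real obstacle: the only points needing care are the routine verification that Theorem~\ref{thm.bangert.zwei} applies to the two-sided restrictions $\gamma\stellle{[s_i,t_i]}$ (it is stated for all $s<t$ with $t-s$ large, so this is immediate) and the elementary observation that a supporting hyperplane meets $B$ precisely in a portion of the unit sphere. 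In essence the corollary repackages Bangert's Theorem~\ref{thm.bangert.zwei} in terms of homological asymptotes via Corollary~\ref{cor.atot}, and then upgrades the conclusion using convexity of $H\cap B$.
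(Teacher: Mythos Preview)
Your proof is correct and follows essentially the same route as the paper's: both invoke Bangert's Theorem~\ref{thm.bangert.zwei} and the link between rotation vectors and asymptotes (Proposition~\ref{prop.atot}/Corollary~\ref{cor.atot}) to land the asymptotes in~$H$, then use convexity of~$H\cap B$. A minor packaging difference is that you prove the slightly stronger inclusion $\Atot(\gamma)\subset H\cap B$ and read off the unit-sphere containment from $H\cap B\subset\partial B$, whereas the paper establishes $\Apm(\gamma)\subset\partial B$ separately via Proposition~\ref{prop.hom.min.sphere} and $\Apm(\gamma)\subset H$ via Theorem~\ref{thm.bangert.zwei} combined with Proposition~\ref{prop.atot}; your version is marginally cleaner in this respect.
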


\begin{proof}
  As the right-hand side is convex, it suffices to prove the statement
  for $A^+(\gamma)$ and $A^-(\gamma)$ instead of $\conv \Apm(\gamma)$.
  We already know from Proposition~\ref{prop.hom.min.sphere} that
  $A^\pm(\gamma)\subset \bigl\{x\in H_1(M;\R)\bigm|
  \stabnorm{x}=1\bigr\}$. From Theorem~\ref{thm.bangert.zwei} and
  Proposition~\ref{prop.atot}, we get a supporting hyperplane~$H$ such
  that $A^\pm(\gamma)\subset H$; the statement is thus shown.
\end{proof}

\begin{remark}
  For general minimal geodesics (\ie without the assumption
  ``$\Z$\=/ho\-mo\-lo\-gi\-cal\-ly''), the relationship of rotation
  vectors and asymptotes is not so immediate and they are out of the
  main focus of Bangert's work~\cite{bangert:90}. We state some
  relations without proof: If a vector~$v\in H_1(M;\R)$ is an
  accumulation point for~$R\bigl(\gamma\stellle{[s_i,t_i]}\bigr)$ for
  some $\R$-valued sequences $(s_i)_{i \in \N}$ and $(t_i)_{i \in \N}$
  with~$t_i\nearrow +\infty$ and $s_i\searrow -\infty$, then there is
  a $w\in \Atot(\gamma)$ auch that
  \begin{equation}\label{eq.relation.asymptote.rotation}
    \stabnorm{w}\cdot v
    = w\,.
  \end{equation}
  Note that we did not exclude the case $w=0$, which arises in
  examples (Example~\ref{ex.higher-genus}).  Conversely, for $w\in
  \Atot(\gamma)\setminus\{0\}$, $v\ceq w/ \stabnorm{w}$ is an
  accumulation point for~$R\bigl(\gamma\stellle{[s_i,t_i]}\bigr)$
  with~$t_i\nearrow +\infty$ and $s_i\searrow -\infty$.
\end{remark}

\subsection{Homologically homoclinic and heteroclinic minimal geodesics}\label{subsec:homoclinic}

The definitions in this section apply in any dimension, but
they are motivated by Hedlund examples, which exist for~$\dim M\geq
3$. Several notions introduced in this subsection come from dynamics, as --
besides the question of existence of minimal geodesics -- the
dynamical properties of the geodesic flow close to minimal geodesics
are of interest.

Hedlund
examples~\cite[Section~9]{hedlund:1932}\cite[Section~5]{bangert:90}\cite{ammann:diploma,ammann:97}
have a finite number of simple closed geodesics; if parametrized by
arclength as a periodic curve~$\tau_i\colon\R\to M$, these simple
closed geodesiscs are also minimal geodesics, see
Section~\ref{subsec:Hedlund.examples}. In the present article, such
curves will be called \emph{minimal closed geodesics}. Let us mention
that the notion ``minimizing closed geodesics'' is also used in the
literature.  If $(M,g)$ is a Hedlund example, one has a finite
number~$\ell$ of minimal closed geodesics, thus we assume~$i\in\{1,2,\cdots,\ell\}$.

\begin{definition}\label{def:homo_heteroclinic}
  Let $(M,g)$ be a closed Riemannian manifold with closed minimal
  geodesics $\tau_1,\dots, \tau_\ell$.

  A minimal geodesic~$\gamma$, parametrized by arclength, is called
  \emph{asymptotic} to~$\tau_i$ at~$\pm \infty$ if there is a
  sign~$\ep\in \{-1,1\}$ and an~$\alpha\in \R$ such that
  \[ \lim_{t\to \pm\infty}
  d\bigl(\gamma(t),\tau_i\bigl(\epsilon\cdot (t-\alpha)\bigr)\bigr)
  = 0
  \,.
  \]
  If $\gamma$ is a minimal geodesic, following Poincar\'e we say that $\gamma$ is
  \begin{itemize}
  \item \emph{homoclinic} if there is an~$i\in\{1,\ldots,\ell\}$ such
    that $\gamma$ is asymptotic to~$\tau_i$ at $+\infty$
    and~$-\infty$. This includes the cases
    $\gamma=\tau_i$ and~$\gamma=\overline\tau_i$.
  \item \emph{heteroclinic} if there are
    $i_+,i_-\in\{1,\ldots,\ell\}$ with~$i_+\neq i_-$ such that $\gamma$ is
    asymptotic to~$\tau_{i_\pm}$ at~$\pm\infty$.
  \end{itemize}
\end{definition}

If the fundamental group~$\pi_1(M)$ of a closed connected manifold~$M$
is virtually abelian (or virtually nilpotent of ``bounded minimal
generation''~\cite[Definition~7.1]{ammann:97}\cite[Definition~IV.1.19]{ammann:diploma}),
then for a suitable Hedlund metric on~$M$, every minimal geodesic is
either homoclinic or heteroclinic~\cite[Hilfsatz~IV.1.9 and
  Korollar~IV.1.21]{ammann:diploma}.
 
\begin{history}
  The existence of homoclinic and heteroclinic minimal geodesics was
  also studied by Bolotin and Rabinowitz, in the special case $M=T^n$,
  under some weak assumptions. We will explain this in Subsection~\ref{subsec:compar_BR} in more details.
\end{history}

On general Riemannian manifolds, this motivates the introduction of
similar terminology, which will be useful to distinguish different
classes of minimal geodesics. All these notions only depend on the
geometric equivalence class of the minimal geodesic.
  
\begin{definition}[types of minimal geodesics]\label{def.homolo.homocli}
  A minimal geodesic~$\gamma$ on a closed connected Riemannian
  manifold~$M$ is called
  \begin{itemize}
  \item \emph{homologically homoclinic} if there is a~$v\in H_1(M;\R)$ with
    \[ A^+(\gamma) = \{v\} = A^-(\gamma)
    \,.
    \]
  \item \emph{homologically heteroclinic} if there
    are~$v,w\in H_1(M;\R)$ with~$v\neq w$ and 
    \[ A^+(\gamma)=\{v\}
    \qand
    A^-(\gamma)=\{w\}
    \,.
    \]
  \item \emph{homologically diverging} if $A^+(\gamma)$ or
    $A^-(\gamma)$ (or both of them) contain more than one point.
  \item \emph{homologically semi\-/con\-ver\-ging} if $A^+(\gamma)$ or
    $A^-(\gamma)$ (or both of them) contain precisely one point.
  \item \emph{homologically exposed} if there are exposed points~$x,y$
    in the stable norm unit ball~$B$ with $A^+(\gamma)=\{x\}$ and
    $A^-(\gamma)=\{y\}$.  We allow both the homoclinic ($x=y$) and the
    hereroclinic ($x\neq y$) case.
  \item \emph{homologically semi\-/ex\-posed}
    if there is an exposed point~$x$ of~$B$ with
    $A^+(\gamma)=\{x\}$ or $A^-(\gamma)=\{x\}$.
  \item \emph{homologically non-homoclinic} if it is not homologically
    homoclinic.  By definition, this is equivalent to being either
    homologically heteroclinic or homologically diverging.
   \end{itemize}
\end{definition}

All homoclinic/heteroclinic minimal geodescis in the sense of
Definition~\ref{def:homo_heteroclinic} are homologically
homoclinic/heteroclinic, respectively, provided that the
homology classes~$[\tau_1], \dots, [\tau_\ell] \in H_1(M,\R)$ are pairwise
different. By definition, every minimal geodesic is either
homologically homoclinic, homologically heteroclinic or homologically
diverging. Furthermore, ``homologically semi\-/ex\-posed'' implies
``homologically semi\-/con\-ver\-ging''.

\begin{example}\label{exam.tk.sl}Let the torus~$T^k=\R^k/\Z^k$
  carry a flat metric and let $N$ be a closed connected Riemannian
  manifold with finite~$\pi_1(N)$.  We equip~$M \ceq T^k\times N$ with
  the product metric. Then a curve
  $\gamma = (\gamma_T,\gamma_N)$ is a geodesic if and
  only if both $\gamma_T$ and~$\gamma_N$ are geodesics, not
  necessarily parametrized by arclength.  Further, $\gamma$ is a
  minimal geodesic if and only if $\gamma_N$ is constant and
  $\gamma_T$ a minimal geodesic. The latter condition is equivalent to
  saying that after identifying $\ucov {T^k}$ with Euclidean~$\R^k$,
  the component~$\gamma_T$ is an affine line.  In particular, all minimal geodesics
  on~$M$ are $\R$\=/ho\-mo\-lo\-gi\-cal\-ly minimal, homologically
  homoclinic, homologically (semi\=/)\allowbreak{}con\-ver\-ging, and homologically
  (semi\=/)\allowbreak{}ex\-posed.
\end{example}  

\section{Finding new minimal geodesics}\label{sec:newmingeod}

We explain how points on the exposed edges of the stable norm unit ball lead
to minimal geodesics and how their terminal/initial asymptotes are
controlled by the corresponding exposed edge. In particular, we will see how
these minimal geodesics can be distinguished via their
terminal/initial asymptotes.

\subsection{Setup for the construction}

Let $M$ be a closed connected Riemannian manifold, let $b := \dim_\R H_1(M;\R)$, and let $B \subset H_1(M;\R)$ be the stable norm unit
ball, i.e., the closed unit ball of~$\bigl(H_1(M;\R),\stabnorm\argu\bigr)$.
The boundary of~$B$ is the stable
norm unit sphere~$\partial B=\bigl\{x\in H_1(M;\R)\bigm|
\stabnorm{x}=1\bigr\}$. An \emph{exposed edge} of~$B$ is a
subset~$E\subset\partial B$ that is the convex hull of two different
points~$x,y\in \partial B$ such that there is an $\omega \in \HdR$
with $\stabnormdual{\omega}=1$ with $E=F_\omega$, where $F_\omega$ is defined in Definition~\ref{def:face}. 
The second condition can be equally expressed by saying that there is 
a hyperplane $H$ of $H_1(M;\R)$ supporting $B$ and satisfying $E=H\cap B=H\cap \partial B$. 
We will write~$[x,y]$ for the convex hull
of $x$ and~$y$, viewed as a $1$-dimensional submanifold with boundary with an orientation from $x$ any $y$. Hence $[x,y]$ and $[y,x]$ are equal as submanifolds, but with opposite orientations.
Moreover, we write $(x,y)\ceq [x,y]\setminus\{x,y\}$
for the relative interior of $[x,y]$. If $\dim_\R H_1(M;\R) \leq 1$,
then $B$ does not have any exposed edges.

Moreover, we choose a $\ZZ$-module basis~$(\beta_1, \dots, \beta_b)$
of~$\HZR$ and closed $1$-forms~$\alpha^1, \dots, \alpha^b$ with
$\<[\alpha^i],\beta_j\>=\delta_{ij}$ as in Section~\ref{subsec:curveshom}; 
in particular, we obtain an associated geometric Hurewicz map~$h$.

\subsection{Construction of minimal geodesics from exposed edges of the stable norm unit ball}
\label{subsec:sinfty}

\def\vertx#1{%
  \fill #1 circle (0.1cm);}

Let $[x,y] \subset \partial B$ be an exposed edge of the stable norm
unit ball~$B$ (Figure~\ref{fig:constrsinfty}) and choose
$z\in(x,y)$. By definition of an exposed edge -- Definition~\ref{def:face} -- there is an~$\omega \in \HdR$
with $\stabnormdual{\omega}=1$ and
\[ \< \omega, z \> = 1 = \stabnorm z 
   \qand
   [x,y] = F_\omega
   \,.
\]
In addition, we choose a closed smooth $1$-form~$\eta \in
\Omega^1(M)$ that is a linear combination of
$\alpha^1,\ldots,\alpha^b$ and that satisfies
\[ \< [\eta] , z \> = 0
   \qand
   \< [\eta], x \> < 0 < \< [\eta], y \>\,.
\]
  
  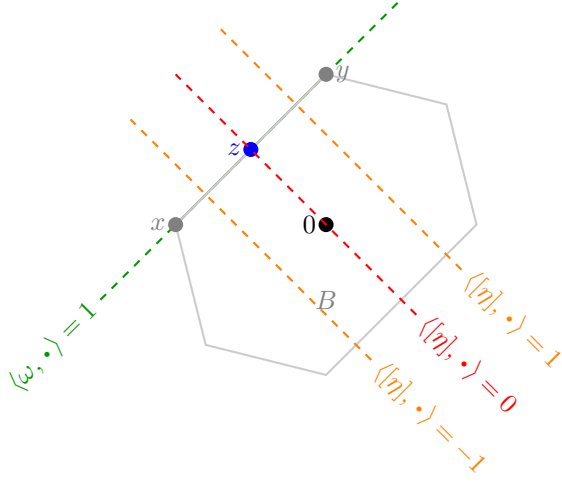
\begin{figure}
  \begin{center}
    \begin{tikzpicture}[x=2cm,y=2cm,thick]
      \begin{scope}[darkgreen]
        \draw[dashed] (-1.5,-0.5) -- +(2,2);
	\draw (-1.5,-0.5) node[anchor=east,rotate=45] {$\<\omega,\args\> = 1$};
      \end{scope}
      \begin{scope}[black!50]
        \draw (-1,0) -- (0,1);
        \draw[black!20] (-1,0) -- (0,1) -- (.8,.8) -- (1,0) -- (0,-1) -- (-.8,-.8) -- cycle;
	\draw (0,-0.5) node {$B$};
        \vertx{(-1,0)}
        \vertx{(0,1)}
	\draw (-1,0) node [anchor=east] {$x$};
	\draw (0,1) node [anchor=west] {$y$};
      \end{scope}
      \begin{scope}[blue]
        \vertx{(-0.5,0.5)}
        \draw (-0.5,0.5) node [anchor=east] {$z$};
      \end{scope}
      \begin{scope}[black]
        \vertx{(-0,0)}
        \draw (-0,0) node [anchor=east] {$0$};
      \end{scope}
      \begin{scope}[red]
        \draw[dashed] (0.6,-0.6) -- +(-1.6,1.6);
	\draw (0.6,-0.6) node [anchor=west,rotate=-45] {$\<[\eta],\args\>=0$};
      \end{scope}
      \begin{scope}[orange]
        \draw[dashed] (0.9,-0.3) -- +(-1.6,1.6);
        \draw (0.9,-0.3) node [anchor=west,rotate=-45] {$\<[\eta],\args\> =1$};
        \draw[dashed] (0.3,-0.9) -- +(-1.6,1.6);
        \draw (0.3,-0.9) node [anchor=west,rotate=-45] {$\<[\eta],\args\> =-1$};
      \end{scope}
    \end{tikzpicture}
  \end{center}

    \caption{The situation in the construction of Section~\ref{subsec:sinfty}}
    \label{fig:constrsinfty}
  \end{figure}

  As starting point for our construction, we pick a controlled
  approximation of~$z$:

  \begin{lemma}\label{lem:constructionzi}
    In this situation, there exists a sequence~$(z_i)_{i \in \N}$
    in~$\HZR \setminus \{0\}$ and a constant~$a \in \R_{\geq 0}$ such
    that 
    \[ \lim_{i \to \infty} \frac{z_i}{\stabnorm{z_i}} = z,\quad
      \fa{i \in \N} \langle \omega,z_i \rangle \geq N(z_i) - a,
      \qand \lim_{i\to\infty}\stabnorm{z_i}=\infty\,.
    \]
  \end{lemma}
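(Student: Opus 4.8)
The plan is to approximate $z$ by rational directions in $\HZR$ and then scale up, using Theorem~\ref{thm:stabnormN} (bounded difference between $N$ and $\stabnorm{\argu}$) and the fact that $\omega$ realizes the value $1$ on the whole edge $[x,y]$ through the dual-norm identity $\stabnormdual\omega = 1$, $\<\omega,z\> = 1$. First I would exploit that $\HZR$ is a lattice of full rank in $H_1(M;\R)$, so rational directions are dense in the unit sphere; hence I can pick a sequence $(w_i)_{i\in\N}$ in $\HZR\setminus\{0\}$ with $w_i/\stabnorm{w_i} \to z$. The naive choice $z_i = w_i$ does not obviously give $\stabnorm{z_i}\to\infty$, so I would instead take, for each $i$, a positive integer multiple: set $z_i \ceq k_i \cdot w_i$ where $k_i\in\N$ is chosen large enough that $\stabnorm{z_i} = k_i\stabnorm{w_i} \geq i$. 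Since $z_i$ is a positive multiple of $w_i$, the direction is unchanged: $z_i/\stabnorm{z_i} = w_i/\stabnorm{w_i} \to z$, and $\stabnorm{z_i}\to\infty$ by construction. This takes care of the first and third displayed conditions.

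The substantive point is the middle inequality $\<\omega,z_i\>\geq N(z_i) - a$ for a uniform constant $a$. Here I would combine two facts. On the one hand, since $\stabnormdual\omega = 1$, the defining equation \eqref{def.stabnormdual} gives $\<\omega,x\>\leq\stabnorm{x}$ for all $x\in H_1(M;\R)$, hence in particular $|\<\omega,z_i\>|\leq\stabnorm{z_i}$; but more importantly I need a lower bound on $\<\omega,z_i\>$, i.e.\ I need $z_i$ to point ``nearly in the direction where $\omega$ is maximized.'' Because $\<\omega,\argu\>$ and $\stabnorm{\argu}$ are continuous and $z_i/\stabnorm{z_i}\to z$ with $\<\omega,z\> = 1 = \stabnorm z$, we get $\<\omega, z_i/\stabnorm{z_i}\> \to 1$, so for large $i$, say $i\geq i_0$, we have $\<\omega,z_i\> \geq (1-\varepsilon_i)\stabnorm{z_i}$ with $\varepsilon_i\to 0$. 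This alone is not enough for a \emph{uniform additive} constant unless $\varepsilon_i\stabnorm{z_i}$ stays bounded, which it need not. So the genuine fix is to be more careful in choosing the rational approximants: I would choose $w_i$ lying \emph{exactly} on the supporting hyperplane, i.e.\ in $H_\omega\cap\HZR$ when that intersection is large enough, or — since the edge $F_\omega = [x,y]$ has dimension $1$ and $H_\omega$ is a genuine affine hyperplane not through $0$ — choose $z_i\in\HZR$ with $\<\omega,z_i\>$ as large as possible relative to $\stabnorm{z_i}$; concretely, pick $z_i$ in the cone over a small neighborhood of $z$ in $F_\omega$ and on lattice points whose $\omega$-value is within a bounded distance of $\stabnorm{z_i}$. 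Then $\<\omega,z_i\>\geq\stabnorm{z_i} - c$ for a uniform $c$, and by Theorem~\ref{thm:stabnormN}, $N(z_i)\leq\stabnorm{z_i} + D$, so
\[
  \<\omega,z_i\>\;\geq\;\stabnorm{z_i} - c\;\geq\;N(z_i) - D - c\;=\;N(z_i) - a
\]
with $a\ceq D + c$. Discarding finitely many initial terms (to absorb the $i\geq i_0$ restriction) and reindexing yields the sequence with all three properties.

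The main obstacle I anticipate is precisely making the additive constant $a$ uniform: density of rational directions only controls $\<\omega,z_i\>$ multiplicatively (up to a factor $1-\varepsilon_i$), and turning this into an additive bound $\<\omega,z_i\>\geq N(z_i)-a$ requires genuinely exploiting the flat/linear structure near the edge — either by landing lattice points on or near $H_\omega$, or equivalently by using that the ``defect'' $\stabnorm{z_i} - \<\omega,z_i\>$ is a (semi-)norm in directions transverse to $F_\omega$ and stays bounded along a suitably chosen sequence. I expect this is where the proof in the paper does its real work, likely via a careful lattice-point selection argument inside the cone spanned by a relatively open piece of the edge; the $N$-versus-$\stabnorm{\argu}$ comparison from Theorem~\ref{thm:stabnormN} is then just the final cosmetic step that converts the bound on $\<\omega,z_i\> - \stabnorm{z_i}$ into a bound on $\<\omega,z_i\> - N(z_i)$.
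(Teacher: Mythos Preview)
Your plan is correct in spirit and would eventually work, but you are making the key step harder than it needs to be. The paper's proof is a one-liner: since $\HZR$ is a cocompact lattice, there is an~$r>0$ such that every point of~$H_1(M;\R)$ lies within stable-norm distance~$r$ of a lattice point; one simply takes $z_i\in\HZR$ with $\stabnorm{i\cdot z - z_i}\leq r$. This immediately gives $\stabnorm{z_i}\in[i-r,i+r]$ (so $\stabnorm{z_i}\to\infty$ and $z_i\neq 0$ for large~$i$) and $z_i/\stabnorm{z_i}\to z$. For the middle inequality one computes
\[
\<\omega,z_i\> - \stabnorm{z_i}
\;\geq\; \<\omega,i\cdot z\> - \stabnormdual\omega\cdot\stabnorm{z_i - i\cdot z} - \stabnorm{i\cdot z} - \stabnorm{z_i - i\cdot z}
\;\geq\; i - r - i - r \;=\; -2r,
\]
and then Theorem~\ref{thm:stabnormN} turns this into $\<\omega,z_i\>\geq N(z_i) - (2r+D)$.

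What you anticipated as the ``main obstacle'' --- converting the multiplicative estimate $\<\omega,z_i\>\geq(1-\varepsilon_i)\stabnorm{z_i}$ into an additive one --- disappears entirely with this choice: the point $i\cdot z$ sits exactly on the ray where $\<\omega,\argu\>=\stabnorm{\argu}$, and both functionals are $1$-Lipschitz for~$\stabnorm\argu$, so moving by at most~$r$ perturbs their difference by at most~$2r$. Your detour through rational directions~$w_i$ followed by scaling creates the problem (the scaling amplifies the angular error) that you then have to solve; approximating $i\cdot z$ directly never introduces it. Your suggestion to ``land lattice points on or near~$H_\omega$'' is exactly right, but the concrete mechanism is just nearest-lattice-point approximation of~$i\cdot z$, not anything subtler about the cone over the edge.
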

  
  The proof even gives the additional information that
  $\left\{\stabnorm{z_i-i\cdot z}\mid i\in \N\right\}$ is bounded, but
  this will not be needed later.
  \begin{proof}
    Because~$\HZR$ is a cocompact lattice in~$H_1(M;\R)$, there
    exists an~$r \in \R_{>0}$ with
    \[ \fa{v \in H_1(M;\R)}
    \exi{v' \in \HZR }
    \stabnorm{v - v'} \leq r
    \,.
    \]
    For each~$i \in \N$, we choose~$z_i \in \HZR$
    with~$\stabnorm{i \cdot z - z_i} \leq r$.
 
    By construction,
    \begin{align*}
      \stabnorm{z_i}
      & \leq \stabnorm{i \cdot z} + \stabnorm{z_i - i \cdot z} \leq i + r\,,
      \\
      \stabnorm{z_i}
      & \geq \stabnorm{i \cdot z} - \stabnorm{z_i - i \cdot z} \geq i -r\,.
    \end{align*}
    Thus, after removing finitely many~$z_i$  we will have $z_i\neq 0$ and 
    $\lim_{i \to \infty} i /\stabnorm{z_i} = 1$.
    Therefore, we obtain
    \[ \lim_{i \to \infty} \frac{z_i} {\stabnorm{z_i}}
    = \lim_{i \to \infty} \frac{i \cdot z}{i}
    + \lim_{i \to \infty} \frac{z_i - i \cdot z}{i}
    = z + 0
    \,.
    \]    
    Moreover, for all~$i \in \N$, we have
    \begin{align*}
      \langle \omega, z_i \rangle - \stabnorm{z_i}
      & \geq \langle \omega,i\cdot z\rangle + \langle \omega, z_i - i \cdot z\rangle
      - \stabnorm{i \cdot z} - \stabnorm{z_i - i \cdot z}
      \\
      & \geq i - \stabnormdual{\omega} \cdot \stabnorm{z_i - i\cdot z}
      - i -r
      \\
      & \geq -2 \cdot r\,.
    \end{align*}
    Because $N$ and $\stabnorm{\args}$ are uniformly close
    (Theorem~\ref{thm:stabnormN}), the claim follows -- with the
    constant~$a \ceq 2 \cdot r + D$, where~$D$ is provided
    by Theorem~\ref{thm:stabnormN}; 
    a possible concrete choice for~$D$ is given by
    Equation~\eqref{eq.D.def}.
  \end{proof}
  
  Let $(z_i)_{i \in \N} \subset \HZR \setminus\{0\}$
  and $a \in \R_{\geq 0}$ be as provided by Lemma~\ref{lem:constructionzi}. 
  As next step, we choose corresponding $1$-forms:
 
  \begin{lemma}\label{lem:constructionetai}
    Choose a number~$\ell\in \{1,\dots, b\}$ such that
    $\<[\alpha^\ell],z\>\neq 0$.  In the situation described above, we
    can find a sequence~$(\lambda_{i})_{i \in \N}$ in~$\R$ 
    with~$\lim_{i \to \infty} \lambda_{i} = 0$ such that
    \[ \eta_i := \eta +  \lambda_{i} \cdot \alpha^\ell
    \]
    satisfies~$\bigl\< [\eta_i], z_i \bigr\> = 0$ for all~$i \in \N$.
    In particular, each~$\eta_i$ is an $\R$-linear combination
    of~$\alpha^1,\ldots,\alpha^b$.
  \end{lemma}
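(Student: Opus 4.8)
The plan is to solve for the unique $\lambda_i$ that makes $\langle [\eta_i], z_i\rangle = 0$, and then to verify that $\lambda_i \to 0$. First I would observe that since $\eta$ and the $\alpha^j$ are $\R$-linear combinations of $\alpha^1,\dots,\alpha^b$, so is each $\eta_i = \eta + \lambda_i \alpha^\ell$; this gives the final sentence of the statement for free once $\lambda_i$ is defined. The defining equation $\langle[\eta_i], z_i\rangle = 0$ expands, by linearity of the pairing, to
\[
   \langle [\eta], z_i\rangle + \lambda_i \cdot \langle [\alpha^\ell], z_i\rangle = 0\,,
\]
so provided $\langle [\alpha^\ell], z_i\rangle \neq 0$ we are forced to take
\[
   \lambda_i \ceq - \frac{\langle [\eta], z_i\rangle}{\langle [\alpha^\ell], z_i\rangle}\,.
\]

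The remaining work is to check that this is well-defined for all large $i$ and that $\lambda_i \to 0$; we may discard the finitely many initial indices where it fails. Here I would use Lemma~\ref{lem:constructionzi}: since $z_i/\stabnorm{z_i} \to z$ and $\stabnorm{z_i}\to\infty$, and $\langle[\alpha^\ell], z\rangle \neq 0$ by the choice of $\ell$, continuity of the pairing gives $\langle[\alpha^\ell], z_i/\stabnorm{z_i}\rangle \to \langle[\alpha^\ell], z\rangle \neq 0$; hence for $i$ large, $\langle[\alpha^\ell], z_i\rangle \neq 0$ and in fact $|\langle[\alpha^\ell], z_i\rangle| \geq c\cdot\stabnorm{z_i}$ for some constant $c>0$. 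On the other hand, $\langle[\eta], z\rangle = 0$ (this was one of the two conditions imposed on $\eta$ just before Lemma~\ref{lem:constructionzi}, since $\eta$ was chosen with $\langle[\eta], z\rangle = 0$), so $\langle[\eta], z_i/\stabnorm{z_i}\rangle \to 0$, i.e.\ $|\langle[\eta], z_i\rangle| = \landau{\stabnorm{z_i}}$. Dividing, $|\lambda_i| \leq \frac{1}{c}\cdot\frac{|\langle[\eta], z_i\rangle|}{\stabnorm{z_i}} \to 0$, which is the claim.

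This argument is entirely routine, so there is no real obstacle; the only point requiring a moment's care is that $\langle[\alpha^\ell], z_i\rangle$ could vanish for small $i$, which is why the conclusion is stated for a sequence indexed by $\N$ only after the (harmless) removal of finitely many terms — consistent with how Lemma~\ref{lem:constructionzi} was already invoked. One should also note that the hypothesis ``$\langle[\alpha^\ell], z\rangle \neq 0$'' is non-vacuous: since $z \neq 0$ and $([\alpha^1],\dots,[\alpha^b])$ is a basis of~$\HdR$ dual to a basis of $H_1(M;\R)$, at least one of the pairings $\langle[\alpha^j], z\rangle$ is nonzero, so such an $\ell$ exists.
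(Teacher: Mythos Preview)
Your proof is correct and follows essentially the same approach as the paper's: both solve explicitly for~$\lambda_i$ from the linear equation $\langle[\eta],z_i\rangle + \lambda_i\langle[\alpha^\ell],z_i\rangle = 0$, and both verify $\lambda_i\to 0$ by normalizing by~$\stabnorm{z_i}$ and using that the denominator tends to~$\langle[\alpha^\ell],z\rangle\neq 0$ while the numerator tends to~$\langle[\eta],z\rangle=0$. The paper writes this out via the basis coefficients~$\mu^j_i$ of~$z_i/\stabnorm{z_i}$, whereas you work directly with the pairing, but this is purely notational.
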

  \begin{proof}
   \newcommand\muu[2]{\mu_{#2}^{#1}}
    For~$i \in \N$, we abbreviate~$\overline z_i := z_i/\stabnorm{z_i}$
    and write
    \[ \overline z_i = \sum_{j=1}^b \muu{j}{i} \cdot \beta_j
    \qand 
     z = \sum_{j=1}^b \mu^j \cdot \beta_j
    \]
    in the chosen basis~$(\beta_1, \dots, \beta_b)$ of~$H_1(M;\R)$.
    Then $(\muu{j}{i})_{i \in \N}$ converges to $\mu^j$. As
    $([\alpha^1],\dots,[\alpha^b])$ is the dual basis we have $\mu^j=
    \<[\alpha^j],z\>$, and thus by assumption~$\mu^\ell\neq 0$.
    Passing to a subsequence, we may achieve~$|\muu{\ell}{i}|\geq
    |\mu^\ell|/2>0$ for all~$i \in \N$.  For~$i \in \N$, we set
    \[ \lambda_i \ceq  -\bafrac1{\muu{\ell}{i}} \cdot \<[\eta],\overline z_i\>\,.\]
    This sequence has the desired properties:
    Because the sequence~$(|1/\muu{\ell}{i}|)_{i \in \N}$ is
    bounded by~$\leq 2/|\mu^\ell|$ 
    and $\lim_{i \to\infty} \<[\eta],\overline z_i\>
    = \<[\eta], z\> = 0$,
    we obtain $\lim_{i \to \infty} \lambda_{i}
    = 0$.
    Moreover, for all~$i \in \N$, duality of the bases leads to 
    \begin{align*}
      \bigl\< [\eta_i], \overline z_i \bigr\rangle
      &
      = \bigl\< [\eta], \overline z_i \bigr\>
      +  \lambda_i
      \cdot \biggl\< [\alpha^\ell], \sum_{j=1}^b \muu{j}{i} \cdot \beta_j \biggr\>
      \\
      & 
      = \bigl\< [\eta], \overline z_i \bigr\>
      + \sum_{j=1}^b \lambda_i \cdot \delta_{j\ell}\cdot \muu{j}{i}
      \\
      & = \bigl\< [\eta], \overline z_i \bigr\>
      + \lambda_i \cdot \muu{\ell}{i}
      \\
      & = 0\,.
      \qedhere
    \end{align*}    
\end{proof}

Our goal now is to construct new minimal geodesics. Additionally we
also want to show that these geodesics are
$\R$\=/ho\-mo\-lo\-gi\-cal\-ly minimal geodesics. We will achieve this
by constructing a geodesic~$\R\to\homRcov M$ that globally minimizes
the distance.  Readers of this construction may replace~$\homRcov M$
by the universal covering~$\ucov M$ in their first reading, as this
case might be more transparent and already allows to understand why
the constructed geodesics are minimal (without the stronger conclusion
of $\R$\=/homological minimality).
  
{}From now on, let $(\eta_i)_{i \in \N}$ be as provided by Lemma~\ref{lem:constructionetai}.  
For each~$i \in \N$, we choose a
closed smooth geodesic~$\sigma_i$ (parametrised by arclength) of
length~$\ell_i := N(z_i)$ representing~$z_i$ and a lift~$\homcov
\sigma_i \colon \R \to \homRcov M$ to~$\homRcov M$. 
By Theorem~\ref{thm:stabnormN} and Lemma~\ref{lem:constructionzi}, we have $\ell_i=\groupnorm{z_i}\geq
\stabnorm{z_i}\to \infty$.  The function
\begin{equation}\label{eq:def.ui}
  u_i \colon \R \to \R.\quad    t \mapsto \int_{\sigma_i\steLLe{[0,t]}} \eta_i
\end{equation}
is smooth and $\ell_i$-periodic; indeed, for all~$t \in \R$, we have
\[ u_i(t + \ell_i) - u(t)
  = \int_{\sigma_i\steLLe{[t,t+\ell_i]}} \eta_i
  = \bigl\< [\eta_i], z_i \bigr\>
  = 0
  \,.
\]
By reparametrizing~$\sigma_i$ (and $\homcov \sigma_i$), we can achieve
that $u_i$ attains its minimum at~$t = 0$. In particular, $u_i(0)=0$
and~$u_i \geq 0$.

As $M$ is compact and as $\homRcov M \to M$ is a normal covering, we
may assume (by translating the lifts via deck transformations and
passing to a suitable subsequence) that the sequence~$(\dot{\homcov
  \sigma}_i(0))_{i \in \N}$ converges to some unit vector~$v_\infty
\in T\homRcov M$.  We then consider the limiting curve
\begin{align*}
     \homcov \sigma_\infty \colon \R & \to \homRcov M \\
     t & \mapsto \exp_{\homRcov M}(t \cdot v_\infty)\,.
\end{align*}
The curve~$\homcov \sigma_\infty$ is a geodesic and the curves~$\homcov
\sigma_i$ converge on all compact intervals uniformly in the
$C^\infty$-topology to~$\homcov \sigma_\infty$
(Lemma~\ref{lemma:limitinggeodesic}, which is applicable in view of
Remark~\ref{rem:defN}). Thus, $\homcov \sigma_\infty$ minimizes
distance between any of its points.  Let $\sigma_\infty \colon \R \to
M$ be the projection of~$\homcov \sigma_\infty$ to~$M$. By
construction, $\sigma_\infty$ is a minimal geodesic on~$M$ and the
$\sigma_i$ converge on all compact intervals uniformly
to~$\sigma_\infty$.  We say that $\sigma_\infty$ is a \emph{minimal
  geodesic constructed from~$([x,y],z,\eta)$}.

By construction, this is even an $\R$\=/ho\-mo\-lo\-gi\-cal\-ly
minimal geodesic.

\subsection{Asymptotes of geodesics from exposed edges of the stable norm ball}

In order to distinguish geodesics constructed from exposed edges as in Section~\ref{subsec:sinfty}, we analyze their
asymptotes with respect to the underlying exposed edge of the
stable norm unit ball. We recall that
$\Apm(\sigma_\infty)= A^+(\sigma_\infty)\cup A^-(\sigma_\infty)$
(Definition~\ref{def.asymptotes}).

\begin{proposition}\label{prop:limits.in.face}
  We assume the situation of Section~\ref{subsec:sinfty}. In
  particular, let $\sigma_\infty$ be a minimal geodesic constructed from~$([x,y],z,\eta)$. Then we have
  \[ \Apm(\sigma_\infty) \subset [x,y]
  \,.
  \]
\end{proposition}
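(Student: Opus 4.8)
The plan is to show that every terminal or initial asymptote $v$ of $\sigma_\infty$ satisfies both $\stabnorm{v} = 1$ and $\langle \omega, v\rangle = 1$, which by Definition~\ref{def:face} (since $\stabnormdual{\omega}=1$ and $[x,y]=F_\omega$) forces $v \in [x,y]$. Since $\sigma_\infty$ is $\R$\=/ho\-mo\-lo\-gi\-cal\-ly minimal, hence $\Z$\=/ho\-mo\-lo\-gi\-cal\-ly minimal, Proposition~\ref{prop.hom.min.sphere} already gives $\stabnorm{v}=1$ for all $v \in \Apm(\sigma_\infty)$. So the only thing to prove is that the $\omega$\=/coordinate of every asymptote equals $1$.

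\textbf{Estimating $\langle\omega,\cdot\rangle$ along $\sigma_\infty$.} First I would record the key properties of the approximating geodesics $\sigma_i$. By construction $\sigma_i$ is a closed geodesic of length $\ell_i = N(z_i)$ with $h(\sigma_i) = z_i$, and Lemma~\ref{lem:constructionzi} gives $\langle\omega, z_i\rangle \geq N(z_i) - a = \ell_i - a$. Together with $\langle\omega,z_i\rangle \leq \stabnormdual{\omega}\cdot\stabnorm{z_i} \leq \stabnorm{z_i} \leq N(z_i) = \ell_i$ (using $\stabnormdual\omega=1$ and Theorem~\ref{thm:stabnormN}), we get $\ell_i - a \leq \langle\omega, z_i\rangle \leq \ell_i$. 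Now fix a compact interval $[s,t]\subset\R$. For $i$ large, $\sigma_i\steLLe{[s,t]}$ is defined (since $\ell_i\to\infty$). The plan is to compare $\langle\omega, h(\sigma_i\steLLe{[s,t]})\rangle$ with $t-s$: since $\omega$ is a fixed linear combination of the $\alpha^j$, Remark~\ref{rem:hurewicznonint} gives $\int_{\sigma_i\steLLe{[s,t]}}\omega = \langle\omega, h(\sigma_i\steLLe{[s,t]})\rangle$, and this integral is, by the fundamental theorem of calculus along $\sigma_i$, a difference of a primitive; additivity over the period shows $\langle\omega, h(\sigma_i\steLLe{[0,\ell_i]})\rangle = \langle\omega,z_i\rangle$. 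The hard part is to upgrade the \emph{average} bound $\langle\omega,z_i\rangle \geq \ell_i - a$ to a bound on each sub\-interval. Here I would use that $\omega$ evaluated on any unit tangent vector is at most $1$ in absolute value: indeed for any arclength curve $\rho$, $\stabnorm{h(\rho)} \leq \maL(\rho)$ up to the additive constant $(\maJ+1)\diam M$ from Proposition~\ref{prop:stabnormlength}, so $\langle\omega, h(\rho)\rangle \leq \stabnorm{h(\rho)} \leq \maL(\rho) + (\maJ+1)\diam M$; more cleanly, $|\omega(\dot\sigma_i(r))| \leq \stabnormdual\omega \cdot \onlynorm{\dot\sigma_i(r)}_{g,\stable}$ is bounded by a fixed constant $c_0$ independent of $i,r$ (operator norm of the continuous, deck\=/invariant comparison between $g$ and $\stabnorm\cdot$ pulled back via $J$). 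Writing $\langle\omega, z_i\rangle = \int_0^{\ell_i}\omega(\dot\sigma_i(r))\,dr$ with integrand $\leq c_0$ and total integral $\geq \ell_i - a$, we conclude that on the complement of a set of measure $\leq a/(c_0 - 1)$ (wait — one must be careful; more robustly: $\int_{I}\bigl(c_0 - \omega(\dot\sigma_i)\bigr) \geq 0$ and $\int_0^{\ell_i}\bigl(c_0 - \omega(\dot\sigma_i)\bigr) = c_0\ell_i - \langle\omega,z_i\rangle \leq (c_0-1)\ell_i + a$). This is still only an $L^1$ bound, so pointwise control on a fixed $[s,t]$ does not follow directly.

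\textbf{Passing to the limit.} The cleaner route, which I would actually follow: don't try to control $\sigma_i$ on a fixed window; instead control $\sigma_\infty$ directly using that $\sigma_\infty$ is the $C^\infty$\=/limit of the $\sigma_i$ and that $u_i = \int_{\sigma_i\steLLe{[0,\cdot]}}\eta_i \geq 0$ with $u_i(0)=0$. Take $v\in A^+(\sigma_\infty)$, say $v = \lim_i h(\sigma_\infty\steLLe{[0,T_i]})/T_i$ with $T_i\to\infty$. For each $T$, since $\sigma_i\to\sigma_\infty$ uniformly on $[0,T]$ in $C^1$ and $\omega,\eta$ are fixed smooth forms, $h(\sigma_i\steLLe{[0,T]})\to h(\sigma_\infty\steLLe{[0,T]})$ and likewise $\int_{\sigma_i\steLLe{[0,T]}}\eta_i \to \int_{\sigma_\infty\steLLe{[0,T]}}\eta$ (using $\lambda_i\to 0$ and $\eta_i = \eta + \lambda_i\alpha^\ell$). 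Now on $[0,\ell_i]$ we have $u_i \geq 0$, i.e. $\int_{\sigma_i\steLLe{[0,T]}}\eta_i \geq 0$ for $0\leq T\leq \ell_i$; in the limit $\int_{\sigma_\infty\steLLe{[0,T]}}\eta \geq 0$ for all $T \geq 0$, so $\langle[\eta], h(\sigma_\infty\steLLe{[0,T]})\rangle \geq 0$, whence $\langle[\eta], v\rangle \geq 0$. Running the same argument toward $-\infty$ (on $[{-}\ell_i,0]$, where $u_i\geq 0$ still) gives $\langle[\eta], w\rangle \leq 0$ for $w\in A^-(\sigma_\infty)$. Combined with $v\in\partial B$ (Proposition~\ref{prop.hom.min.sphere}) and, from the $\omega$\=/computation above applied in the limit — $\langle\omega, h(\sigma_\infty\steLLe{[s,t]})\rangle \geq (t-s) - (\text{const})$ for all $s<t$ forces $\langle\omega, v\rangle \geq 1$, while $\langle\omega,v\rangle \leq \stabnormdual\omega\stabnorm v = 1$ — we get $\langle\omega, v\rangle = 1$, i.e. $v\in F_\omega = [x,y]$. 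The same for $A^-$. I expect the main obstacle to be making the $\omega$\=/lower\=/bound on sub\-arcs of $\sigma_\infty$ rigorous; the cleanest fix is to apply Proposition~\ref{prop:stabnormlength} and Theorem~\ref{thm:stabnormN} to the closed loop obtained from $\sigma_i\steLLe{[s,t]}$ by closing it up with a short path, giving $\langle\omega, h(\sigma_i\steLLe{[s,t]})\rangle \geq \ell_i - a - (\ell_i - (t-s)) - O(1) = (t-s) - O(1)$ once one notes the remaining arc $\sigma_i\steLLe{[t, s+\ell_i]}$ contributes at most its length $\ell_i - (t-s)$ to $\langle\omega,\cdot\rangle$ (again by $|\omega(\dot\sigma_i)|\leq 1 + o(1)$, or by $\langle\omega,h(\cdot)\rangle\leq \stabnorm{h(\cdot)}\leq \maL(\cdot) + O(1)$). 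Then pass to the limit $i\to\infty$ on the fixed window $[s,t]$, and finally $t-s\to\infty$.
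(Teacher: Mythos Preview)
Your final ``cleanest fix'' is correct and is exactly the paper's argument: decompose $\langle\omega,z_i\rangle = \langle\omega,h(\sigma_i|_{[0,t]})\rangle + \langle\omega,h(\sigma_i|_{[t,\ell_i]})\rangle$, bound the left side below by $\ell_i - a$ via Lemma~\ref{lem:constructionzi}, bound the second summand above by $(\ell_i - t) + c$ via $\stabnormdual\omega = 1$ and Proposition~\ref{prop:stabnormlength}, and conclude $\langle\omega,h(\sigma_i|_{[0,t]})\rangle \geq t - a - c$; then pass to the limit. The paper does this along a diagonal subsequence ($t_i$ chosen with $i$), whereas you fix the window and let $i\to\infty$ first; both orders work equally well.

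Two points of streamlining. First, the entire $\eta$-discussion (showing $\int_{\sigma_\infty|_{[0,T]}}\eta \geq 0$ from $u_i\geq 0$) is irrelevant here---that is precisely the content of the \emph{next} proposition, Proposition~\ref{prop.asymptote.semiedge}, and plays no role in placing $\Apm(\sigma_\infty)$ inside $[x,y]$. Second, invoking Proposition~\ref{prop.hom.min.sphere} for $\stabnorm v = 1$ is harmless but unnecessary: the paper only uses $\stabnorm v \leq 1$ from Lemma~\ref{lem:haproperties}~\ref{lem:haproperties.ii}, and once $\langle\omega,v\rangle = 1$ is established, $\stabnorm v = 1$ follows from $1 = \langle\omega,v\rangle \leq \stabnormdual\omega\,\stabnorm v \leq 1$. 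Finally, drop the justification ``$|\omega(\dot\sigma_i)|\leq 1 + o(1)$'', which conflates the cohomology class $\omega$ with a representing form (and would require the nontrivial comass duality to make rigorous); your alternative via $\langle\omega,h(\cdot)\rangle \leq \stabnorm{h(\cdot)} \leq \maL(\cdot) + O(1)$ is the correct route and is what the paper uses.
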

\begin{proof}
  The cases of initial and terminal asymptotes are symmetric; we
  therefore only consider terminal asymptotes.  Let $v \in
  A^+(\sigma_\infty)$. Again choose $\omega\in \HdR$ with $[x,y] =
  F_\omega$. Because $A^+(\sigma_\infty)$ lies in the stable norm unit
  ball (Lemma~\ref{lem:haproperties}~\ititemref{lem:haproperties.ii}),
  it suffices to show that~$\langle \omega, v \rangle = 1$.

  On the one hand, by definition of $\stabnormdual\argu$ we have
  $\langle \omega, v \rangle \leq \stabnormdual \omega \cdot \stabnorm
  v \leq 1$.  For the converse estimate, we argue as follows: Let
  $(t_i)_{i \in \N_0} \subset \R$ be a sequence of parameters with~$t_i
  \to \infty$ realizing~$v$ in Equation~\eqref{eq.v.def} of Definition~\ref{def.asymptotes};
  without loss of generality, we may assume that~$t_0 = 0$
  (Remark~\ref{rem:hastart}).  Passing to a subsequence
  of~$(\sigma_i)_{i \in \N}$ without changing the sequence $(t_i)_{i \in \N}$, we may assume that $t_i \in [0,\ell_i]$
  for all~$i \in \N$ and
  \begin{align*}
    \stabnorm[big]{\hsi{t_i}- \hsi[\infty]{t_i}}
  \leq \frac1i \cdot t_i
  \,.
  \end{align*}
  In particular, this implies that 
  \[ \langle \omega, v \rangle= \lim_{i \to \infty}  \biggl\langle \omega,
           \frac{\hsi[\infty]{t_i}}{t_i}  \biggr\rangle
  = \lim_{i \to \infty}
  \biggl\langle \omega,
  \frac{\hsi{t_i}}{t_i}
  \biggr\rangle
  \]
  and we can thus exploit the properties of the geodesics~$\sigma_i$.
  For all~$i \in \N$, we obtain
  \begin{align*}
    \biggl\langle
    \omega, \frac{\hsi{t_i}}{t_i}
    \biggr\rangle
    & = \frac1{t_i} \cdot
    \bigl(
    \bigl\langle \omega, \hsi{\ell_i})\bigr\rangle
    -
    \bigl\langle \omega, \hsirel[t_i]{\ell_i}\bigr\rangle
    \bigr)
    \\
    & \geq \frac1{t_i} \cdot
    \bigl(\langle \omega, z_i \rangle
    - \underbrace{\stabnormdual{\omega}}_{=1} \cdot \stabnorm[big]{\hsirel[t_i]{\ell_i}}\bigr)\,.
  \end{align*}
Proposition~\ref{prop:stabnormlength} provides $-\stabnorm[big]{\hsirel[t_i]{\ell_i}}\geq - \maL(\sigma)-c$ for $c\ceq
  (\maJ +1) \diam M$ and~$\maJ$ given by Equation~\eqref{eq.def.maJ}. 
  Using this and Lemma~\ref{lem:constructionzi}, we continue our
  estimate with
  \begin{align*}
    \biggl\langle
    \omega, \frac{\hsi{t_i}}{t_i}
    \biggr\rangle 
    & \geq \frac1{t_i} \cdot
    \bigl(\langle \omega,z_i\rangle - (\ell_i - t_i) - c \bigr)
    \\
    & \geq \frac1{t_i} \cdot
    \bigl((\ell_i - a) - (\ell_i - t_i) - c)\bigr)
    \\
    & = \frac{t_i - a - c}{t_i}\,.
  \end{align*}
  We conclude that
  \[ \langle \omega, v \rangle
  \geq \lim_{i \to \infty} \frac{t_i - a - c}{t_i} = 1
  \,,
  \]
  as desired.
\end{proof}

More precisely, we know in which parts of the exposed edge
the terminal/initial asymptotes are located with respect
to the point~$z$:

\begin{proposition}\label{prop.asymptote.semiedge}
  We assume the situation of Section~\ref{subsec:sinfty}. In
  particular, let $\sigma_\infty$ be a minimal geodesic constructed from~$([x,y],z,\eta)$.  
  Then we have
  \[ A^+(\sigma_\infty) \subset [z,y]
  \qand A^-(\sigma_\infty) \subset [x,z]
  \,.
  \]
\end{proposition}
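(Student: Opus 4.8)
The plan is to refine the estimate in the proof of Proposition~\ref{prop:limits.in.face} by additionally tracking the form~$\eta$ (and the forms~$\eta_i$), whose pairing with points of the edge distinguishes the $z$-to-$y$ half of~$[x,y]$ from the $z$-to-$x$ half. Recall that $\<[\eta],z\>=0$ and $\<[\eta],x\> < 0 < \<[\eta],y\>$, so a point $w\in[x,y]$ lies in $[z,y]$ if and only if $\<[\eta],w\>\geq 0$, and in $[x,z]$ if and only if $\<[\eta],w\>\leq 0$. Since we already know $\Apm(\sigma_\infty)\subset[x,y]$ from Proposition~\ref{prop:limits.in.face}, it suffices to show $\<[\eta],v\>\geq 0$ for every $v\in A^+(\sigma_\infty)$ and, by the symmetric argument with orientations reversed, $\<[\eta],v\>\leq 0$ for every $v\in A^-(\sigma_\infty)$. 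I will only treat the terminal case.

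First I would fix $v\in A^+(\sigma_\infty)$ and a sequence $t_i\to\infty$ realizing~$v$ as in Equation~\eqref{eq.v.def}, with $t_0=0$, and pass to a subsequence of~$(\sigma_i)$ (as in the proof of Proposition~\ref{prop:limits.in.face}) so that $t_i\in[0,\ell_i]$ and $\stabnorm[big]{\hsi{t_i}-\hsi[\infty]{t_i}}\leq t_i/i$; then $\<[\eta],v\>=\lim_i\<[\eta],\hsi{t_i}\>/t_i$. Now the crucial computation: by Remark~\ref{rem:hurewicznonint}, since $\eta_i$ is a linear combination of the $\alpha^j$, we have $\int_{\sigma_i\steLLe{[0,t_i]}}\eta_i = \<[\eta_i],\hsi{t_i}\>$, and this integral equals $u_i(t_i)\geq 0$ because $u_i$ was normalized to attain its minimum value~$0$ at~$t=0$. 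Hence $\<[\eta_i],\hsi{t_i}\>\geq 0$ for all~$i$. Writing $\eta_i=\eta+\lambda_i\alpha^\ell$ with $\lambda_i\to 0$ (Lemma~\ref{lem:constructionetai}), we get
\[
  \Bigl\<[\eta],\frac{\hsi{t_i}}{t_i}\Bigr\>
  \geq -\lambda_i\cdot\Bigl\<[\alpha^\ell],\frac{\hsi{t_i}}{t_i}\Bigr\>\,.
\]
By Lemma~\ref{lemma.h.rho}, $\stabnorm[big]{\hsi{t_i}}\leq\maJ\cdot\maL(\sigma_i\steLLe{[0,t_i]})=\maJ\cdot t_i$, so $\hsi{t_i}/t_i$ lies in a fixed stable-norm ball; thus $\<[\alpha^\ell],\hsi{t_i}/t_i\>$ is bounded uniformly in~$i$, and since $\lambda_i\to 0$ the right-hand side tends to~$0$. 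Taking the limit gives $\<[\eta],v\>=\lim_i\<[\eta],\hsi{t_i}\>/t_i\geq 0$, which together with $v\in[x,y]$ yields $v\in[z,y]$.

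I do not expect any genuine obstacle here; the only points requiring care are (i) invoking Remark~\ref{rem:hurewicznonint} correctly — this is why the $\eta_i$ were built as $\R$-linear combinations of the $\alpha^j$ rather than taking a general closed form, so that $\int_{\sigma_i\steLLe{[0,t_i]}}\eta_i$ genuinely equals the pairing with $h(\sigma_i\steLLe{[0,t_i]})$ even though $\sigma_i\steLLe{[0,t_i]}$ is not a loop — and (ii) the uniform boundedness of $\<[\alpha^\ell],\hsi{t_i}/t_i\>$, which follows from Lemma~\ref{lemma.h.rho} combined with equivalence of norms on the finite-dimensional space $H_1(M;\R)$. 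The initial-asymptote statement $A^-(\sigma_\infty)\subset[x,z]$ follows by the same argument applied with $t_i\to-\infty$: here the $\ell_i$-periodicity of $u_i$ and $u_i\geq 0 = u_i(0)$ again force $\int_{\sigma_i\steLLe{[t_i,0]}}\eta_i = u_i(0)-u_i(t_i)\leq 0$, hence $\<[\eta],h(\sigma_i\steLLe{[t_i,0]})\>/(-t_i)\to\<[\eta],v\>\leq 0$, placing $v$ in $[x,z]$.
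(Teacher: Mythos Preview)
Your proof is correct and uses essentially the same ingredients as the paper: the nonnegativity of~$u_i$, the decomposition $\eta_i=\eta+\lambda_i\alpha^\ell$ with $\lambda_i\to 0$, and a boundedness argument to kill the $\alpha^\ell$-term. The only organizational difference is that the paper first establishes the pointwise statement $u_\infty(t)\ceq\int_{\sigma_\infty\steLLe{[0,t]}}\eta\geq 0$ for every fixed~$t$ (via $u_i(t)\to u_\infty(t)$ and the same $\lambda_i$-argument) and then divides by~$t_i$, whereas you work directly along the diagonal $u_i(t_i)$ in the style of the proof of Proposition~\ref{prop:limits.in.face}; both routes are equally short and rest on the same idea.
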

\begin{proof}
  We prove the claim for~$A^+(\sigma_\infty)$, the other case
  being symmetric. We know that $A^+(\sigma_\infty) \subset 
  [x,y]$ by Proposition~\ref{prop:limits.in.face}.
  Therefore, it suffices to show that $\<
  [\eta], v \> \geq 0$ for all~$v \in A^+(\sigma_\infty)$.

  We consider the function
  \begin{align*}
  u_\infty \colon \R & \to \R,\quad
  t \mapsto \int_{\sigma_\infty\steLLe{[0,t]}} \eta\,.
  \end{align*}
  Because $\eta$ is a linear combination of~$\alpha^1, \dots, \alpha^b$, 
  Remark~\ref{rem:hurewicznonint} tells us that 
  \[u_\infty(t)=\<[\eta],\hsi[\infty]{t}\>\,.\]  
  Below, we will show
  that~$u_\infty \geq 0$. Assuming this estimate, we can argue as
  follows: Let $v \in A^+(\sigma_\infty)$ and let $(t_i)_{i \in \N}
  \subset \R_{>0}$ be a sequence for which we obtain $v$ as a limit
  as in Definition~\ref{def.asymptotes}. 
  Then
  \begin{align*}
  \bigl\< [\eta], v \bigr\>
  &
  =\, \lim_{i \to \infty}
    \frac{\bigl\< [\eta], \hsi[\infty]{t_i}\bigr\>}%
         {t_i}\,=\, \lim_{i \to \infty}
    \frac{u_\infty(t_i)}%
         {t_i}
  \geq 0\,,
  \end{align*}
  as desired.
  
  It remains to show that~$u_\infty \geq 0$: Let $t \in \R$ and let $I
  \subset \R$ be a compact interval with~$0,t \in \inneres{I}$.  By
  construction, $\sigma_i\steLLe{I} \to \sigma_\infty\steLLe{I}$
  in~$C^\infty(I,M)$ for~$i \to \infty$.  We assume that an $\ell\in
  \{1,\ldots,b\}$ and a sequence ~$(\lambda_{i})_{i \in \N}$ is chosen
  as in Lemma~\ref{lem:constructionetai}; this provides
  $\eta_i$ and then $u_i$ is defined in Equation~\eqref{eq:def.ui}.
  Furthermore, we have
  \[ \lim_{i \to \infty} \int_{\sigma_i\steLLe{[0,t]}} \alpha^\ell
  = \int_{\sigma_\infty|_{[0,t]}} \alpha^\ell
  \]
  and thus $\bigl(\int_{\sigma_i\steLLe{[0,t]}}
  \alpha^\ell\bigr)_{i\in\N}$ is bounded.  Therefore,
  \begin{align*}
    u_\infty (t)
    & = \int_{\sigma_\infty\steLLe{[0,t]}} \eta
    \,= \lim_{i \to \infty}
    \int_{\sigma_i\steLLe{[0,t]}} ( \eta_i - \lambda_i \cdot \alpha^\ell)\\ 
    &= \lim_{i \to \infty}
      \underbrace{\int_{\sigma_i\steLLe{[0,t]}} \eta_i}_{=u_i(t)}   - \lim_{i \to \infty}\biggl(\underbrace{\lambda_i}_{\to 0} \cdot \underbrace{\int_{\sigma_i\steLLe{[0,t]}}  \alpha^\ell}_{\textrm{bounded}}\;\biggr) \\
    &=  \lim_{i \to \infty} u_i(t) \geq 0\,.\qedhere
  \end{align*}
\end{proof}

\begin{proposition}[homologically homoclinic minimal geodesics]\label{prop:sinftyuniqueasymptote}
  In the situation of Section~\ref{subsec:sinfty},
  let us assume that $\sigma_\infty$ has a unique terminal
  and a unique initial asymptote and that these coincide.
  Then this asymptote is the given edge element~$z$.
\end{proposition}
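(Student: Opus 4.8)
The plan is to deduce this directly from the two localization results already established for the asymptotes of $\sigma_\infty$, namely Proposition~\ref{prop:limits.in.face} and, more precisely, Proposition~\ref{prop.asymptote.semiedge}. The key observation is that the latter pins the terminal asymptotes of $\sigma_\infty$ into the half-edge $[z,y]$ and the initial asymptotes into the complementary half-edge $[x,z]$, and that these two closed sub-segments of the edge $[x,y]$ meet in exactly one point, namely $z$, because $z$ was chosen in the relative interior $(x,y)$.

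Concretely, I would proceed as follows. First, invoke the hypothesis to write $A^+(\sigma_\infty) = \{v\}$ and $A^-(\sigma_\infty) = \{v\}$ for one and the same $v \in H_1(M;\R)$. Next, since $\sigma_\infty$ is a minimal geodesic constructed from~$([x,y],z,\eta)$, Proposition~\ref{prop.asymptote.semiedge} gives
\[ v \in A^+(\sigma_\infty) \subset [z,y]
   \qand
   v \in A^-(\sigma_\infty) \subset [x,z]
   \,.
\]
Finally, because $z \in (x,y)$, the two sub-segments $[x,z]$ and $[z,y]$ of the segment $[x,y]$ overlap only in their common endpoint, i.e.\ $[x,z] \cap [z,y] = \{z\}$. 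Hence $v = z$, which is the claim.

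There is essentially no obstacle here: the proposition is a formal consequence of Proposition~\ref{prop.asymptote.semiedge}, whose proof (via the auxiliary function $u_\infty \geq 0$ together with its orientation-reversed counterpart) carried out the real work. The only point deserving an explicit remark is that the choice $z \in (x,y)$, rather than merely $z \in [x,y]$, is precisely what forces $[x,z] \cap [z,y]$ to be a single point; if $z$ were an endpoint of the edge, the intersection would be an entire sub-segment and the conclusion would degenerate. No new estimates, limiting arguments, or constructions are needed beyond those already available.
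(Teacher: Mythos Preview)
Your proof is correct and is essentially identical to the paper's own argument: the paper likewise observes that $[x,z] \cap [z,y] = \{z\}$ and concludes directly from Proposition~\ref{prop.asymptote.semiedge}.
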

\begin{proof}
  Because of $[x,z] \cap [z,y] = \{z\}$, this is immediate
  from Proposition~\ref{prop.asymptote.semiedge}. 
\end{proof}

In general, we cannot expect to be in this
situation. We always have the following, which
will help us to distinguish minimal geodesics for
different exposed edges:

\begin{proposition}\label{prop:distinguishedges}
  In the situation of Section~\ref{subsec:sinfty},
  the geodesic~$\sigma_\infty$ constructed above satisfies
  \[ \Atot(\sigma_\infty) 
  \cap (x,y) \neq \emptyset
  \,.
  \]
\end{proposition}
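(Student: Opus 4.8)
The plan is to prove the slightly stronger statement that the distinguished edge point $z\in(x,y)$ chosen in Section~\ref{subsec:sinfty} itself lies in $\Atot(\sigma_\infty)$; since $z\in(x,y)$, the assertion follows immediately. This will be a short topological argument combining only facts already available: that $A^+(\sigma_\infty)$ and $A^-(\sigma_\infty)$ are non-empty (Lemma~\ref{lem:haproperties}~\itemref{lem:haproperties.iii}) and contained in the half-edges $[z,y]$ and $[x,z]$ respectively (Proposition~\ref{prop.asymptote.semiedge}), that $\Atot(\sigma_\infty)\subset\convjoin\bigl(A^+(\sigma_\infty),A^-(\sigma_\infty)\bigr)$ (Lemma~\ref{lem:haproperties}~\itemref{lem:haproperties.vi}), and that $\Atot(\sigma_\infty)$ is connected (Lemma~\ref{lem:haproperties}~\itemref{lem:haproperties.vii}).

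First I would record that $\Atot(\sigma_\infty)\subset[x,y]$: by Proposition~\ref{prop.asymptote.semiedge} both $A^+(\sigma_\infty)$ and $A^-(\sigma_\infty)$ lie in $[x,y]$, which is convex, so their convex join lies in $[x,y]$ as well, and $\Atot(\sigma_\infty)$ is contained in that join. Hence $\Atot(\sigma_\infty)$ is a connected subset of the segment $[x,y]$, and it contains the non-empty set $A^-(\sigma_\infty)\subset[x,z]$ as well as the non-empty set $A^+(\sigma_\infty)\subset[z,y]$.

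Then I would argue by contradiction. Suppose $z\notin\Atot(\sigma_\infty)$. Choose a linear form $\ell\in H_1(M;\R)^{*}$ with $\ell(x)<\ell(z)<\ell(y)$; this is possible because $x,z,y$ are three pairwise distinct collinear points with $z$ strictly between $x$ and $y$. Since $\ell$ is strictly monotone along $[x,y]$, the sets $[x,z]\setminus\{z\}$ and $[z,y]\setminus\{z\}$ are disjoint, each relatively open in $[x,y]$, and together they exhaust $[x,y]\setminus\{z\}$. Because $A^+(\sigma_\infty),A^-(\sigma_\infty)\subset\Atot(\sigma_\infty)$ and $z\notin\Atot(\sigma_\infty)$, we obtain $\emptyset\neq A^-(\sigma_\infty)\subset[x,z]\setminus\{z\}$ and $\emptyset\neq A^+(\sigma_\infty)\subset[z,y]\setminus\{z\}$. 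Hence $\Atot(\sigma_\infty)$, which by assumption is contained in $[x,y]\setminus\{z\}$, is the disjoint union of the two non-empty, relatively open pieces $\Atot(\sigma_\infty)\cap\bigl([x,z]\setminus\{z\}\bigr)$ and $\Atot(\sigma_\infty)\cap\bigl([z,y]\setminus\{z\}\bigr)$, contradicting connectedness of $\Atot(\sigma_\infty)$. Therefore $z\in\Atot(\sigma_\infty)\cap(x,y)$, as claimed.

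I do not expect a genuine obstacle here: the real work sits in Proposition~\ref{prop.asymptote.semiedge} and in the structural properties of $\Atot$ collected in Lemma~\ref{lem:haproperties}. The only point that needs a little care is that the half-edges $[x,z]\setminus\{z\}$ and $[z,y]\setminus\{z\}$ are \emph{not} open in the ambient vector space, only relatively open in $[x,y]$; introducing the separating functional $\ell$ makes this precise. Morally, the statement just says that a connected set straddling both halves of the segment $[x,y]$ cannot avoid the splitting point $z$.
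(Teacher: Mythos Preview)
Your argument is correct and in fact proves the sharper statement $z\in\Atot(\sigma_\infty)$, which the paper does not claim. The ingredients you invoke are exactly the ones available at this point, and the connectedness-of-an-interval argument is sound; the linear functional~$\ell$ is not strictly necessary (connected subsets of the segment $[x,y]$ are intervals), but it does no harm.

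The paper takes a slightly different route: it distinguishes the case where $\Apm(\sigma_\infty)$ is a singleton from the case where it contains at least two points. In the first case it quotes Proposition~\ref{prop:sinftyuniqueasymptote} to identify the singleton as~$z$; in the second case it uses only Proposition~\ref{prop:limits.in.face} (not Proposition~\ref{prop.asymptote.semiedge}) together with connectedness of $\Atot$ to produce an interior point of the edge. Your approach bypasses this case split by invoking Proposition~\ref{prop.asymptote.semiedge} from the start, which forces $A^-$ and $A^+$ to lie on opposite sides of~$z$; connectedness then traps~$z$ in $\Atot$. This is cleaner and yields more, at the modest cost of relying on the finer Proposition~\ref{prop.asymptote.semiedge} rather than just Proposition~\ref{prop:limits.in.face}.
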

\begin{proof}
  We know that
  $\Apm(\sigma_\infty) \subset [x,y]$
  (Proposition~\ref{prop:limits.in.face}) and that $\Apm(\sigma_\infty) \neq \emptyset$
  (Lemma~\ref{lem:haproperties}~\itemref{lem:haproperties.iii}). 
  
  If $\Apm(\sigma_\infty)$ contains more than one point, then we use
  the connectedness
  of~$\Atot(\sigma_\infty)$, which contains~$\Apm(\sigma_\infty)$
  (Lemma~\ref{lem:haproperties}~\itemref{lem:haproperties.vii}). Then
  $\Atot(\sigma_\infty) \cap (x,y)$ is non-empty because $[x,y]$ is
  an exposed edge.

  Therefore, it suffices to consider the case that
  $\Apm(\sigma_\infty)$ consists of a single point of the stable norm
  unit ball. In this case, we obtain $\Apm(\sigma_\infty) = \{z\}$
  from Proposition~\ref{prop:sinftyuniqueasymptote}; moreover, $z$
  lies in~$(x,y)$.
\end{proof}

We obtain two geometrically distinct geodesics constructed from a
single exposed edge:

\begin{corollary}\label{cor:twomingeod}
  Let $M$ be a closed connected Riemannian manifold and 
  let $[x,y]$ be an exposed edge of the stable norm unit ball of~$M$.
  Then there exist at least two geometrically distinct $\R$-homologically
  minimal geodesics~$\gamma \colon \R \to M$ of~$M$ with
  \begin{equation}\label{display.int.char}
    \begin{split}
      &A^+(\gamma)\text{ and }A^-(\gamma)\text{ are closed connected subsets of }[x,y]\\
      &\text{and }   \Atot(\gamma) \cap (x,y) \neq \emptyset\,.
    \end{split}
  \end{equation}
\end{corollary}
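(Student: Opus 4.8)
The plan is to extract two geometrically distinct minimal geodesics from the single exposed edge $[x,y]$ by running the construction of Section~\ref{subsec:sinfty} twice with two ``opposite'' choices of the auxiliary $1$-form, and then to distinguish the resulting geodesics using the asymptote information collected in Proposition~\ref{prop:limits.in.face} and Proposition~\ref{prop.asymptote.semiedge}.

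\medskip

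\textbf{Setup of the two constructions.} First I would fix, once and for all, a point $z\in(x,y)$ together with a supporting functional $\omega\in\HdR$ with $\stabnormdual\omega=1$ and $[x,y]=F_\omega$ (which exists because $[x,y]$ is an exposed edge). Next I would choose a closed $1$-form $\eta$ that is an $\R$-linear combination of $\alpha^1,\dots,\alpha^b$ with
\[
\<[\eta],z\>=0,\qquad \<[\eta],x\> < 0 < \<[\eta],y\>;
\]
such an $\eta$ exists because $x\neq y$ lie on the affine line through $z$ and the functionals $\<[\alpha^j],\args\>$ span $(H_1(M;\R))^*$. Now I run the construction of Section~\ref{subsec:sinfty} with the triple $([x,y],z,\eta)$ to obtain an $\R$\=/homologically minimal geodesic $\gamma_+ \ceq \sigma_\infty$, and then run it a second time with the triple $([x,y],z,-\eta)$ (equivalently, with the edge orientation reversed, so that the roles of $x$ and $y$ are swapped) to obtain $\gamma_- \ceq \sigma_\infty'$. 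Both $\gamma_\pm$ are $\R$\=/homologically minimal by the closing remark of Section~\ref{subsec:sinfty}, and both satisfy the first line of~\eqref{display.int.char} by Proposition~\ref{prop:limits.in.face} together with Lemma~\ref{lem:haproperties}~\itemref{lem:haproperties.iv} and~\itemref{lem:haproperties.v}, and the second line of~\eqref{display.int.char} by Proposition~\ref{prop:distinguishedges}.

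\medskip

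\textbf{Distinguishing $\gamma_+$ from $\gamma_-$.} This is the point requiring care, and I expect it to be the main obstacle. By Proposition~\ref{prop.asymptote.semiedge} applied to $\gamma_+$ (built from $\eta$) we have $A^+(\gamma_+)\subset[z,y]$ and $A^-(\gamma_+)\subset[x,z]$; applying the same proposition to $\gamma_-$ (built from $-\eta$, so the strict inequalities $\<[-\eta],x\><0<\<[-\eta],y\>$ are read with $x,y$ interchanged) gives $A^+(\gamma_-)\subset[z,x]$ and $A^-(\gamma_-)\subset[y,z]$. Suppose, for a contradiction, that $\gamma_+$ and $\gamma_-$ are geometrically equivalent, say $\gamma_-=\gamma_+\circ\phi$ for a reparametrization $\phi$. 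By Remark~\ref{rem:distinctasymptotes} there are two cases. If $\phi'>0$, then $A^+(\gamma_-)=A^+(\gamma_+)$ and $A^-(\gamma_-)=A^-(\gamma_+)$; intersecting the two sets of constraints gives $A^+(\gamma_+)\subset[z,y]\cap[z,x]=\{z\}$ and likewise $A^-(\gamma_+)\subset\{z\}$, so $\Apm(\gamma_+)=\{z\}$, and by symmetry $\Apm(\gamma_-)=\{z\}$ as well. If $\phi'<0$, then $A^+(\gamma_-)=-A^-(\gamma_+)\subset[x,z]$ while also $A^+(\gamma_-)\subset[z,x]$ --- these two segments are equal, so this case gives no immediate contradiction; here I would instead argue directly that $-A^-(\gamma_+)\subset[z,x]$ together with $A^-(\gamma_+)\subset[x,z]$ forces, using central symmetry of $B$ and the fact that $[x,y]$ and its antipode $[-x,-y]$ are disjoint exposed edges (as $0\in\inneres B$, no exposed edge contains both a point and its negative), that $A^-(\gamma_+)$ is a single point of $[x,z]$ whose negative lies in $[z,x]$, which is impossible unless $[x,z]\cap(-[z,x])\neq\emptyset$, again impossible. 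So the orientation-reversing case cannot occur, and we are reduced to the orientation-preserving case where both $\gamma_\pm$ have $\Apm=\{z\}$.

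\medskip

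\textbf{Closing the argument.} In the remaining case, both $\gamma_+$ and $\gamma_-$ are homologically homoclinic with unique asymptote $z\in(x,y)$, which is exactly the hypothesis of Proposition~\ref{prop:sinftyuniqueasymptote}; that proposition is consistent but does not by itself yield a contradiction, so I need one more ingredient to separate the two geodesics. Here I would use the auxiliary functions $u_\infty$ from the proof of Proposition~\ref{prop.asymptote.semiedge}: for $\gamma_+$ we showed $u_\infty=\<[\eta],\hsi[\infty]{\args}\>\ge 0$ on all of $\R$, while for $\gamma_-$ the corresponding function $u_\infty'=\<[-\eta],\hsi[\infty']{\args}\>\ge 0$, i.e.\ $\<[\eta],\hsi[\infty']{\args}\>\le 0$ on all of $\R$. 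If $\gamma_-=\gamma_+\circ\phi$ with $\phi'>0$, then $\<[\eta],h(\gamma_-|_{[0,t]})\>$ and $\<[\eta],h(\gamma_+|_{[0,t]})\>$ agree up to a shift of parameter and an additive constant (the $\eta$-integral over the connecting piece), so they cannot be simultaneously $\ge 0$ and $\le 0$ unless both are identically constant equal to $0$ --- which would force $\<[\eta],h(\gamma_+|_{[0,t]})\>\equiv 0$, hence (by Remark~\ref{rem:hurewicznonint}) $\gamma_+$ never crosses the level set $\<[\eta],\args\>=\pm 1$, contradicting that the approximating closed geodesics $\sigma_i$ representing $z_i\approx i\cdot z$ must have $\eta$-period zero but nonconstant $u_i$ (this is the rounding-off point where $u_i$ attains a genuine minimum $0$ but is not identically $0$, since $z\in(x,y)$ with $x,y$ on opposite sides of $\<[\eta],\args\>=0$). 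Spelling out this last contradiction carefully is the technical heart; modulo it, $\gamma_+$ and $\gamma_-$ are geometrically distinct, both are $\R$\=/homologically minimal, and both satisfy~\eqref{display.int.char}, which completes the proof.
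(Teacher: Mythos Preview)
Your setup and the handling of the orientation-reversing case are fine (once you observe $[x,y]\cap[-x,-y]=\emptyset$, any $w\in A^-(\gamma_+)$ would have both $w$ and $-w$ in $[x,y]$, which is impossible). The gap is in your ``Closing the argument'' step, where you try to derive a contradiction from $\gamma_+$ and $\gamma_-$ being geometrically equivalent with $\Apm=\{z\}$.

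First, the algebra does not give what you claim. If $\gamma_-=\gamma_+(\cdot+c)$, then $u_\infty^-\ge 0$ translates into $u_\infty^+(c+t)\le u_\infty^+(c)$ for all $t$; combined with $u_\infty^+\ge 0$ this only shows that $u_\infty^+$ is \emph{bounded} (between $0$ and $u_\infty^+(c)$), not that it is identically~$0$. Second, and more importantly, even $u_\infty\equiv 0$ is \emph{not} a contradiction: the fact that each $u_i$ is periodic and nonconstant does not prevent the limit from vanishing, and indeed Example~\ref{example:Hedlung-cont} exhibits metrics for which \emph{every} minimal geodesic with $\Atot\subset[x,y]$ is homologically homoclinic. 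In such examples, for suitable choices the two constructions can genuinely yield the same geodesic, so there is nothing to contradict.

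The paper's proof does not attempt this contradiction. Instead it accepts the possibility $A^+(\sigma_\infty)=\{z\}=A^-(\sigma_\infty)$ and, in that case, picks a different interior point $z''\in(z,y)$ and runs the construction once more to get~$\sigma''_\infty$. Proposition~\ref{prop.asymptote.semiedge} gives $A^+(\sigma''_\infty)\subset[z'',y]$, so $z\in A^+(\sigma_\infty)\setminus A^+(\sigma''_\infty)$, and together with $[x,y]\cap[-x,-y]=\emptyset$ this forces $\sigma_\infty$ and $\sigma''_\infty$ to be geometrically distinct. Replacing your last paragraph by this ``move the base point'' argument closes the gap.
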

\begin{proof}
  Let $z \in (x,y)$. We choose~$\eta \in \Omega^1(M)$ as in
  Section~\ref{subsec:sinfty}.  Let $\sigma_\infty$ be a minimal
  geodesic constructed from~$([x,y],z,\eta)$
  (Section~\ref{subsec:sinfty}).  Flipping the sign, let $x':=y$,
  $y':=x$, $z' := z$, and $\eta' := -\eta$.  Then the construction
  from Section~\ref{subsec:sinfty} also provides us with a minimal
  geodesic~$\sigma'_\infty$ constructed from~$([y,x],z,-\eta)$.

  We obtain (Proposition~\ref{prop.asymptote.semiedge}
  and Proposition~\ref{prop:distinguishedges})
  \begin{align*}
  & A^+(\sigma_\infty) \subset [z,y],
  \quad A^-(\sigma_\infty) \subset [x,z],
  \quad \Atot(\sigma_\infty) \cap (x,y) \neq \emptyset,\\
  & A^+(\sigma'_\infty) \subset [x,z],
  \quad A^-(\sigma'_\infty) \subset [z,y],
  \quad \Atot(\sigma'_\infty) \cap (x,y) \neq \emptyset\,.
  \end{align*}

  As $[x,y]\subset \partial B$ is an exposed edge, we have $[x,y] \cap
  [-x,-y]=\emptyset$.  In particular,
  Remark~\ref{rem:distinctasymptotes} yields that $\sigma_\infty$ and
  $\sigma'_\infty$ are geometrically distinct unless
  \begin{equation}\label{eq:unless}
    A^+(\sigma_\infty) = \{z\} = A^-(\sigma_\infty)\,.
  \end{equation}
  If this holds (\ie if $\sigma_\infty$ is homologically homoclinic),
  then we may pick~$z'' \in (z,y)$ and an appropriate
  $1$-form~$\eta''$ and construct a minimal geodesic~$\sigma''_\infty$
  constructed from~$([x,y], z'', \eta''\>)$.  
  This minimal geodesic satisfies (Proposition~\ref{prop.asymptote.semiedge} and
  Proposition~\ref{prop:distinguishedges})
  \[ A^+(\sigma''_\infty)\subset [z'',y]
  \qand
  \Atot(\sigma''_\infty) \cap (x,y) \neq \emptyset
  \,.
  \] 
  This implies $z \in A^+(\sigma_\infty) \setminus A^+(\sigma''_\infty)$, 
  and because of $[x,y] \cap [-x,-y]=\emptyset$ we conclude that $\sigma_\infty$
  and $\sigma''_\infty$ are geometrically distinct. 

  By construction $\sigma_\infty$,  $\sigma'_\infty$, and  $\sigma''_\infty$ are $\R$-homologically minimal.
  The sets~$A^\pm(\sigma_\infty)$, $A^\pm(\sigma'_\infty)$, and $A^\pm(\sigma''_\infty)$ are closed and connected by
  Lemma~\ref{lem:haproperties}~\itemref{lem:haproperties.iv} and~\itemref{lem:haproperties.v}.
\end{proof}

A stronger version of Corollary~\ref{cor:twomingeod}
is provided by the following:

\begin{proposition}\label{prop:twomingeod.stronger}
  Let $M$ be a closed connected Riemannian manifold and 
  let $[x,y]$ be an exposed edge of the stable norm unit ball of~$M$;
  and let $x$ and $y$ be exposed points of the stable norm unit ball.
  Then at least one of the following statements holds:
  \begin{enumAlpha}
  \item\label{cor:twomingeod.stat.i} There are uncountably many
    geometrically distinct homologically homoclinic minimal
    geodesics of~$M$
    satisfying Equation~\eqref{display.int.char}.
  \item\label{cor:twomingeod.stat.ii} There are infinitely many
    geometrically distinct homologically non-homoclinic minimal
    geodesics of~$M$ satisfying Equation~\eqref{display.int.char}.
  \item\label{cor:twomingeod.stat.iii}
    There are geometrically
    distinct homologically non-homoclinic minimal
    geodesics $\gamma_1,\dots,\gamma_m \colon \R \to M$ satisfying
    Equation~\eqref{display.int.char}
    with at least one of the following additional properties:   
    \begin{enumC}
    \item\label{cor:twomingeod.stat.iii.a} $m=2$, and $\gamma_1$ and
      $\gamma_2$ are homologically heteroclinic and homologically
      exposed.
    \item\label{cor:twomingeod.stat.iii.b} $m =3$, and $\gamma_1$ is
      homologically heteroclinic and homologically exposed. The
      geodesics $\gamma_2$ and~$\gamma_3$ are homologically
      semi\-/ex\-posed, but not homologically exposed.
    \item\label{cor:twomingeod.stat.iii.c} $m=4$, and
      $\gamma_1,\ldots,\gamma_4$ are homologically semi\-/ex\-posed,
      but not homologically exposed.
   \end{enumC}
  \end{enumAlpha}
  All minimal geodesics detected in the above cases are
  $\R$\=/ho\-mo\-lo\-gi\-cal\-ly minimal. Furthermore each of these
  geodesics is homologically non-exposed or it is homologically
  heteroclinic.
\end{proposition}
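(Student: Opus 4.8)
The plan is to bootstrap from Corollary~\ref{cor:twomingeod}, which already hands us two geometrically distinct $\R$\=/ho\-mo\-lo\-gi\-cal\-ly minimal geodesics satisfying Equation~\eqref{display.int.char}, and then to run a careful case distinction on how many of the three ``seed'' geodesics $\sigma_\infty,\sigma'_\infty,\sigma''_\infty$ from that proof turn out to be homologically homoclinic. The key observation is that a geodesic $\gamma$ satisfying Equation~\eqref{display.int.char} with $A^+(\gamma)=A^-(\gamma)=\{z\}$ and $z\in(x,y)$ is homologically homoclinic, and its single asymptote $z$ is an \emph{interior} point of the edge, hence \emph{not} an exposed point of $B$ (since $x,y$ are the exposed points on this edge). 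So: if $\sigma_\infty$ is homologically homoclinic with asymptote $z_0\in(x,y)$, then varying the interior point $z$ over the uncountably many choices in $(x,y)$ and invoking Proposition~\ref{prop:sinftyuniqueasymptote} produces, for each such $z$, a minimal geodesic that is either again homoclinic with asymptote exactly $z$ (giving uncountably many geometrically distinct ones by Remark~\ref{rem:distinctasymptotes}, since distinct singleton asymptote sets force geometric distinctness — this is case~\ititemref{cor:twomingeod.stat.i}) or is homologically non-homoclinic (satisfying \eqref{display.int.char}), in which case the sub-construction in the proof of Corollary~\ref{cor:twomingeod} already gives a second non-homoclinic geodesic and one can iterate to get infinitely many (case~\ititemref{cor:twomingeod.stat.ii}).

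The remaining case is that no homologically homoclinic geodesic with interior asymptote appears, i.e., every geodesic we construct from $([x,y],z,\eta)$ is homologically non-homoclinic. Then I would analyze the asymptote structure of $\sigma_\infty$ and $\sigma'_\infty$ using Proposition~\ref{prop.asymptote.semiedge}: $A^+(\sigma_\infty)\subset[z,y]$, $A^-(\sigma_\infty)\subset[x,z]$, and symmetrically for $\sigma'_\infty$. Each of $A^\pm$ is closed and connected (Lemma~\ref{lem:haproperties}), hence a subinterval of the edge; ``homologically non-homoclinic'' means $A^+(\gamma)\neq A^-(\gamma)$ as sets. The four sub-cases \ititemref{cor:twomingeod.stat.iii.a}--\ititemref{cor:twomingeod.stat.iii.c} should fall out of asking which endpoints $x,y$ (the only exposed points available on $[x,y]$) are hit by $A^\pm(\sigma_\infty)$ and $A^\pm(\sigma'_\infty)$: if some geodesic has $A^+=\{y\}$ and $A^-=\{x\}$ it is homologically heteroclinic and homologically exposed; a geodesic whose asymptote sets are non-degenerate subintervals, or are singletons located at the interior point $z''$ produced by the sub-construction, is homologically semi\-/ex\-posed at best or not exposed at all. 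I would enumerate: if two of the seeds are heteroclinic-and-exposed we are in \ititemref{cor:twomingeod.stat.iii.a} (taking $m=2$); if exactly one is, the sub-construction at an interior point $z''$ forces two further non-exposed semi-exposed geodesics, giving \ititemref{cor:twomingeod.stat.iii.b}; if none reaches both exposed endpoints, four semi-exposed-but-not-exposed geodesics arise from the two orientations combined with the $z,z''$ refinement, giving \ititemref{cor:twomingeod.stat.iii.c}.

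For the final two sentences of the statement, $\R$\=/ho\-mo\-lo\-gi\-cal minimality of every geodesic produced is automatic: all of them are built by the construction of Section~\ref{subsec:sinfty}, which yields $\R$\=/ho\-mo\-lo\-gi\-cal\-ly minimal geodesics by fiat (the limiting curve $\homcov\sigma_\infty$ is distance-minimizing on $\homRcov M$). The dichotomy ``homologically non-exposed or homologically heteroclinic'' is then a bookkeeping consequence of the case analysis: a geodesic satisfying \eqref{display.int.char} has $A^\pm(\gamma)\subset[x,y]$, and the only exposed points of $B$ in $[x,y]$ are $x$ and $y$; so if $\gamma$ is homologically exposed it must have $A^+(\gamma)=\{x\}$ or $\{y\}$ and $A^-(\gamma)=\{x\}$ or $\{y\}$, and a homologically homoclinic such geodesic would have $A^+(\gamma)=A^-(\gamma)\in\{\{x\},\{y\}\}$, which is incompatible with $\Atot(\gamma)\cap(x,y)\neq\emptyset$ together with Lemma~\ref{lem:haproperties}~\ititemref{lem:haproperties.vi} (the mixed asymptote set lies in $\convjoin(A^+,A^-)$, which would collapse to a single endpoint). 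Hence a homologically exposed geodesic here is forced to be heteroclinic, which is exactly the stated alternative.

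The main obstacle I expect is organizing the case analysis cleanly enough that the counts $m=2,3,4$ in \ititemref{cor:twomingeod.stat.iii} are genuinely exhaustive and the geodesics produced in each case are verifiably \emph{pairwise} geometrically distinct — the distinctness arguments rely on comparing asymptote sets via Remark~\ref{rem:distinctasymptotes}, and one has to be careful about the orientation-reversal ambiguity ($A^\pm$ may swap and negate under reparametrization), using crucially that $[x,y]\cap[-x,-y]=\emptyset$ because $[x,y]$ is an exposed edge. A secondary technical point is ensuring that when we claim ``infinitely many'' in case~\ititemref{cor:twomingeod.stat.ii} or ``uncountably many'' in case~\ititemref{cor:twomingeod.stat.i}, the refinement process (choosing ever-finer interior points) actually terminates in the desired alternative at each stage rather than looping; this is handled by noting that each homoclinic geodesic with a \emph{new} interior asymptote is automatically distinct from all previous ones, so the process either produces uncountably many homoclinic ones or eventually exits into the non-homoclinic regime where the sub-construction multiplies the count without bound.
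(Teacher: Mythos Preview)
Your outline captures the right ingredients (varying $z\in(x,y)$, the construction of Section~\ref{subsec:sinfty}, the asymptote containments of Proposition~\ref{prop.asymptote.semiedge}, and the $[x,y]\cap[-x,-y]=\emptyset$ distinctness criterion), and your argument for the final two sentences is correct and in fact cleaner than anything spelled out in the paper. But the heart of the case analysis has genuine gaps.

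\textbf{The iteration for case~\ititemref{cor:twomingeod.stat.ii} is not justified.} You assert that once one $z$ produces a non-homoclinic geodesic, ``the sub-construction multiplies the count without bound,'' but you never say how to ensure the non-homoclinic geodesics for different $z$ are pairwise distinct. Two different interior points $z$ and $z'$ may well yield the \emph{same} non-homoclinic geodesic. The paper solves this by writing $A^+(\gamma_1^z)=[v_1^+(z),w_1^+(z)]$, observing $z\le v_1^+(z)$, and iterating via $z_{i+1}\in(v_1^+(z_i),y)$; then $v_1^+(z_i)\in A^+(\gamma_1^{z_i})$ but $v_1^+(z_i)<z_j\le v_1^+(z_j)$ for $j>i$ forces $v_1^+(z_i)\notin A^+(\gamma_1^{z_j})$, giving distinctness. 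This explicit mechanism is the missing idea in your case~\ititemref{cor:twomingeod.stat.ii}.

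\textbf{The enumeration for case~\ititemref{cor:twomingeod.stat.iii} is too vague to be a proof.} You propose to split on ``how many of the seeds are heteroclinic-and-exposed,'' but your seeds $\sigma_\infty,\sigma'_\infty,\sigma''_\infty$ come from a \emph{fixed} $z$ (and $\sigma''_\infty$ only exists in the proof of Corollary~\ref{cor:twomingeod} when $\sigma_\infty$ is homoclinic, the situation you have already disposed of). The paper instead handles the two orientation families $(\gamma_1^z)_{z\in(x,y)}$ and $(\gamma_2^z)_{z\in(x,y)}$ \emph{separately}, and for each does a four-way case split on the endpoint conditions ``$v_1^+(z)=y$ for some $z$?'' and ``$w_1^-(z)=x$ for some $z$?''. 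Case~3 (both endpoints reached for one $z$) gives one heteroclinic exposed geodesic; Case~4 (each endpoint reached, but never simultaneously) gives two semi-exposed non-exposed geodesics; Cases~1 and~2 (some endpoint never reached) feed back into~\ititemref{cor:twomingeod.stat.i} or~\ititemref{cor:twomingeod.stat.ii} via the iteration above. Running this for both families and combining yields the counts $m=2,3,4$ in~\ititemref{cor:twomingeod.stat.iii.a}--\ititemref{cor:twomingeod.stat.iii.c}. Your sketch does not make this two-family structure visible, and without it the counts do not obviously come out right.
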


\begin{remark}
 The following proof remains valid if we remove the assumption that the boundary points $x$ and $y$ are exposed. Note that the exposedness of $[x,y]$ already implies that $x$ and $y$ are extremal, which is a bit weaker than being exposed, see Subsection~\ref{subsec:def.convex.bodies}. If $x$ is no longer exposed, then $A^\pm(\gamma)=\{x\}$ no longer means that~$\gamma$ is ``homologically semi\-/ex\-posed'', see Definition~\ref{def.homolo.homocli}, and similarly for~$y$. Nevertheless Proposition~\ref{prop:twomingeod.stronger} still holds
if we replace the properties 
    ``homologically exposed'',   ``homologically semi\-/ex\-posed'' in Proposition~\ref{prop:twomingeod.stronger} \ref{cor:twomingeod.stat.iii}  by  ``homologically extremal'',   ``homologically semi\-/ex\-tremal'' which are defined similarly by replacing ``exposed points'' by ``extremal points'' in Definition~\ref{def.homolo.homocli}. We will not elaborate on this in detail, as the main focus of the article lies on the case that the stable norm unit ball is a polytope in which case all faces of this ball are exposed, see the comment at the end of Subsection~\ref{subsec:def.convex.bodies}.
\end{remark}

This proposition and its proof show, in particular: If the
stable norm unit ball contains an exposed edge~$[x,y]$, then  
\begin{itemize}
\item Condition~\ref{cor:twomingeod.stat.i} holds or 
\item the two geodesics provided by Corollary~\ref{cor:twomingeod} may be chosen to be homologically non-homoclinic.
\end{itemize}
For both (non-exclusive) alternatives, there are examples, \eg  Example~\ref{example:Hedlung-cont} for the
first alternative,
and for the second alternative Examples~\ref{exam.hedlund.b2}.

\begin{proof}[Proof of Proposition~\ref{prop:twomingeod.stronger}]
  For simplicity of notation we introduce a linear ordering~$\leq$,
  resp.~$<$, on~$[x,y]$, characterized by ``$z\leq w \gdw w\in
  [z,y]$''. All minimal geodesics in this proof will be
  $\R$\=/ho\-mo\-lo\-gi\-cal\-ly minimal.
  
  The proof of this proposition starts as the proof
  of Corollary~\ref{cor:twomingeod}. For every given~$z\in (x,y)$ we
  obtain $\R$\=/ho\-mo\-lo\-gi\-cal\-ly minimal geodesics $\gamma_1^z\ceq
  \sigma_\infty$ and~$\gamma_2^z\ceq \sigma'_\infty$ with
  \begin{align*}
  & A^+(\gamma_1^z) \subset [z,y],
  \quad A^-(\gamma_1^z) \subset [x,z],
  \quad \Atot(\gamma_1^z) \cap (x,y) \neq \emptyset,\\
  & A^+(\gamma_2^z) \subset [x,z],
  \quad A^-(\gamma_2^z) \subset [z,y],
  \quad \Atot(\gamma_2^z) \cap (x,y) \neq \emptyset\,.
  \end{align*}
  As all~$A^\pm(\gamma_i^z)$ are closed and connected subsets
  of~$[x,y]$, there are~$v_i^\pm(z)\in[x,y]$ and~$w_i^\pm(z)\in[x,y]$
  with~$A^\pm(\gamma_i^z)=[v_i^\pm(z),w_i^\pm(z)]$, and we get
  \begin{align*}
    & x\leq v_1^-(z)\leq w_1^-(z)\leq z \leq v_1^+(z)\leq w_1^+(z)\leq y\,,\\
    & x\leq v_2^+(z)\leq w_2^+(z)\leq z \leq v_2^-(z)\leq w_2^-(z)\leq y\,.
  \end{align*}
  We study the geodesics~$\gamma_1^z$ with~$z\in (x,y)$ first, which
  are not necessarily geometrically distinct for different values
  of~$z$. We claim that in this family~$(\gamma_1^z)_{z\in (x,y)}$ of geodesics 
  \begin{enumalpha}
  \item\label{claim.i} there is at least one homologically heteroclinic and
    homologically exposed one,
  \item\label{claim.ii} or there are at least two geometrically distinct homologically
    semi\-/ex\-posed ones that are not homologically exposed and that
    are homologically non-homoclinic,
  \item\label{claim.iii} or this family provides an infinite number of
    geometrically distinct homologically non-homoclinic minimal geodesics,
  \item\label{claim.iv}
    or there are uncountably many geometrically distinct homologically homoclinic minimal geodesics.
  \end{enumalpha}
  If we carry out the same discussion for~$\gamma_2^z$, the
  proposition follows.
  
  We consider several cases:

  \begingroup 
  \case{1}{For all~$z \in (x,y)$, we have~$v_1^+(z)<y$.}
  In this case, we
  choose a~$z_1\in (x,y)$ and then we inductively select 
  \[z_{i+1}\in \bigl(v_1^+(z_i),y\bigr)
  \,.
  \]  
  By construction, the geodesics~$\gamma_1^{z_i}$ with~$i\in \N$ are
  geometrically distinct minimal geodesics.  We have thus obtained an
  infinite number of minimal geodesics.  If infinitely many of them
  are homologically non-homoclinic, then we have
  Item~\itemref{claim.iii} of the claim. 
  If only finitely
  many of them are geometrically non-homoclinic with
  Equation~\eqref{display.int.char}, then the union of their $\Atot$-sets is a
  compact subset of~$[x,y)$ and thus its complement contains a set
  of the form~$[w,y)$ for some~$w<y$. 
  Then for every~$z\in [w,y)$,
  the geodesic~$\gamma_1^z$ is a homologically homoclinic minimal geodesic,
  and they are pairwise geometrically distinct.
  We thus get Item~\itemref{claim.iv} of the claim.
  
  \case{2}{For all~$z\in (x,y)$, we have~$x<w_1^-(z)$.}
  In this case, we argue analogously to Case~1.
 
  \case{3}{There exists a~$z\in (x,y)$ with $x=w_1^-(z)$
    and~$v_1^+(z)=y$.}
  We choose such a~$z$. Then
  $A^+(\gamma_1^z)=\{y\}$ and $A^-(\gamma_1^{z})=\{x\}$.  The
  geodesic~$\gamma_1^z$ is homologically heteroclinic and
  homologically exposed, and Item~\itemref{claim.i} of the claim
  holds.
  \case{4}{Remaining case.}
  In this case, we have
  \begin{align*}
    \exi{z\in (x,y)} & \bigl( x<w_1^-(z)\text{ and }v_1^+(z)=y\bigr) \qand\\
    \exi{z'\in (x,y)} & \bigl(x=w_1^-(z')\text{ and } v_1^+(z')<y\bigr)\,.
  \end{align*}
  The minimal geodesics $\gamma_1^z$ and~$\gamma_1^{z'}$ are
  homologically semi\-/ex\-posed and geometrically distinct. They are
  not homologically exposed. Thus, we have Item~\itemref{claim.ii} of
  the claim.
  \endgroup 
\end{proof}

\begin{example}\label{example:Hedlung-cont}
  Let $M=T^2\times S^2$, where $T^2=\R^2/\Z^2$. 
  We choose the $1$-forms $\alpha^1$ and $\alpha^2$ in Setup~\ref{setup:bases} as pullbacks of $\upd x^1$
  and $\upd x^2$ on $\R^2/\Z^2$. The projection $p_1\colon M\to T^2$ gives an isomorphism 
  $H_1(M;\R)\to H_1(T^2;\R)$ and we identify $H_1(M;\R)\cong H_1(T^2;\R)\cong \R^2 \cong \ucov T^2$.
  The Jacobi maps $J^M$ and $J^{T^2}$ of $M$ and $T^2$ satisfy $J^M = J^{T^2}\circ \ucov p_1$, where
  $\ucov p_1\colon \R^2\times S^2\to \R^2$ is again projection to the first factor.
  
  Let $g_s$ be a Riemannian metric with constant coefficients on~$\R^2$, 
  depending smoothly on~$s\in S^2$; we also write~$g_s$ for the associated metric on~$T^2$. 
  Denoting the standard metrik on~$S^2$ by~$g^{S}$ we define the 
  Riemannian metric $g\ceq g_s+g^{S}$ on~$M$. One may choose a smooth family $f_s\in \Iso(\R^2)$,
  such that $f_s^*g_s$ is the standard metric on $\R^2$. The closed unit norm ball of~$g_s$ 
  is thus $B_s\ceq f_s(B_1(0))$, where $B_1(0)$ is the standard closed unit ball in $\R^2$.

  Then $B=\bigcup_{s\in S^2} B_s$ is the union of a smooth family of ellipses centered in~$0$; however, in general, $B$ is not convex.
  But for suitable choices of~$g_s$, one can achieve that $B$ is convex and such that there is an 
  embedded smooth path $\sigma\colon [0,1]\to S^2$ with  $B=\bigcup_{t\in [0,1]} B_{\sigma(t)}$. 
  These conditions imply
  \[\stabnorm{v}= \min_{t\in[0,1]}\sqrt{g_{\sigma(t)}(v,v)}\]
  and that $B$ is the stable norm unit ball. By choosing the family~$(g_s)_s$ in a suitable way,
  we can additionally achieve that~$\partial B$ contains an exposed edge, \ie a non-trivial line segment $e\subset \partial B$.
  Let $\inneres{e}\neq\emptyset$ be the open line segment associated to $e$.

  Under these assumptions we have on $(M,g)$:
  \begin{itemize}  
  \item all minimial geodesics $\gamma$ with $\Atot(\gamma)\subset e$ are homologically homoclinic,
  \item all points $v$ in $\inneres{e}$  have a minimal geodesic~$\gamma_v$ with $A^+(\gamma_v)= \{v\} = A^-(\gamma_v)$ 
  (we do not claim uniqueness of~$\gamma_v$).
 \end{itemize}
Obviously all minimal geodesics are $\R$-homologically minimal. Thus we have verified Condition~\ref{cor:twomingeod.stat.i}. 
\end{example}

\section{Lower bounds for centrally symmetric polytopes}\label{sec:cs}

We give a (crude) lower bound for the number of vertices and
edges of centrally symmetric polytopes, which will be
used to prove Theorem~\ref{thm:main} in Section~\ref{sec:proofs}.

\begin{proposition}\label{prop:cslowerbound}
  Let $n \in \N$ and let $P \subset \R^n$ be a centrally symmetric
  polytope with non-empty interior. Let $V$ and $E$ denote the
  number of vertices and edges of~$P$, respectively. Then,
  $V \geq 2 \cdot n$ and 
  \[ \frac12 \cdot V + E
     \geq
     \min(n^2 + 2\cdot n +1, 2 \cdot n^2 - n)
     = \begin{cases}
     2 \cdot n^2 - n& \text{if $n \leq 3$}
     \\
     n^2 + 2\cdot n +1 & \text{if $n \geq 4$}
     .
     \end{cases}  
     \]
\end{proposition}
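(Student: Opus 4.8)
The plan is to prove the two stated inequalities $V \geq 2n$ and $\tfrac12 V + E \geq \min(n^2+2n+1,\ 2n^2-n)$ separately. The bound $V \geq 2n$ is classical and quick: a centrally symmetric polytope with nonempty interior in $\R^n$ must have at least $n$ pairs of antipodal vertices, because its vertices must affinely span $\R^n$ (otherwise the polytope would lie in a proper affine subspace, contradicting nonempty interior), and since the vertex set is symmetric under $x \mapsto -x$, the vertices come in antipodal pairs, giving at least $2n$ of them. For the second, harder inequality I would split into two regimes according to the number $V$ of vertices, and show that in each regime $\tfrac12 V + E$ is bounded below appropriately; the minimum of the two regime bounds will produce the claimed piecewise formula.

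The key structural input I would use is a lower bound on the number of edges emanating from each vertex. In a polytope with nonempty interior in $\R^n$, every vertex lies on at least $n$ edges, since the edge directions at a vertex $v$ must positively span a full-dimensional cone (the vertex is a $0$-face, and locally the polytope looks like a cone over the vertex figure, which is $(n-1)$-dimensional and hence has at least $n$ vertices, corresponding to at least $n$ edges at $v$). Summing over all $V$ vertices and dividing by $2$ (each edge has two endpoints) gives the crude bound $E \geq \tfrac12 \cdot n \cdot V \geq \tfrac12 n \cdot 2n = n^2$, and hence $\tfrac12 V + E \geq n + n^2$. This is not quite strong enough, so I would refine it. The refinement: if $V = 2n$ (the minimum), then $P$ is a \emph{simplicial} centrally symmetric polytope — in fact combinatorially a cross-polytope — and one can compute $E$ exactly. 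For the cross-polytope, each vertex is joined to every other vertex except its own antipode, so $E = \binom{2n}{2} - n = 2n^2 - n$, giving $\tfrac12 V + E = n + 2n^2 - n = 2n^2$; this is comfortably above $2n^2 - n$. Actually I realize one must be careful here: a centrally symmetric polytope with exactly $2n$ vertices need not be the cross-polytope for small $n$, but any such polytope still has $E \geq$ something; the cleanest route is to observe that with $V = 2n$ vertices and each vertex on $\geq n$ edges but never joined to its antipode (an edge through a vertex and its antipode would pass through the origin/interior — I'd need to justify that an edge cannot connect antipodal vertices in a polytope with $0$ in the interior), one gets $n \leq \deg(v) \leq 2n - 2$, and a more careful count. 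The honest statement I would aim for: if $V = 2n$ then $E = 2n^2 - n$ (forced), and if $V \geq 2n + 2$ then $E \geq$ a bound growing with $V$.

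So concretely I would argue: \textbf{Case 1: $V = 2n$.} Then $\tfrac12 V + E = n + E$. Here I claim $E = 2n^2 - n$: each of the $2n$ vertices is connected to at least $n$ others, none to its antipode, and a dimension count on the vertex figure (which must be an $(n-1)$-polytope on exactly the $2n-1$ or fewer available vertices) forces the vertex figure to be a simplex, hence $P$ is combinatorially the cross-polytope, hence $E = \binom{2n}{2} - n = 2n^2 - n$. This gives $\tfrac12 V + E = 2n^2 \geq 2n^2 - n$, and also $\geq n^2 + 2n + 1$ when $n \geq 4$. Wait — for $n \leq 3$ we want exactly $2n^2 - n$ as the bound, and $2n^2 > 2n^2 - n$, so this case is fine, it's not the extremal one. \textbf{Case 2: $V \geq 2n+2$.} Using $E \geq \tfrac12 n V$ we get $\tfrac12 V + E \geq \tfrac12 V(1 + n) \geq \tfrac12(2n+2)(n+1) = (n+1)^2 = n^2 + 2n + 1$. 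Combining: $\tfrac12 V + E \geq \min(2n^2,\ n^2+2n+1)$. Since $2n^2 \geq n^2 + 2n + 1 \iff n^2 - 2n - 1 \geq 0 \iff n \geq 3$ (as $n$ is an integer, $n \geq 3$), for $n \geq 3$ this minimum is $n^2 + 2n + 1$... but we wanted $2n^2 - n$ for $n = 3$. So Case~1's bound of $2n^2$ is too generous for $n=3$; I'd need Case~1 to only contribute when it genuinely can, i.e. the point is that the overall minimum over \emph{all} $P$ is realized either by a cross-polytope (value $2n^2 - n$, not $2n^2$; I must recheck the cross-polytope edge count — $\binom{2n}{2} = n(2n-1) = 2n^2 - n$ edges total minus $n$ antipodal pairs... no: $\binom{2n}{2} - n = 2n^2 - n - n = 2n^2 - 2n$, so $\tfrac12 V + E = n + 2n^2 - 2n = 2n^2 - n$, matching!).

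Let me restate cleanly. The cross-polytope has $2n$ vertices, $\binom{2n}{2} - n = 2n^2 - 2n$ edges (all pairs except the $n$ antipodal ones), so $\tfrac12 V + E = n + 2n^2 - 2n = 2n^2 - n$. This handles the upper end; for the lower bound I show every centrally symmetric $P$ with nonempty interior satisfies $\tfrac12 V + E \geq \min(2n^2 - n,\ n^2 + 2n+1)$. \textbf{Case $V = 2n$:} show $E = 2n^2 - 2n$ exactly (combinatorial cross-polytope argument via vertex figures), giving $2n^2 - n$. \textbf{Case $V \geq 2n + 2$:} $\tfrac12 V + E \geq \tfrac12 V + \tfrac12 n V = \tfrac12 (n+1) V \geq \tfrac12 (n+1)(2n+2) = (n+1)^2$. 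Taking the min of the two case-values and checking $2n^2 - n$ versus $(n+1)^2 = n^2 + 2n + 1$: the former is smaller iff $n^2 - 3n - 1 < 0$ iff $n \leq 3$. Hence $\min = 2n^2 - n$ for $n \leq 3$ and $= n^2 + 2n + 1$ for $n \geq 4$, exactly as claimed.

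\textbf{Main obstacle.} The delicate point is the claim in Case $V = 2n$ that the polytope is forced to be combinatorially a cross-polytope, which requires knowing that (a) no edge joins a vertex to its antipode when $0 \in \operatorname{int} P$ — this follows because such an edge would contain $0$ in its relative interior, but edges are faces and faces are disjoint from the interior — and (b) the vertex figure at each vertex is a simplex, which I would deduce from the fact that it is an $(n-1)$-dimensional polytope whose vertices inject into the $2n - 2$ vertices of $P$ other than $v$ and its antipode, combined with a symmetry/counting argument; if the vertex figure had more than $n$ vertices at some vertex, a parity or double-counting argument across all vertices would push $E$ strictly above $2n^2 - 2n$, and then $\tfrac12 V + E > 2n^2 - n$, which is only helpful if I also separately confirm it stays $\geq \min(2n^2-n, n^2+2n+1)$ — which it trivially does. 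So in fact I do not need the full "cross-polytope" rigidity: it suffices to show $E \geq 2n^2 - 2n$ when $V = 2n$, which follows from $E \geq \tfrac12 n V = n^2$... no, $n^2 < 2n^2 - 2n$ for $n > 2$. So I \emph{do} need a sharper edge bound for $V = 2n$. The cleanest sharp statement: when $V = 2n$, the graph of $P$ is $n$-connected (Balinski's theorem) on $2n$ vertices with no antipodal edges; by Balinski each vertex has degree $\geq n$; the absence of antipodal edges on $2n$ vertices together with $n$-connectivity forces — via a short combinatorial argument I'd supply — degree exactly $2n - 2$ at every vertex (the maximum possible), hence $E = \tfrac12 \cdot 2n \cdot (2n-2) = 2n^2 - 2n$. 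That degree-forcing step is the real crux; I expect it to follow from the observation that the $n$ edge-directions at $v$ spanning an $(n-1)$-cone in the vertex figure, combined with central symmetry, leave no room for a vertex to have degree below $2n-2$, but making this airtight is where I'd spend the most care.
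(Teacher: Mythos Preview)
Your approach is correct and genuinely different from the paper's. The paper splits into two cases according to whether $P$ is \emph{simplicial} or not: in the simplicial case it invokes a nontrivial cited result (the lower bound theorem for centrally symmetric simplicial polytopes, due to Stanley and others, surveyed in Novik's article) to get $V \geq 2n$ and $E \geq 2n(n-1)$; in the non-simplicial case a facet with $\geq n+1$ vertices and its disjoint antipodal copy force $V \geq 2(n+1)$, after which the degree bound $E \geq \tfrac12 nV$ gives $(n+1)^2$. Your split by $V = 2n$ versus $V \geq 2n+2$ is more elementary: it avoids the cited lower bound theorem entirely, at the cost of handling only the single extremal case $V = 2n$ rather than all simplicial polytopes. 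Both routes land on exactly the same two numerical bounds.

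One remark: the step you flagged as ``the real crux'' is much easier than you made it. If $V = 2n$, the vertices are $n$ antipodal pairs $\pm v_1, \dots, \pm v_n$; since they affinely span~$\R^n$ and are symmetric about~$0$, the $v_i$ are linearly independent, so $P = \conv\{\pm v_1, \dots, \pm v_n\}$ is the image of the standard cross-polytope under the linear isomorphism $e_i \mapsto v_i$. This gives $E = 2n^2 - 2n$ immediately, with no need for Balinski's theorem, vertex-figure arguments, or connectivity considerations. The paper in fact uses exactly this observation elsewhere (in the proof of Theorem~\ref{thm.equality.gen}).
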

\begin{proof}
  We distinguish two cases:
  \begin{enumarab}
  \item The polytope~$P$ is simplicial, i.e., all faces are simplices.
    Because $P$ is centrally symmetric, the number of vertices and
    edges of the $n$-dimensional cross-polytope is a lower bound
    for~$V$ and $E$, respectively~\cite[Section~7]{novik}. Hence,
    \[ V \geq 2 \cdot n
    \qand
       E \geq 2^{1+1} \cdot {n \choose {1+1}} = 2 \cdot n \cdot (n-1)\,.
    \]
    Therefore, $1/2 \cdot V+E \geq 2 \cdot n^2 - n$.
  \item
    The polytope~$P$ is \emph{not} simplicial, i.e., $P$ has at least
    one facet~$\sigma$ (i.e., at least one face of dimension~$n-1$ in the boundary)
    that is not an $(n-1)$-simplex. 
    In particular, $\sigma$ has at least $n+1$ vertices.
    Because $P$ is centrally symmetric, also $-\sigma$ is a facet
    of~$P$; as $P$ has non-empty interior, these parallel faces~$\sigma$
    and~$-\sigma$ have no common vertex. Thus, $P$ has at
    least $V \geq 2 \cdot (n+1)$ vertices. 

    Because of~$\inneres{P} \neq\emptyset$, each vertex of~$P$ 
    has edge-degree at least~$n$. Therefore,~$P$ has at least
    \[ \frac12 \cdot n \cdot V
       \geq \frac12 \cdot n \cdot 2 \cdot (n+1)
       = n \cdot (n+1)
    \]
    edges.
    Hence, $1/2 \cdot V+E \geq (n+1) \cdot (n+1) = n^2 + 2 \cdot n +1$.
  \end{enumarab}	
  Taking the minimum over both cases gives the claim.
\end{proof}

We can reformulate this estimate in terms of the constants
from Definition~\ref{def:Emin}:

\begin{corollary}\label{cor:VEmin}
  For  all~$b \in \N$, we have  
  \[ \VEmin(b) \geq
     \begin{cases}
     2 \cdot b^2 - b& \text{if $b \leq 3$}
     \\
     b^2 + 2\cdot b +1 & \text{if $b \geq 4$}
     .
     \end{cases}  
  \]
Furthermore, $\VEmin(b) \geq b + \Emin(b)$; 
if $\VEmin(b) = b + \Emin(b)$, then $\Emin(b)$ is attained by the cross-polytope,
and we thus have $\Emin(b)=2\cdot b\cdot (b-1)$.
\end{corollary}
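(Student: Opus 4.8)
The plan is to derive all three assertions from Proposition~\ref{prop:cslowerbound}, the only non-formal ingredient being a rigidity statement for centrally symmetric polytopes with the least possible number of vertices. For the explicit lower bound: every $P\in\CS(b)$ satisfies $\tfrac12 V(P)+E(P)\ge\min(b^2+2b+1,\,2b^2-b)$ by Proposition~\ref{prop:cslowerbound}, and that same proposition also records that this minimum equals $2b^2-b$ for $b\le 3$ and $b^2+2b+1$ for $b\ge 4$; taking the infimum over $P\in\CS(b)$ then gives the claimed bound for $\VEmin(b)$. For $\VEmin(b)\ge b+\Emin(b)$ I would use that Proposition~\ref{prop:cslowerbound} also provides $V(P)\ge 2b$ for every $P\in\CS(b)$, so that $\tfrac12 V(P)+E(P)\ge b+E(P)\ge b+\Emin(b)$, and again take the minimum over $P$.

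For the equality case, assume $\VEmin(b)=b+\Emin(b)$. Since $0$ lies in the interior, the vertices of any $P\in\CS(b)$ split into antipodal pairs, so $V(P)$ is even; hence $\{\tfrac12 V(P)+E(P)\mid P\in\CS(b)\}$ is a non-empty set of positive integers and the minimum $\VEmin(b)$ is attained, say by $P_0\in\CS(b)$. From $b+\Emin(b)=\tfrac12 V(P_0)+E(P_0)$ together with $\tfrac12 V(P_0)\ge b$ and $E(P_0)\ge\Emin(b)$, both inequalities are forced to be equalities, so $P_0$ has exactly $2b$ vertices and $E(P_0)=\Emin(b)$. Writing these vertices as $\pm v_1,\dots,\pm v_b$ and using that $P_0$ is full-dimensional, $v_1,\dots,v_b$ is a basis of $\R^b$, so the linear isomorphism $e_j\mapsto v_j$ carries the standard cross-polytope $\conv\{\pm e_1,\dots,\pm e_b\}$ onto $P_0$. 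Edge counts are invariant under linear isomorphisms, so $\Emin(b)=E(P_0)$ equals the number of edges of the cross-polytope; since two of its vertices span an edge precisely when they are not antipodal, this number is $\binom{2b}{2}-b=2b(b-1)$. Thus the minimum in $\Emin$ is attained by the cross-polytope and equals $2b(b-1)$.

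I expect the rigidity step in the last paragraph to be the only real obstacle: recognizing that a centrally symmetric polytope with non-empty interior and merely $2b$ vertices must be a linear image of the cross-polytope, and then reading off that its edge count is $2b(b-1)$. The remaining arguments are elementary bookkeeping built on Proposition~\ref{prop:cslowerbound} together with the observation that $V(P)/2+E(P)$ ranges over positive integers, so that the relevant minima are actually attained.
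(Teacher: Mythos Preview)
Your proof is correct and follows essentially the same approach as the paper: both derive the numerical bound directly from Proposition~\ref{prop:cslowerbound}, use $V(P)\ge 2b$ for the inequality $\VEmin(b)\ge b+\Emin(b)$, and in the equality case argue that a minimizer must have exactly $2b$ vertices and hence be a cross-polytope. You are in fact more careful than the paper on two points: you justify why a minimizer~$P_0$ exists (integrality of $\tfrac12 V(P)+E(P)$), and you spell out the rigidity step (the $2b$ vertices form $\pm$ a basis, giving a linear identification with the standard cross-polytope) that the paper dismisses as ``up to an affine transformation''.
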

\begin{proof}
  The first lower bound on $\VEmin(b)$ is a direct consequence of Proposition~\ref{prop:cslowerbound}.  Let $P$ be a polytope that attains~$\VEmin(b)$. 
  By elementary geometry, $P$ has at least $2b$~vertices, which implies
  that $\VEmin(b) \geq b + \Emin(b)$.
  If this inequality is an equality, then $P$ has precisely $2b$~vertices, 
  and is then (up to an affine transformation) a cross-polytope.
\end{proof}

In view of the square and the octahedron, these bounds are sharp in
dimensions $2$ and~$3$.  Improved bounds for~$\Emin$ in higher
dimensions are discussed in Remark~\ref{rem:csimproved}~\ref{rem:csimproved.i}.

\section{Proofs of the results from the introduction}\label{sec:proofs}

We combine the results from the previous sections to
complete the proofs of the main results. We will always
use the following notation:

\begin{setup}
  Let $M$ be a closed connected (and non-empty) Riemannian manifold.
  Let $b := \dim_\R H_1(M;\R)$ and let $B \subset H_1(M;\R)$ be the
  stable norm unit ball.
\end{setup}

\begin{remark}\label{rem:exposedpolytope}
  We know that $B$ is closed, convex, centrally symmetric,
  and has non-empty interior.
  If $B$ has only finitely many exposed points, then $B$
  has only finitely many extreme points (by Straszewicz's
  theorem~\cite{straszewicz}) and so $B$ is a compact
  centrally symmetric polytope in~$H_1(M;\R)$ with non-empty
  interior. 
\end{remark}

\subsection{Proof of Theorem~\ref{thm:main}}
\label{subsec:proofthmmain}

Let $M$ be a closed connected Riemannian manifold. The stable norm
ball~$B \subset H_1(M;\R)$ is centrally symmetric and convex.

\begin{enumarab}
  \item\label{proof.thmmain.1:1} If $v$ is an exposed point of~$B$,
    then Bangert shows that there exists an
    $\R$\=/ho\-mo\-lo\-gi\-cal\-ly minimal geodesic~$\gamma$
    with~$A^+(\gamma) = \{v\} =
    A^-(\gamma)$~\cite[Theorem~4.4]{bangert:90}. This proves the
    theorem in the case of~$b = 1$ or in the case that $B$ has
    infinitely many exposed points.
\item\label{proof.thmmain.1:2} We may therefore assume that $b \geq 2$
  and that $B$ has only finitely many exposed points. Then $B$ is a
  compact centrally symmetric polytope with non-empty interior
  (Remark~\ref{rem:exposedpolytope}); in particular, all
  vertices/edges of~$B$ are exposed. Let $V(B)$ denote the number of
  vertices of~$B$ and $E(B)$ the number of edges of~$B$.

Applying Bangert's construction to each antipodal pair of vertices
of~$B$ provides $1/2 \cdot V(B)$ geometrically distinct minimal
geodesics (which are homologically homoclinic and homologically
exposed).

For each exposed edge~$[x,y]$ of~$B$, we obtain two geometrically
distinct minimal geodesics~$\gamma$ with~$\Atot(\gamma) \cap (x,y)
\neq \emptyset$ (Corollary~\ref{cor:twomingeod}).  In particular, such
minimal geodesics for different antipodal edge pairs are geometrically
distinct. Thus we have at least $E(B)$ many geometrically distinct
minimal geodesics of this type.  Moreover, these minimal geodesics for
exposed edges do not have the vertex asymptote behaviour of Bangert's vertex
examples: they are homologically heteroclinic or they are not
homologically exposed.  
Thus the geodesics constructed from the edges are all geometrically
distinct from the ones obtained from the vertices.

The total number of $\R$\=/ho\-mo\-lo\-gi\-cal\-ly minimal geodesics
is therefore at least~$1/2 \cdot V(B) + E(B)$.

\item\label{proof.thmmain.1:3}
  Theorem~~\ref{thm:main} now follows from Corollary~\ref{cor:VEmin}.
\end{enumarab}

Moreover, we record the following observation from
the proof above:

\begin{remark}\label{rem:finpolytope}
  If $M$ has only finitely many geometrically distinct
  minimal geodesics, then $B$ is a compact convex polytope
  in~$H_1(M;\R)$ with non-empty interior and $M$
  has at least~$V(B)/2 + E(B)$ geometrically distinct
  minimal geodesics.
\end{remark}

\subsection{Proof of Theorem~\ref{thm:main.2}}
\label{subsec:proofthmmain.2}

If $B$ is not a polytope, then $B$ has infinitely
many different antipodal pairs of exposed points
(Remark~\ref{rem:exposedpolytope}) and so the
corresponding minimal geodesics by Bangert show
that we are in case~\ititemref{thm:main.alt.1} of Theorem~\ref{thm:main.2}.

If $B$ is a polytope, then the arguments in
Section~\ref{subsec:proofthmmain} can be refined. For every edge we apply
Proposition~\ref{prop:twomingeod.stronger}:
\begin{itemize}
  \item If we are in case~\ititemref{cor:twomingeod.stat.i} of
    Proposition~\ref{prop:twomingeod.stronger} for some edge, then we
    have uncountably many geometrically distinct homologically
    homoclinic minimal geodesics. In particular, we are again in
    case~\ititemref{thm:main.alt.1} of Theorem~\ref{thm:main.2}.
  \item If we are in case~\ititemref{cor:twomingeod.stat.ii} of
    Proposition~\ref{prop:twomingeod.stronger} for some edge, then we
    have infinitely many geometrically distinct homologically
    non-homoclinic minimal geodesics. These geodesics are
    geometrically distinct from at least $b$ homologically homoclinic
    minimal geodesics, detected with Bangert's method. This shows that
    we are in case~\ititemref{thm:main.alt.2} of
    Theorem~\ref{thm:main.2}.
  \item If we are in case~\ititemref{cor:twomingeod.stat.iii} of
    Proposition~\ref{prop:twomingeod.stronger} for every edge, then
    every pair of antipodal edges contributes at least two
    homologically non-homoclinic minimal geodesics. The statament for
    the geodesics via Bangert's method is as above. Again, we are in
    case~\ititemref{thm:main.alt.2} of Theorem~\ref{thm:main.2}.
  \end{itemize}
As before, all minimal geodesics in this list are
$\R$\=/ho\-mo\-lo\-gi\-cal\-ly minimal.

The last claim of Theorem~\ref{thm:main.2} follows from
Remark~\ref{rem:finpolytope} and the proof is thus complete.

\subsection{Proof of Theorem~\ref{thm.equality.gen}}
\label{subsec:proofEminequal}

Let $M$ have exactly~$b + \Emin(b)$ minimal geodesics. In particular,
in combination with Remark~\ref{rem:finpolytope}, we obtain that the
stable norm unit ball~$B$ is a compact convex centrally symmetric
polytope in~$H_1(M;\R)$ with non-empty interior and $E(B) = \Emin(b)$
as well as~$V(B) = 2\cdot b$. As vertices come in antipodal pairs, there is a
$b$-tuple $\maV=(v_1,\ldots,v_b)$ of vectors in~$H_1(M;\R)$ such that
$v_1,\ldots,v_b,-v_1,\ldots,-v_b$ are precisely the vertices of $B$.
Because $B$ is the convex hull of its vertices and because $B$ has non-empty
interior, $\maV$ is a linearly independent family, and thus a basis
of~$H_1(M;\R)$. It follows that $B$ has the combinatorial type of
a cross-polytope.

In particular, $B$ is a simplicial polytope and as in the proof of
Proposition~\ref{prop:cslowerbound} we have~$\Emin(b) = E(B)$. Therefore,
\[ \Emin(b) = E(B) = 2 \cdot b^2 -2 \cdot b
\,.
\]
Further, every antipodal pair of edges contributes at least two
$\R$\=/ho\-mo\-lo\-gi\-cal\-ly minimal geodesics
(Proposition~\ref{prop:twomingeod.stronger}) and every antipodal pair
of vertices contributes one minimal geodesic. Thus, the equality
discussion implies that each edge pair contributes precisely two, and
each vertex pair precisely one. Hence, for the edges we will always be
in case~\ititemref{cor:twomingeod.stat.iii.a} of this proposition. It
follows that we have $\Emin(b)$ of $\R$\=/ho\-mo\-lo\-gi\-cal\-ly minimal
geodesics that are homologically heteroclinic and homologically exposed,
and $b$ of $\R$\=/ho\-mo\-lo\-gi\-cal\-ly minimal geodescis that are
homologically homoclinic and homologically exposed, and that there are
no other minimal geodesics (up to geometric equivalence).

\subsection{Proof of Theorem~\ref{thm.equality.polytope-fixed}}
\label{subsec:proofVEminequal}

Let $M$ have exactly~$V(B)/2 + E(B)$ geometrically distinct minimal
geodesics. Then a similar equality discussion implies that precisely
$E(B)$ minimal geodesics are determined by the edges and that
precisely $V(B)/2$ minimal geodesics are determined by the vertices.
The types of the minimal geodesics are then determined by
Proposition~\ref{prop:twomingeod.stronger}, similarly to the proof of
Theorem~\ref{thm.equality.gen} in Subsection~\ref{subsec:proofEminequal}.

\section{Examples}\label{sec:examples}

In the following section, we discuss existence of minimal geodesics on surfaces, as well as examples of Hedlund metrics on manifolds of dimension at least~$3$. 
We also discuss further examples and discuss more related literature.

\subsection{Surfaces}

We discuss minimal geodesics on closed connected surfaces~$M$.
On the one hand, the $2$-dimensional case is special as
topological conditions often imply intersections of geodesics and the
results are in some sense different from higher dimensions, see
Remark~\ref{rem:2dim.untypical}.
On the other hand, this situation
nicely illustrates possible behaviours of minimal
geodesis and their asymptotes.

By passing to connected components and to the orientation covering, it
is sufficient to consider connected and orientable
surfaces~$M$.
Obviously, minimal geodesics exist if and only if the genus of~$M$ is at least~$1$.

Let $M$ be such a surface and let $\gamma$ be a simple closed
geodesic on~$M$.
Recall that \emph{simple} means by definition that~$\gamma$ defines an embedding of a circle~$S^1$ into~$M$.
We also assume that~$\gamma$ minimizes length within its free homotopy class. 
Such minimizing simple closed geodesics~$\gamma$ always exist, in particular, in every primitive class in~$H_1(M;\Z)$~\cite{meeks.patrusky::Pacific}.
An element $\neq 0$ in a free abelian group is called \emph{primitive}
if it is not a non-trivial multiple of another element. We declare~$0$ to be \emph{non-primitive}.

By a classical argument, carried out for the torus already by Hedlund
\cite[before Theorem~II]{hedlund:1932}, the geodesic~$\gamma$ is
minimal\ifwithappendixtwodim; see Proposition~\ref{prop:closed-geo-on-surfaces} in Appendix~\ref{app:twodim} for the
general case\fi.

\begin{example}[$2$-tori]\label{exa.torus.asymptotes}
  We consider the $2$-dimensional torus $T^2=\R^2/\Z^2$, equipped with
  an arbitrary (smooth) Riemannian metric~$g$. This example has many
  features essentially different from the higher dimensional case.
  Minimal geodesics on $T^2$ were already discussed in many aspects in
  articles by Hedlund~\cite{hedlund:1932} and Morse~\cite{morse:1924},
  where minimal geodesics are called ``unending
  geodesics of class~A''. Some considerations even seem to go back to
  Bliss~\cite{bliss:1902} who considered the special case of tori embedded in Euclidean~$\R^3$
  by rotating a circle in the $(x>0,z\in \R)$-half plane around
  the $z$-axis. We refer to Bangert's articles~\cite{bangert_DynamicReported1988,bangert:90}
  for a presentation in modern language.
  
  We will again use the choices from Example~\ref{example:torus.basics}; 
  in particular, the Jacobi map corresponds to the identity under
  the canonical isomorphism~$\R^2\cong H_1(T^2;\R)$.
  We identify $\pi_1(T^2)\cong H_1(T^2;\Z)\cong
  \HZR[T^2]\cong \Z^2$, and elements thereof are written as pairs~$(k,\ell)$ of
  integers.  A pair is non-primitive if and only if it can
  be written as $(rk,r\ell)$, with $k,\ell,r\in \Z$, $r\geq 2$,
  and~$(k,\ell)\neq 0$.

  By an intersection argument, 
  \ifwithappendixtwodim see Lemma ~\ref{lemma:closed-non-prim-geod} 
  in Appendix~\ref{app:twodim} for details, \fi%
  every closed curve representing a
  non-primitive non-zero element has to have a self-intersection, which 
  implies that every closed geodesic of minimal length representing
  such a multiple~$(rk,r\ell)\in \HZR[T^2]$ is in fact an $r$-fold covering of
  a closed geodesic geodesic representing~$(k,\ell)$. This implies
  that $N\bigl((rk,r\ell)\bigr)=r \cdot N\bigl((k,\ell)\bigr)$ and so 
  $N\bigl((k,\ell)\bigr)=\stabnorm{(k,\ell)}$.

  If $(k',\ell')\notin \mathbb{Q}\cdot(k,\ell)$, then closed curves
  $\gamma$ and~$\gamma'$ representing the homology classes
  $(k,\ell)$ and~$(k',\ell')$, respectively, have to intersect. This implies 
  \[ N\bigl((k+k',\ell+\ell')\bigr)
  <N\bigl((k,\ell) \bigr)+ N\bigl((k',\ell') \bigr)
  \,.
  \] 
  We conclude that the stable norm unit sphere~$\partial B$ does not
  contain any straight lines, \ie the stable norm unit ball~$B$ is
  strictly convex.

  By definition, $\gamma\colon\R\to T^2$ is a minimal geodesic if a
  lift~$\tilde\gamma\colon\R\to \R^2$ is a line, \ie a globally
  minimizing geodesic, parametrized by arclength.  It follows from
  Theorem~\ref{thm.bangert.zwei}, or alternatively from Hedlund's work,
  that for every minimal geodesic~$\gamma$ there is a
  unique~$v_\gamma\in H_1(T^2;\R)\cong \R^2$ with $\stabnorm{v_\gamma}=1$,
  such that
  \[ A^+(\gamma)
  = A^-(\gamma)
  = \Atot(\gamma)
  =\bigl\{\lim_{t-s\to\infty}R\bigl(\gamma\stellle{[s,t]}\bigr)\bigr\}
  =\{v_\gamma\}\,.
  \] 
  Furthermore, $\tilde\gamma$ remains within bounded distance from the
  line $t\mapsto tv_\gamma$. Conversely, for every~$v\in H_1(T^2;\R)$
  with~$\stabnorm{v}=1$, there is a minimal geodesic~$\gamma$
  with~$v_\gamma=v$~\cite[Theorem~4.4]{bangert:90}; however, it may
  happen that $v_\gamma\=v_{\gamma'}$ for geometrically distinct
  minimal geodesics $\gamma$ and~$\gamma'$.

  We briefly indicate how \emph{rotation} vectors are related to
  actual rotations.
  
  There is a closed curve~$\tau$ that has minimal length among all
  closed curves representing non-zero elements in~$H_1(T^2;\Z)$. One
  can show that this curve is a simple closed geodesic, and by the
  arguments above, it is also a minimal geodesic. Further, $[\tau]\in
  H_1(T^2;\Z)$ is primitive; after pulling back by a diffeomorphism
  we may assume~$[\tau]=(0,1)$. If $\gamma$ is a minimal geodesic 
  with~$v_\gamma=(v_1,v_2)\notin \R\cdot (0,1)$, then $v_\gamma$ and $\tau$
  have infinitely many intersections, they all have the same
  orientation, and they are located at some~$v_\gamma(t_i)$ for~$i\in
  \Z$ with~$t_i<t_{i+1}$. The map $v_\gamma(t_i)\mapsto v_\gamma(t_{i+1})$
  defines a bijection $\phi\colon A\to A$ for some subset $A$ of the
  circle $S\ceq\image(\tau)$, and $\phi$ extends to a
  circle homeomorphism $\phi\colon S\to S^1$ that lifts canonically to
  a periodic orientation-preserving homeomorphism $\Phi\colon \R\to
  \R$ \cite{bangert_DynamicReported1988}.  Its rotation number
  \[ \rho(\Phi)\ceq \lim_{j\to\infty}\frac1j \cdot \bigl(\Phi^j(x)-x\bigr)
  \] 
  exists and does not depend on $x\in\R$~\cite[Sec.~2.2]{navas:2011}.
  One can show through appropriate choices that~$\rho(\Phi)=v_2/v_1$.
  Thus the rotation vector~$v_\gamma$ determines the rotation number~$\rho(\Phi)$ and
  conversely the rotation vector~$v_\gamma$, which is on the stable
  norm unit circle, is determined up to sign by~$\rho(\Phi)$.
\end{example}

Recently, also the stable norm on slit $2$-tori has been
studied~\cite{montealegre}.

\begin{example}\label{ex.higher-genus}
  Let $M$ be an orientable closed connected surface of genus~$k\geq
  2$ with an arbitrary Riemannian metric.
  Let~$\gamma$ be a closed
  geodesic representing a non-trivial $\pi_1$-conjugacy class, see
  Section~\ref{subsec:based-and-free-homotopies}, contained in the
  commutator subgroup.  Such geodesics exist, \eg if $\langle
  a_1,\dots,a_k, b_1, \dots, b_k \mid [a_1,b_1] \cdot \dots \cdot
  [a_k,b_k] \rangle$ is a presentation of~$\pi_1(M)$ arising from
  realizing~$M$ in the standard way as a quotient of a $4k$-gon, then
  a curve minimizing~$[a_1,b_1] \cdot \dots \cdot [a_\ell,b_\ell]$ 
  with~$\ell \in \{1,\dots, k-1\}$ in the associated free homotopy class 
  has this property. 
  It is not so straigthforward to decide whether~$\gamma$ is a
  simple closed curve, but one can easily see that there are $\pi_1$-conjugacy classes and Riemannian metrics~$g$ such that $\gamma$ is a simple closed curve.
  Let us mention here -- we will not use this fact in this paper -- that the simpleness of~$\gamma$ is independent of the choice of Riemannian metric~$g$, thus only depends on a suitably chosen $\pi_1$-conjugacy class \cite[Theorem~2.1]{freedman-hass-scott:82}.
  From the arguments at
  the beginning of this subsection, it follows that
  simple closed geodesics~$\gamma$
  are minimal geodesics. As $\gamma$ represents~$0\in
  H_1(M;\Z)$, the set
  \[\bigl\{\,h(\gamma\stellle{[s,t]})
  \bigm| s,t\in\R,\;\, s<t \,\bigr\}
  \]
  is bounded in~$H_1(M;\R)$ and thus
  $\Apm(\gamma)=\Atot(\gamma)=\{0\}$. In particular, both sides of
  Equation~\eqref{eq.relation.asymptote.rotation} vanish.  The
  associated periodically parametrized geodesic~$\R\to M$ is a minimal
  geodesic, as explained above. Such geodesics are obtained in the
  work of Morse~\cite{morse:1924},
  Klingenberg~\cite{klingenberg:1971}, and
  Gromov~\cite[Section~7.5]{gromov_hyperbolic_groups:87}.  However,
  the minimal geodesics detected in Bangert's existence
  theorem~\cite{bangert:90} and in the present article
  satisfy~$\Apm(\gamma)\subset \{x\in H_1(M\;\R)\mid
  \stabnorm{x}=1\}$, see Proposition~\ref{prop.hom.min.sphere}. Thus,
  the above geodesics will not arise this way.
\end{example}  

\begin{remark}\label{rem:2dim.untypical}
  The situation in dimension at least $3$ is very different from the
  results sketched above. Topological conditions do no longer force
  intersections of curves, and this allows the construction of Hedlund
  examples, see Subsection~\ref{subsec:Hedlund.examples}. One of the
  consequences is that in dimension $2$ the stable norm unit ball is
  never a polytope, see the discussion in an article by
  Massart~\cite[after
    Proposition~6]{massart_GAFA:97}\,\cite{massart_CRAS:97}.  In
  dimension at least~$3$, however, Hedlund examples always exist and
  for such examples the stable norm unit ball is a polytope.
\end{remark}

\subsection{Hedlund examples}\label{subsec:Hedlund.examples}

We now turn to the situation in higher dimensions.  We briefly recall
the construction of Hedlund examples on closed connected manifolds~$M$
of dimension~$n\geq 3$.  The classical constructions of such metrics
only treated the case of the $n$-dimensional torus with~$n\geq 3$, the
first -- and name-giving -- reference being Hedlund's
work~\cite[Section~9]{hedlund:1932}. The construction on the torus was
also discussed in modified versions by
Bangert~\cite[Section~5]{bangert:90} and the first
author~\cite{ammann:97}.  To our knowledge, Hedlund metrics on
arbitrary closed connected manifolds of dimension~$\geq 3$ were
constructed and discussed first in~\cite[IV.1.a and
  IV.1.b]{ammann:diploma} under the name ``Autobahnmetriken'', also
briefly sketched in~\cite[Sec.~6]{ammann:97} as ``Express-way
metrics''. Later they were discussed
further~\cite{babenkobalacheff,jotz}.

In order to construct a Hedlund metric on $M$, we take closed curves
$\sigma_1,\ldots,\sigma_\ell$ based in $x_0\in M$, whose based
homotopy classes $[\sigma_i]$ generate $\pi_1(M,x_0)$. One can easily
achieve that these curves are closed regular curves (\ie immersions of
$S^1$), and because $\dim M=n\geq 3$ we may assume that the
curves~$\sigma_i$ are embeddings. Now we perturb the closed
curves~$\sigma_i\colon S^1\to M$ in the class of free (\ie unbased) closed
curves to curves, denoted as~$\tau_i\colon S^1\to M$, which are
embeddings of circles with pairwise disjoint images. One can easily
construct a Riemannian metric~$g_1$ on~$M$ such that each~$\tau_i$ is
a closed geodesic of length~$1$ 
and such that the distance from $x_0$
to~$\image(\tau_i)$ is ``well-controlled and sufficiently small''.

Now let $L_1, \dots, L_\ell \in \R_{>0}$ be given. We choose
a smooth function~$f\colon M\to \R$ such that
\begin{itemize}
\item $f\stelle{\image(\tau_i)} = L_i$, and $f>L_i$ in a tubular
  neighborhood of~$\image(\tau_i)$, away from $\image(\tau_i)$, 
\item $f$ grows ``rapidly'' on a tubular neighborhood of the
  $\image(\tau_i)$'s, and
\item $f$ is ``very large'' suffiently far away from~$\bigcup_{i=1}^\ell
  \image(\tau_i)$.
\item
  It is also helpful to keep the value of $f$ controlled (and of
  medium size) close to chosen paths $\alpha_i$ from $x_0$ to the set
  $\image(\tau_i)$.
\end{itemize}
One then defines the \emph{Hedlund metric} $\gHed\ceq f^2g_1$. We
refer to the literature~\cite[IV.1.a and IV.1.b]{ammann:diploma} for a precise
desciption of this metric for~$\epsilon\ceq L_1=L_2=\ldots=L_\ell$, which
extends to the generality sketched above.

The curves $\tau_i$ are then geodesics for $\gHed$. One can show
that for suitable choices of~$L_i$ the curves~$\tau_i$ are minimal
geodesics. Furthermore, every minimal geodesic~$\gamma$ stays within a
small tubular neighborhood of~$\bigcup_{i=1}^\ell \image(\tau_i)$ ``most of the
time''. The precise meaning of ``most of the time'' is a bit
subtle. In the case that $\pi_1(M)$ is virtually nilpotent with the
bounded minimal generation property~\cite[Def.~7.1]{ammann:97}\cite[IV.~Def~1.19]{ammann:diploma} --
which includes the case that $\pi_1(M)$ is abelian -- ``most of the time''
means up to a bounded number of short intervals during which the
geodesic~$\gamma$ jumps from a neighborhood of some~$\image(\tau_i)$
to the neighborhood of some~$\image(\tau_j)$ with~$j\neq i$~\cite[IV.1.
  items c, e, f,~g]{ammann:diploma}. The geodesic~$\gamma$
then gives rise to a ``symbol sequence'', which is a function
\[ \Z\to \{\tau_1,\ldots,\tau_\ell,\ol\tau_1,\ldots,\ol\tau_\ell\}
\] 
unique up to shift in~$\Z$; it describes the curves $\tau_i$ followed
by $\gamma$ one after the other. See items
\itemref{exam.hedlund.b2.1}--\itemref{exam.hedlund.b2.6} in the
following example for a list of such sequences for the
case~$\pi_1(M)\cong \Z^2$.

Note that $h(\tau_1), \dots, h(\tau_\ell)$ span~$H_1(M;\R)$ as a vector
space.  One can now show that for all~$v\in H_1(M;\R)$ we have 
\[ \stabnorm{v}=\min\biggl\{\,\sum_{i=1}^\ell |a_i|\cdot L_i \biggm|
a_1, \dots, a_\ell \in \R,\ 
v
= \sum_{i=1}^\ell  a_i \cdot h(\tau_i) \,\biggr\}\,.
\] 
If the geodesic is even $\Z$\=/ho\-mo\-lo\-gi\-cal\-ly minimal, then
the property to have only a ``bounded number of jumps'' holds for arbitrary fundamental groups. 
We consider this in more detail in some special cases.

\begin{example}[Hedlund examples with~$b=2$ and $B$ a parallelogram]\label{exam.hedlund.b2}
  \newcommand\generatorname{\tau} Let $M$ be a closed connected
  manifold of dimension at least $3$ with $b=\dim H_1(M;\R)=2$.  
  Let~$\generatorname_1$ and~$\generatorname_2$ be two simple closed
  curves, representing a $\Z$-basis of~$\HZR \cong \Z^2$. We choose a
  Hedlund metric~$\gHed$ as in~\cite{ammann:diploma} or as above
  with~$L_1=L_2=1$. Then $H_1(M;\R)$ with the stable norm is isometric
  to~$\R^2$ with the $L^1$-norm with respect to the basis consisting
  of~$x_\R\ceq [\generatorname_1]$ and $y_\R\ceq [\generatorname_2]$.
  We write~$x_\Z\ceq [\generatorname_1],y_\Z\ceq [\generatorname_2]$
  for the corresponding integral classes classes in~$H_1(M;\Z)$. With
  our conventions we have ~$-x_\Z = [\overline\generatorname_1]$
  and $- y_\Z = [\overline\generatorname_2]$.  
  As discussed above, every minimal geodesic~$\gamma$ on~$M$ with respect to the Hedlund
  metric~$\gHed$ is described by a suitable word (of two-sided
  infinite length) over~$\{x_\Z, y_\Z,x_\Z^{-1}, y_\Z^{-1}\}$. One can
  show that the following sequences give a full list of all symbol sequences of 
  $\R$\=/ho\-mo\-lo\-gi\-cal\-ly minimal geodesics (modulo geometric
  equivalence). In this list, a negative number~$k$ has to be read as
  taking $|k|$~times the inverse:
  \begin{enumarab}
  \item\label{exam.hedlund.b2.1} for $k\in \Z$: $\cdots x_\Z \cdots
    x_\Z\underbrace{y_\Z\cdots y_\Z}_{k \text{ times}}x_\Z \cdots x_\Z
    \cdots$
  \item\label{exam.hedlund.b2.2} for $k\in \Z$: $\cdots y_\Z \cdots
    y_\Z\underbrace{x_\Z\cdots x_\Z}_{k \text{ times}}y_\Z \cdots y_\Z
    \cdots$
  \item\label{exam.hedlund.b2.3}  $\cdots x_\Z \cdots  x_\Z y_\Z\cdots y_\Z \cdots$
  \item $\cdots y_\Z \cdots y_\Z x_\Z\cdots x_\Z \cdots$
  \item  $\cdots x_\Z \cdots  x_\Z \bar y_\Z\cdots \bar y_\Z \cdots$
  \item\label{exam.hedlund.b2.6}  $\cdots \bar y_\Z \cdots  \bar y_\Z x_\Z\cdots x_\Z \cdots$
  \end{enumarab}
  Note that for each such symbol class there is an $\R$\=/ho\-mo\-lo\-gi\-cal\-ly minimal geodesic. Furthermore, one can show -- but this is supposedly not yet worked out in the literature -- that for suitably chosen Hedlund metrics this minimal geodesic is uniquely determined (up to geometric equivalence) by its symbol sequence.
  If $\pi_1(M)\cong \Z^2$, then every minimal geodesic is
  $\R$\=/ho\-mo\-lo\-gi\-cal\-ly minimal and thus this is the list of
  all minimal geodesics (up to geometric equivalence).

  Minimal geodesics of type~\ref{exam.hedlund.b2.1} and~\ref{exam.hedlund.b2.2} are homologically homoclinic
  and homologically exposed. The case~$k=0$ for~\ref{exam.hedlund.b2.1} resp.~\ref{exam.hedlund.b2.2}
  describes $\generatorname_1$ and~$\generatorname_2$. 
  Bangert's existence results detect one minimal geodesic of type~\ref{exam.hedlund.b2.1} and one of type~\ref{exam.hedlund.b2.2}.

  The four minimal geodesics~\ref{exam.hedlund.b2.3}--\ref{exam.hedlund.b2.6} are homologically heteroclinic
  and homologically exposed.
  Note that the stable norm is the $\ell^1$-norm with respect to this
  generating set. Thus, the stable norm unit ball is a polytope, more
  precisely a parallelogram.  In the case that~$b=2$ and $B$ is a polytope, our
  existence theorem, Theorem~\ref{thm:main.2}~\itemref{thm:main.alt.2}
  predicts the existence of at least four homologically non-homoclinic
  and $\R$\=/ho\-mo\-lo\-gi\-cal\-ly minimal geodesics. In the example
  above, these are the minimal geodesics (3)--(6).  Thus, these
  examples show that our existence result provides an optimal lower
  bound -- the number $4$ -- for homologically non-homoclinic and
  $\R$\=/ho\-mo\-lo\-gi\-cal\-ly minimal geodesics.
\end{example}

The next example is an extension to this, in the sense that the case
$k=2$ in the following example reduces to the previous one.

\begin{example}[Hedlund examples with~$b=2$ and $B$ a $2k$-gon]\label{exam.hedlund.b2.extended}
  \newcommand\generatorname{\tau} Let again $b=2$ and $\dim M\geq
  3$. Let $\generatorname_1,\ldots,\generatorname_k\in H_1(M;\Z)$ be
  given with images~$\generatorname_1^\R,\ldots,\generatorname_k^\R$
  in~$H_1(M;\R)$. We further assume that the convex hull
  of~$\{\generatorname_1^\R,\ldots,\generatorname_k^\R,-\generatorname_1^\R,\ldots,-\generatorname_k^\R\}$
  is a symmetric set~$B$ whose boundary is a convex $2k$-gon, and thus
  $E(B)=V(B)=2k$. Then the Hedlund metric based on these
  curves~$\tau_i$ with~$L_i=1$ has $B$ as stable norm unit ball.  In
  this case, the last phrase in Theorem~\ref{thm:main.2} predicts the
  existence of
  \begin{itemize}
    \item $2k$ homologically non-homoclinic and
      $\R$\=/ho\-mo\-lo\-gi\-cal\-ly minimal geodesics, and 
    \item $k$ homologically homoclinic and
      $\R$\=/ho\-mo\-lo\-gi\-cal\-ly minimal geodesics.
    \end{itemize} 
 This estimate is sharp: for the Hedlund metric in this example there are precisely
 $3k$~geodesics that are $\R$\=/ho\-mo\-lo\-gi\-cal\-ly minimal; and among them
 there are $k$~homologically homoclinic ones, and $2k$~homologically
 heteroclinic (and thus homologically non-homoclinic) ones.
\end{example}

\begin{example}[homologically homoclinic or heteroclinic, but not homologically semi\-/ex\-posed]
  We describe the construction of a Hedlund metric with a finite number of 
  $\R$\=/ho\-mo\-lo\-gi\-cal\-ly minimal geodesics that are not
  homologically semi\-/ex\-posed, but that are either homologically
  heteroclinic or homologically homoclinic. For simplicity of 
  presentation, we specialize to the case of the $3$-torus~$M=T^3=\R^3/\Z^3$,
  but similar constructions are also possible for
  every closed connected manifold~$M$ with $n=\dim M\geq 3$
  and~$b=\dim_\R H_1(M;\R)\geq 2$.
  
  Again as in Example~\ref{example:torus.basics} we assume that the
  Jacobi map~$J$ corresponds to the identity from~$\ucov
  {T^3}=\homZcovtorus3=\homRcovtorus3\cong \R^3$ to~$H_1(T^3;\R)\cong
  \R^3$. In particular, the notion of ``minimal geodesics'' and of 
  ``$\R$- or $\Z$\=/ho\-mo\-lo\-gi\-cal\-ly minimal geodesics''
  coincide. We make the choices such that $H_1(T^3;\Z)\cong\HZR[T^3]$ is
  the lattice generated by the canonical basis~$e_1,e_2,e_3$
  of~$\R^3$. For $i\in\{1,2,3\}$, we define $j(1)=2$, $j(2)=3$, $j(3)=1$, and
  let
  \[t \mapsto
  \tau_i(t)\ceq \Bigl[t\cdot e_i+\frac12 \cdot e_{j(i)}\Bigr]\in T^3 
  \qand
  L_i\ceq 1\,.
  \]
  Now we consider vectors $v_4=\sum_{k=1}^3v_4^ke_k$ and
  $v_5=\sum_{k=1}^3v_5^ke_k$ with $v_4^1,v_5^1\in\Z$,
  $v_4^2,v_5^2\in \Z\setminus\{0\}$, and $v_4^3, v_5^3\in \Z_{>0}$; moreover,
  we set~$L_i\ceq
  \left|v_i^1\right|+ \left|v_i^2\right| + \left|v_i^3\right|$.  We
  assume that $v_i^1$, $v_i^2$, and $v_i^3$ have no common divisor
  $>1$, \ie $v_4$ and $v_5$ are primitive in $\Z^3$. Furthermore
  assume $v_4\neq v_5$. We define for $i=4,5$
  \[t \mapsto \tau_i(t)\ceq \Bigl[\frac{t}{L_i}\cdot v_i + w_i\Bigr]
  \in T^3\,,
  \]
  where $w_4$ and $w_5$ are chosen such that $\tau_i$ and~$\tau_j$
  have disjoint images for~$i, j \in \{1,\dots, 5\}$ with~$i < j$.
  All curves $\tau_1, \dots, \tau_5$
  are closed curves of periodicity~$L_i$. We define~$v_i\ceq e_i$ for~$i\inupto{3}$.

  For these $\tau_i$ and $L_i$ and some basepoint $x_0\in T^3\setminus \bigcup_{i=1}^5 \image(\tau_i)$, we
  construct a Hedlund metric~$\gHed$ as described above; let $\dHed$
  be the associated distance function and let $D_{i,j}\ceq
  \min\left\{\dHed\bigl(\tau_i(t),\tau_j(s)\bigr)\mid t,s\in \R\right\}>0$ for $i\neq j$.
  Let again $\alpha_i$ be a suitable path from~$x_0$ to~$\tau_i$ as in the construction of the function~$f$ at the beginning of this subsection.
  We assume that $f$ is much smaller  along~$\alpha_4 $ and $\alpha_5$ than along~$\alpha_1, \alpha_2,
  \alpha_3$; and we also assume that outside tubular neighborboods of the $\tau_i$ and $\alpha_i$ the function $f$ is much larger.
  In particular, we may assume $3 D_{4,5}<D_{i,j}$ for all~$i\neq j$ with~$\{i,j\}\neq\{4,5\}$.

  Then the stable norm on~$H_1(T^3;\R)\cong \R^3$ is the
  $\ell^1$-norm with respect to~$(e_1,e_e,e_3)$, and
  $\stabnorm{v_i}=L_i$ for~$i\inupto{5}$.
  
  The closed curves $\tau_1, \dots, \tau_5$ are minimal geodesics
  for~$\gHed$.
  We have $A^+(\tau_i)=A^-(\tau_i)=\{(1/L_i)v_i\}$.
  Thus $\tau_1, \dots, \tau_5$ are homologically homoclinic, and they are
  the only minimal geodesics of this type.

  For $i\inupto{3}$, the unique initial and terminal asymptote $v_i=e_i$ of $\tau_i$ is an
  exposed point, thus $\tau_1, \tau_2, \tau_3$ are homologically ex\-posed;
  furthermore, these minimal geodeiscs can be detected by Bangert's
  method. However, the unique asymptote of $\tau_4$ and~$\tau_5$ is
  not exposed, and these $\tau_4$, $\tau_5$ are not homologically
  (semi\=/)\allowbreak{}ex\-posed. They are minimal geodesics and they are homologically
  heteroclinic, but there are no homologically diverging minimal
  geodesics. For example, there is a minimal geodesic~$\gamma$ with
  $A^-(\gamma)=\{(1/L_4)v_4\}$
  and~$A^+(\gamma)=\{(1/L_5)v_5\}$. Geodesics of such a type cannot 
  be detected by Bangert's method. If $(1/L_4)v_4$ or $(1/L_5)v_5$ is
  not on an edge of the stable norm unit ball~$B$, then this geodesic
  is also not detected by our method. 
  The number of $\R$-homologically minimal geodesics is finite 
  as previously shown by the first author~\cite[Korollar~1.16 in IV.1.e]{ammann:diploma}.
  
  Our method provides the
  existence of homologically heteroclinic minimal geodesics with
  $A^-(\gamma)=\{\pm e_i\}$ and $A^+(\gamma)=\{\pm e_j\}$
  with~$i,j\inupto{3}$ and $i\neq j$, where the two choices of sign are
  independent. This provides $12$~geometrically distinct minimal
  geodesics.

  Now let us consider the case that $(1/L_4)v_4$ lies on an edge of~$B$, say
  in the relative interior of the edge $[e_2,e_3]$, \eg
  $v_4=e_2+e_3$, $L_4=2$. We also assume~$(1/L_5)v_5\notin[e_2,e_3]$
  Then it depends on some finer information about $f$ and some choices
  in our proof whether our method, applied to the edge $[e_2,e_3]$
  will provide one of the following non-exclusive items (the number
  corresponds to the cases in
  Proposition~\ref{prop:twomingeod.stronger}) 
  \begin{itemize}
  \item[\itemref{cor:twomingeod.stat.iii.a}]
    minimal geodesics $\gamma_1$ and $\gamma_2$ with
    \[ A^-(\gamma_1)=A^+(\gamma_2)=\{e_2\},\quad A^+(\gamma_1)= A^-(\gamma_2)=\{e_3\}\,.\]
  \item[\itemref{cor:twomingeod.stat.iii.b}]
    minimal geodesics  $\gamma_1, \dots, \gamma_4$ with
    \begin{align*}
      &A^-(\gamma_1)= A^+(\gamma_2)=\{e_2\},\quad  A^-(\gamma_3)=A^+(\gamma_1)=\{e_3\},\\
      & A^-(\gamma_4)= A^+(\gamma_4)=A^-(\gamma_2)= A^+(\gamma_3)=\Bigl\{\frac1{L_4}v_4\Bigr\}\,.
    \end{align*}
    (Proposition~\ref{prop:twomingeod.stronger} only provides
    $\gamma_1$ and two out of the three remaining ones.)
  \item[\itemref{cor:twomingeod.stat.iii.c}]
    minimal geodesics $\gamma_1, \dots, \gamma_5$ with
    \begin{align*}
      &A^-(\gamma_1)= A^+(\gamma_4)=\{e_2\},\quad A^-(\gamma_3)= A^+(\gamma_2)=\{e_3\},\\
      &A^+(\gamma_5)= A^-(\gamma_5)= A^-(\gamma_2)=  A^-(\gamma_4)=A^+(\gamma_1)=A^+(\gamma_3)=\Bigl\{\frac1{L_4}v_4\Bigr\}\,.
    \end{align*}
    (Proposition~\ref{prop:twomingeod.stronger} only provides the four semi-exposed 
    geodesics $\gamma_1,\ldots,\gamma_4$.)
\end{itemize}
\end{example}

\subsection{Some (expected) examples with minimal geodesics of other types}

Examples of Riemannian manifolds with homologically diverging minimal
geodesics with abelian or nilpotent fundamental group are difficult to
construct, see Remark~\ref{rem:typeconj}.  However, if the fundamental
group is Gromov-hyperbolic, their existence follows from known facts,
see Example~\ref{ex.surf.times.stwo}.

Let us first formulate a candidate for a counterexample in
Remark~\ref{rem:typeconj}, which would imply that the bound in
Proposition~\ref{prop:twomingeod.stronger}~\itemref{cor:twomingeod.stat.iii.c}
cannot be improved to~$m=5$. Then we discuss the Gromov-hyperbolic case.

\begin{remark}\label{rem:typeconj}
  We expect that examples with the following properties exist,
  although to our knowledge the construction of such examples
  has not yet been worked out.  These examples consist of a closed connected
  Riemannian manifold~$(M,g)$, an exposed edge~$[x,y]$ of the stable norm unit
  ball, and points~$z_1 \in (x,y)$, $z_2 \in (z_1,y)$ such that there
  are \emph{precisely} four geometrically distinct minimal geodesics
  $\gamma_1, \dots, \gamma_4 \colon \R\to M$ with~$\Atot(\gamma_i)
  \cap (x,y) \neq \emptyset$ such that 
  \begin{align*}
    A^-(\gamma_1)&=\,\{x\},\quad
    A^+(\gamma_1)\= [z_2,y],\quad
    A^-(\gamma_2)\=[x,z_1],\quad
    A^+(\gamma_2)\=\{y\}
      \\
    A^+(\gamma_3)&=\,\{x\},\quad
    A^-(\gamma_3)\= [z_2,y],\quad
    A^+(\gamma_4)\=[x,z_1],\quad
    A^-(\gamma_4)\=\{y\}\,.
 \end{align*}
  Hence, these four geodesics would be homologically semi\-/ex\-posed amd
  homologically semi\-/con\-ver\-ging.
\end{remark}
  
\begin{example}\label{ex.surf.times.stwo}
  Let $N$ be an oriented closed connected surface of genus~$k \geq 2$
  and let $M\ceq N\times S^2$. We choose a Hedlund metric on~$M$, with
  respect to standard generators~$a_1, \dots, a_k, b_1, \dots, b_k$
  coming from writing~$M$ as a quotient of a $4k$-gon in the standard
  way. This gives rise to disjoint simple closed curves~$\tau_1, \dots, \tau_{2k}$,
  for $L_i\ceq 1$; we construct the associated Hedlund metric.
  Then $(H_1(M;\R),\stabnorm\argu)$ is isometric to~$\R^{2k}$ with the
  $\ell^1$-norm, and all~$\tau_i$ are minimal geodesics. In every exposed edge
  of the stable norm unit ball, we find minimal geodesics~$\gamma_i$,
  $i\inupto{4}$ with all the properties in Remark~\ref{rem:typeconj},
  but without the word ``precisely''. In the present example,  infinitely
  many geometrically distinct minimal geodesics~$\gamma$
  with~$\Atot(\gamma) \subset [x,y]$ and~$\Atot(\gamma) \cap (x,y)
  \neq \emptyset$ do exist. Furthermore, we also can show the
  existence of homologically diverging, homologically
  semi\-/con\-ver\-ging, homologicyally semi\-/ex\-posed geodesics,
  and many more types.
\end{example}  

\subsection{Comparison to results by Bolotin and Rabinowitz}\label{subsec:compar_BR}

For the case that $M=T^n$ is an $n$-dimensional torus, Bolotin and Rabinowitz~\cite{bolotin.rabinowitz:99} shows the existence of homoclinic and heteroclinic minimal geodesics in the sense of Poincar\'e, see Subsection~\ref{subsec:homoclinic}. These authors fix a non-trivial class~$v\in  H_1(T^n;\Z)\cong\pi_1(T_n)\cong \Z^n$, and two conditions (S1) and (S2) are assumed -- see below for details.

Let $\maC_v$ be the set of all $\Cinftypw$-loops representing $v$. For simplicity we identify loops in  $\maC_v$ that are reparametrizations of each other. 
According to Lemma~\ref{lemma.length.minimizing.loop} the length functional $\maL\colon\maC_v\to[0,\infty)$ attains its infimum, and each minimizer $\tau\in \maC_v$
is a closed geodesic (if parametrized proportionally to arclength).
The results in the article~\cite{bolotin.rabinowitz:99} assume that at least one minimizer -- and thus all minimizers -- are minimal geodesics.
Let us rescale the metric $g$ on $T^n$ such that $\stabnorm{v}=1$, \ie for every minimizer $\tau$ we have $\maL(\tau)=\groupnorm{[\tau]}=\stabnorm{v}=1$.

Assumption (S2)~\cite{bolotin.rabinowitz:99} claims that the number~$p$ of minimizers in~$\maC_v$ is finite. We omit the precise definition of~(S1), but let us mention that it follows from some mild conditions, e.g. if $T^n$ has a symmetry that reflects ``orthogonally to~$v$''. The condition~(S1) implies that length minimizers in~$\maC_v$ are (closed) minimal geodesics.  

The main results  of Bolotin and Rabinowitz~\cite{bolotin.rabinowitz:99} yield homoclinic and heteroclinic minimal geodesics~$\gamma$. All these geodesics~$\gamma$ have~$\Atot(\gamma)=\{v\}$, thus they are homologically homoclinic in the sense of Subsection~\ref{subsec:homoclinic}, and $t\mapsto\gamma(t)$ is asymptotic to some~$\tau_+$ for~$t\to+\infty$ and  asymptotic to some~$\tau_-$ for~$t\to-\infty$, where $\tau_\pm$ are length minimizers in~$\maC_v$.
In the case $p=1$, \ie if there is a unique closed minimal geodesic~$\tau_1$ in~$\maC_v$, then
there exist at least~$n$ homoclinic minimal geodesics~\cite[Theorem 1.7]{bolotin.rabinowitz:99}, all of them are asymptotic to~$\tau_1$ for $t\to +\infty$ and~$t\to -\infty$. For $p>1$, there exist at least~$n+p-1$ heteroclinic minimal geodesic, with the above properties~\cite[Theorem 1.8]{bolotin.rabinowitz:99} 

All these minimal geodesics are homologically homoclinic,
thus they are of a different type than the ones detected by our methods.
This fact sheds some light on the minimal number~$\Nmin(T^3)$ of minimal geodesics on~$T^3$.

To be more precise, let $\maN(T^3,g)$ be the number of geometrically distinct minimal geodesics on~$(T^3,g)$, and define 
\begin{align*}
  \Nmin(T^3)
  \ceq \min\bigl\{
  & \maN(T^n,g)  \mid \text{$g$ is a Riemannian metric on~$T^3$}\bigr\}.
\end{align*}
We stated in Subsection~\ref{subsec:introsharp} that we expect~$15\leq \Nmin(T^3)\leq 27$, and it would be interesting to determine the precise value of~$\Nmin(T^3)$.

Suppose that $g$ is a Hedlund metric on~$T^n$ with finitely many minimal geodesics. Let us assume that $g$ also satisfies (S1) and~(S2)~\cite[Theorem 1.8]{bolotin.rabinowitz:99} -- all explicit constructions of Hedlund metrics known to the authors do so.

Then the results by Bangert~\cite{bangert:90} and Bolotin--Rabinowitz~\cite{bolotin.rabinowitz:99} provide at least~$9$ homologically homoclinic minimal geodesics and our method yields at least~$12$ homologically non-homoclinic minimal geodesics. Thus one has $\maN(T^3,g)\geq 21$ for all Hedlund metrics~$g$ satisfying the assumptions of Bolotin and Rabinowitz~\cite{bolotin.rabinowitz:99}.

As a conclusion we see that $\Nmin(T^3)$ is either essentially larger than~$15$, or one has to use much finer construction methods in order to obtain a metric~$g$ with~$\maN(T^3,g)$ close to~$15$. It thus would be desirable to find a refined existence theorem for homologically homoclinic minimal geodesics in the style of Bolotin--Rabinowitz~\cite{bolotin.rabinowitz:99} without assumptions (S1) and~(S2), strengthening Bangert's existence result~\cite[Theorem~4.4]{bangert:90}. Such an improved existence result would imply that $\Nmin(T^3)$ is essentially larger than~$15$.

In summary, we see that our lower bound on the number of homologically non-homoclinic minimal geodesics is optimal on~$T^3$, but it is still unknown whether the existing lower bound on the number of homologically homoclinic minimal geodesics is optimal.


\appendix
\section{Length and minimizing geodesics}\label{app.minimzing.geodesics}

We collect some facts about the length functional and
on distance minimizing geodesics that are well-known or
straightforward consequences of well-known facts.

\subsection{Length of curves}

Let $(X,d)$ be a metric space. For a continuous curve~$\gamma\colon I
\to X$ on a non-empty interval~$I \subset \R$, we define the
\emph{length} by
\[   \maL(\gamma)
\ceq \sup \Bigl\{ \sum_{i=1}^kd(\gamma(t_i),\gamma(t_{i-1})
          \Bigm| k \in \N_0,\ t_0, \dots,t_k\in I, \ t_0<t_1\cdots <t_k
          \Bigr\} \in [0,\infty]\,.
\]

If $(X,d)$ arises from a Riemannian manifold and if~$\gamma$ is
piecewise~$C^1$, then $\maL(\gamma)=\int_I \|\dot\gamma(t)\|\,\upd t\in
[0,\infty]$.

\subsection{Limiting geodesics}

We provide a proof of Lemma~\ref{lemma:limitinggeodesic}: 

\begin{proof}[{Proof of Lemma~\ref{lemma:limitinggeodesic}}]
  It follows from the theorem of Picard and Lindel\"of for suitable
  ordinary differential equations, in particular from the smooth
  dependence of the solution on the initial conditions, that
  $\sigma_i$ converges to~$\sigma_\infty$, in the sense of uniform
  convergence on all compact intervals.

  The induced distance function~$\hat d \colon \wihat M \times \wihat M \to
  \R_{\geq 0}$ is continuous; thus, as all curves~$\sigma_i$ are
  minimizing, this allows the following calculation for all~$s, t\in \R$
  with~$s < t$:
  \begin{eqnarray*}
    \hat d\bigl(\sigma_\infty(t),\sigma_\infty(s)\bigr) &=& \hat d\bigl(\lim_{i\to\infty}\sigma_i(t),\lim_{i\to\infty}\sigma_i(s)\bigr)\\
    &=& \lim_{i\to\infty}\underbrace{\hat d\bigl(\sigma_i(t),\sigma_i(s)\bigr)}_{=t-s}\,=\, t-s\,.
  \end{eqnarray*}
  Therefore, $\sigma_\infty$ is minimizing as well.
\end{proof}

\subsection{Existence of minimal geodesics and the fundamental group}

\begin{proposition}\label{prop.infinite.pi}
  Let $(M,g)$ be a closed connected Riemannian manifold. Then there
  exists a minimal geodesic on~$M$ if and only if $\pi_1(M)$ is
  infinite.
\end{proposition}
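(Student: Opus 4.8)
<br>

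The plan is to prove both implications of Proposition~\ref{prop.infinite.pi}.

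First I would handle the easy direction: if $\pi_1(M)$ is finite, then the universal covering $\ucov M$ is a closed (hence compact) Riemannian manifold, so its diameter is finite. A minimizing geodesic $\gamma\colon\R\to\ucov M$ would satisfy $\ucov d(\gamma(t),\gamma(0))=|t|\to\infty$, contradicting compactness. Hence no minimal geodesic exists on $M$.

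For the harder direction, suppose $\pi_1(M)$ is infinite. The plan is to produce a sequence of finite-length minimizing geodesics on $\ucov M$ whose domains exhaust $\R$ and whose velocities at a fixed point subconverge, then invoke Lemma~\ref{lemma:limitinggeodesic}. Concretely: fix a lift $\ucov x_0\in\ucov M$ of a basepoint $x_0\in M$. Since $\pi_1(M)$ acts cocompactly and freely on $\ucov M$ by isometries and $\pi_1(M)$ is infinite, the orbit of $\ucov x_0$ is an infinite discrete set, so there are elements $g_i\in\pi_1(M)$ with $\ucov d(\ucov x_0, g_i\cdot\ucov x_0)\to\infty$ as $i\to\infty$ (if distances in the orbit stayed bounded, the orbit would be finite by discreteness). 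By the Hopf--Rinow theorem on the complete manifold $\ucov M$, choose a minimizing geodesic segment $\tilde\sigma_i$ from $\ucov x_0$ to $g_i\cdot\ucov x_0$, parametrized by arclength on $[0,L_i]$ with $L_i = \ucov d(\ucov x_0, g_i\cdot\ucov x_0)\to\infty$. Now I would use the deck-transformation action to recenter: the projection $\sigma_i\colon[0,L_i]\to M$ is a geodesic from $x_0$ to $x_0$, and picking a point $\sigma_i(s_i)$ roughly in the middle (say $s_i\in[L_i/3,2L_i/3]$, chosen along a subsequence so that $s_i$ and $L_i-s_i$ both tend to $\infty$) and applying a deck transformation $h_i$ so that $h_i\cdot\tilde\sigma_i(s_i)$ lies in a fixed compact fundamental domain, we obtain minimizing geodesics $\tilde\tau_i \ceq h_i\circ\tilde\sigma_i$ on intervals $[a_i,b_i]=[-s_i,L_i-s_i]$ with $a_i\to-\infty$, $b_i\to+\infty$, and $\tilde\tau_i(0)$ in a fixed compact set. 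By compactness of the unit tangent bundle over this set, pass to a subsequence so that $\dot{\tilde\tau}_i(0)\to v_\infty\in T\ucov M$. Lemma~\ref{lemma:limitinggeodesic} (applied with $\wihat M = \ucov M$) then yields a minimizing geodesic $\tilde\sigma_\infty\colon\R\to\ucov M$, whose projection to $M$ is a minimal geodesic.

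The main obstacle, and the step requiring the most care, is the recentering argument: one must verify that after applying deck transformations the recentered geodesics have velocities living in a compact subset of $T\ucov M$ and that the reparametrized domains genuinely exhaust $\R$ — this needs the choice of $s_i$ to be made along a subsequence so that both $s_i\to\infty$ and $L_i-s_i\to\infty$, which is possible because $L_i\to\infty$. Everything else (Hopf--Rinow, discreteness and cocompactness of the deck action, the extraction of convergent subsequences) is routine, and the conclusion is then immediate from Lemma~\ref{lemma:limitinggeodesic}.
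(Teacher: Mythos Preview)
Your proposal is correct and follows essentially the same approach as the paper: both arguments produce long minimizing segments in~$\ucov M$ (the paper uses arbitrary far-apart points, you use orbit points of~$\ucov x_0$), recenter via deck transformations so that the midpoint velocities lie in a compact subset of the unit tangent bundle, extract a convergent subsequence, and apply Lemma~\ref{lemma:limitinggeodesic}. The paper's version is marginally more streamlined in that it parametrizes symmetrically on~$[-\ell_i,\ell_i]$ from the outset, avoiding your explicit choice of~$s_i$, but the content is the same.
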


\begin{proof}
  The universal covering~$\ucov M$ of~$M$ has infinite diameter~$\diam
  \ucov M$ if and only if $\pi_1(M)$ is infinite.
  If there exists a minimal geodesic on~$M$, then clearly, $\diam \ucov M=\infty$.

  Conversely, let $\diam \ucov M = \infty$.
  Then there are points $p_i,q_i\in \ucov M$ with $\ell_i\ceq \ucov d(p_i,q_i)/2\to \infty$.
  We choose a minimizing geodesic~$\sigma_i\colon [-\ell_i,\ell_i]\to \ucov M$ with~$\sigma_i(-\ell_i)=p_i$ and~$\sigma_i(\ell_i)=q_i$, parametrized by arclength. 
  As the deck transformation group acts cocompactly on $\ucov M$ and on the unit tangent bundle of $\ucov M$, we can assume -- without loss of generality -- 
  that there is a compact set~$K$ containing~$\dot\sigma_i(0)$ for all~$i\in \N$.
  This allows us to pass to a subsequence such that $v_\infty\ceq \lim_{i\to\infty}\dot\sigma_i(0)$ exists.
  We apply Lemma~\ref{lemma:limitinggeodesic} for $a_i\ceq
  -\ell_i$ and~$b_i\ceq \ell_i$. The resulting
  geodesic~$\sigma_\infty\colon\R\to \ucov M,\ t \mapsto \exp(tv_\infty)$ is globally distance
  minimizing, and thus -- by definition -- its projection to~$M$ is a
  minimal geodesic.
\end{proof}

\subsection{Length-minimizing curves}

Next, we prove that the infimum in Definition~\ref{def:N} is 
attained and that it does not matter whether the infimum ranges over
curves of high or low regularity, as long as they are continuous. Let us
recall a lemma that is a reformulation of a lemma in Sakai's book.

\begin{lemma}[{\cite[Lemma~V.1.5 (1)]{sakai_Riemannian_Geometry}}]\label{lemma.length.minimizing.loop}
  Let $(M,g)$ be a closed connected Riemannian manifold. Then, in every
  nontrivial free homotopy class of closed curves~$S^1\to M$, there exists 
  a curve of minimal length. This curve can be parametrized by
  arclength and then it is a closed geodesic.
\end{lemma}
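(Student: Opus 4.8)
The plan is to run the classical direct method of the calculus of variations on the space of closed curves in the prescribed free homotopy class, and then to upgrade the minimiser so obtained to a smooth closed geodesic by a local surgery argument.

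First I would set $c$ to be the given nontrivial free homotopy class and $\ell_0 \ceq \inf\{\maL(\gamma)\mid \gamma\in c\}$. Since $M$ is closed there is a radius $\epsilon_0>0$ (for instance the infimum of the injectivity radius over $M$, or a Lebesgue number for a finite cover of $M$ by geodesically convex balls) such that every loop in $M$ of length $<\epsilon_0$ is contained in one such ball and is therefore null\-/homotopic; as $c$ is nontrivial this forces $\ell_0\geq\epsilon_0>0$, and clearly $\ell_0<\infty$. Choose a minimising sequence $(\gamma_k)_{k\in\N}$ in $c$, each $\gamma_k\colon S^1\to M$ parametrised proportionally to arclength, so that $\gamma_k$ is Lipschitz with constant $\maL(\gamma_k)\to\ell_0$. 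These curves are uniformly bounded (as $M$ is compact) and equicontinuous, so by Arzel\`a--Ascoli a subsequence, still denoted $(\gamma_k)$, converges uniformly to a continuous curve $\gamma_\infty\colon S^1\to M$.

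Next I would verify that $\gamma_\infty\in c$ and that it is length\-/minimising. For $k$ large we have $d\bigl(\gamma_k(s),\gamma_\infty(s)\bigr)<\epsilon_0$ for all $s$, so the fibrewise geodesic homotopy $H(s,u)\ceq\exp_{\gamma_\infty(s)}\bigl(u\cdot\exp_{\gamma_\infty(s)}^{-1}(\gamma_k(s))\bigr)$ is a free homotopy from $\gamma_\infty$ to $\gamma_k$; hence $\gamma_\infty\in c$. The length functional is lower semicontinuous under uniform convergence (for any partition $t_0<\dots<t_m$ of $S^1$ one has $\sum_i d(\gamma_\infty(t_i),\gamma_\infty(t_{i-1}))=\lim_k\sum_i d(\gamma_k(t_i),\gamma_k(t_{i-1}))\leq\liminf_k\maL(\gamma_k)$, and one takes the supremum over partitions), so $\maL(\gamma_\infty)\leq\ell_0$; combined with $\gamma_\infty\in c$ this gives $\maL(\gamma_\infty)=\ell_0$. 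In particular $\gamma_\infty$ is rectifiable and non\-/constant, hence may be reparametrised by arclength as a map of period $\ell_0$.

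Finally I would show $\gamma_\infty$ is a closed geodesic. Fix $t\in\R$ and $\delta>0$ with $2\delta<\epsilon_0$. If the subarc $\gamma_\infty|_{[t-\delta,t+\delta]}$ were not a minimising geodesic between its endpoints, then replacing it by the unique minimising geodesic joining $\gamma_\infty(t-\delta)$ to $\gamma_\infty(t+\delta)$ (both lying in a common convex ball) would strictly decrease $\maL(\gamma_\infty)$; the modification is supported in that ball, so the straight\-/line homotopy argument again shows the modified curve still lies in $c$, contradicting minimality. Hence every sufficiently short subarc of $\gamma_\infty$ is a minimising geodesic, so $\gamma_\infty$ is a smooth geodesic; applying this to a subarc straddling the parameter value $0\equiv\ell_0$ forces $\dot\gamma_\infty(0)=\dot\gamma_\infty(\ell_0)$, so $\gamma_\infty$ is genuinely a closed geodesic. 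The main obstacle is the homotopy bookkeeping: one must ensure that both the Arzel\`a--Ascoli limit and the local surgeries stay inside the prescribed free homotopy class, which is precisely what the uniform $\epsilon_0$\-/control (via the injectivity/convexity radius) provides; the passage from ``locally minimising'' to ``smooth geodesic'' is then routine via the first variation formula.
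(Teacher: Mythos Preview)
The paper does not give a proof of this lemma at all: it is stated as a citation of Sakai's textbook (Lemma~V.1.5~(1)) and used as a black box, with the surrounding text merely saying ``Let us recall a lemma that is a reformulation of a lemma in Sakai's book.'' So there is nothing to compare against on the paper's side.

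Your proposal is the standard classical argument and is correct. The only minor point worth tightening is the phrasing around the convexity radius: you want $\epsilon_0$ small enough that every ball of radius $\epsilon_0$ is geodesically convex (not merely that the injectivity radius is at least $\epsilon_0$), so that both the fibrewise geodesic homotopy and the local shortening surgery are well-defined and stay inside the class; you essentially say this, but it is worth being explicit that convexity (rather than injectivity alone) is what makes the homotopy $H(s,u)$ continuous in $s$.
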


In the following, if $\gamma$ is a closed curve in~$M$, then
$h(\gamma)$ denotes the element in~$\HZR$ represented by~$\gamma$
via the Hurewicz homomorphism.

\begin{lemma}\label{lem.loop.attain.inf}
  Let $M$ be a closed connected Riemannian manifold
  and let $x\in \HZR\setminus\{0\}$.
  Then there is a closed geodesic $\gamma:[0,N(x)]\to M$ with $h(\gamma)=x$ and 
     \[\maL(\gamma)=  \inf
    \bigl\{ \maL (\tau)
    \bigm| \text{$\tau$ is a continuous loop with~$h(\tau) = x$}
    \bigr\}
    \,.
    \]
\end{lemma}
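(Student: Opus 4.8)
The statement is a combination of two facts: that the infimum defining $N(x)$ is attained, and that a minimizing loop can be taken to be a closed geodesic parametrized by arclength on $[0,N(x)]$. The plan is to reduce everything to the classical existence result for length-minimizing closed curves in a free homotopy class, Lemma~\ref{lemma.length.minimizing.loop} (Sakai). The subtlety is that $N(x)$ is an infimum over all loops representing the \emph{homology} class $x$, hence over infinitely many free homotopy classes (and $\pi_1$-conjugacy classes), so one cannot directly apply the one-class existence statement; one must first argue that the infimum is realized within a single free homotopy class.

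First I would fix a loop $\gamma_0$ representing $x$ with $\maL(\gamma_0) \le N(x) + 1$ (possible by definition of the infimum), and set $R \ceq N(x) + 1$. Any loop $\tau$ with $h(\tau) = x$ and $\maL(\tau) \le R$ has image contained in a ball of radius $R$ around any of its points; using compactness of $M$ and a standard Lebesgue-number / broken-geodesic argument (as invoked in Remark~\ref{rem:defN}), such a $\tau$ is freely homotopic — without increasing length beyond a fixed bound — to a broken geodesic with a uniformly bounded number of breakpoints, determined by $R$ and the injectivity radius of $M$. The space of such broken geodesics representing classes that map to $x$ under $h$ is, up to reparametrization, a finite-dimensional compact set, so there are only finitely many free homotopy classes $c_1, \dots, c_m$ of loops $\tau$ with $h(\tau) = x$ and $\maL(\tau) \le R$. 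Consequently
\[
N(x) = \min_{j \in \{1,\dots,m\}} \inf\bigl\{ \maL(\tau) \bigm| [\tau] = c_j \bigr\}.
\]
If $x \ne 0$ then each $c_j$ is a nontrivial free homotopy class (a null-homotopic loop represents $0 \ne x$ in homology only if... it cannot, since $h$ of a contractible loop is $0$), so Lemma~\ref{lemma.length.minimizing.loop} applies to each $c_j$: the infimum over $c_j$ is attained by a closed curve which, reparametrized by arclength, is a closed geodesic. Picking the $j$ achieving the outer minimum yields a closed geodesic $\gamma$ with $h(\gamma) = x$ and $\maL(\gamma) = N(x)$, which after arclength parametrization is defined on $[0, N(x)]$ and closes up smoothly (i.e.\ $\dot\gamma(0) = \dot\gamma(N(x))$, as in Lemma~\ref{lemma.length.minimizing.loop}).

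For the equality with the infimum over merely continuous loops, I would invoke the smoothing argument already recalled in Remark~\ref{rem:defN}: the inclusion of smooth loops into $C^0$-loops (respectively piecewise $C^1$ or $\Cinftypw$ loops) is, within each free homotopy class, realized by length-non-increasing approximations (broken geodesic approximation followed by rounding of corners), so the infimum of $\maL$ over continuous loops in a fixed class equals the infimum over smooth loops in that class. Combining this with the finite decomposition above gives that $\inf\{\maL(\tau) \mid \tau \text{ continuous}, h(\tau) = x\}$ equals $N(x)$, which equals $\maL(\gamma)$.

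The main obstacle is the first step: verifying that only finitely many free homotopy classes contribute loops of bounded length representing a fixed homology class. I expect this to follow cleanly from the broken-geodesic / Lebesgue-number machinery (essentially the compactness underlying the Arzelà–Ascoli-type arguments in Lemma~\ref{lemma:limitinggeodesic} and Remark~\ref{rem:defN}), but it does require a little care to phrase correctly — in particular one should note that a minimizing sequence for $N(x)$, having uniformly bounded length, has a subsequence converging (after suitable parametrization) to a closed curve representing $x$, which already gives attainment directly, with the closed-geodesic conclusion then following from the first variation of arclength / Lemma~\ref{lemma.length.minimizing.loop}. Either route works; I would present the one via Lemma~\ref{lemma.length.minimizing.loop} since that lemma is already quoted.
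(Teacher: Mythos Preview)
Your proposal is correct and follows essentially the same route as the paper: the paper's proof is just a pointer to Sakai's Lemma~V.1.5~(2) (finitely many free homotopy classes contain loops of length below a given bound) together with the smoothing techniques already invoked in Remark~\ref{rem:defN}, and your write-up spells out precisely this argument. The only thing to tighten is the phrasing of the finiteness step---state it as ``on a closed Riemannian manifold there are only finitely many free homotopy classes admitting a representative of length $\leq R$'' and cite Sakai directly, rather than rederiving it via broken geodesics.
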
    

The lemma can be proved via arguments analogous to a related proof in
Sakai's book~\cite[Lemma~V.1.5~(2)]{sakai_Riemannian_Geometry},
enriched with standard smoothing techniques~\cite[\Paragraph 16
  and~17]{milnor::morse-theory}.

\subsection{Distance estimates in finite-sheeted coverings}

\begin{lemma}\label{lem:finite-covering}
  Let $Q_1\xrightarrow{p} Q_2\xrightarrow{\pi} M$ be a sequence of
  Riemannian coverings, where~$M$ is a closed Riemannian manifold with
  metric~$g$. (Then also $Q_1\xrightarrow{\pi \circ p} M$ is a
  covering.) We assume that the coverings $p$, $\pi$, and $\pi \circ p$
  are normal, \ie the deck transformation groups act transitively (and
  freely) on the fibres of the respective coverings. Let $g_i$ be the pullback of~$g$
  to~$Q_i$ and let $d_i$ be the induced distance function on~$Q_i$.
  \begin{enumarab}
  \item
    Then, for all~$x,y\in Q_1$ we have
    \[ d_2\bigl(p(x),p(y)\bigr)\leq d_1(x,y)
    \,.
    \]
  \item
    If $Q_1\xrightarrow{p} Q_2$ is a finite covering, then there is
    a~$C<\infty$, such that for all~$x,y\in Q_1$:
    \[ d_1(x,y)\leq d_2\bigl(p(x),p(y)\bigr)+C
    \]
  \end{enumarab}
\end{lemma}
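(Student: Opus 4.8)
The plan is to treat the two inequalities separately: the first is immediate from the fact that Riemannian coverings are local isometries, and the second rests on a cocompactness argument. For the first statement, I would take any piecewise-$C^1$ path $\gamma$ from $x$ to $y$ in $Q_1$; its image $p\circ\gamma$ is a path from $p(x)$ to $p(y)$ in $Q_2$ with $\maL(p\circ\gamma)=\maL(\gamma)$, since $p$ is a local isometry. Taking the infimum over all such $\gamma$, and using that $d_1$ and $d_2$ are the infima of path-lengths, yields $d_2\bigl(p(x),p(y)\bigr)\le d_1(x,y)$.

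For the second statement I would first set up the deck groups. Write $\Gamma$ for the deck group of $Q_1\to M$ and $G$ for the deck group of $Q_1\to Q_2$. Since $Q_1\to M$ is a normal covering of the closed manifold $M$, the group $\Gamma$ acts on $Q_1$ freely, transitively on each fibre, and cocompactly; since $p$ is finite-sheeted, $G$ is finite; and since $Q_2\to M$ is normal, $G$ is a normal subgroup of $\Gamma$. Fix a compact set $K\subset Q_1$ with $\Gamma\cdot K=Q_1$ (such $K$ exists because $M$ is compact). The decisive step is to produce a constant uniform over all of $Q_1$: put $C\ceq \max_{\phi\in G}\sup_{z\in K}d_1\bigl(z,\phi(z)\bigr)$, which is finite because $G$ is finite, $K$ is compact, and $z\mapsto d_1\bigl(z,\phi(z)\bigr)$ is continuous and finite-valued (here we use that $Q_1$ is connected). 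I then claim $d_1\bigl(a,\phi(a)\bigr)\le C$ for every $a\in Q_1$ and $\phi\in G$: writing $a=\psi(z)$ with $\psi\in\Gamma$ and $z\in K$, using that $\psi$ is an isometry of $(Q_1,g_1)$ and that $\psi^{-1}\phi\psi\in G$ (normality of $G$ in $\Gamma$), one gets $d_1\bigl(a,\phi(a)\bigr)=d_1\bigl(\psi(z),\psi(\psi^{-1}\phi\psi)(z)\bigr)=d_1\bigl(z,(\psi^{-1}\phi\psi)(z)\bigr)\le C$.

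To conclude, given $x,y\in Q_1$ I would apply Hopf--Rinow on the complete manifold $Q_2$ to obtain a minimizing geodesic $\bar\gamma\colon[0,L]\to Q_2$ from $p(x)$ to $p(y)$ with $L=d_2\bigl(p(x),p(y)\bigr)$, lift it to $\gamma\colon[0,L]\to Q_1$ with $\gamma(0)=x$, and note that $p\bigl(\gamma(L)\bigr)=p(y)$ forces $\gamma(L)\in G\cdot y$, hence $d_1\bigl(\gamma(L),y\bigr)\le C$ by the claim. The triangle inequality then gives
\[
 d_1(x,y)\le d_1\bigl(x,\gamma(L)\bigr)+d_1\bigl(\gamma(L),y\bigr)\le \maL(\gamma)+C=L+C=d_2\bigl(p(x),p(y)\bigr)+C .
\]
The only non-formal ingredient — and hence the main obstacle — is the finiteness and uniformity of the constant $C$; everything else is bookkeeping with coverings and the length functional. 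This is precisely where the compactness of $M$ (via cocompactness of $\Gamma$) and the normality of $Q_2\to M$ (via $G\trianglelefteq\Gamma$) are used.
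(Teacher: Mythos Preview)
Your proof is correct and follows essentially the same strategy as the paper: bound the $d_1$-diameter of the fibres of~$p$ uniformly over~$Q_2$, then lift a minimizing geodesic in~$Q_2$ and apply the triangle inequality. The only cosmetic difference is in how the uniform bound is obtained: the paper introduces the fibre-diameter function~$m(\hat x)=\diam_{d_1}p^{-1}(\hat x)$ on~$Q_2$ and shows it descends to the compact manifold~$M$ (by lifting deck transformations of~$\pi$ to deck transformations of~$\pi\circ p$), whereas you phrase the same descent group-theoretically via $G\trianglelefteq\Gamma$ and a compact fundamental domain~$K$ --- these are two formulations of the same invariance argument.
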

\begin{proof}
  The first part is clear because $p$ is a local isometry.
  We show the second part:
  For~$\hat x\in Q_2$ we define
  \[ m(\hat x)
  \ceq \diam{}_{d_1} p^{-1}(\hat x)
  = \max \left\{d_1(x,y)\mid x,y \in p^{-1}(\hat x)\right\}
  \,.
  \]
  It is easy to see that $m\colon Q_2\to \R$ is a
  continuous function. We show that $m$ factors over~$\pi \colon Q_2 \to M$: 
  As $\pi$ is normal,
  for all~$\hat x_1, \hat x_2 \in Q_2$ with~$\pi(\hat x_1) = \pi(\hat x_2)$,
  there exists a deck transformation~$\hat f$ of~$Q_2 \to M$ with~$\hat f(\hat x_1)=\hat
  x_2$. Because $\pi \circ p$ is normal, $\hat f$
  lifts to a deck transformation~$f$ of~$p$; in particular,
  $\hat f$ maps $p^{-1}(\hat x_1)$ bijectively to~$p^{-1}(\hat
  x_2)$. Thus $m$ is invariant under the deck transformations of~$Q_2\to M$. 
  Therefore, $m$ factors as $m=m'\circ \pi$ for some continuous map~$m'\colon
  M\to \R$. In particular, the function~$m$ attains
  its maximum and we define
  \[ C\ceq \max_{\hat x\in Q_2} m(\hat x)
  \,.
  \]
  Let $x, y \in Q_1$. 
  The Riemannian manifold~$Q_2$ is complete and so by the Hopf--Rinow
  theorem there is a path~$\gamma$ in~$Q_2$ from $p(x)$ to~$p(y)$ of
  length~$\ell\ceq d_2\bigl(p(x),p(y)\bigr)$. Let~$\gamma'$ be the
  lift of~$\gamma$ to~$Q_1$ starting in~$x$. If $y'$ is the endpoint
  of~$\gamma'$, then $p(y')$ is the endpoint of~$\gamma$ and so~$p(y')=p(y)$.
  By construction of~$C$, we have
  $d_1\bigl(y',y\bigr) \leq C$. Furthermore, the existence of~$\gamma'$,
  which has length~$\ell$, shows $d_1\bigl(x,y'\bigr)\leq
  \ell$. Then the statement follows from the triangle inequality.
\end{proof}

\ifwithappendixstable
\section{Other aspects of the stable norm}\label{app:stable-norm}

We recall (special cases of) Federer's notion of mass~$\gnorm{\argu}{K}$,
which is a group norm on~$H_1(M;K)$ for~$K\in
\{\Z,\Q,\R\}$. We then show that the norms~$\gnorm\argu \R$ and
$\stabnorm\argu$ are equal.

This equivalence is well-known, and -- as already mentioned in
Section~\ref{subsec:stable-norm} -- sometimes attributed to Gromov's
book on metric structures on Riemannian manifolds~\cite{gromov:99} or
its French version~\cite{gromov::structures-metriques}.  The
equivalence is based on work by Federer, requiring involved
notation. We include a proof to keep our presentation self-contained.

\begin{definition}[mass of $1$\=/ho\-mo\-lo\-gy classes]\label{def:federer-mass}
  Let $(M,g)$ be a Riemannian manifold and $K\in \{\Z,\Q,\R\}$.  For a
  singular $1$-chain~$a=\sum_{i=1}^m a_i \sigma_i\in C_1(M;K)$ with
  coefficients~$a_i$ in~$K$, we define
  \[ \gnorm{a}{}\ceq \sum_{i=1}^m |a_i|\cdot \maL(\sigma_i)
  \,.
  \]
  For~$\alpha \in H_1(M;K)$, we define the \emph{$K$-mass} of~$\alpha$ as
  \[
  \gnorm{\alpha}K\ceq \inf_{a\in \alpha} \gnorm{a}{}
  \,.
  \]
\end{definition}

It follows directly from the definition that the map
$\gnorm{\argu}K\colon H_1(M;K)\to \R$ is symmetric and satisfies the
triangle inequality; i.e., it is what is sometimes referred to as a
\emph{group semi\-/norm}. The
manifold~$\R^2\setminus \{0\}$ with the Euclidean metric shows that in
general we do not get positive definiteness of~$\gnorm{A}K$ without
further assumptions. In the following, we therefore often
require that the manifold is closed.

The group semi\-/norm~$\gnorm{\argu}\R$ is $\R$\=/homogeneous and
thus a semi\-/norm on a vector space in the usual sense.

\begin{lemma}\label{lem:gnormrational}
  Let $(M,g)$ be a Riemannian manifold. Then, the
  change-of-coefficients map~$\args_\R \colon H_1(M;\Q) \to H_1(M;\R)$
  is isometric: For all~$\alpha \in H_1(M;\Q)$, we have
  \[ \gnorm {\alpha_\R}\R = \gnorm \alpha \Q
  \,.
  \]
\end{lemma}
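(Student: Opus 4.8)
The plan is to prove the two inequalities $\gnorm{\alpha_\R}{\R}\le\gnorm{\alpha}{\Q}$ and $\gnorm{\alpha}{\Q}\le\gnorm{\alpha_\R}{\R}$ separately. The first is immediate: if $a=\sum_i a_i\sigma_i$ is a rational singular $1$-cycle with $[a]=\alpha$, then the same chain, with its coefficients now regarded as real numbers, is a $1$-cycle representing $\alpha_\R$, and the quantity $\gnorm{a}{}=\sum_i|a_i|\cdot\maL(\sigma_i)$ is unchanged. Taking the infimum over all such $a$ gives $\gnorm{\alpha_\R}{\R}\le\gnorm{\alpha}{\Q}$.

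For the reverse inequality I would fix $\epsilon>0$ and a real $1$-cycle $a$ with $[a]=\alpha_\R$ and $\gnorm{a}{}<\gnorm{\alpha_\R}{\R}+\epsilon$; fix once and for all a rational $1$-cycle $b$ with $[b]=\alpha$ (so $b$, regarded over $\R$, represents $\alpha_\R$); and choose a real $2$-chain $c$ with $\partial c=a-b$, which exists since $[a]=[b]$ in $H_1(M;\R)$. The next step is to pass to a finite model: let $\Delta$ be the finite set of all iterated faces of the finitely many singular simplices occurring in $a$, $b$, and $c$, and let $C_\bullet^\Delta$ be the subcomplex of the singular chain complex spanned by $\Delta$. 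In each degree $C_k^\Delta(\ZZ)$ is a finitely generated free abelian group, $C_k^\Delta(K)=C_k^\Delta(\ZZ)\otimes_\ZZ K$ for $K\in\{\Q,\R\}$, and by flatness $B_1^\Delta(K):=\partial C_2^\Delta(K)=B_1^\Delta(\ZZ)\otimes_\ZZ K$. By construction $a,b,c$ all lie in $C_\bullet^\Delta$ and $a-b=\partial c$ holds there.

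The key structural input, which I would extract from the Smith normal form of the inclusion $B_1^\Delta(\ZZ)\hookrightarrow C_1^\Delta(\ZZ)$, is that there is a common integral basis exhibiting $B_1^\Delta(\Q)$ as its $\Q$-span and $B_1^\Delta(\R)$ as its $\R$-span; in particular $B_1^\Delta(\Q)$ is dense in $B_1^\Delta(\R)$ inside the finite-dimensional space $C_1^\Delta(\R)$, and $\bigl(b+B_1^\Delta(\R)\bigr)\cap C_1^\Delta(\Q)=b+B_1^\Delta(\Q)$. Since $a-b=\partial c\in B_1^\Delta(\R)$, the coset $a+B_1^\Delta(\R)$ equals $b+B_1^\Delta(\R)$ and therefore contains the dense subset $b+B_1^\Delta(\Q)$ of rational $1$-cycles. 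I would then pick $a'$ in this subset whose coefficients (in the finite basis $\Delta_1$ of $1$-simplices) are so close to those of $a$ that $\gnorm{a-a'}{}=\sum_{\sigma\in\Delta_1}|a_\sigma-a'_\sigma|\cdot\maL(\sigma)<\epsilon$; then $\gnorm{a'}{}\le\gnorm{a}{}+\epsilon<\gnorm{\alpha_\R}{\R}+2\epsilon$. Finally, $[a']=[b]$ in $H_1^\Delta(\Q)$, so under the natural map $H_1^\Delta(\Q)\to H_1(M;\Q)$ we obtain $[a']=\alpha$; hence $\gnorm{\alpha}{\Q}\le\gnorm{a'}{}<\gnorm{\alpha_\R}{\R}+2\epsilon$, and letting $\epsilon\to0$ concludes.

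The main obstacle is setting up the finite free $\ZZ$-model $C_\bullet^\Delta$ correctly — in particular including a $2$-chain $c$ that witnesses $a\sim b$ over $\R$, so that the class of $a$ in $H_1^\Delta(\R)$ actually lies in the image of $H_1^\Delta(\Q)$; after that, the density of $B_1^\Delta(\Q)$ in $B_1^\Delta(\R)$ is pure linear algebra over $\ZZ$, and the estimate on $\gnorm{\cdot}{}$ is trivial since this functional is Lipschitz in the simplex coefficients. A minor point to keep in mind is that constant $1$-simplices can make $\gnorm{\cdot}{}$ only a seminorm on $C_1^\Delta(\R)$, but only continuity in the coefficients is used, so this is harmless. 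Note also that closedness of $M$ is not needed for this lemma.
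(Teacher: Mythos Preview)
Your proof is correct and follows the same underlying idea as the paper: the inclusion $C_*(M;\Q)\hookrightarrow C_*(M;\R)$ is isometric with dense image, so approximating real boundaries by rational ones yields the isometry on homology. The paper states this in one line and defers the approximation argument to cited references (Schmidt, L\"oh), whereas you spell it out explicitly via a finite subcomplex and Smith normal form; this is a difference in presentation, not in strategy.
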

\begin{proof}
  The change-of-coefficients map is induced by the inclusion~$i \colon
  C_*(M;\Q) \to C_*(M;\R)$ of the singular chain complexes. The
  map~$i$ is isometric in each degree and has dense image with respect
  to~$\|\args\|_g$.
  Approximating boundaries therefore shows that $\arg_\R = H_1(i)
  \colon H_1(M;\Q) \to H_1(M;\R)$ is isometric with respect to the
  induced semi\-/norms~$\gnorm{\args}\Q$ and~$\gnorm{\args}\R$
  on homology~\cite[Lemma~2.9]{mschmidt}\,\cite[Lemma~1.7]{loehthesis}
  (the cited proofs carry over to this slightly more general setting). 
\end{proof}

\begin{lemma}
  Let $(M,g)$ be a Riemannian manifold. Then $\gnorm\argu \R$
  on~$H_1(M;\R)$ extends the homogenisation (in the sense of
  Proposition~\ref{prop:homog}) of~$\gnorm\argu \Z$.
\end{lemma}
\begin{proof}
  Let $\alpha\in H_1(M;\Z)$ and we denote its image in~$H_1(M;\R)$
  with respect to the change-of-coefficients map~$H_1(M;\Z)\to
  H_1(M;\R)$ by~$\alpha_\R$ (and similarly for~$\Q$). Using
  Lemma~\ref{lem:gnormrational} and the definition of~$\gnorm\args\Q$,
  we obtain
  \[ \gnorm{\alpha_\R}\R
  = \gnorm{\alpha_\Q}{\Q}
  = \inf_{k\in\N} \frac1k \cdot \gnorm{k \cdot \alpha}\Z
  \,.
  \] 
  According to Proposition~\ref{prop:homog}, the right-hand side
  equals to the homogenisation of~$\gnorm\argu \Z$ on~$\alpha$.
\end{proof}

\begin{proposition}[mass and stable norm]\label{prop.gnorm-equals-stabnorm}
  Let $(M,g)$ be a closed connected Riemannian manifold. Then 
  $\gnorm\argu \R$ and $\stabnorm\argu$ coincide on~$H_1(M;\R)$.
\end{proposition}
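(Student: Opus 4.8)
The plan is to reduce the statement to the lattice~$\HZR$ and then combine two ingredients: the elementary comparison of the Federer mass with lengths of loops, and a straightening argument for integral cycles. First I would observe that it suffices to prove $\stabnorm{x}=\gnorm{x}{\R}$ for all~$x\in\HZR$: both $\stabnorm{\argu}$ and $\gnorm{\argu}{\R}$ are continuous and homogeneous over~$\R$ (the latter directly from the definition of the mass, the former by Proposition~\ref{prop:stabnormconstruction}), so once they agree on the lattice~$\HZR$ they agree on the dense set~$\Q\cdot\HZR\subset H_1(M;\R)$ and hence everywhere.

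For the inequality $\gnorm{x}{\R}\leq\stabnorm{x}$ (with $x\in\HZR$) I would argue directly. Choose a loop~$\gamma$ with~$h(\gamma)=x$ and~$\maL(\gamma)=N(x)$ (Lemma~\ref{lem.loop.attain.inf}). Viewed as a singular $1$-simplex with coefficient~$1\in\R$, the loop~$\gamma$ is a real $1$-cycle representing~$x$, of mass~$\maL(\gamma)=N(x)$, so $\gnorm{x}{\R}\leq N(x)$. Replacing~$x$ by~$k\cdot x$, dividing by~$k$, and using the $\R$-homogeneity of~$\gnorm{\argu}{\R}$ together with $\lim_{k\to\infty}N(k\cdot x)/k=\stabnorm{x}$ (Proposition~\ref{prop:stabnormconstruction}) yields $\gnorm{x}{\R}\leq\stabnorm{x}$.

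The converse $\stabnorm{x}\leq\gnorm{x}{\R}$ is the heart of the matter. Fix~$x\in\HZR$, an integral lift~$\alpha\in H_1(M;\Z)$, and an arbitrary singular $1$-cycle~$z=\sum_i a_i\sigma_i\in C_1(M;\Z)$ with~$[z]=\alpha$ and finite mass. The key geometric step is to straighten~$z$ to a cycle built from geodesic segments without increasing its mass and without changing its homology class: since a compact Riemannian manifold has positive convexity radius and each~$\sigma_i$ is uniformly continuous, one can subdivide each~$\sigma_i$ so finely that every piece lies in a strongly geodesically convex ball; replacing each such piece by the minimizing geodesic between its endpoints takes place inside that ball, hence does not increase length and is a homotopy relative to the endpoints, so does not change~$[z]$. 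I expect this homology-preservation --- that is, checking that the subdivision can be chosen fine enough for the straightening to be a relative homotopy --- to be the main technical point. The resulting integral $1$-cycle~$z'$ has mass $\gnorm{z'}{}\leq\gnorm{z}{}$ and is a balanced $\Z$-combination of geodesic segments. Regarding these segments (with coefficients absorbed into orientation reversals) as the edges of a finite directed multigraph that is balanced at every vertex, the classical Eulerian decomposition expresses the edge multiset as a union of directed closed walks; each closed walk is a broken geodesic loop~$\gamma_l\colon[0,\ell_l]\to M$, and, since $[z']=\alpha$ maps to~$x$, we get $x=\sum_l h(\gamma_l)$ with $\sum_l\maL(\gamma_l)=\gnorm{z'}{}$.

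From here I would conclude, using the triangle inequality for~$\stabnorm{\argu}$, then $\stabnorm{\argu}\leq N$ on~$\HZR$ (Theorem~\ref{thm:stabnormN}), then the definition of~$N$ applied to each loop~$\gamma_l$:
\[
\stabnorm{x}=\stabnorm[big]{\textstyle\sum_l h(\gamma_l)}\leq\sum_l\stabnorm{h(\gamma_l)}\leq\sum_l N\bigl(h(\gamma_l)\bigr)\leq\sum_l\maL(\gamma_l)=\gnorm{z'}{}\leq\gnorm{z}{}\,.
\]
Taking the infimum over all representing cycles~$z$ gives $\stabnorm{x}\leq\gnorm{\alpha}{\Z}$; applying this to~$k\cdot\alpha$ in place of~$\alpha$, dividing by~$k$ and letting $k\to\infty$, and using that $\gnorm{\argu}{\R}$ extends the homogenisation of~$\gnorm{\argu}{\Z}$ (the previous lemma) gives $\stabnorm{x}\leq\gnorm{x}{\R}$. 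Combined with the reverse inequality, this proves equality on~$\HZR$, and the reduction of the first paragraph completes the argument.
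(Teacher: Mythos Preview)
Your proof is correct and follows the same core idea as the paper: reduce to integral classes, decompose an integral $1$-cycle into loops (the Eulerian decomposition), and compare the loop lengths with~$N$ and hence with~$\stabnorm\argu$. The paper routes the reduction through~$\gnorm\argu\Q$ via Lemma~\ref{lem:gnormrational} rather than through the homogenisation of~$\gnorm\argu\Z$, but this is cosmetic.

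Two remarks on the execution. First, your geodesic straightening step is unnecessary: the Eulerian decomposition works on the level of the formal multigraph built from the singular $1$-simplices of~$z$ (with multiplicities), and the resulting closed walks are continuous loops whose total length is exactly~$\gnorm z{}$; Remark~\ref{rem:defN} then lets you compare with~$N$ directly. Dropping the straightening removes what you yourself flag as ``the main technical point''. Second, your use of the triangle inequality for~$\stabnorm\argu$ together with~$\stabnorm\argu\leq N$ is more direct than the paper's device of concatenating the loops into one long loop~$w_{k_1}$ (with connecting arcs of length~$\leq\diam M$) and sending~$k_1\to\infty$ to kill the overhead; both reach the same inequality~$\stabnorm{k_0\alpha}\leq\sum_i\maL(\sigma_i')$, but your route avoids the auxiliary limit.
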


\begin{proof}
  We first show~$\gnorm{\argu}\R\leq\stabnorm\argu$:
  Let $\alpha\in \HZR$.
  Then there is a loop~$\gamma$ of length~$\groupnorm{\alpha}$
  representing~$\alpha$.
  If we re-interpret~$\gamma$ as a $1$-cycle, we get
  \[ \gnorm{\alpha_\R}\R
  \leq \gnorm{\alpha}\Z
  \leq \gnorm{\gamma}{}
  = \maL(\gamma)
  =  N(\gamma)
  \,.
  \]
  Applying homogenisation (Proposition~\ref{prop:homog}) and
  using the fact that $\gnorm\argu\R$ is homogeneous, we
  obtain~$\gnorm{\argu}\R\leq\stabnorm\argu$.

  We now prove the converse inequality. As rational classes
  are dense in~$H_1(M;\R)$ with respect to $\gnorm\args\R$ and $\stabnorm{\args}$,
  it suffices to prove the estimate~$\gnorm\args\R \geq \stabnorm\args$
  on rational classes. Moreover, in view of Lemma~\ref{lem:gnormrational},
  we may replace~$\gnorm\args\R$ with~$\gnorm\args\Q$. 

  Let $\alpha \in H_1(M;\Q)$ and $\varepsilon \in \R_{>0}$. Then,
  there is a singular chain~$a= \sum_{i=1}^m a_i \sigma_i \in
  C_1(M;\Q)$ representing~$\alpha$ with~$\gnorm a {} \leq \gnorm
  \alpha\Q + \varepsilon$. We will replace~$a$ with loops
  that efficiently represent multiples of~$\alpha$.
  More precisely, let $k_0 \in \N$ be a common multiple
  of the denominators appearing in~$a_1, \dots, a_m \in \Q$.
  There exists a chain~$b = \sum_{i=1}^{m'} \sigma_i' \in C_1(M;\Z)$,
  consisting of a sum of \emph{loops}~$\sigma'_i$, with the
  following properties:
  \[ [b] = k_0 \cdot \alpha
  \qand
  \gnorm{b}{}
  =\sum_{i=1}^{m'} \maL(\sigma_i')
  \leq k_0\cdot\bigl(\gnorm{\alpha}\Q+ \epsilon\bigr)
  \]
  Indeed, because $k_0 \cdot a$ is an integral cycle, 
  we can re-organise the singular $1$-simplices appearing in~$k_0 \cdot a$
  (with multiplicities/orientations given by their coefficients)
  into a sum of loops. This procedure does not affect the homology class.

  We parametrize the singular simplices~$\sigma_i'$ on~$[0,\ell_i]$;
  moreover, for each~$i$, we choose a path~$\tau_i$ from~$\sigma_i(0)$
  to~$\sigma_{i+1}(0)$ with~$\maL(\tau_i) \leq \diam M$. For~$k_1 \in \N$,
  let us consider 
  \begin{multline*}
    w_{k_1}
    \ceq \underbrace{\sigma_1'*\cdots*\sigma_1'}_{k_1\text{-times}}*\,\tau_1*\underbrace{\sigma_2'*\cdots*\sigma_2'}_{k_1\text{-times}}*\,\tau_2*\cdots * \underbrace{\sigma_{m'}'*\cdots*\sigma_{m'}'}_{k_1\text{-times}}*\,\bar\tau_{m'-1}*\cdots *\bar\tau_1, 
  \end{multline*}
  which provides a loop based at~$\si_1'(0)$ with $[w_{k_1}]= k_1\cdot
  [k_0\cdot a]\in\HZR$ and of length
  \[ \maL(w_{k_1})\leq k_1\cdot \sum_{i=1}^{m'} \maL(\sigma_i') + 2(m'-1)\cdot\diam M
  \,.
  \] 
  Thus
  \[ \frac1{k_1}\cdot\groupnorm{  k_1\cdot [k_0\cdot a]}
  \leq \frac1{k_1}\cdot \maL(w_{k_1})
  \leq  \sum_{i = 1}^{m'} \maL(\sigma_i') + \frac{2(m'-1)}{k_1} \cdot \diam M
  \,.
  \]
  As $\stabnorm\argu$ was defined as the homogenisation of
  $\groupnorm\argu$, the left-hand side converges for~$k_1\to\infty$
  to~$\stabnorm[big]{ [k_0\cdot a]}=k_0\cdot \stabnorm{\alpha}$.
  On the other hand, the
  limit of the right-hand side is bounded from above
  by~$k_0 \cdot (\gnorm{\alpha}\Q+ \epsilon)$. Dividing by~$k_0$ and
  taking~$\varepsilon \to 0$, we get $\stabnorm{\alpha}\leq \gnorm{\alpha}\R$.
\end{proof}

If $M$ is closed, 
for~$\omega\in \Omega^1(M)$, we define the supremum norm (or
$L^\infty$-norm or $C^0$-norm) as
\[ \innorm{\omega}{L^\infty}{}
\ceq \sup_{x\in M} \bigl|\omega(x)\bigr|_g\,.
\] 
For $1$-forms, this coincides with Gromov's notion of
\emph{comass}~\cite[4.15]{gromov:99}.
  
\begin{proposition}[duality and the comass]
  Let $M$ be a closed connected Riemannian manifold.
  Then, for all~$\varphi \in H^1_{\derham}(M)$ we have:
  \[ \stabnormdual{\varphi}
  =\inf_{\omega\in \varphi} \innorm{\omega}{L^\infty}{}\,.
  \] 
\end{proposition}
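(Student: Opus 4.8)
The plan is to prove the two inequalities separately, using Proposition~\ref{prop.gnorm-equals-stabnorm} to identify $\stabnorm\argu$ with the mass norm $\gnorm\argu\R$, so that $\stabnormdual\argu$ is the norm dual to the mass. First I would establish the estimate
$\stabnormdual{\varphi}\leq\inf_{\omega\in\varphi}\innorm{\omega}{L^\infty}{}$. Fix a closed $1$-form $\omega\in\varphi$ and let $x\in H_1(M;\R)$ with $\stabnorm x=1$. By density (of rational, hence of integral-scaled classes) and homogeneity it suffices to bound $\langle\varphi,x\rangle$ for $x\in\HZR$; for such $x$ pick a loop $\gamma$ of length $N(x)$ representing $x$. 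Then $\langle\varphi,x\rangle=\int_\gamma\omega$ (Remark~\ref{rem:hurewiczint}), and the pointwise estimate $|\omega(\dot\gamma(t))|\leq\innorm{\omega}{L^\infty}{}\cdot|\dot\gamma(t)|_g$ integrated over $\gamma$ gives $|\int_\gamma\omega|\leq\innorm{\omega}{L^\infty}{}\cdot\maL(\gamma)=\innorm{\omega}{L^\infty}{}\cdot N(x)$. Dividing by $N(x)$ and passing to the homogenisation (which only decreases the right-hand side, since $\stabnorm x\leq N(x)$) yields $|\langle\varphi,x\rangle|\leq\innorm{\omega}{L^\infty}{}\cdot\stabnorm x$; taking the supremum over $x$ and then the infimum over $\omega\in\varphi$ gives the claimed inequality. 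Actually it is cleaner to argue directly via the mass representation: for a singular chain $a=\sum a_i\sigma_i$ we have $|\int_a\omega|\leq\sum|a_i|\cdot\innorm{\omega}{L^\infty}{}\cdot\maL(\sigma_i)=\innorm{\omega}{L^\infty}{}\cdot\gnorm a{}$, so $|\langle\varphi,\alpha\rangle|\leq\innorm{\omega}{L^\infty}{}\cdot\gnorm\alpha\R$ for every real homology class $\alpha$, which is exactly $\stabnormdual\varphi\leq\innorm{\omega}{L^\infty}{}$.

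The reverse inequality $\stabnormdual{\varphi}\geq\inf_{\omega\in\varphi}\innorm{\omega}{L^\infty}{}$ is the substantive half. Here I would invoke Federer's duality between the mass on homology and the comass on cohomology: on a closed Riemannian manifold the mass norm on $H_1(M;\R)$ and the comass norm on $H^1_{\derham}(M)$ are dual to each other. Concretely, given $\varphi$ with $\stabnormdual\varphi=1$, one must produce, for every $\varepsilon>0$, a closed form $\omega\in\varphi$ with $\innorm{\omega}{L^\infty}{}\leq 1+\varepsilon$. The standard approach is a Hahn--Banach / Legendre-duality argument in the de Rham complex: consider on the space of closed $1$-forms the sublinear functional $\omega\mapsto\innorm\omega{L^\infty}{}$ and the linear functional $\omega\mapsto\langle[\omega],x_0\rangle$ for a mass-unit class $x_0$ achieving $\langle\varphi,x_0\rangle=1$; then extend and use that $L^\infty$ is the dual of $L^1$ together with the de Rham theorem to realise the dual element as a current, and smooth/average it (using the closedness of $M$) into a genuine $1$-form in the class $\varphi$ with controlled sup-norm. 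For $1$-forms this is exactly the content of Federer's theorem \cite[\Paragraph 4--5]{federer.VP}, and I would cite it in the form recorded by Gromov \cite[4.18, 4.20$\frac12$bis$_+$]{gromov:99}, noting that in codimension $n-1$ (i.e.\ for $1$-forms dual to $(n-1)$-dimensional mass) the comass equals the pointwise operator norm, which for $1$-forms is just $\innorm\argu{L^\infty}{}$.

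The main obstacle is the reverse inequality: producing a \emph{smooth, closed} representative of $\varphi$ with near-optimal $L^\infty$-norm. Duality of seminormed spaces gives only a bounded linear functional / current, and promoting it to a smooth closed form in the prescribed cohomology class without inflating the sup-norm requires either a careful regularisation (mollification commuting with $d$, plus a Hodge-type projection to kill the coexact part, all with norm estimates) or a direct appeal to Federer's deformation and structure theorems. Since the excerpt already grants us Federer's results and Gromov's reformulation, I would keep this step at the level of a citation: invoke $\stabnorm\argu=\gnorm\argu\R$ (Proposition~\ref{prop.gnorm-equals-stabnorm}), then invoke the Federer--Gromov mass/comass duality to conclude $\stabnormdual\varphi=$ comass of $\varphi=\inf_{\omega\in\varphi}\innorm\omega{L^\infty}{}$, remarking only that for $1$-forms the comass of a form coincides with its fibrewise Euclidean norm and hence the comass of the class is the infimum of $\innorm\omega{L^\infty}{}$ over closed representatives.
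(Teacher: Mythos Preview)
Your proposal is correct and matches the paper's approach essentially exactly. For the inequality~$\leq$ the paper does precisely your first argument: take $v\in\HZR$, a length-minimizing loop~$\gamma$ with $\maL(\gamma)=N(v)$, estimate $|\langle[\omega],v\rangle|=|\int_\gamma\omega|\leq N(v)\cdot\innorm{\omega}{L^\infty}{}$, then homogenise and extend by density; your alternative via the mass representation is equally valid but not what the paper uses. For the reverse inequality~$\geq$ the paper, like you, does not give a self-contained argument but simply cites Gromov (specifically \cite[4.35]{gromov:99} rather than the sections you name), remarking only that it ``requires the construction of suitable $1$-forms~$\omega$''.
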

  
\begin{proof}[Proof of the inequality $\leq$]
  Let $\omega\in \Omega^1(M)$. Take $v\in \HZR$ and a closed
  geodesic~$\gamma$ representing~$v$ with~$\maL(\gamma)=\groupnorm{v}$.
  Then
  \begin{eqnarray*}
    \bigl|\<[\omega],v\>\bigr|
    &=& \Bigl|\int_\gamma \omega\,\Bigr|
    \,\leq\, \maL(\gamma) \cdot \innorm{\omega}{L^\infty}{}
    \,=\, \groupnorm{v}\cdot \innorm{\omega}{L^\infty}{}\,.
  \end{eqnarray*}
  We apply homogenisation. 
  As the left-hand side is homogeneous in~$v$, we obtain 
  \begin{eqnarray}\label{ineq.stab}
    \bigl|\<[\omega],v\>\bigr|
    &\leq& \stabnorm{v} \cdot \innorm{\omega}{L^\infty}{}\,.
  \end{eqnarray} 
  By homogeneity of this inequality, this estimate~\eqref{ineq.stab}
  also holds for all
  rational classes~$v\in H_1(M;\Q)$; and then by density for all~$v\in
  H_1(M;\R)$.  Using the defining Equation~\eqref{def.stabnormdual}
  of the dual of the stable norm, this implies $\stabnormdual{[\omega]}\leq
  \innorm{\omega}{L^\infty}{}$, which proves the claimed inequality.
\end{proof}

The proof of the inequality~``$\geq$'' requires the construction of
suitable $1$-forms~$\omega$. A proof is given by Gromov~\cite[4.35]{gromov:99}.

\begin{comparisonliterature*}
  Gromov~\cite[Proposition~2.22]{gromov::structures-metriques} calls
  the vector space norm obtained from a homogenisation of a group
  norm~$\innorm\argu{}{}$ in the sense of Proposition~\ref{prop:homog}
  \emph{norme limite} and denotes it as~$\innorm\argu{}{lim}$.  In the
  English edition~\cite[4.17]{gromov:99}, the norm is denoted in the same way
  and called \emph{limit norm}, but the ``Proposition~2.22''
  to which he refers differs in the English version.

  Gromov uses the symbol $\innorm\argu{}{}$ or $\innorm\argu{H_1}{}$
  for all the norms $\gnorm\argu{}\colon C_1(M;K)\to K$,
  $\gnorm\argu{\Z}\colon H_1(M;\Z)\to \Z$, and $\gnorm\argu{\R}\colon
  H_1(M;\R)\to \R$. This does not lead to ambiguities as the argument~$\alpha$
  of~$\innorm{\alpha}{}{}$ will determine, which (group) norm is
  meant.
\end{comparisonliterature*}

\fi 

\ifwithappendixtwodim
\section{More on minimal geodesics in dimension~$2$}\label{app:twodim}

In this appendix let~$M$ be an orientable, closed, connected surface of genus at
  least~$1$ with Riemannian metric~$g$. 
For $\alpha\in \pi_1(M)$ let $\freehom{\alpha}$ be its free homotopy class.
The length functional attains its minimum in any free homotopy class $\freehom{\alpha}$, 
and we write $N_0\bigl(\freehom{\alpha}\bigr)$ for the minimum of this length.
Let us recall the following classical lemma, for which one may find a proof, for example, in an article by Bleecker~\cite{bleecker:74}.

\begin{lemma}\label{lemma:closed-non-prim-geod}
  Let $M$ be an orientable, closed, connected surface of genus at
  least~$1$ with Riemannian metric~$g$.
  Let~$\gamma\colon[0,L]\to M$ 
  be a closed geodesic in $M$ minimizing length within its 
  free homotopy class~$\freehomn{\alpha^k}$ where $\alpha\in \pi_1(M)$ is primitive and $k\in\N$.
  Then $\gamma\colon[0,L/k]\to M$ is a closed geodesic in $M$ minimizing length within its 
  free homotopy class~$\freehom{\alpha}$, and $\gamma$ extends to a periodic curve $\R\to M$ of period~$L/k$.
  in particular, we have for every~$\beta\in \pi_1(M)$:
    \[N_0\bigl(\freehom{\beta^k}\bigr)=k\cdot N_0\bigl(\freehom{\beta}\bigr)\,.\]
\end{lemma}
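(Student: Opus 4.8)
\textbf{Proof plan for Lemma~\ref{lemma:closed-non-prim-geod}.}
The plan is to exploit the special topology of surfaces: a closed curve representing a power $\alpha^k$ of a primitive class with $k \geq 2$ must have a self-intersection, because the free homotopy class $\freehomn{\alpha^k}$ is not represented by a simple closed curve (this is a classical fact about surface groups: primitive conjugacy classes are exactly the ones that can be represented by simple closed curves, and a proper power is never simple). So first I would reduce to the case $k \geq 2$, the case $k=1$ being trivial. Let $\gamma \colon [0,L] \to M$ be a length-minimizing closed geodesic in $\freehomn{\alpha^k}$; since it minimizes, it is in particular a geodesic and hence (after scaling the parametrization) we may assume it is parametrized proportionally to arclength with speed $L/(2\pi)$ or, more convenient here, by arclength on $[0,L]$.

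The core argument is a standard ``cut and re-glue at a self-intersection'' surgery. Since $\gamma$ is not simple, there are parameters $0 \le s < t \le L$ with $\gamma(s) = \gamma(t)$ and $(s,t) \ne (0,L)$. Cutting at this point decomposes the loop $\gamma$ into two sub-loops $\gamma_1 = \gamma\steLLe{[s,t]}$ and $\gamma_2 = \gamma\steLLe{[0,s]} * \gamma\steLLe{[t,L]}$, both based at $\gamma(s)$, with $\alpha^k$ conjugate to $[\gamma_1]\cdot[\gamma_2]$ in $\pi_1(M,\gamma(s))$ and $\maL(\gamma_1) + \maL(\gamma_2) = L$. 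I would then invoke the algebraic structure of the free product / surface group: in a free group, or in the fundamental group of a surface of genus $\geq 1$, if a product $g_1 g_2$ is conjugate to $\alpha^k$ with $\alpha$ primitive, then (after possibly interchanging and conjugating) either one of $g_1, g_2$ is trivial, or both $g_1$ and $g_2$ are themselves powers of a common primitive element conjugate to $\alpha$. If some $g_i$ were trivial, then $\gamma$ would be freely homotopic to the other sub-loop, which is shorter (or, if the shorter one has zero length, $\gamma$ would be nullhomotopic, contradicting $\alpha^k \ne 1$), contradicting minimality of $L$ unless that sub-loop is a reparametrization covering the whole class — a case one rules out by the same length comparison. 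Hence both sub-loops are powers $\alpha^{k_1}$, $\alpha^{k_2}$ with $k_1 + k_2 = k$, $k_1, k_2 \geq 1$, and by minimality $\maL(\gamma_i) \geq N_0(\freehomn{\alpha^{k_i}})$.

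From here I would run an inductive / averaging estimate. By the minimality of $\gamma$ in its class and the subadditivity $N_0(\freehomn{\alpha^{k_1}}) + N_0(\freehomn{\alpha^{k_2}}) \geq N_0(\freehomn{\alpha^{k_1+k_2}}) = L$ combined with $\maL(\gamma_1) + \maL(\gamma_2) = L$, one deduces $\maL(\gamma_i) = N_0(\freehomn{\alpha^{k_i}})$ for each $i$, i.e.\ each sub-loop is itself length-minimizing in its class. Iterating the self-intersection argument on the sub-loops (which still have proper-power length whenever $k_i \geq 2$) terminates and shows $N_0(\freehomn{\alpha^k}) = k \cdot N_0(\freehomn{\alpha})$, and moreover that $\gamma$, up to reparametrization, traverses a single primitive closed geodesic $\gamma_0 \colon [0, L/k] \to M$ in the class $\freehom{\alpha}$ exactly $k$ times; in particular $\gamma_0$ is length-minimizing in $\freehomn{\alpha}$, is a closed geodesic, and extends to a periodic curve $\R \to M$ of period $L/k$. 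The displayed formula $N_0(\freehomn{\beta^k}) = k \cdot N_0(\freehomn{\beta})$ for arbitrary $\beta \in \pi_1(M)$ then follows by writing $\beta = \alpha^m$ with $\alpha$ primitive (or $\beta$ trivial, where both sides vanish) and using $N_0(\freehomn{\beta^k}) = N_0(\freehomn{\alpha^{mk}}) = mk\cdot N_0(\freehomn{\alpha}) = k\cdot N_0(\freehomn{\alpha^m}) = k \cdot N_0(\freehomn{\beta})$.

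\textbf{Main obstacle.} The delicate point is the purely group-theoretic input: controlling how a self-intersection of a geodesic representing $\alpha^k$ splits the class, i.e.\ showing that the two sub-loops are forced to be powers of a common primitive element conjugate to $\alpha$. For free groups this is the classical statement that the centralizer of a primitive element is cyclic and that $g_1 g_2$ conjugate to a proper power $\alpha^k$ forces $g_1, g_2 \in \langle \alpha' \rangle$ for a conjugate $\alpha'$ of $\alpha$ (roots are unique up to conjugacy); for closed surface groups the analogous facts hold since these are one-relator groups with cyclic centralizers of nontrivial elements (and even bi-orderable / torsion-free hyperbolic in the genus $\geq 2$ case, abelian in the genus $1$ case). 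I would cite the standard references for unique roots and cyclic centralizers in surface groups rather than reprove them, and would be careful to treat the genus $1$ (torus) case — where $\pi_1(M) \cong \Z^2$ and the statement is elementary — separately or note that it is subsumed.
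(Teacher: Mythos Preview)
Your algebraic input is false, and this is a genuine gap, not a technicality. You claim that in a surface group (or free group), if $g_1 g_2$ is conjugate to $\alpha^k$ with $\alpha$ primitive, then either some $g_i$ is trivial or both $g_i$ lie in a cyclic subgroup generated by a conjugate of~$\alpha$. Already in the genus-$1$ case $\pi_1(T^2)\cong\Z^2$ this fails: with $\alpha=(1,0)$ one has $(1,1)+(1,-1)=(2,0)=\alpha^2$, yet $(1,1)$ and $(1,-1)$ are not multiples of a common primitive vector. In a free group $\langle a,b\rangle$ the same phenomenon occurs, e.g.\ $g_1=ab$, $g_2=b^{-1}a$ with $g_1g_2=a^2$; neither factor is a power of a conjugate of~$a$. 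Unique roots and cyclic centralisers in surface groups do not rule this out: they control how a single element can be written as a power, not how a power can be split as an \emph{arbitrary} product. Nothing in your argument prevents the self-intersection of~$\gamma$ from producing sub-loops in such ``skew'' classes, and then your length bookkeeping $\maL(\gamma_i)\ge N_0(\freehomn{\alpha^{k_i}})$ never gets off the ground.

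The paper's proof avoids this algebraic pitfall entirely by a geometric argument in the cyclic covering. One passes to $\wihat M\ceq \ucov M/\langle\alpha^k\rangle\cong S^1\times\R$, where the minimizer~$\gamma$ lifts to a \emph{simple} closed geodesic~$\wihat\gamma$ generating $\pi_1(\wihat M)$. The residual action of~$\alpha$ on~$\wihat M$ produces a translate $\alpha\cdot\wihat\gamma$, and a Jordan-curve-theorem argument forces $\wihat\gamma$ and $\alpha\cdot\wihat\gamma$ to intersect (else iterating would push $\alpha^k\cdot\wihat\gamma$ strictly to one side of~$\wihat\gamma$, contradicting $\alpha^k\cdot\wihat\gamma=\wihat\gamma$). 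A cut-and-paste at these intersections would shorten~$\wihat\gamma$ unless the two curves coincide, giving $\alpha$-invariance of~$\wihat\gamma$ and hence the $L/k$-periodicity. The point is that in the cylinder the Jordan curve theorem supplies exactly the topological control that your purely group-theoretic statement was meant to provide but cannot.
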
  

\begin{proof}We choose $x_0\ceq \gamma(0)$ and a lift $\ucov x_0\in \ucov M$ as basepoints. 
This allows to identify $\pi_1(M,x_0)$ with the deck transformations of $\ucov M\to M$.
We consider $\wihat M\ceq \ucov M/\<\alpha^k\>$, which is diffemorphic to~$S^1\times \R$. 
Then $\gamma$ lifts to a simple closed geodesic $\wihat\gamma\colon [0,L]\to\wihat M$ with $\wihat\gamma(0)=[\ucov x_0]\qec \wihat x_0$.
The closed geodesic $\wihat\gamma$ generates the infinite cyclic group $\pi_1(\wihat M)\cong H_1(\wihat M;\Z)$. 
The group $\{\alpha^m\mid m\in \Z\}\subset\pi_1(M,x_0)$ acts freely and isometrically on $\wihat M$, and $\alpha^k$ acts trivially. 

We claim that the curves $\wihat\gamma$ and $\alpha\cdot \wihat\gamma$ necessarily have to intersect in at least two points. The Jordan curve theorem implies that
$\wihat M\setminus\image(\wihat\gamma)$ has two components, and orientation allows us to call them the right and the left connected component of~$\wihat\gamma$ 
(this requires the choice of some convention, but any choice will be fine). If~$\wihat\gamma$ and $\alpha\cdot \wihat\gamma$ do not intersect,
then  $\alpha\cdot \wihat\gamma$ has to run entirely in the left (or right) connected component of~$\wihat\gamma$. 
As~$\alpha$ preserves orientation, 
$\alpha^2\cdot \wihat\gamma$ has to run entirely in the left (or right) connected component of $\alpha \cdot\wihat\gamma$ and thus in the  left (or right) connected component of  $\wihat\gamma$.
By induction, we get the same statement for $\alpha^k\cdot \wihat\gamma$ instead of $\alpha^2\cdot \wihat\gamma$, 
but this is obviously in contradiction to $\alpha^k\cdot \wihat\gamma =\wihat\gamma$. 

If the curves~$\wihat\gamma$ and $\alpha\cdot \wihat\gamma$ intersect in precisely one point, then the Jordan curve theorem requires that away from this point  $\wihat\gamma$ is on one side of
$\alpha\cdot \wihat\gamma$ and thus a similar argument yields a contradiction. The claim is thus proven.

We now prove that $\wihat\gamma$ and $\alpha\cdot \wihat\gamma$ coincide (up to reparametrization): If they do not coincide, 
we can use cut and paste constructions at the intersection points to produce 
a loop in $\hat M$ shorter than $L$, freely homotopic to $\wihat\gamma$, which is again a contradiction.

Thus, $\hat\gamma$ is invariant under the action of $\alpha$ (up to shift in the parameter) and all statements of the lemma follow directly from this.
\end{proof}

\begin{proposition}\label{prop:closed-geo-on-surfaces}
  Let $M$ be an orientable, closed, connected surface of genus at
  least~$1$ with Riemannian metric~$g$.
  Let~$\gamma\colon\R\to M$ be a
  (non-constant) closed geodesic with respect to~$g$, that
  minimizes length in its free homotopy class. Then, $\gamma$ is a minimal
  geodesic.
\end{proposition}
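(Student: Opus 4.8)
The plan is to lift $\gamma$ to the universal covering $\ucov M$ and to argue by contradiction: if a lift $\ucov\gamma\colon\R\to\ucov M$ fails to be distance minimizing, there are parameters $s<t$ with $\ucov d\bigl(\ucov\gamma(s),\ucov\gamma(t)\bigr)<t-s$, and we want to convert this ``shortcut'' into a closed loop in $M$, freely homotopic to (a power of) $\gamma$, that is strictly shorter than the corresponding multiple of $\maL(\gamma)$, contradicting the length-minimality of $\gamma$ in its free homotopy class together with Lemma~\ref{lemma:closed-non-prim-geod}. The key point is that passing to a suitable intermediate covering turns the problem into the essentially two-dimensional, surface-topological statement that was already proved above; so the work is mostly in setting up the right covering and the right loop.

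First I would fix a period $L>0$ of $\gamma$, choose a basepoint $x_0\ceq\gamma(0)$ and a lift $\ucov x_0$, and identify $\pi_1(M,x_0)$ with the deck transformation group of $\ucov M\to M$. Writing $\alpha\in\pi_1(M,x_0)$ for the element represented by $\gamma\stelle{[0,L]}$, we may (after replacing $\alpha$ by its primitive root and $L$ by the corresponding smaller period, using Lemma~\ref{lemma:closed-non-prim-geod}) assume $\alpha$ is primitive; note $\alpha\neq 1$ since $\gamma$ is non-constant and length-minimizing. Now consider the intermediate covering $\wihat M\ceq\ucov M/\langle\alpha\rangle$, an annulus $S^1\times\R$, with induced metric $\hat g$ and distance $\hat d$. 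The geodesic $\gamma$ lifts to a \emph{simple} closed geodesic $\wihat\gamma\colon[0,L]\to\wihat M$ (simplicity of the minimizing geodesic representing the primitive class $\alpha$ on the annulus is exactly the content recalled in the proof of Lemma~\ref{lemma:closed-non-prim-geod}, via the Jordan curve theorem), generating $\pi_1(\wihat M)\cong H_1(\wihat M;\Z)\cong\Z$, and $\ucov\gamma$ is a further lift of $\wihat\gamma$.

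Next, suppose for contradiction that $\ucov\gamma$ is not minimizing. Since $\ucov d\geq\hat d$ (Lemma~\ref{lem:finite-covering}~(1), applied to $\ucov M\to\wihat M$), it suffices to show $\wihat\gamma$ \emph{is} minimizing on $\wihat M$ — then so is $\ucov\gamma$, because $\ucov M\to\wihat M$ is the universal covering of $\wihat M$ and a line in the universal covering of an annulus that projects to a minimizing line is minimizing (here one uses that the universal covering of $\wihat M$, which is $\ucov M$, is simply connected so a shortcut upstairs would project to a homotopic shortcut downstairs of the same length). So the whole matter reduces to: \emph{a simple closed geodesic $\wihat\gamma$ on an annulus $\wihat M$ that is shortest in its free homotopy class is a minimal geodesic on $\wihat M$.} To prove this, assume some arc $\wihat\gamma\stelle{[s,t]}$ is not minimizing in $\wihat M$ and let $\delta$ be a shortest path from $\wihat\gamma(s)$ to $\wihat\gamma(t)$, so $\maL(\delta)<t-s$. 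The concatenation of $\delta$ with $\wihat\gamma\stelle{[t,s+L]}$ (reading indices cyclically if $t-s<L$, or with the appropriate full turns if $t-s\geq L$) is a loop freely homotopic to $\wihat\gamma$ of length $<L$, contradicting minimality of $\wihat\gamma$ in its free homotopy class; the one subtlety, that $\delta$ together with the complementary arc really is freely homotopic to $\wihat\gamma$ and not to a different power, is handled because $\pi_1(\wihat M)\cong\Z$ and $\wihat\gamma$ generates it, so the winding number around the annulus is forced. Finally, translating back: a lift of $\wihat\gamma$ that is shorter in its free homotopy class would descend to such a loop in $M$, contradicting length-minimality of $\gamma$ in $\freehom{\alpha}$ in $M$ (using Lemma~\ref{lemma:closed-non-prim-geod} once more to compare the primitive and original periods). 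This proves that $\ucov\gamma$ is minimizing, i.e. $\gamma$ is a minimal geodesic.

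\textbf{Main obstacle.} The delicate point is the reduction step — showing that minimality of the simple closed geodesic in its \emph{free homotopy class} on the annulus already forces it to be \emph{globally distance minimizing} on the annulus (and then on the universal cover). One must argue carefully that an arbitrary shortcut on $\wihat M$ between two points of $\image(\wihat\gamma)$ can be closed up, using the complementary arc of $\wihat\gamma$, into a loop that is genuinely freely homotopic to $\wihat\gamma$ (not to a trivial loop or to a higher power), which is where the computation of $\pi_1$ of the annulus and the simplicity of $\wihat\gamma$ (Jordan curve theorem, as in Lemma~\ref{lemma:closed-non-prim-geod}) both enter; the rest is bookkeeping with coverings and Lemma~\ref{lem:finite-covering}.
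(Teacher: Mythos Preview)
Your reduction to the cyclic covering $\wihat M = \ucov M/\langle\alpha\rangle$ is fatally flawed: since $\gamma$ represents $\alpha$, its lift $\wihat\gamma\colon\R\to\wihat M$ is \emph{periodic} with period $L$, so $\wihat\gamma(0) = \wihat\gamma(L)$ and hence $\hat d\bigl(\wihat\gamma(0),\wihat\gamma(L)\bigr) = 0 < L$. Thus $\wihat\gamma$ is \emph{never} globally distance minimizing on $\wihat M$, and the statement you reduce to --- ``a simple closed geodesic on an annulus that is shortest in its free homotopy class is a minimal geodesic on $\wihat M$'' --- is simply false. Your implication ``shortcut upstairs $\Rightarrow$ shortcut downstairs'' is logically correct, but since shortcuts downstairs always exist, nothing is gained. (There is also a secondary gap in your closing-up step on the annulus: the concatenation $\delta * \wihat\gamma\stelle{[t,s+L]}$ need not be freely homotopic to $\wihat\gamma$; a shortest path $\delta$ may wind arbitrarily, and the trivial class is not excluded by your $\pi_1(\wihat M)\cong\Z$ remark.)

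The paper's argument avoids the annulus entirely and stays on $\ucov M$. If $\ucov\gamma$ admits a shortcut on some interval $[t_0,t_1]$, one simply \emph{enlarges} the interval so that both endpoints lie at integer multiples of the period $\ell$, say $t_0 = 0$ and $t_1 = k\ell$ with $k \geq 2$ (the strict inequality $\ucov d(\ucov\gamma(t_0),\ucov\gamma(t_1))<t_1-t_0$ is preserved under enlargement by the triangle inequality). Then $\ucov\gamma(0)$ and $\ucov\gamma(k\ell)$ differ by the deck transformation $\alpha^k$, so the short path $\ucov\tau$ between them projects to a \emph{loop} $\tau$ in $M$ based at $\gamma(0)$ representing $\alpha^k$, with $\maL(\tau) < k\ell$. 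This directly contradicts Lemma~\ref{lemma:closed-non-prim-geod}, which gives $N_0\bigl(\freehom{\alpha^k}\bigr) = k\cdot N_0\bigl(\freehom{\alpha}\bigr) = k\ell$. The key move is that aligning the shortcut endpoints with deck-translates of the basepoint turns the shortcut into a closed loop in $M$ of a determined free homotopy type, with no need to control winding numbers on an intermediate cover.
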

\begin{proof}
  As always, we assume that the geodesic $\gamma$ is
  parametrized by arclength.
  Let $\ell>0$ be the period of~$\gamma$.  As $\gamma$ is not
  constant, the free homotopy class of~$\gamma\stelle{[0,\ell]}$ is
  non-trivial and thus also the class~$\alpha\ceq [\gamma\stelle{[0,\ell]}]
  \in\pi_1\bigl(M,\gamma(0)\bigr)$
  is non-trivial. 

  Let $\ucov\gamma\colon\R\to \ucov M$ be a lift of~$\gamma$.
  Because the fundamental group of the surface~$M$ is torsion-free,
  $\alpha$ has infinite order. Moreover,
  $\alpha^k\cdot \ucov\gamma(0)$ converges for~$k\to\infty$ to
  a point~$p_\infty\in \partial\ucov M$, where $\partial\ucov M$ is
  the boundary at infinity of~$\ucov M$.  Similarly, the sequence
  converges to a point~$p_{-\infty}\in \partial\ucov M$ for~$k\to-\infty$.

  Assume for a contradiction that $\ucov\gamma$ is not a line.
  Then there exist~$t_0,t_1\in \R$ with $t_0<t_1$ and 
  \[ \ucov d\bigl(\ucov\gamma(t_0),\ucov\gamma(t_1)\bigr)<t_1-t_0= \maL\bigl(\gamma\stelle{[t_0,t_1]}\bigr)
  \,.
  \] 
  By increasing~$t_1$ and decreasing~$t_0$ we may assume that~$t_i=k_i\ell$
  with~$k_i\in \Z$ for~$i \in \{0,1\}$, and $k\ceq k_1-k_0\geq 2$. By shifting in the domain
  by~$t_0$ and in the range by $\alpha^{k_0}$, we may assume $t_0=0$ and
  $t_1=k\ell$ with~$k\in \N_{\geq 2}$. 
  There is a curve~$\ucov\tau$
  of length $L$ less than~$k\ell$ from~$\ucov\gamma(0)$ to~$\ucov\gamma(k\ell)$. 
  The projections of $\ucov \gamma\stelle{[0,k\ell]}$ and $\ucov \tau$ to~$M$ are loops and 
  will be denoted as $\gamma'$ and $\tau$. Both of them represent the free homotopy class defined by $\alpha^k$.
  As $\tau$ is shorter than $\gamma'$, 
  we have $N_0\bigl(\freehom{\alpha^k}\bigr)\leq \mathcal{L}(\tau)<k\ell$. This 
  contradicts Lemma~\ref{lemma:closed-non-prim-geod} and our proposition is shown.
\end{proof}
Note that orientability is crucial for these results. For example, if~$M$ is the Klein bottle, one can construct a Riemannian metric $g$ on $M$ such that Lemma~\ref{lemma:closed-non-prim-geod}
and Proposition~\ref{prop:closed-geo-on-surfaces} do no longer hold for this non-orientable surface~$M$ and the metric~$g$.

\fi 

\clearpage


\end{document}